\documentclass[10pt]{amsart}
\usepackage{verbatim}
\usepackage{graphicx}
\usepackage{svg}
\usepackage{subfigure}
\usepackage{makecell}
\usepackage[export]{adjustbox}
\usepackage{amssymb}
\usepackage{tikz, tikz-cd}
\usetikzlibrary{decorations.markings, knots, decorations.pathreplacing}
\usetikzlibrary{positioning, fit, backgrounds,through, calc}
\usepackage{fullpage}
\usepackage[all]{xy}
\usepackage[margin=1in, headheight=25pt, headsep=20pt]{geometry}
\usepackage{amsmath}
\usepackage{amsthm}
\usepackage{amsfonts}
\usepackage{comment}
\usepackage{amsthm}
\usepackage{array} 
\usepackage{color}
\usepackage[utf8]{inputenc}
\usepackage[english]{babel}
\usepackage{hyperref}
\usepackage{pdfpages}
\usepackage{textcomp}
\usepackage{slashbox}
\usepackage{listings}
\usepackage{caption}
\usepackage{quiver}
\pagestyle{headings}

\hypersetup{
    colorlinks,
    linkcolor={red!50!black},
    citecolor={blue!50!black},
    urlcolor={blue!80!black}
    }

\newcolumntype{L}{>{$}l<{$}}

\DeclareMathSymbol{\shortminus}{\mathbin}{AMSa}{"39}

\newtheorem{theorem}{Theorem}[section]
\newtheorem{lemma}[theorem]{Lemma}

\newtheorem{corollary}[theorem]{Corollary}
\newtheorem{conjecture}[theorem]{Conjecture}
\newtheorem{proposition}[theorem]{Proposition}
\theoremstyle{definition}  
\newtheorem{definition} [theorem] {Definition} 

\newtheorem{example} [theorem] {Example}
\newtheorem{remark} [theorem] {Remark}
\newtheorem{question} [theorem] {Question}
\newtheorem{construction} [theorem] {Construction}

\theoremstyle{definition}

\newtheorem{defin}[theorem]{Definition}

\newcommand{\Q}{{\mathbb{Q}}}

\newcommand{\Z}{{\mathbb{Z}}}

\newcommand{\A}{\mathcal{A}}
\newcommand{\Lk}{\mathcal{L}}
\newcommand{\fil}{{\mathcal{F}}}
\newcommand{\Mt}{{\mathcal{M}}}
\newcommand{\Pt}{{\mathcal{P}}}
\newcommand{\Ch}{{\mathcal{C}}}

\newcommand{\To}{{\mathfrak{T}_o}}
\newcommand{\Res}{\text{Res}}
\newcommand{\im}{\text{im}}

\tikzset{red dot/.style={thick}}

\DeclareMathOperator{\CKh}{CKh}
\DeclareMathOperator{\CLee}{CLee}
\DeclareMathOperator{\CST}{CST}
\DeclareMathOperator{\Kh}{Kh}

\title[A spanning tree model for Khovanov homology]{A spanning tree model for Khovanov homology, Rasmussen's s-invariant and exotic discs in the $4$-ball}
\author[Banerjee]{Aninda Banerjee}
\address[]{IAI, TCG CREST, Kolkata, India}
\address{NIT, DURGAPUR}

\email{anindabanerjee24@gmail.com}

\author[Chakraborty]{Apratim Chakraborty}

\address[]{IAI, TCG CREST, Kolkata, India}
\address[]{Academy of Scientific and Innovative Research (AcSIR), Ghaziabad- 201002, India}

\email{apratimn@gmail.com}

\author[Das]{Swarup Kumar Das}
\address[]{IAI, TCG CREST, Kolkata, India}
\address{NIT, DURGAPUR}

\email{swarupdas.math@gmail.com}

\begin{document}


\begin{abstract}
	The checkerboard coloring of knot diagrams offers a graph-theoretical approach to address topological questions. Champanerkar and Kofman defined a complex generated by the spanning trees of a graph obtained from the checkerboard coloring whose homology is the reduced Khovanov homology. Notably, the  differential in their chain complex was not explicitly defined. We explicitly define the combinatorial form of the differential within the spanning tree complex. We additionally provide a description of Rasmussen's $s$-invariant within the context of the spanning tree complex. Applying our techniques, we identify a new infinite family of knots where each of them bounds a set of exotic discs within the 4-ball.
\end{abstract}

\maketitle

\setlength\parindent{0pt}


\begin{section}{Introduction}
		
	Khovanov homology is an invariant of links in $S^3$ that categorifies the Jones polynomial. Champanerkar and Kofman \cite{kh_st} and Wehrli \cite{wehrli} introduced the spanning tree model for the Khovanov homology. In their construction, the chain complex is generated by the spanning trees of the Tait graph associated with a given knot diagram. The homology of this spanning tree complex is isomorphic to the reduced Khovanov homology. However, the exact combinatorial form of the differential remained unknown in the bigraded case.\\

  Given a connected link diagram $\Lk$, let $G_{\Lk}$ denote its associated Tait graph. Throughout this paper, we explore Khovanov homology through the perspective of the Tait graph. In Section 2, we establish a correspondence between these two viewpoints. Under this framework, the generators of the Khovanov chain complex, which are enhanced states, correspond to enhanced spanning subgraphs of the Tait graph.\\
    
    We then provide an acyclic matching within the reduced Khovanov chain complex, denoted as $(\CKh^+(\Lk), \partial_{\Kh}^+)$. From the acyclic matching, we obtain the bigraded Morse complex $(\CST^+(G_{\Lk}), \partial_{ST}^+)$, which is generated by the spanning trees of $G_{\Lk}$. The differential operator $\partial_{ST}^+$ is entirely determined by the graph-theoretic properties of the Tait graph $G_{\Lk}$. A similar technique has been applied by the authors and Paul in \cite{STM-chromatic} in order to derive a spanning tree model for chromatic homology which categorifies the chromatic polynomial. As a consequence of the fundamental theorem of algebraic discrete Morse theory, we present the following theorem, providing a spanning tree model for reduced Khovanov homology where the differential can be explicitly computed using the graph theoretic information of the Tait graph:
	
	\begin{theorem}\label{maintheorem}
		For a connected link diagram $\Lk$, the cohomology of the complex $(\CST^+(G_{\Lk}), \partial_{ST}^+)$ is isomorphic to the reduced Khovanov homology ${\Kh}^+(\Lk)$ (Also denoted by $\widetilde{\Kh}(\Lk)$).
	\end{theorem}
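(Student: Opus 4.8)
The plan is to deduce Theorem~\ref{maintheorem} as a formal consequence of algebraic discrete Morse theory applied to the reduced Khovanov complex $(\CKh^+(\Lk),\partial_{\Kh}^+)$. The essential input, which I would establish before this theorem, is the correspondence from Section~2 identifying enhanced states of $\Lk$ with enhanced spanning subgraphs of the Tait graph $G_{\Lk}$, together with an explicit acyclic matching $\mathcal{M}$ on the digraph of the complex $(\CKh^+(\Lk),\partial_{\Kh}^+)$. Given such a matching, the fundamental theorem of algebraic discrete Morse theory produces a chain homotopy equivalent \emph{Morse complex} whose generators are precisely the critical (unmatched) cells, and whose differential is the signed count of alternating zig-zag paths in the matching digraph. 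So the proof of the theorem itself is short: it is the invocation of that fundamental theorem once the matching is in place.

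First I would recall the statement of the fundamental theorem of algebraic discrete Morse theory (in the cohomological convention, since $\partial_{\Kh}^+$ raises homological degree), citing the standard references, and note that a Morse matching on a bounded complex of finitely generated free modules yields a deformation retract onto the Morse complex, hence an isomorphism on (co)homology. Second, I would invoke the construction from Section~2: the matching $\mathcal{M}$ is chosen so that its critical cells are exactly the enhanced spanning subgraphs that are spanning \emph{trees} of $G_{\Lk}$ (with the appropriate enhancement), so that the underlying module of the Morse complex is identified with $\CST^+(G_{\Lk})$. Third, I would observe that the induced Morse differential $\partial_{ST}^+$ is, by the zig-zag-path formula, a purely combinatorial operator on spanning trees: each coefficient is an alternating sum over directed paths in $G_{\Lk}$'s subgraph poset that alternate between matched and unmatched edges, and I would point to the explicit description worked out in Section~2 (paralleling the chromatic-homology case in~\cite{STM-chromatic}). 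Finally I would remark that bigradings are preserved because both the matching and $\partial_{\Kh}^+$ respect the quantum grading (the matching pairs states in the same $q$-degree and adjacent homological degree), so the isomorphism is an isomorphism of bigraded groups.

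The main obstacle is not in this theorem's proof but in the lemmas it depends on: verifying that the proposed matching $\mathcal{M}$ is genuinely acyclic — i.e.\ that there are no directed cycles in the modified matching digraph — and that its critical cells are exactly the spanning-tree-enhanced subgraphs. Acyclicity is typically the delicate combinatorial heart of such arguments; one usually exhibits a ``weight'' or potential function on subgraphs that strictly decreases along every non-trivial alternating path, or builds the matching inductively via a sequence of edge contractions/deletions so that acyclicity is inherited at each step. I would expect that argument, rather than the present theorem, to carry the technical burden; here one simply records the payoff. I would close by noting that finiteness (so that the zig-zag sums are finite and the Morse complex is well-defined) is automatic since $\Lk$ is a connected \emph{finite} link diagram, so $G_{\Lk}$ has finitely many subgraphs.
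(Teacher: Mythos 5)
Your proposal is correct and follows essentially the same route as the paper: the paper's proof of this theorem is exactly the invocation of the fundamental theorem of algebraic discrete Morse theory (Theorem \ref{dmt maintheorem}) together with the identification of the Morse complex of the acyclic matching of Theorem \ref{matching_theorem} with $(\CST^+(\Lk),\partial_{ST}^+)$ via the critical enhanced spanning subgraphs $S_c^{T^+}$, one per spanning tree. As you anticipate, the technical content (acyclicity, near-perfectness, and the combinatorial description of the alternating-path weights) is carried by the earlier matching theorem and the path-counting propositions, not by this theorem itself.
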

	
	We extend our construction of acyclic matching to the unreduced Khovanov chain complex of an arbitrary connected link diagram $\Lk$, denoted as $(\CKh(\Lk), \partial_{\Kh})$, in order to obtain the Morse complex $(\CST(G_{\Lk}), \partial_{ST})$. This complex is generated by the signed spanning trees of the Tait graph $G_{\Lk}$, and the differential can be described from the graph theoretic information of the Tait graph as well.
	
	\begin{theorem}\label{unreduced maintheorem}
		For a link diagram $\Lk$, the cohomology of the complex $(\CST(G_{\Lk}), \partial_{ST})$ is isomorphic to the Khovanov homology ${Kh}(\Lk)$.
	\end{theorem}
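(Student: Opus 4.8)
The plan is to reduce Theorem~\ref{unreduced maintheorem} to Theorem~\ref{maintheorem} together with a standard structural relationship between unreduced and reduced Khovanov homology, carried through the language of Tait graphs and discrete Morse theory. The first step is to record the chain-level relation between $(\CKh(\Lk),\partial_{\Kh})$ and $(\CKh^+(\Lk),\partial_{\Kh}^+)$: over $\Z$ (or whichever coefficient ring is in force) the unreduced complex of a pointed diagram fits into a short exact sequence involving two copies of the reduced complex, shifted in the quantum grading, and in fact for a fixed basepoint there is an isomorphism of complexes $\CKh(\Lk)\cong \CKh^+(\Lk)\{+1\}\oplus\CKh^+(\Lk)\{-1\}$ at the level of the underlying graded groups, with the differential being upper-triangular with respect to this splitting. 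Translating this through the dictionary of Section~2, I would show that an enhanced spanning subgraph of $G_\Lk$ in the unreduced setting is exactly a \emph{signed} enhanced spanning subgraph, the sign recording on which side of the splitting the generator lives, so that the generating set of $\CKh(\Lk)$ is the set of signed enhanced spanning subgraphs.

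Next I would build the acyclic matching on $(\CKh(\Lk),\partial_{\Kh})$. The key observation is that the acyclic matching $\Mt^+$ already constructed on $(\CKh^+(\Lk),\partial_{\Kh}^+)$ in the proof of Theorem~\ref{maintheorem} is insensitive to the quantum-grading shift, so it induces a matching on each of the two summands $\CKh^+(\Lk)\{\pm 1\}$; taking the (disjoint) union $\Mt := \Mt^+_{\{+1\}}\sqcup \Mt^+_{\{-1\}}$ gives a matching on the whole unreduced complex. I must check that $\Mt$ is still a valid acyclic matching on the unreduced Morse graph, not just on the two pieces separately. This is where the upper-triangularity of $\partial_{\Kh}$ with respect to the splitting is essential: any edge of the Morse graph of $\CKh(\Lk)$ either stays within one summand — where acyclicity is inherited from $\Mt^+$ — or goes from the $\{+1\}$-summand to the $\{-1\}$-summand, and such ``crossing'' edges can never participate in an alternating cycle because they all point the same way, so no new cycles are created. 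Hence $\Mt$ is acyclic, and its set of critical cells is the union of two copies of the critical cells of $\Mt^+$, i.e.\ two copies of the spanning trees of $G_\Lk$, one for each sign — precisely the signed spanning trees. Unfolding the definition of $\CST(G_\Lk)$ and $\partial_{ST}$ as the Morse complex of $\Mt$, one recovers exactly the complex in the statement.

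The final step is to invoke the fundamental theorem of algebraic discrete Morse theory: for an acyclic matching on a (co)chain complex, the associated Morse complex is chain homotopy equivalent to the original, hence has isomorphic (co)homology. Applying this to $\Mt$ on $(\CKh(\Lk),\partial_{\Kh})$ yields $H^*(\CST(G_\Lk),\partial_{ST})\cong \Kh(\Lk)$, which is the claim. I would also remark that the $\{+1\}\oplus\{-1\}$ structure descends to the Morse complex — the differential $\partial_{ST}$ is upper-triangular with the $\{\pm1\}$-graded pieces each being a copy of $(\CST^+(G_\Lk),\partial_{ST}^+)$ — so Theorem~\ref{unreduced maintheorem} refines Theorem~\ref{maintheorem} in the expected way and is consistent with the known long exact sequence relating reduced and unreduced Khovanov homology.

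The main obstacle I anticipate is the verification that the combined matching $\Mt$ is genuinely acyclic on the full unreduced Morse graph: one has to rule out alternating cycles that repeatedly hop between the two summands, and for that one needs the precise form of the basepoint action (the map $X\colon \CKh^+\{+1\}\to\CKh^+\{-1\}$) and the fact that no matched edge of $\Mt$ is a crossing edge, so that in any alternating path the crossing (unmatched) steps all go from $\{+1\}$ to $\{-1\}$ and thus a cycle is impossible. A secondary, more bookkeeping-type difficulty is making the identification of critical cells with \emph{signed spanning trees} match whatever sign/decoration convention was fixed in Section~2, and checking that the induced differential agrees on the nose with the graph-theoretic $\partial_{ST}$ advertised in the introduction rather than merely up to isomorphism.
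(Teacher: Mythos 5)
Your proposal is correct and follows essentially the same route as the paper: the paper also splits the unreduced complex by the label on the marked component (its $\CKh^+\oplus\CKh^-$, with crossing differentials only from the $+$ piece to the $-$ piece), reuses the reduced matching on each piece since its construction never used the marked-label restriction, rules out directed cycles exactly by the one-way nature of the crossing edges, and then identifies the critical cells with signed spanning trees and applies the fundamental theorem of algebraic discrete Morse theory. The only difference is cosmetic: you phrase the second summand as a quantum-shifted copy of $\CKh^+$, while the paper works directly with $\CKh^-$.
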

	
	As an application,  we deduce the following result about existence of $\mathbb{Z}_2$-torsion in Khovanov homology of alternating links, originally established independently by Asaeda and Przytycki \cite{Asaeda}, and Shumakovitch \cite{Shumakovitch}.
	
	\begin{theorem}\label{torsion theorem}
		Every non-split alternating link except the trivial knot, Hopf link and their connected sums have a $\Z_2-$torsion in their Khovanov homology.
	\end{theorem}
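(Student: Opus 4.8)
The plan is to extract the torsion directly from the unreduced spanning tree complex of Theorem~\ref{unreduced maintheorem}, using that for an alternating link the \emph{reduced} spanning tree differential vanishes. For a connected alternating diagram $\Lk$ the Tait graph $G_\Lk$ has $\det(\Lk)$ spanning trees, and $\rank_\Q\widetilde{\Kh}(\Lk;\Q)=\det(\Lk)$ with $\widetilde{\Kh}(\Lk)$ supported on a single diagonal. Since $(\CST^+(G_\Lk),\partial_{ST}^+)$ is a finitely generated free complex with exactly $\det(\Lk)$ generators whose rational cohomology has dimension $\det(\Lk)$, the map $\partial_{ST}^+$ vanishes over $\Q$, hence (being $\Z$-linear between free modules) over $\Z$, and in particular over $\Z_2$; thus $\widetilde{\Kh}(\Lk;\Z)\cong\Z^{\det(\Lk)}$. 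The signed spanning trees give $2\det(\Lk)$ generators of $(\CST(G_\Lk),\partial_{ST})$, grouped into two shifted copies $\CST^+(G_\Lk)\{-1\}$ and $\CST^+(G_\Lk)\{1\}$; because $\widetilde{\Kh}(\Lk)$ sits on one diagonal and $\partial_{ST}$ preserves the $q$-grading while raising homological degree by one, a bidegree comparison shows $\partial_{ST}$ can only send the $\{1\}$-copy to the $\{-1\}$-copy. Hence $\partial_{ST}=\bigl(\begin{smallmatrix}0&\phi\\0&0\end{smallmatrix}\bigr)$ for a single graph-theoretic matrix $\phi\colon\Z^{\det(\Lk)}\to\Z^{\det(\Lk)}$, and $\Kh(\Lk)\cong\ker\phi\oplus\operatorname{coker}\phi$.

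Next I would establish that $\phi\equiv 0\pmod 2$ and deduce a torsion criterion. Shumakovitch's isomorphism $\Kh(\Lk;\Z_2)\cong\widetilde{\Kh}(\Lk;\Z_2)\otimes\Z_2[X]/(X^2)$, valid over $\Z_2$ for every link, gives $\dim_{\Z_2}\Kh(\Lk;\Z_2)=2\det(\Lk)$, equal to the number of generators, so $\partial_{ST}\otimes\Z_2=0$ and $\phi$ has all entries even (equivalently one argues as for $\partial_{ST}^+$, now with $\Z_2$-coefficients). Applying universal coefficients to $(\CST(G_\Lk),\partial_{ST})$: $2\det(\Lk)=\dim_{\Z_2}\Kh(\Lk;\Z_2)=\rank_\Q\Kh(\Lk;\Q)+2\cdot(\text{number of }2\text{-power torsion summands})$, while $\rank_\Q\Kh(\Lk;\Q)=2\det(\Lk)-2\rank\phi$; hence $\Kh(\Lk;\Z)$ has $2$-torsion precisely when $\phi\neq0$, and since the nonzero elementary divisors of $\phi$ are even with $\gcd$ of the entries equal to $2$ (the Morse differential produces a $\pm2$ entry), that torsion contains a $\Z_2$ summand — recovering that alternating links carry only $\Z_2$-torsion. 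Finally I would reduce to prime links: the Tait graph of a connected sum is the one-vertex union $G_{L_1}\vee G_{L_2}$, the prime summands of an alternating link are alternating (Menasco), and by the connected-sum formula $\Kh(L_1\# L_2)\cong\Kh(L_1)\otimes_\Z\widetilde{\Kh}(L_2)$ — with no $\operatorname{Tor}$ term, the reduced factor being free — $\Kh(\Lk;\Z)$ has torsion as soon as one prime summand does; the unknot and Hopf link have Tait graphs a single vertex and a single digon, for which $\phi=0$ by inspection.

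It then remains to prove $\phi\neq0$ for every prime non-split alternating link other than the unknot and the Hopf link. Such a link has a reduced alternating diagram with at least three crossings, so $G_\Lk$ is $2$-connected, has at least three edges, and is not a digon; hence $G_\Lk$ contains a cycle of length $\geq 3$ or three parallel edges between some pair of vertices. Using the explicit formula for $\partial_{ST}$ I would exhibit, within such a configuration, two signed spanning trees joined by a one-edge exchange and verify that the Morse differential between them — the signed sum, over the alternating paths of the matching, of the surviving Khovanov differentials — equals $\pm2$ rather than $0$; equivalently, that $\rank_\Q\Kh(\Lk;\Q)<2\det(\Lk)$, i.e. the rational Lee spectral sequence of $\Lk$ has a nontrivial knight-move differential, which by induction on the crossing number holds for every reduced alternating diagram that is not an iterated chain of digons. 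Combined with the connected-sum reduction, this yields a $\Z_2$ summand in $\Kh(\Lk;\Z)$ for every non-split alternating link other than the unknot, the Hopf link, and their connected sums.

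The step I expect to be hardest is precisely the last one: ruling out that the acyclic matching cancels all of $\partial_{ST}$ for an arbitrary prime alternating link. Concretely this requires either a careful control of signs in the Morse differential along the alternating paths issuing from a well-chosen adjacent pair of spanning trees, or, equivalently, the clean but not entirely trivial statement that among non-split alternating links the equality $\rank_\Q\Kh(\Lk;\Q)=2\det(\Lk)$ holds exactly for connected sums of Hopf links and unknots. Everything else — the vanishing of $\partial_{ST}^+$, the mod $2$ vanishing of $\partial_{ST}$, the universal-coefficient bookkeeping, and the reduction to prime links — is routine once Theorems~\ref{maintheorem} and~\ref{unreduced maintheorem} are available.
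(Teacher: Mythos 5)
Your setup is sound and overlaps substantially with the paper's: for an alternating diagram the reduced differential $\partial_{ST}^+$ vanishes, so by equation (4.2) the unreduced differential is concentrated in a single block $\phi\colon \CST^+\to\CST^-$, it is divisible by $2$, and $2$-torsion in $\Kh(\Lk)$ is equivalent to $\phi\neq 0$; the reduction to prime summands via the connected sum formula is also fine. But the proposal has a genuine gap exactly where you flag it: you never prove $\phi\neq 0$ for a prime non-split alternating link other than the unknot and the Hopf link, and this nonvanishing \emph{is} the theorem --- all the surrounding bookkeeping (Shumakovitch's $\Z_2$ reduced/unreduced isomorphism, universal coefficients, Lee's thinness) only converts the statement into that one claim. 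Neither of your two suggested routes is carried out: the ``exhibit a $\pm 2$ incidence between two trees related by a one-edge exchange'' route requires showing that the chosen pair admits exactly the expected alternating paths and that no other ordering-dependent cancellation or longer chain interferes, and the ``$\rank_\Q\Kh(\Lk;\Q)<2\det(\Lk)$ by induction on crossing number'' route is asserted without an induction step (moreover the parenthetical ``i.e.\ the Lee spectral sequence has a nontrivial knight-move differential'' is not equivalent to that rank inequality, since $\rank_\Q\Kh\geq 2^{\#\mathrm{components}}$ with equality iff the Lee spectral sequence degenerates, which is a different condition from $\rank_\Q\Kh=2\det$). There is also a circularity concern in leaning on Shumakovitch's $\Z_2$ isomorphism and on the rank/torsion structure of alternating Khovanov homology, since those results are part of the same circle of theorems this theorem reproves; the paper deliberately avoids them.

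For comparison, the paper's proof is devoted entirely to the step you postpone. It takes an ear decomposition of the $2$-connected Tait graph with initial cycle $G_0$ of length at least $3$ (this is where ``not the unknot, Hopf link, or their connected sums'' enters), chooses an edge ordering adapted to the decomposition, and constructs an explicit tree $T$ whose activity word consists only of $L$'s and $d$'s together with a neighbour $T'$ obtained by one edge exchange in $G_0$. With that ordering the only chain from $T'$ is $T'>T$, the alternating paths can be listed completely (Figure 18), giving $\partial_{ST}(T'^+)=(-1)^{n-1}\,2\,T^-$, and a final check shows every other generator maps to an even multiple of $T^-$, so $[T^-]$ is a nonzero class of order $2$. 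If you want to complete your proposal, you must supply an argument of exactly this explicit, ordering-aware kind (or an honest induction establishing $\rank_\Q\Kh(\Lk;\Q)<2\det(\Lk)$); as it stands the hard content is missing.
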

	
	\begin{figure}[ht]
		\centering
		\includesvg[width=0.8\textwidth]{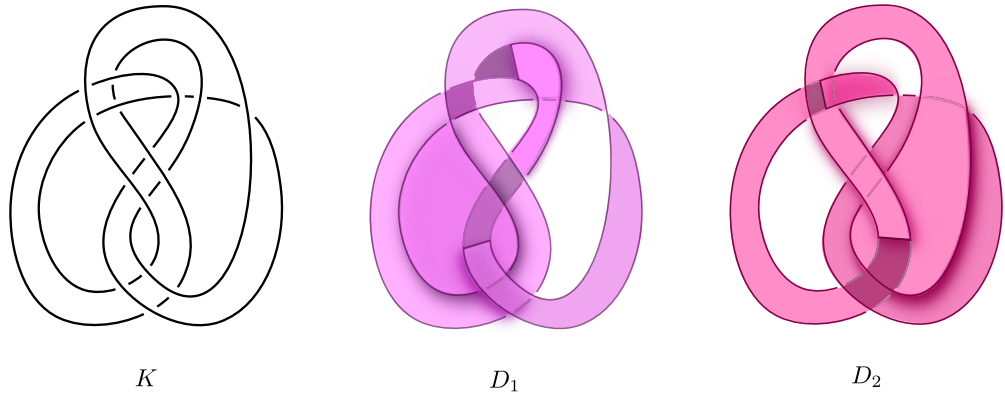}
		\caption{The slice disks $D_1$ and $D_2$ bounded by the knot $K$ forms an exotic pair detected by the distinguished oriented resolution cycle under Khovanov  cobordism map}
		\label{exotic_disks}
	\end{figure}
	
	We also study the spanning tree model for Lee's complex. In the spanning tree model of Lee's complex we have a filtration grading of the homology classes denoted by $s_{ST}$. We introduce the notion of \textbf{orientation preserving tree} $\mathbf{\mathfrak{T}_o}$ which can be thought of as spanning tree version of the oriented resolution.  ${\mathfrak{T}_o}^{\pm}$ is a cycle in the unreduced spanning tree complex for a suitable ordering. We prove that Rasmussen's $s$-invariant can be recovered from the filtration gradings of homology classes of signed orientation preserving trees ${\mathfrak{T}_o}^+$ and  ${\mathfrak{T}_o}^-$.
	
	\begin{theorem}\label{s-invariant thm}
		
		Let $\mathfrak{T}_o$ be the orientation preserving tree in the Tait graph $G_{\Lk}$ of a knot $K$ then,
		\[
		s(K) = \frac{s_{ST}([\mathfrak{T}_o^{+}])+ s_{ST}([\mathfrak{T}_o^{-}])}{2}.  
		\]
	\end{theorem}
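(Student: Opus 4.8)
\textit{Proof proposal.} The plan is to compare the spanning‑tree model of Lee's complex with Lee's complex itself through a \emph{filtered} homotopy equivalence, identify the classes $[\mathfrak{T}_o^{+}]$ and $[\mathfrak{T}_o^{-}]$ with Lee's canonical generators (more precisely with their sum and difference), and then read $s(K)$ off Rasmussen's description of the quantum filtration on Lee homology. The first step is to observe that the acyclic matching producing $(\CST(G_{\Lk}),\partial_{ST})$ from $(\CKh(\Lk),\partial_{\Kh})$ in Theorem~\ref{unreduced maintheorem} is assembled from components of the unreduced Khovanov differential, all of which preserve the quantum grading. Passing to Lee's complex $\CLee(\Lk)$ only adds to the differential terms that strictly raise the quantum filtration, so the very same matching remains acyclic and yields a Lee Morse complex on the signed spanning trees carrying the induced filtration grading $s_{ST}$. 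Since every matched edge is quantum‑grading preserving, no zig‑zag path in the Morse reduction can lower the quantum filtration; hence the Morse homotopy equivalence $\CLee(\Lk)\simeq(\CST(G_{\Lk}),\partial_{ST})$ is filtered, and on homology it identifies $s_{ST}$ with Rasmussen's filtration grading $s_{\mathrm{Lee}}$ on $\Kh_{\mathrm{Lee}}(K)$. The point to nail down here is that the correction terms appearing in the Morse homotopy genuinely live in equal‑or‑higher quantum filtration, so that leading terms are never destroyed.

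The crux is the identification of classes. By construction $\mathfrak{T}_o$ is the spanning tree whose associated Kauffman state is the oriented (Seifert) resolution of $\Lk$, and $\mathfrak{T}_o^{+},\mathfrak{T}_o^{-}$ are the two signed trees supported on it; they are cycles in the unreduced spanning‑tree complex with the Lee differential for the chosen ordering, as recalled before the theorem. I would show that, under the filtered Morse isomorphism above, $\{[\mathfrak{T}_o^{+}],[\mathfrak{T}_o^{-}]\}$ is carried, up to nonzero scalars, to $\{[\mathfrak{s}_{\mathfrak o}]+[\mathfrak{s}_{\bar{\mathfrak o}}],\,[\mathfrak{s}_{\mathfrak o}]-[\mathfrak{s}_{\bar{\mathfrak o}}]\}$, where $\mathfrak{s}_{\mathfrak o},\mathfrak{s}_{\bar{\mathfrak o}}$ are Lee's canonical generators attached to the two orientations of $K$. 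The mechanism is that the quantum‑minimal part of each canonical generator is supported exactly on the oriented resolution with the Lee‑idempotent labelling prescribed by the corresponding orientation; rewriting the rank‑two label attached to $\mathfrak{T}_o$ in the idempotent basis and applying the Morse projection then shows that, modulo strictly higher quantum filtration, $\mathfrak{s}_{\mathfrak o}\pm\mathfrak{s}_{\bar{\mathfrak o}}$ map to scalar multiples of $\mathfrak{T}_o^{\pm}$. The delicate verification is that the acyclic matching does not transport the oriented‑resolution generator across quantum‑filtration levels, so that these leading terms really do line up and no hidden filtration shift occurs.

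Finally I would invoke Rasmussen's computation of Lee homology: for a knot $K$ one has $\Kh_{\mathrm{Lee}}(K)\cong\Q^{2}$, the classes $[\mathfrak{s}_{\mathfrak o}]\pm[\mathfrak{s}_{\bar{\mathfrak o}}]$ span it, and their filtration gradings are $s_{\min}(K)$ and $s_{\max}(K)=s_{\min}(K)+2$ in some order, with $s(K)=\tfrac12\bigl(s_{\min}(K)+s_{\max}(K)\bigr)$. Combining this with the previous two steps gives
\[
\frac{s_{ST}([\mathfrak{T}_o^{+}])+s_{ST}([\mathfrak{T}_o^{-}])}{2}
=\frac{s_{\mathrm{Lee}}\bigl([\mathfrak{s}_{\mathfrak o}]+[\mathfrak{s}_{\bar{\mathfrak o}}]\bigr)+s_{\mathrm{Lee}}\bigl([\mathfrak{s}_{\mathfrak o}]-[\mathfrak{s}_{\bar{\mathfrak o}}]\bigr)}{2}
=\frac{s_{\min}(K)+s_{\max}(K)}{2}=s(K),
\]
which is the claimed formula. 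I expect the main obstacle to be the middle step: tracking Lee's canonical generators through the acyclic matching precisely enough to pin down $[\mathfrak{T}_o^{\pm}]$ as their sum and difference, while controlling the zig‑zag correction terms so that the quantum filtration is not inadvertently shifted.
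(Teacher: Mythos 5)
Your proposal follows essentially the same route as the paper's own proof: reuse the Khovanov matching on the Lee complex to obtain a filtered Morse equivalence (Proposition \ref{acyclic matching for Lee} and the commutative diagram \ref{cd for inc and gpm}), identify $g(\mathfrak{s}_o\pm\mathfrak{s}_{\bar{o}})$ with nonzero multiples of $\To^{\pm}$ (Proposition \ref{oriented tree generators}, which rests on Propositions \ref{length 1 chain} and \ref{higher hom than To} plus a sign-coherent count of generalized rooted positive subpaths), and then conclude from $s_{\max}(K)+s_{\min}(K)=2s(K)$. The ``delicate middle step'' you flag is exactly where the paper invests its work, and your sketch of it is consistent with that argument.
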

	
	As a corollary, we obtain the following lower bound in terms of quantum grading ($j-$grading) of the signed tree ${\mathfrak{T}_o}^+$.

	\begin{corollary} \label{s-inv lower bound}
    Let ${\mathfrak{T}_o}$ be the orientation preserving tree in the Tait graph $G_{K}$ of a knot $K$.  Then, the Rasmussen's $s$-invariant of the knot $K$ gets a lower bound in terms of the quantum grading of $\mathfrak{T}_o^{+}$ i.e.
    
     \[s(K) \geq j(\mathfrak{T}_o^{+})-1. \] 
	\end{corollary}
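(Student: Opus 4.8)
The plan is to deduce this directly from Theorem~\ref{s-invariant thm} together with one soft property of the filtration grading $s_{ST}$. Recall that in the spanning tree model for Lee's complex the number $s_{ST}([z])$ attached to a homology class is defined, exactly as for the quantum filtration on $\CLee$, as the largest filtration level carried by a cycle homologous to $z$; in particular, if $w$ is itself a cycle and is homogeneous with respect to the $j$-grading, then $s_{ST}([w]) \ge j(w)$. By the discussion preceding Theorem~\ref{s-invariant thm}, the signed orientation preserving trees $\mathfrak{T}_o^{+}$ and $\mathfrak{T}_o^{-}$ are cycles in the unreduced spanning tree complex for the chosen ordering, and each is a homogeneous generator; hence
\[
s_{ST}([\mathfrak{T}_o^{+}]) \;\ge\; j(\mathfrak{T}_o^{+}), \qquad s_{ST}([\mathfrak{T}_o^{-}]) \;\ge\; j(\mathfrak{T}_o^{-}).
\]

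Next I would record the relation between the $j$-gradings of the two signed trees. The trees $\mathfrak{T}_o^{+}$ and $\mathfrak{T}_o^{-}$ have the same underlying spanning tree and differ only in the choice of sign label; since the two labels differ in quantum degree by $2$, with the conventions fixed in Section~2 we have $j(\mathfrak{T}_o^{-}) = j(\mathfrak{T}_o^{+}) - 2$. Substituting the two inequalities above into Theorem~\ref{s-invariant thm} then gives
\[
s(K) \;=\; \frac{s_{ST}([\mathfrak{T}_o^{+}]) + s_{ST}([\mathfrak{T}_o^{-}])}{2}
\;\ge\; \frac{j(\mathfrak{T}_o^{+}) + j(\mathfrak{T}_o^{-})}{2}
\;=\; \frac{2\,j(\mathfrak{T}_o^{+}) - 2}{2}
\;=\; j(\mathfrak{T}_o^{+}) - 1,
\]
which is the claimed bound.

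The only genuinely non-formal points are the two ingredients imported above. First, one has to be sure that the filtration on the Morse/spanning tree complex produced by the algebraic discrete Morse theory machinery is precisely the one induced from the quantum filtration on $\CLee$, so that a homogeneous cycle really does sit in filtration level equal to its $j$-grading and the inequality $s_{ST}([w]) \ge j(w)$ is legitimate; this is part of how the filtered spanning tree complex is set up, but it should be stated explicitly. Second, the identity $j(\mathfrak{T}_o^{-}) = j(\mathfrak{T}_o^{+}) - 2$ must be checked against the precise definition of $\mathfrak{T}_o^{\pm}$ and the grading conventions. I expect the first of these — pinning down that $s_{ST}$ is computed by homogeneous cycles, so that the elementary lower estimate applies — to be the main thing to get right, though once the filtered spanning tree complex from the Lee-theory section is in hand it is essentially bookkeeping, and the corollary follows immediately.
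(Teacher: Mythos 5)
Your proposal is correct and follows essentially the same route as the paper: the paper's proof also combines the inequality $s_{ST}([\mathfrak{T}_o^{\pm}]) \geq j(\mathfrak{T}_o^{\pm})$ (which follows from the definition of $s_{ST}$ via the filtration, since $\mathfrak{T}_o^{\pm}$ is itself a cycle at filtration level $j(\mathfrak{T}_o^{\pm})$), the relation $j(\mathfrak{T}_o^{+}) = j(\mathfrak{T}_o^{-}) + 2$, and Theorem \ref{s-invariant thm}. The two points you flag as needing care are exactly the ingredients the paper invokes, so your argument matches the intended one.
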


We observe that this lower bound for a given Tait graph  $G_{K}$ of a knot $K$  agrees with the following lower bound provided by Lobb (See Remark \ref{lobb remark}) which is stronger than the slice-Bennequin bound \cite{Lobb}.
	
	\begin{equation}\label{lobb inequality}
		\begin{aligned}
			2 g^*(K) & \geq s(K) \geq U(D)-2 \Delta(D) \\
			& \geq w(D)-\# \operatorname{nodes}(\mathrm{~T}(\mathrm{D}))+2 \# \operatorname{components}\left(\mathrm{T}^{+}(\mathrm{D})\right)-1.
		\end{aligned}
	\end{equation}
	
	The cycle $[{\mathfrak{T}_o}^{+}]$ corresponds to a cycle $[f({\mathfrak{T}_o}^{+})]$ in Khovanov complex which has interesting properties under Khovanov cobordism map. Hayden and Sunderberg \cite{kh-isaac-hayden} used Khovanov homology to provide examples of knots bounding  an exotic pair of ribbon discs in the four ball. The cycles used to distinguish the ribbon discs are obtained from enhanced states of the oriented resolution. Our prescription of orientation preserving tree provides a cycle $[f({\mathfrak{T}_o}^{+})]$ in the Khovanov complex which we call a \textbf{distinguished oriented resolution cycle} as it is a sum of enhanced states of the oriented resolution. We observe that the distinguished oriented resolution cycle generalizes the construction of distinguishing cycles in the work of  Hayden-Sunderberg\cite{kh-isaac-hayden}. Moreover, the orientation preserving tree $\mathfrak{T}_o$  also motivates the construction of knot diagrams admitting a pair of ribbon disc.  As an application, we find a new infinite family of knots that bound pairs of exotic ribbbon discs in the $4$-ball [See Figure \ref{exotic_disks}].
	
	\begin{theorem}\label{application in exotic disks}
	There exists an infinitely many knots in  $S^3$ that bound a pair of smooth, orientable discs $D, D^{\prime} \subset B^4$ that are topologically isotopic rel boundary yet they are not smoothly isotopic. The distinguished oriented resolution cycle in Khovanov homology obstructs smooth isotopy in these examples.
	\end{theorem}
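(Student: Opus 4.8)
The plan is to mimic, and then systematically iterate, the construction of Hayden and Sundberg \cite{kh-isaac-hayden}, using the orientation preserving tree $\mathfrak{T}_o$ to generate both the knot diagrams and the distinguishing cycle. First I would fix a connected link diagram $\Lk$ whose Tait graph $G_{\Lk}$ carries an orientation preserving tree $\mathfrak{T}_o$, and read off from the two orderings realizing $\mathfrak{T}_o^{\pm}$ as a cycle two sequences of ribbon moves that present the corresponding knot $K$ as the boundary of ribbon discs $D, D' \subset B^4$. To pass to an infinite family I would insert, into a fixed tangle region, a sequence of local modifications $\Lk \rightsquigarrow \Lk_n$ --- for instance $n$ full twists on a coherently oriented pair of strands inside one of the bands --- chosen so that (i) the orientation preserving tree of $G_{\Lk_n}$ is the obvious extension of $\mathfrak{T}_o$ and its distinguished oriented resolution cycle $[f(\mathfrak{T}_o^{+})]$ persists, and (ii) both ribbon-move sequences survive, yielding discs $D_n, D_n'$ bounded by a knot $K_n$.

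Next I would check that $D_n$ and $D_n'$ are topologically isotopic rel boundary. For this I would compute $\pi_1(B^4 \setminus D_n)$ from either band presentation and verify that it is infinite cyclic (this is exactly where the ribbon structure and the choice of local modification enter); the uniqueness theorem for topologically flat discs with infinite cyclic complement group --- a consequence of Freedman--Quinn surgery theory, in the form exploited by Conway--Powell and by Hayden--Sundberg --- then produces the rel-boundary topological ambient isotopy $D_n \simeq D_n'$.

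Finally comes the smooth obstruction. Removing a small ball from the interior of $D_n$ presents it as an annular cobordism from the unknot $U$ to $K_n$ inside $S^3 \times I$, and functoriality of Khovanov homology gives maps $\phi_{D_n}, \phi_{D_n'} \colon \Kh(U) \to \Kh(K_n)$, well defined up to sign; the rel-boundary normalization removes the sign ambiguity. Working at the chain level --- where the key point is that $f(\mathfrak{T}_o^{+})$ is literally a sum of enhanced states supported on the oriented resolution, so that the merge/split maps of the cobordism act on it transparently, exactly as in \cite{kh-isaac-hayden} --- I would compute that $\phi_{D_n}$ sends the generator of $\Kh(U)$ to a nonzero multiple of $[f(\mathfrak{T}_o^{+})]$ while $\phi_{D_n'}$ annihilates this class (or lands in a complementary summand). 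Since smoothly isotopic rel-boundary cobordisms induce equal maps on Khovanov homology, $D_n$ and $D_n'$ cannot be smoothly isotopic.

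The main obstacle is propagating the last computation across the whole family: one must show $[f(\mathfrak{T}_o^{+})]$ remains a nonzero class in $\Kh(K_n)$ for every $n$ and that the discrepancy $\phi_{D_n} \neq \phi_{D_n'}$ does not degenerate as $n \to \infty$. I expect this to follow from the spanning tree descriptions of Theorems~\ref{maintheorem} and \ref{unreduced maintheorem}, which pin down $\mathfrak{T}_o^{\pm}$ and the adjacent part of the differential purely in terms of $G_{\Lk_n}$, together with a naturality argument relating the cobordism maps for $K_{n+1}$ to those for $K_n$ through a connecting cobordism; making both of these precise uniformly in $n$ is the crux of the proof.
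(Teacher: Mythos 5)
Your blueprint for a single pair of discs is essentially the paper's argument: the paper also fixes one knot $K$ whose two ribbon discs $D_1,D_2$ come from band surgeries suggested by the orientation preserving tree, proves topological isotopy rel boundary via a Kirby-calculus computation showing both disc complements are homotopy equivalent to $S^1\times B^3$ and then quoting Conway--Powell (Theorem \ref{cp_top_isotopy}), and distinguishes the discs smoothly by evaluating the two cobordism maps on the distinguished oriented resolution cycle $\phi=f(\mathfrak{T}_o^{+})$ and invoking Jacobsson's theorem (Theorem \ref{jacobsson thm}); the zero-versus-nonzero dichotomy disposes of the sign ambiguity, so no extra normalization is needed.

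Where your proposal has a genuine gap is exactly the step you flag as ``the crux'': propagating the distinction to an infinite family. Your plan --- insert $n$ full twists, re-verify that the tree, the cycle, the $\pi_1$ computation and the chain-level cobordism computation all persist, helped by ``a naturality argument relating the cobordism maps for $K_{n+1}$ to those for $K_n$ through a connecting cobordism'' --- does not close, because a general connecting cobordism $C$ gives no control: even if $\Kh(\Sigma_1)\neq\Kh(\Sigma_2)$, the composites $\Kh(C)\circ\Kh(\Sigma_1)$ and $\Kh(C)\circ\Kh(\Sigma_2)$ can coincide unless $\Kh(C)$ is injective. The paper's missing ingredient is precisely this: the knots $K_n$ are built so that a single band surgery $b_3$ yields a \emph{ribbon concordance} between $K_n$ and $K$, the discs $D_n,D_n'$ are $D_1,D_2$ extended by this concordance, and the theorem of \cite{ribbon-inj} that a ribbon concordance induces an injective (respectively split surjective, for its reverse) map on Khovanov homology is packaged as Corollary \ref{ribbon conc inj cst} to transport the inequality of induced maps to every $K_n$ with no new computation in $\Kh(K_n)$ whatsoever; likewise the topological isotopy rel boundary for $D_n,D_n'$ is inherited from Proposition \ref{top-iso-prop} by gluing the isotopy of $D_1,D_2$ to the concordance, rather than re-running Freedman--Quinn for each $n$. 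Without identifying the ribbon concordance structure and the injectivity theorem (or some substitute giving uniform nonvanishing), your scheme reduces to an infinite sequence of separate verifications that you have not supplied.
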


	In a different direction, Baldwin and Levine \cite{BALDWIN} defined the spanning tree model for $\delta$-graded knot Floer homology with explicitly defined combinatorial differential. Roberts \cite{roberts}, defined a deformation of Khovanov homology and provided a spanning tree model for it with explicit combinatorial differential. Later, Jaeger \cite{jaeger} showed that Robert's deformation agrees with $\delta$-graded reduced characteristic-2 Khovanov homology.
	It is natural to ask if analogous theories can be defined for bigraded knot Floer complex with graph theoretic differential.
	
	\begin{question}
	Can one define an analogous spanning tree model for knot Floer homology (bi-graded) where the differential is explicitly computed using the graph theoretic information of the Tait graph
	\end{question}

	The paper is organized as follows:  \hyperlink{section.2}{Section 2} introduces algebraic Morse theory, basic concepts of Tait graphs and Khovanov homology. In  \hyperlink{section.3}{Section 3} , we define a chain complex generated by the spanning trees and an explicit differential, the cohomology of which yields the reduced Khovanov homology. In  \hyperlink{section.4}{Section 4}, we extend the spanning tree model for the unreduced Khovanov chain complex.  \hyperlink{section.5}{Section 5} provides a description of Rasmussen's s-invariant within the context of the spanning tree complex. Finally, in  \hyperlink{section.6}{Section 6}, we present a new, infinite family of examples of knots with pairs of ribbon disks that are not smoothly isotopic.

\subsection*{Acknowledgements}
We would like to thank Abhijit Champanerkar for his helpful suggestions and encouragement.

\end{section}


\begin{section}{Preliminaries}
\label{prelim}
    \begin{subsection}{Algebraic discrete Morse Theory}
	
	For a given regular CW-complex, Forman \cite{Forman1998} developed Discrete Morse theory which studies a new homotopy-equivalent CW-complex which has fewer cells than the original one. Using this concept, Kozlov \cite{Kozlov2005} and Sk\"{o}ldberg \cite{sk-berg} independently developed an algebraic analogue of this theory which is also known as the Algebraic discrete Morse theory. In simple terms,this theory takes a freely generated chain complex $(C_\star, \partial_\star)$, provides an acyclic matching on the corresponding Hasse diagram $\mathcal{H}(C_\star, \partial_\star)$, a directed weighted graph representing $(C_\star, \partial_\star)$, and as it turns out that $(C_\star, \partial_\star)$ and the Morse complex of $\mathcal{H}(C_\star, \partial_\star)$ are quasi-isomorphic. Thus, we get a combinatorial description of the homology of $(C_\star, \partial_\star)$. For our purpose, we define this theory following \cite{Min-Res} for a freely generated chain complex where each $C_i$ is a finitely generated free module over a commutative ring with unity. An analogous theory shall also follow for general freely generated chain and co-chain complexes.\\
	
	Let $\mathcal{R}$ be a commutative ring with $1$, and $C_\star =(C_i,\partial_i)_{i\geq 0}$ be a freely generated chain complex of finitely generated free $\mathcal{R}$-modules. Fix a basis $B=\bigcup_{i \geq 0} B_i$ such that 
	
	$$
      C_i \backsimeq \bigoplus_{c \in B_i} \mathcal{R}c
	$$
	
	and let the differential $\partial_i : C_i \rightarrow C_{i-1}$ be described as
	
	$$
	 \partial_i(c) = \sum_{c' \in B_{i-1}} [c:c'].c'.
	$$
	
	Here, $[c:c']$ denotes the incidence number or the coefficient between $c$ and $c'$.\\
	
	Given the complex $(C_\star,\partial_\star)$, we construct the Hasse diagram corresponding to $(C_\star,\partial_\star)$, a directed, weighted graph $\mathcal{H}(C_\star,\partial_\star)=(V,E)$ such that $V=B$, and the elements of $E$ are triples $(v,v',w)$, where $v,v' \in V$, $v$ is directed towards $v'$ (denoted by $v \rightarrow v'$) and $w \in \mathcal{R}$ is the weight, are defined as follows:
	
	\[
	 (\{c,c'\},[c:c']) \in E \Longleftrightarrow c \in B_i, c' \in B_{i-1}, \text{ and } [c:c'] \neq 0  
	\]
	
	Thus for such a given graph, we now define the meaning of an \textit{acyclic matching} on $\mathcal{H}_\Mt(C_\star,\partial_\star)$.
	
	\begin{definition}
		A subset $\Mt \subset E$ is called an \textit{acyclic matching} if it satisfies the following three conditions:
		
		\begin{enumerate}
			\item Each vertex $v\in V$ lies in at most one edge $e \in \Mt$.
			\item For all edges $(\{c,c'\},[c:c']) \in \Mt$, the weight $[c:c']$ is a unit in $\mathcal{R}$.
			\item $\mathcal{H}_\Mt(C_\star,\partial_\star)$ has no directed cycles, where $V(\mathcal{H}_\Mt(C_\star,\partial_\star))=V$ and $E(\mathcal{H}_\Mt(C_\star,\partial_\star))$ is given by 
			
			$$
			  E(\mathcal{H}_\Mt(C_\star,\partial_\star)):=(E\setminus \Mt) \cup \left\{\left(c',c,-\frac{1}{[c:c']}\right) \bigg|\: (c,c',[c:c']) \in \Mt \right\}
			$$
		\end{enumerate}
		
	\end{definition}
	
	Given an acyclic matching $\Mt$ on $\mathcal{H}(C_\star,\partial_\star)$, we now have a couple of notations associated to it,
	
	\begin{enumerate}
		\item A vertex $v \in V$ is called \textit{critical} with respect to $\Mt$ if $v \not\in e$ for all $e \in \Mt$. For a given chain group $C_i$, we have the following set:
		 
		$$
		 B_i^\Mt:=\{c \in C_i \mid c \text{ is critical }\}
		$$
		
		\item Let $\A(c,c')$ denote the set of directed paths from $c$ to $c'$ in $\mathcal{H}_\Mt(C_\star,\partial_\star)$.
		
		\item For a given edge $e=(c \rightarrow c') \in E(\mathcal{H}_\Mt(C_\star,\partial_\star))$, the weight can be redefined as:
		
		$$
		 w(c \rightarrow c') = \begin{cases}
		 	[c:c'], & \text{ if } e \in E \cap E(\mathcal{H}_\Mt(C_\star,\partial_\star)) \\
		 	
		 	-\frac{1}{[c:c']}, & \text{ otherwise }
		 \end{cases}
		$$
		
		\item For $\Pt \in \A(c,c')$, where $\Pt=(c=c_1 \rightarrow c_2 \rightarrow \cdots \rightarrow c_n=c')$, the weight of $\Pt$ is given by
		 
		$$
		 w(\Pt):=\prod_{i=1}^{n-1} w(c_i \rightarrow c_{i+1})
		$$
		
		\item $\Gamma(c,c')=\sum_{\Pt \in \A(c,c')}w(\Pt)$ denotes the sum of the weights of all directed paths from $c$ to $c'$.
	\end{enumerate}
	
	We are now ready to define the \textit{Morse complex} associated to $(C_\star,\partial_\star)$ and an acyclic matching $\Mt$ on its Hasse diagram.
	
	\begin{definition}
		Let $(C_\star,\partial_\star)$ be a free chain complex with an acyclic matching $\Mt$ in $\mathcal{H}(C_\star,\partial_\star)$. The \textit{Morse complex} $\left(C_\star^{\Mt},\partial_\star^{\Mt}\right)$, with respect to $\Mt$, is defined to be the freely generated chain complex $C_\star^{\Mt} = \bigoplus_{i \geq 0} C_i^\Mt$, where each $C_i^\Mt$ is a finitely generated $\mathcal{R}$-module with 
		
		$$
		 C^\Mt_i := \mathcal{R}[B_i^\Mt]
		$$
		
		and $\partial_\star^\Mt$ is defined for each $i \geq 0$, $\partial_i^\Mt: C_i^\Mt \rightarrow C_{i-1}^\Mt$, as follows:
		
		$$
		 \partial_i^\Mt(c) := \sum_{c' \in B^\Mt_{i-1}} \Gamma(c,c').c'.
		$$ 
	\end{definition}
	
	\begin{lemma}
		$\partial_\star^\Mt$ is a differential.
	\end{lemma}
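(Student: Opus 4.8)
The plan is to verify directly that $\partial_\star^\Mt \circ \partial_\star^\Mt = 0$ by reducing the statement to a combinatorial identity about directed paths in the modified Hasse diagram $\mathcal{H}_\Mt(C_\star,\partial_\star)$. Concretely, for critical cells $c \in B_i^\Mt$ and $c'' \in B_{i-2}^\Mt$, the coefficient of $c''$ in $\partial_{i-1}^\Mt(\partial_i^\Mt(c))$ is $\sum_{c' \in B_{i-1}^\Mt} \Gamma(c,c')\,\Gamma(c',c'')$. Since the weight of a concatenated path is the product of the weights, this sum equals $\sum_{\Pt} w(\Pt)$ where $\Pt$ ranges over all directed paths from $c$ to $c''$ that pass through \emph{some} critical cell in homological degree $i-1$. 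So the whole question becomes: why is $\sum_\Pt w(\Pt) = 0$ over that set of paths?

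First I would recall the structure of paths in $\mathcal{H}_\Mt$: because $\Mt$ is a matching, every edge is either an ``original'' down-edge from $E\setminus\Mt$ (lowering homological degree by $1$) or a ``reversed'' matching edge raising degree by $1$, and reversed edges cannot be consecutive (their source is matched, hence not the target of another reversed edge) — so a directed path alternates in a controlled way and its net degree drop determines which critical cells it can hit. In particular, since $c$ and $c''$ differ in degree by $2$, and there are no critical cells strictly between unless the path actually dips to degree $i-1$ at a matched cell and bounces, one shows the paths from $c$ to $c''$ that pass through a degree-$(i-1)$ critical cell are in fact \emph{all} the directed paths from $c$ to $c''$ in $\mathcal{H}_\Mt$ (a path from degree $i$ to degree $i-2$ must reach degree $i-1$ at least once, and we must argue it can be taken to do so at a critical cell, or otherwise that the contributions telescope). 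This is the delicate bookkeeping step.

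The cleanest route is not to do this bookkeeping by hand but to invoke the standard machinery: I would set up the two maps $f_\star \colon C_\star \to C_\star^\Mt$ and $g_\star \colon C_\star^\Mt \to C_\star$ of algebraic discrete Morse theory, defined on generators by $f(c) = \sum_{c' \in B^\Mt} \Gamma(c,c')\,c'$ and $g(c') = \sum_{c \in B} \Gamma(c',c)\,c$ for $c'$ critical (with the convention $\Gamma(c,c)=1$), together with the homotopy operator built from path weights. The core lemma — proved by induction on path length using acyclicity of $\Mt$ to guarantee finiteness of $\A(c,c')$ — is that $f$ is a chain map, i.e. $f\partial = \partial^\Mt f$. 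Once $f$ is a chain map and $g$ is its one-sided inverse on critical generators, the identity $\partial^\Mt \partial^\Mt = f \partial g \,f \partial g$ combined with $\partial\partial = 0$ and $gf \simeq \mathrm{id}$ forces $\partial^\Mt\partial^\Mt = f\partial\partial g = 0$; alternatively one checks $\partial^\Mt\partial^\Mt$ vanishes on each generator directly from $f\partial = \partial^\Mt f$ and surjectivity considerations. Either way the statement follows, and this is exactly the content of Kozlov's and Sk\"oldberg's theorems cited above, so in the write-up I would either reproduce the short inductive argument that $f$ is a chain map or simply cite \cite{Kozlov2005, sk-berg, Min-Res}.

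The main obstacle I anticipate is purely organizational: making the path-weight combinatorics rigorous requires that $\A(c,c')$ be finite for all $c,c'$ — which is precisely where condition (3) (no directed cycles in $\mathcal{H}_\Mt$) is used — and then tracking signs carefully through the reversed matching edges with their weights $-1/[c:c']$. Everything else is a routine, if somewhat tedious, induction on the length of directed paths; I would present it as a lemma stating $f\partial = \partial^\Mt f$, prove that by the standard telescoping argument, and deduce $(\partial^\Mt)^2 = 0$ as an immediate corollary.
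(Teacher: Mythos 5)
The paper does not actually prove this lemma: it is recalled as part of the standard algebraic discrete Morse theory package and justified only by the citations to Kozlov, Sk\"oldberg and \cite{Min-Res}, so the relevant comparison is with the standard argument in those references, which your sketch essentially reproduces. Your second route is the right one and is sound as outlined: prove by induction on path length (finiteness of $\A(c,c')$ coming from acyclicity) that the map sending $c\in B_i$ to $\sum_{c'\in B_i^\Mt}\Gamma(c,c')\,c'$ is a chain map from $(C_\star,\partial_\star)$ to $(C_\star^\Mt,\partial_\star^\Mt)$; note that on a critical generator the only contributing path of degree zero is the trivial one — a nontrivial path out of a critical cell would have to end with a reversed matching edge, whose target is matched and hence not critical — so this chain map restricts to the identity on critical generators and is surjective in each degree; then $(\partial^\Mt)^2=0$ follows from $\partial\partial=0$. (Beware that your $f$ and $g$ are the paper's $g$ and $f$, respectively.) Your first route, by contrast, is circular as stated: the identity $\partial^\Mt = f\partial g$ is not a definition but is itself equivalent to the path-decomposition bookkeeping you set out to avoid, and replacing the inner $gf$ by the identity is not legitimate on the nose — in your labelling only $fg=\mathrm{id}_{C^\Mt}$ holds exactly, while $gf\simeq\mathrm{id}$ only up to the homotopy $\chi$, so you would need to add that the correction terms $f\partial(\partial\chi+\chi\partial)\partial g$ vanish against $\partial\partial=0$. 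Since you explicitly fall back on the chain-map-plus-surjectivity argument, or on simply citing \cite{Kozlov2005,sk-berg,Min-Res} (which is all the paper itself does), the proposal is acceptable.
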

	
	Associated to a given freely generated chain complex $(C_\star,\partial_\star)$ and an acyclic matching $\Mt$ on its Hasse diagram, we have the following maps:
	
	\begin{equation} \label{f map in dmt}
		\begin{aligned}
			& f: C_\star^{\Mt} \rightarrow C_\star \\
			& f_i: C_i^\Mt \rightarrow C_i, &  f_i(c) := \sum_{c' \in B_i} \Gamma(c,c').c',
		\end{aligned}
	\end{equation}
	
	\begin{equation} \label{g map in dmt}
		\begin{aligned}
			& g: C_\star \rightarrow C_\star^\Mt \\
			& g_i: C_i \rightarrow C_i^\Mt, &  g_i(c) := \sum_{c' \in B_i^\Mt} \Gamma(c,c').c',
		\end{aligned}
	\end{equation}
	
	\begin{equation}
		\begin{aligned}
			& \chi: C_\star \rightarrow C_\star \\
			& \chi_i: C_i \rightarrow C_{i+1}, &  \chi_i(c) := \sum_{c' \in B_{i+1}} \Gamma(c,c').c',
		\end{aligned}
	\end{equation}
	
	\begin{lemma} \label{chain homotopic maps}
		For the maps defined above, we have the following:
		\begin{enumerate}
			\item $f$ and $g$ are chain maps.
			
			\item $f$ and $g$ define a chain homotopy. In particular, we have
			  \begin{enumerate}
			 	\item $f_i \circ g_i - id_{C_i} = \partial \circ \chi_{i+1} + \chi_i \circ \partial$,
			 	
			 	\item $g_i \circ f_i - id_{C_i^\Mt} = 0$.
			  \end{enumerate}
		\end{enumerate}
	\end{lemma}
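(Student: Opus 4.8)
The plan is to prove all three statements at once by unwinding the definitions. On basis elements, each of the maps $f$, $g$, $\chi$, and $\partial^\Mt$ sends $c$ to a signed sum $\sum_{c'}\Gamma(c,c')\,c'$ over basis elements $c'$ of the appropriate degree (critical or not), where $\Gamma(c,c')$ is the sum of weights of directed paths in the modified Hasse diagram $\mathcal{H}_\Mt$, each path weight being the product of its edge weights; the original differential $\partial$ records the single-edge weights $[c:c']$. Two preliminary facts drive everything. First, acyclicity of $\Mt$ forces every directed path in $\mathcal{H}_\Mt$ to be vertex-simple, since a repeated vertex would yield a directed cycle; together with the finite-generation hypothesis (and the boundedness present in the situations where the theory is applied) this makes each $\A(c,c')$ finite, so all the maps are well defined. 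Second, concatenation of directed paths is weight-multiplicative and sets up, for any triple $c,d,c'$, a bijection between internally vertex-disjoint pairs in $\A(c,d)\times\A(d,c')$ and paths in $\A(c,c')$ passing through $d$; this is the device that converts sums over paths into products of $\Gamma$'s and back, and it reduces each of the claimed identities to a consequence of $\partial^{2}=0$.

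I would then argue by induction on $|\Mt|$. If $\Mt=\varnothing$ then $f=g=\mathrm{id}$, $\chi=0$, and everything is trivial. If $\Mt=\{(a,b,u)\}$ is a single pair with $u=[a:b]$ a unit, then $\mathcal{H}_\Mt$ has exactly one reversed edge $b\to a$ of weight $-1/u$, and since $a,b$ lie in no other matched edge a directed path uses it at most once; expanding $\Gamma$ one finds that $\partial^\Mt$, $f$, $g$, $\chi$ are precisely the classical formulas for cancelling the acyclic pair $a,b$ by Gaussian elimination (e.g.\ $\partial^\Mt c=\partial c-\tfrac{[c:b]}{u}\,\partial a$ on critical $c$), and the three statements follow from $\partial^{2}=0$ by a routine substitution. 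For the inductive step, fix any $e=(a,b,u)\in\Mt$; one checks that the Gaussian-elimination quotient $\bar C_\star=C_\star^{\{e\}}$ carries the residual matching $\Mt'=\Mt\setminus\{e\}$ as an acyclic matching — here acyclicity of $\Mt$ is used, since a directed cycle in the modified Hasse diagram of $(\bar C_\star,\Mt')$ would lift, after replacing each new composite edge $c\to c'$ by the path $c\to b\to a\to c'$, to a closed directed walk, hence a directed cycle, in $\mathcal{H}_\Mt$. Using the concatenation property, one shows the Morse data factors as a composite: $f=f^{\{e\}}\circ f^{\Mt'}$, $g=g^{\Mt'}\circ g^{\{e\}}$, $\partial^{\Mt}=(\bar\partial)^{\Mt'}$, and $\chi=\chi^{\{e\}}+f^{\{e\}}\circ\chi^{\Mt'}\circ g^{\{e\}}$. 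Granting this, parts (1) and (2b) are immediate, since a composite of chain maps is a chain map and $g^{\Mt'}g^{\{e\}}f^{\{e\}}f^{\Mt'}=g^{\Mt'}f^{\Mt'}=\mathrm{id}$, while (2a) is the standard identity for the homotopy of a composite of two homotopy equivalences: substituting $f^{\Mt'}g^{\Mt'}=\mathrm{id}+\partial\chi^{\Mt'}+\chi^{\Mt'}\partial$ into $fg-\mathrm{id}=f^{\{e\}}(f^{\Mt'}g^{\Mt'})g^{\{e\}}-\mathrm{id}$ and using that $f^{\{e\}},g^{\{e\}}$ commute with $\partial$ yields $\partial\chi+\chi\partial$ with the displayed $\chi$.

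The main obstacle is precisely the factorization of the Morse data in the inductive step: one must show that every directed path in $\mathcal{H}_\Mt$ between vertices critical for $\Mt$ decomposes uniquely, in a weight-compatible way, into segments internal to $\{a,b\}$ (accounting for $f^{\{e\}}$, $g^{\{e\}}$, $\chi^{\{e\}}$ and the edge corrections defining $\bar C_\star$) alternating with segments avoiding $a,b$ (accounting for the $\Mt'$-data), and this requires care with how the weights $-1/u$ and $-1/[c:c']$ interact under the quotient and with which of the new composite edges are themselves matched by $\Mt'$. An alternative, induction-free route is to prove (1), (2a), (2b) directly by constructing, for each identity, an explicit sign-reversing involution on the set of directed paths (or pairs of paths) occurring with opposite signs on the two sides, the surviving fixed points being exactly the desired terms; there the obstacle reappears in a different guise, namely showing the involution has no spurious fixed points, which is again exactly where acyclicity of $\Mt$ is needed. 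In either approach I would follow the combinatorial treatment in \cite{Min-Res} (equivalently Sk\"oldberg's original argument) for the bookkeeping.
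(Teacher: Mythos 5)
First, a point of comparison: the paper does not actually prove this lemma — it is quoted as background from algebraic discrete Morse theory and attributed to \cite{Min-Res} (equivalently Sk\"oldberg), where the argument proceeds by direct manipulation of the path sums $\Gamma$. So your induction on $|\Mt|$ via iterated Gaussian elimination is a genuinely different, and in principle workable, route; the assessment below is of your sketch on its own terms.

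As written, though, the proposal has a real gap, and it is exactly the step you defer: the factorization $f=f^{\{e\}}\circ f^{\Mt'}$, $g=g^{\Mt'}\circ g^{\{e\}}$, $\partial^{\Mt}=(\bar\partial)^{\Mt'}$, $\chi=\chi^{\{e\}}+f^{\{e\}}\circ\chi^{\Mt'}\circ g^{\{e\}}$ is not bookkeeping but is the entire content of the lemma — once it is granted, (1), (2a), (2b) are formal — so leaving it as "the main obstacle" leaves the proof essentially unfinished. Two concrete items are missing even to set the factorization up. (i) For $\Mt'=\Mt\setminus\{(a,b,u)\}$ to be an acyclic matching on the quotient $\bar C_\star$ you must check that each $\Mt'$-matched edge $(c,c')$ still has a unit weight there: Gaussian elimination replaces $[c:c']$ by $[c:c']-[c:b]\,u^{-1}[a:c']$, and one has to show the correction term vanishes on $\Mt'$-matched pairs. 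This does follow from acyclicity of $\Mt$, applied to the would-be directed cycle $c\to b\to a\to c'\to c$, but it is a separate verification from the cycle-lifting argument you give, and without it $\Mt'$ need not even be a matching on the new Hasse diagram. (ii) Your weight-compatible correspondence between paths in $\mathcal{H}_\Mt$ and lifted paths of the quotient tacitly assumes that every contributing path meets $\{a,b\}$ only in detours $x\to b\to a\to y$ (or a terminal half-detour ending at $a$). But in $\mathcal{H}_\Mt$ a directed path may also enter $b$ and leave it along a downward edge, or enter $a$ by a downward edge from one degree higher, and such paths have no counterpart in the quotient. One must show they never contribute to the relevant $\Gamma(c,c')$ — for instance by proving that a path contributing to $f$, $g$, $\partial^{\Mt}$ or $\chi$ can never take two consecutive downward edges, since after doing so it is trapped below its target level and can re-enter that level only at matched top cells, hence can never terminate at a critical cell (or at the required degree). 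This alternating zig-zag structure of contributing paths is precisely what makes your detour decomposition exhaustive, and, together with (i), is where acyclicity genuinely enters; neither point is addressed in the proposal. Supplying these (or carrying out the direct sign-reversing-involution argument you mention as an alternative, which is essentially the proof in \cite{Min-Res}) would complete the argument.
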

	 
	 Finally, as a consequence of the above lemma, we have the following central result of algebraic discrete Morse theory,
	
	\begin{theorem}[Theorem 2.2, \cite{Min-Res}]\label{dmt maintheorem}
		Let $(C_\star,\partial_\star)$ be a freely generated chain complex and $\Mt$ be an acyclic matching on $\mathcal{H}(C_\star,\partial_\star)$. Then, we the following homotopy equivalence of chain complexes:
		
		$$
		 (C_\star,\partial_\star) \backsimeq \left(C_\star^\Mt,\partial_\star^\Mt\right)
		$$
		
		Moreover, for all $i \geq 0$, we have the isomorphism,
		
		$$
		H_i(C_\star) \cong H_i(C_\star^\Mt)
		$$
	\end{theorem}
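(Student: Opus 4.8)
The plan is to derive Theorem \ref{dmt maintheorem} directly from Lemma \ref{chain homotopic maps} together with the lemma asserting that $\partial_\star^\Mt$ is a differential, so that essentially no computation beyond those two statements is needed. The homotopy equivalence will be exhibited by the explicit maps $f \colon C_\star^\Mt \to C_\star$ and $g \colon C_\star \to C_\star^\Mt$ of \eqref{f map in dmt} and \eqref{g map in dmt}, with $\chi$ playing the role of the chain homotopy.

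First I would note that $\left(C_\star^\Mt, \partial_\star^\Mt\right)$ is a genuine chain complex, which is exactly the content of the lemma stating that $\partial_\star^\Mt$ is a differential. Then, by Lemma \ref{chain homotopic maps}(1), both $f$ and $g$ are chain maps. By Lemma \ref{chain homotopic maps}(2)(b) we have $g_i \circ f_i = \mathrm{id}_{C_i^\Mt}$ for every $i$, and by Lemma \ref{chain homotopic maps}(2)(a) we have $f_i \circ g_i - \mathrm{id}_{C_i} = \partial \circ \chi_{i+1} + \chi_i \circ \partial$, so that $f \circ g$ is chain homotopic to $\mathrm{id}_{C_\star}$ via $\chi$. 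Consequently $f$ is a chain homotopy equivalence with homotopy inverse $g$, and $(C_\star, \partial_\star) \backsimeq \left(C_\star^\Mt, \partial_\star^\Mt\right)$ as claimed.

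For the homology statement I would invoke the standard fact that chain homotopic maps induce the same homomorphism on homology. Passing to homology in the two identities above gives $f_\ast \circ g_\ast = (f \circ g)_\ast = \mathrm{id}$ on $H_i(C_\star)$ and $g_\ast \circ f_\ast = (g \circ f)_\ast = \mathrm{id}$ on $H_i\left(C_\star^\Mt\right)$, whence $f_\ast \colon H_i\left(C_\star^\Mt\right) \to H_i(C_\star)$ is an isomorphism with inverse $g_\ast$ for each $i \geq 0$.

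The only real subtlety, and the place where the genuine work resides, is already packaged inside the two cited lemmas: one must know that the sums $\Gamma(c, c')$ entering the definitions of $f$, $g$, $\chi$, and $\partial_\star^\Mt$ are finite, which follows from acyclicity of the matching $\Mt$ (between any two vertices of $\mathcal{H}_\Mt(C_\star, \partial_\star)$ there are only finitely many directed paths), and the three identities of Lemma \ref{chain homotopic maps} are established by a path-bookkeeping argument that partitions directed paths in $\mathcal{H}_\Mt(C_\star, \partial_\star)$ according to their interaction with the matched edges. If Lemma \ref{chain homotopic maps} were not available, reproving it would be the main obstacle; granting it, the present theorem is an immediate formal corollary.
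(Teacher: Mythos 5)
Your derivation is correct and follows exactly the route the paper intends: Theorem \ref{dmt maintheorem} is presented there as an immediate consequence of Lemma \ref{chain homotopic maps} (cited from the reference for algebraic discrete Morse theory), with $f$, $g$, and $\chi$ furnishing the homotopy equivalence and hence the isomorphism on homology. Your only addition --- noting that acyclicity guarantees finiteness of the path sums $\Gamma(c,c')$ and that the real work lives inside the cited lemmas --- is an accurate observation, not a deviation.
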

	
\end{subsection}

\begin{subsection}{Links and Tait graph}
    
Given a connected link diagram $\Lk$, one can view it as a connected 4-regular planar graph $G$, where the vertices of $G$ are crossings of $\Lk$ and there is an edge between two such vertices if there is an arc in $\Lk$ between the two corresponding crossings. One can show that the dual graph $G^*$ is a bipartite graph and hence, we can use two colors (black and white) to color the vertices of each partition. Thus, one can color the faces of $\Lk$ using two colors such that no two adjacent faces (faces sharing a common arc) have same color. This is also known as \textit{checkerboard coloring} of $\Lk$. \\  
Given a checkerboard coloring of $\Lk$, one can associate a graph to it in which the vertices represent the black regions of the diagram, and the edges represent the common crossings shared by the black regions. Additionally, each edge can be assigned a sign ($+$ or $-$) based on whether the corresponding crossing connects or separates two black regions when a $A-$smoothing is performed on it.  This resulting signed graph is called the \textit{Tait graph} of $\Lk$, which we denote by $G_\Lk$ (See Figure \ref{Three crossing trefoil}).   
		
		\begin{figure}[ht]
			\centering
			\subfigure{\includesvg[width=2.5cm,inkscape=force]{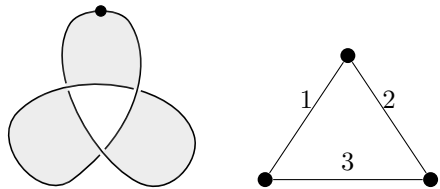}}
			\qquad
			\subfigure{\begin{tikzpicture}[vertex/.style={circle,fill,inner sep=2pt},scale=0.55]
					\node[vertex] at (0,0) (a) {};
					\node[vertex] at (2,3) (b) {};
					\node[vertex] at (4,0) (c) {};
					\draw (a)--(b) node[midway,above] {$1$};
					\draw (b)--(c) node[midway,above] {$2$};
					\draw (c)--(a) node[midway,above] {$3$};
			\end{tikzpicture}}
			\caption{Right-handed trefoil diagram $D$ and its Tait graph $G_D$}
			\label{Three crossing trefoil}
		\end{figure}

  \begin{figure}[ht]
	\centering
	\includesvg[width=0.3\textwidth]{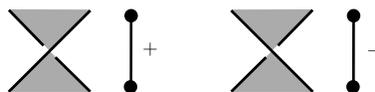}
	\caption{Sign of an edge of the Tait graph based on the crossing}	
 \end{figure} 

Interestingly, it turns out that one can express the Jones polynomial, a classical invariant of a link, in terms of some combinatorial information of the Tait graph. Thistlethwaite \cite{th_jp} showed that one can assign certain monomials to each of the spanning trees of the Tait graph in order to expand the Jones polynomial of the link in terms of the spanning trees. More precisely, each spanning tree of $G_\Lk$ can be assigned an \textit{activity word} based on a fixed ordering on the edges of $G_\Lk$.\\
For a given spanning tree $T$ of $G_\Lk$ and for each edge $e \in G_\Lk$, we have the following two definitions:

$$
\begin{aligned}
    \text{For } e \in T, \quad & \emph{cut}(T,e)=\{e' \in G_\Lk \mid e' \text{ connects } T \setminus e\} \\
    \text{For } e \not\in T, \quad  & \emph{cyc}(T,f)=\{f' \in G_\Lk \mid f' \text{ belongs to the unique cycle in } T \cup f\}
\end{aligned}
$$

Now let us fix an ordering on the edges of $G_\Lk$. Given an edge $e \in T$, if $e$ is the smallest edge in $\emph{cut}(T,e)$, then $e$ is said to be an \textit{internally active (live)} edge of the tree $T$ else it is said to be an \textit{internally inactive (dead)} edge. Similarly, if $e \not\in T$ and $e$ is the smallest edge in $\emph{cyc}(T,e)$, then $e$ is said to be \textit{externally active (live)} edge of $T$ or else it is said to be \textit{externally inactive (dead)} edge of T. In other words, for a positive edge $e \in G_\Lk$, the activity of $e$ is the following function:

     $$
     \begin{aligned}
      & a_T : E^+(G_\Lk) \rightarrow \{L,D,l,d\} \\
      & a_T(e) = \begin{cases}
          L & \text{if $e$ is internally active} \\
          D & \text{if $e$ is internally inactive} \\
          l & \text{if $e$ is externally active} \\
          d & \text{if $e$ is externally inactive}
      \end{cases}
     \end{aligned} 
     $$
		
   Where $E^+(G_\Lk)$ is the set of positive edges of $G_\Lk$. Similarly, if the edge is negative, the co-domain for $a_T$ becomes $\{\Bar{L},\Bar{D},\Bar{l},\Bar{d}\}$. Hence, the activity word $W(T)$ for a given spanning tree is defined as 

   $$
     W(T)=a_T(e_1)a_T(e_2)\cdots a_T(e_n)
   $$

   Table \ref{Three trefoil table} lists all the spanning trees of the Tait graph of the right-handed trefoil and their corresponding activity words for a given ordering in Figure \ref{Three crossing trefoil}.\\
		
	\begin{table}[ht]
			\centering
			\renewcommand{\arraystretch}{1.5}
			\begin{tabular}{|c|c|c|c|}
				\hline
				Spanning trees & \adjustbox{valign=m}{\Gape[0.2cm][0cm]{
						\begin{tikzpicture}[vertex/.style={circle,fill,inner sep=2pt},baseline]
							\node[vertex] at (0,0) (a) {};
							\node[vertex] at (0.8,1) (b) {};
							\node[vertex] at (1.5,0) (c) {};
							\draw (a)--(b);
							\draw (a)--(c);
							\node [below=0.2cm, align=flush center] at (0.8,0) {$T_1$};
				\end{tikzpicture}}} & 
				\adjustbox{valign=m}{
					\Gape[0.2cm][0cm]{
						\begin{tikzpicture}[vertex/.style={circle,fill,inner sep=2pt},baseline]
							\node[vertex] at (0,0) (a) {};
							\node[vertex] at (0.8,1) (b) {};
							\node[vertex] at (1.5,0) (c) {};
							\draw (a)--(b);
							\draw (b)--(c);
							\node [below=0.2cm, align=flush center] at (0.8,0) {$T_2$};
				\end{tikzpicture}}} &  
				\adjustbox{valign=m}{
					\Gape[0.2cm][0cm]{
						\begin{tikzpicture}[vertex/.style={circle,fill,inner sep=2pt},baseline]
							\node[vertex] at (0,0) (a) {};
							\node[vertex] at (0.8,1) (b) {};
							\node[vertex] at (1.5,0) (c) {};
							\draw (b)--(c);
							\draw (a)--(c);
							\node [below=0.2cm, align=flush center] at (0.8,0) {$T_3$};
				\end{tikzpicture}}} \\
				\hline
				Activity word & $LdD$ & $LLd$  & $lDD$\\
				\hline
			\end{tabular}
			\vspace{0.2cm}
			\caption{Spanning trees and activity word of the right-handed trefoil}
			\label{Three trefoil table}
		\end{table}

\end{subsection}

\begin{subsection}{Spanning tree and Skein resolution tree}

A \textit{twisted unknot} is a knot diagram which is isotopic to the round unknot only through Reidemeister I moves. A \textit{skein resolution tree} $\mathcal{T}$ corresponding to $\Lk$ is a rooted tree whose leaves are twisted unknots and the root is $\Lk$. To construct $\mathcal{T}$, fix an ordering on the edges of $G_\Lk$ and take the reverse ordering on the crossings of $\Lk$. A crossing is called \textit{nugatory} if either $A$ or $B$-smoothing on it disconnects the link diagram.  Now starting with $\Lk$ as the root of $\mathcal{T}$, resolve each crossing in the given order to form branches and leave nugatory crossings unsmoothed and move on to the next crossing. Continue until all the crossings are nugatory. Observe that a knot diagram is twisted unknot if and only if all of its crossings are nugatory. Thus, the leaves of $\mathcal{T}$ are twisted unknots. One can associate a unique spanning tree of $G_\Lk$ to each of these twisted unknots of $\mathcal{T}$ (See Figure \ref{skein tree}). In order to get a twisted unknot $U(T)$ from a spanning tree $T$, smooth the crossings of $\Lk$ corresponding to the dead edges of $T$ using Table \ref{smoothing table}. 

\setlength{\tabcolsep}{5mm}
\def\arraystretch{1.75}
    
\begin{table}[ht]
    \centering
    \begin{tabular}{|c|c|c|c|}
     \hline
    \begin{tabular}{cc}
       $L$  & $D$  
    \end{tabular} & 
    \begin{tabular}{cc}
        $l$ & $d$  
    \end{tabular} &
    \begin{tabular}{cc}
        $\Bar{L}$ & $\Bar{D}$  
    \end{tabular} &
    \begin{tabular}{cc}
        $\Bar{l}$ & $\Bar{d}$  
    \end{tabular} \\
    \hline
    \begin{tabular}{cc}
       $B$  & $A$  
    \end{tabular} & 
    \begin{tabular}{cc}
        $A$ & $B$  
    \end{tabular} &
    \begin{tabular}{cc}
        $A$ & $B$  
    \end{tabular} &
    \begin{tabular}{cc}
        $B$ & $A$  
    \end{tabular} \\
    \hline
    \includesvg[scale=0.8]{st-1.svg} &
    \includesvg[scale=0.75]{st-2.svg} &
    \includesvg[scale=0.83]{st-3.svg} &
    \includesvg[scale=0.77]{st-4.svg} \\
    \hline
    \end{tabular} 
    \caption{Smoothings corresponding to edge activities}
    \label{smoothing table}
\end{table}

\begin{lemma}
   For a given fixed ordering on the edges of $G_\Lk$, there is a bijection between the set of spanning trees of $G_\Lk$ and the twisted unknots of $\mathcal{T}$ obtained from this ordering.
\end{lemma}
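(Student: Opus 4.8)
The plan is to pass from the link diagram and its smoothings to the Tait graph and its deletion–contraction recursion, where the statement becomes the classical fact that the leaves of the deletion–contraction tree of a connected graph biject with its spanning trees, and then to check that the explicit recipe $T \mapsto U(T)$ coming from Table \ref{smoothing table} realizes this bijection.

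\smallskip
First I would set up the dictionary. A crossing of $\Lk$ corresponds to an edge $e$ of $G_\Lk$, and by the definition of the edge sign, one of the two smoothings at $e$ contracts $e$ and the other deletes $e$ (the $A$-smoothing contracts exactly when $e$ is positive). A crossing is nugatory precisely when the corresponding edge is a bridge or a loop of the current graph, and, as recalled in the excerpt, a diagram is a twisted unknot exactly when all of its crossings are nugatory, i.e. when the associated graph is connected with every edge a bridge or a loop. Under this dictionary the construction of $\mathcal{T}$ becomes the recursion: given the current minor $G'$ of $G_\Lk$, let $e$ be the smallest edge (in the fixed order) that is neither a bridge nor a loop of $G'$ and branch into $G'/e$ and $G'\setminus e$; leave bridges and loops untouched and stop when none remain. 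Since deleting a non-bridge and contracting any edge preserve connectivity, and each step strictly lowers the edge count, every branch terminates at a connected minor all of whose edges are bridges or loops — a twisted unknot — so the leaves of $\mathcal{T}$ are exactly the leaves of this recursion tree. I would also note that bridges remain bridges and loops remain loops under further deletions/contractions of other edges, so the recursion is well defined regardless of the processing order.

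\smallskip
Next I would invoke the standard deletion–contraction bijection for spanning trees: spanning trees of $G$ containing $e$ correspond to spanning trees of $G/e$, and those avoiding $e$ to spanning trees of $G\setminus e$. Iterating along a root-to-leaf path in $\mathcal{T}$, the spanning trees of $G_\Lk$ whose membership pattern agrees with the contraction/deletion choices along that path biject with the spanning trees of the leaf minor $G'$; but a connected graph whose edges are all bridges or loops has a unique spanning tree, namely its set of bridges. Hence each leaf of $\mathcal{T}$ determines a single spanning tree $T$ of $G_\Lk$ — the union of the edges contracted along the path with the bridges surviving in the leaf — and conversely, starting from any spanning tree $T$ and branching at each stage according to whether the current smallest non-bridge, non-loop edge lies in $T$, one reaches a well-defined leaf. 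That these two assignments are mutually inverse is the usual argument: two distinct leaves first diverge at a branch where $e$ is contracted on one side and deleted on the other, forcing the associated spanning trees to disagree about $e$ (injectivity), and the ``follow $T$'' procedure visibly recovers $T$ (surjectivity). This already yields a bijection between spanning trees of $G_\Lk$ and leaves of $\mathcal{T}$.

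\smallskip
Finally I would identify this abstract bijection with the explicit map $T \mapsto U(T)$. The point is that ``following $T$ down the recursion'' contracts exactly the internally inactive edges of $T$, deletes exactly the externally inactive edges of $T$, and leaves unsmoothed exactly the active edges, with the internally active ones surviving as bridges and the externally active ones as loops of the leaf minor; unwinding the sign conventions, this is precisely the smoothing prescribed by Table \ref{smoothing table} (namely $D,\bar D$ contract, $d,\bar d$ delete, $L,l,\bar L,\bar l$ keep the crossing). I would prove this by induction on the recursion, using the characterizations that an in-tree edge $e$ is internally inactive iff some smaller out-of-tree edge has $e$ in its fundamental cycle, and dually for out-of-tree edges — essentially Thistlethwaite's internal/external activity analysis — checking at each step that the smallest non-bridge, non-loop edge of the current minor is dead in $T$ and that contracting or deleting it preserves the activity labels of the surviving edges. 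This identifies $U(T)$ with the leaf of $\mathcal{T}$ carrying $T$, so $T \mapsto U(T)$ is the asserted bijection. The hard part will be exactly this last step: reconciling the activity-word bookkeeping, which is relative to the fixed edge order and its reverse on the crossings, with the evolving bridge/loop status of edges along the recursion. The deletion–contraction and unique-spanning-tree-of-a-leaf portions are routine.
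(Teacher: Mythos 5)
Your route is genuinely different from the paper's: the paper constructs $U(T)$ directly as a regular neighbourhood of $T$ in the plane with one Reidemeister~I crossing for each live edge (using $cut(T,e)=cyc(T^*,e^*)$ for the internal ones), and inverts the map explicitly via the checkerboard colouring induced on a twisted unknot, whereas you go through the deletion--contraction recursion on $G_\Lk$. The order-independent part of your argument is fine: the dictionary (crossing $\leftrightarrow$ edge, nugatory $\leftrightarrow$ bridge or loop, $A$-smoothing of a positive edge $\leftrightarrow$ contraction), the fact that each leaf minor is connected with every edge a bridge or a loop and hence has a unique spanning tree, and the resulting bijection between spanning trees of $G_\Lk$ and leaves of the recursion.

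The gap is in the step that identifies this abstract bijection with the explicit map $T\mapsto U(T)$, which is also the step you only sketch. First, the orientation of the recursion is wrong: the paper's $\mathcal{T}$ resolves crossings in the \emph{reverse} of the edge order, so at each stage one branches on the \emph{largest} non-bridge, non-loop edge of the current minor, not the smallest. This is not cosmetic, because activities are defined by ``smallest edge in $cut/cyc$ is active'': with your smallest-first convention the pivotal claim that the branching edge is always dead in $T$ fails. For the triangle Tait graph of the trefoil with edges $1<2<3$ and $T=T_2=\{1,2\}$ (activity word $LLd$), edge $1$ is internally active yet it is the smallest non-nugatory edge at the root; following your recursion along $T_2$ smooths crossings $1$ and $2$ and ends at the leaf $(A,A,\star)$, while $U(T_2)=(\star,\star,B)$. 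With the correct largest-first order the claim is true, and it is provable by the characterization you invoke: when $e$ is reached, any $f>e$ in $cyc(T,e)\setminus\{e\}$ is internally dead (since $e\in cut(T,f)$ and $e<f$) and so has already been contracted, and dually for $cut(T,e)\setminus\{e\}$; from this one checks that $e$ is a loop (resp.\ bridge) of the current minor if and only if it is externally (resp.\ internally) active, so the skipped edges are exactly the live ones and the leaf reached along $T$ is exactly $U(T)$. That induction is the real content of the lemma in your approach, and as written your last paragraph announces it as ``the hard part'' without carrying it out.
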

		
\begin{proof}
    Consider a regular neighborhood of $T$ in the plane and put a crossing for each live edge $e \not \in T$. This is the only crossing in $\emph{cyc}(T,e)$ and hence the round unknot differs only by Reidemeister I moves. Similarly, for edges $e \in T$, do the above procedure in the dual tree $T^*$ and use the fact that $\emph{cut}(T,e) = \emph{cyc}(T^*,e^*)$.\\
    Conversely, for a given twisted unknot $U \in \mathcal{T}$, we consider the coloring on $U$ induced from $\Lk$ and keep all such edges of $G_\Lk$ which are either contained in the black region of $U$ or a crossing of $U$ corresponding to an edge separates two black regions of $U$ when it is smoothed according to Table \ref{smoothing table}.
\end{proof}

\begin{figure}[ht]
    \centering
    \includesvg[width=0.45\textwidth]{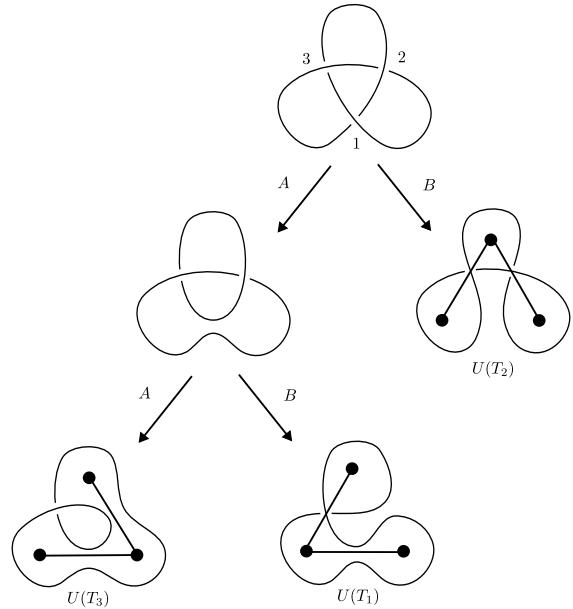}
    \caption{Skein resolution tree of the left-handed trefoil}
    \label{skein tree}
\end{figure}

Each twisted unknot $U(T)$ is a partial smoothing of $\Lk$ which comes from Table \ref{smoothing table} and we label the unsmoothed crossings of $U(T)$ by $\star$ and denote this partial smoothing by a tuple $(x_1,x_2,\cdots,x_n)$. 
			
\begin{definition}\label{partial order on spanning trees defn}
     Let $\Lk$ be a connected link diagram with $n$ ordered crossings. For any two spanning trees $T,T'$ of $G_\Lk$, let $(x_1,\cdots,x_n)$ and $(y_1,\cdots,y_n)$ be the corresponding partial smoothings of $\Lk$ giving $U(T)$ and $U(T')$ respectively. Define a relation $T > T'$ if and only if 
				
		$$
		    (x_1,\cdots,x_n) > (y_1,\cdots,y_n)
		$$
				
     if for each $i$, $y_i=A$ implies $x_i=A$ or $\star$, and there exists $i$ such that $x_i=A$ and $y_i=B$. This relation is not transitive in general and thus taking a transitive closure gives a partial order due to Proposition \ref{partially ordered trees are distinct}.
\end{definition}
			
\begin{proposition}[\cite{kh_st}]\label{partially ordered trees are distinct}
	If $T > \cdots > T'$ then $T \neq T'$.
\end{proposition}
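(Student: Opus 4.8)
The plan is to refine the relation $>$ of Definition~\ref{partial order on spanning trees defn} to an honest strict total order on the set of spanning trees of $G_\Lk$. Once this is done, every $>$-chain is strictly decreasing for that total order, so it cannot return to its starting point; this is precisely the assertion, and it simultaneously shows that the transitive closure of $>$ is antisymmetric, hence a partial order.

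The structural input I would use is the skein resolution tree $\mathcal{T}$ together with the bijection $T \mapsto U(T)$ between spanning trees of $G_\Lk$ and the leaves of $\mathcal{T}$. Because the crossings of $\Lk$ are resolved in one fixed order along every root-to-leaf branch of $\mathcal{T}$, any two distinct leaves $U(T)$ and $U(T')$ have a well-defined deepest common ancestor $N$. A node that is not a branching node has a single child, which would then be a deeper common ancestor, so $N$ must be a branching node: it resolves some crossing $c$, one child carrying the $A$-smoothing and the other the $B$-smoothing, and $U(T)$, $U(T')$ lie below distinct children. Passing to the partial smoothings $(x_1,\dots,x_n)$ of $U(T)$ and $(y_1,\dots,y_n)$ of $U(T')$, where $\star$ marks the live edges and $A,B$ are assigned to the dead edges via Table~\ref{smoothing table}, we read off that the two tuples agree on every crossing resolved before $c$ and that $\{x_c,y_c\}=\{A,B\}$. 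In short: the first coordinate (in the fixed crossing order) on which the tuples of two distinct spanning trees disagree is always an $A$ opposite a $B$ --- never a $\star$.

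Now suppose $T>T'$. At that first coordinate $c$ we must have $x_c=A$ and $y_c=B$: the alternative $x_c=B$, $y_c=A$ would violate the clause ``$y_i=A$ implies $x_i=A$ or $\star$'' of Definition~\ref{partial order on spanning trees defn} at $i=c$. Fix any total order on the three symbols with $A$ largest and $B$ smallest, and let $\prec_{\mathrm{lex}}$ be the induced lexicographic order on the length-$n$ tuples, with coordinates read in the fixed crossing order; this is a strict total order on the spanning trees. What we have shown is that $T>T'$ implies $T\succ_{\mathrm{lex}}T'$. Hence a chain $T=T_0>T_1>\cdots>T_m=T'$ with $m\ge 1$ gives $T_0\succ_{\mathrm{lex}}T_1\succ_{\mathrm{lex}}\cdots\succ_{\mathrm{lex}}T_m$, so $T\neq T'$; and, more generally, the transitive closure of $>$ is contained in $\prec_{\mathrm{lex}}$ and is therefore a partial order.

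The only genuinely substantive step is the second one: showing that the earliest coordinate of disagreement between two spanning-tree tuples is always an $\{A,B\}$-pair. This is exactly where the single-pass, fixed-order nature of the skein resolution tree, together with the identification of $U(T)$ with the corresponding leaf of $\mathcal{T}$, is doing the work, and it is the point one must handle with care; everything after it --- excluding the $(B,A)$ pattern using the definition of $>$, and converting a refining total order into acyclicity of chains --- is formal.
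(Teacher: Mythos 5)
Your proof is correct; note that the paper itself offers no argument for this proposition---it is imported from Champanerkar--Kofman \cite{kh_st}---and your route (distinct spanning trees correspond to distinct leaves of the skein resolution tree, whose paths split at a branching node, so the earliest coordinate of disagreement of the two partial smoothings is an $A$/$B$ pair, which together with the clause ``$y_i=A$ implies $x_i=A$ or $\star$'' forces $x_c=A$, $y_c=B$ there and hence makes $>$ refine a lexicographic total order in the processing order with $A$ largest and $B$ smallest) is essentially the standard argument behind the cited result. The one step that genuinely needs care, which you flagged and handled correctly, is that the symbol at each crossing is determined once and for all at the moment it is reached---a nugatory crossing stays nugatory under further smoothings of the remaining crossings---so the tuples of two leaves really do agree on every crossing processed before their divergence node, and the first disagreement can never involve a $\star$.
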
			
			
  \begin{example}
      For the right-handed trefoil from Figure \ref{Three crossing trefoil},
				
		\begin{table}[ht]
			\centering
			\renewcommand{\arraystretch}{1.3}
			\begin{tabular}{|c|c|c|}
				\hline
				$T_1$ & $T_2$ & $T_3$\\
				\hline
				$\star BA$ & $\star \star B$ & $\star AA$\\
				\hline
			\end{tabular}
			\label{partial ordering on trefoil}
		\end{table}		
				
	We get only one sequence: $T_3 > T_1 > T_2$. 
				
\end{example}

In the next subsection, we introduce Khovanov homology of a connected link diagram $\Lk$ in terms of the underlying Tait graph $G_\Lk$ and we observe that the Khovanov chain complex of $\Lk$ is ``disjoint union" of the Khovanov chain complexes of each twisted unknot $U(T)$ for each spanning tree $T$ of $G_\Lk$.
			
\end{subsection}
    
\begin{subsection}{Khovanov homology}
        
Let $\Lk$ be a connected link diagram and $G_\Lk$ be its Tait graph. Let $H$ be a spanning subgraph of $G_\Lk$ and $C(H)$ be the set of connected components of $H$. Denote $F(C)$ to be the set of faces of the subgraph $C \in C(H)$ and let,
		
		$$
		   F(H) = \bigcup_{C \in C(H)} F(C)
		$$

   We now define the generators of the Khovanov chain complex in terms of these spanning subgraphs and their faces.
		
	\begin{definition}
		An \textit{enhanced spanning subgraph} of $G_\Lk$ is a pair $(H,\epsilon)$, where $H$ is a spanning subgraph of $G_\Lk$ and $\epsilon: F(H) \rightarrow \{1,x\}$ is a set-theoretic map.
	\end{definition}
		
		For a given enhanced spanning subgraph $(H,\epsilon)$, we define a bigrading for it,
		
		$$
		 \begin{aligned}
		 	& i((H,\epsilon)) = \#(+ve \text{ edges not in }H) + \#(-ve \text{ edges in }H) - n_-\\
		 	& j((H,\epsilon)) = i((H,\epsilon)) + \left(|\epsilon^{-1}(1)| - |\epsilon^{-1}(x)|\right) + n_+ - n_-.
		 \end{aligned}
		$$

        where, $n_+$ and $n_-$ are number of positive and negative crossings of $\Lk$ respectively. Now we have the following bigraded $\Z$-module:
		
		$$
		     \CKh^{i,j}(\Lk) = \Z[(H,\epsilon) \mid i((H,\epsilon)) = i, j((H,\epsilon)) = j ]
		$$
		
		The Khovanov chain complex is defined as the following bigraded complex,
		
		$$
		   \CKh(\Lk) = \bigoplus_{i,j} \CKh^{i,j}(\Lk)
		$$
		
      For $(H,\epsilon) \in \CKh^{i,j}(\Lk)$, let $A((H,\epsilon)) = \{+ve \text{ edges in } H\} \cup \{-ve \text{ edges not in }H\}$. Then for $e \in A((H,\epsilon))$, Let $H'_e$ be the subgraph obtained from $H$ by either removing $e$ from $H$ if $e \in H$ and $e$ is positive or inserting $e$ into $H$ if $e \not\in H$ and $e$ is negative. Hence, we have $|F(H)| - |F(H'_e)| = \pm 1$. Thus, either two faces in $F(H)$ merges into one or a face splits into two. For each case, we obtain a new set of enhanced spanning subgraphs $(H'_e,\epsilon')$:\\
	    
	    \textbf{case 1:} Suppose two faces $F_1, F_2 \in F(H)$ merges (denoted by $m$) into a single $F_{12} \in F(H'_e)$, then for a face $F \in F(H'_e)$ $\epsilon'$ is given by,
	    
	$$
	     \epsilon'(F) = \begin{cases}
	     	\epsilon(F) & F \neq F_{12}\\
	        m(\epsilon(F_1), \epsilon(F_2)) & F=F_{12}
	     \end{cases} 
	$$

        where, $m(1,1)=1, m(1,x) = m(x,1) = x, m(x,x)=0$.\\
	    
	    \textbf{case 2:} Suppose a face $F_{12} \in F(H)$ splits (denoted by $\Delta$) into two faces $F_1, F_2 \in F(H'_e)$, then we have enhanced subgraphs based on $\epsilon(F_{12})$. If $\epsilon(F_{12})=1$, then we have two enhanced spanning subgraphs $(H_1',\epsilon'_1)$ and $(H_2',\epsilon'_2)$ with $H'_1 = H'_2 = H'_e$ and 
	    
	    $$
	      \epsilon'_1(F) = \begin{cases}
	      	\epsilon(F) & F \neq F_1,F_2\\
	      	 1 & F = F_1\\ 
	      	 x & F= F_2
	      \end{cases}
	    $$ 
	    
	    and 
	    
	    $$
	    \epsilon'_2(F) = \begin{cases}
	    	\epsilon(F) & F \neq F_1,F_2\\
	    	x & F = F_1\\ 
	    	1 & F= F_2
	    \end{cases}
	    $$
	    
	    If $\epsilon(F_{12})=x$, then we have exactly one enhanced spanning subgraph $(H'_e,\epsilon')$ with $\epsilon'$ given by,
	    
	    $$
	    \epsilon'(F) = \begin{cases}
	    	\epsilon(F) & F \neq F_1,F_2\\
	    	x & F = F_1,F_2
	    \end{cases}
	    $$
	    
	    Fix an ordering on the set of edges of $G_\Lk$ and for each $(H,\epsilon)$ and $e \in A((H,\epsilon))$, define $w(e) = \#\{f \in E(G_\Lk) \mid f < e \text{ and } f \not\in A((H,\epsilon))\}$. The differential on the Khovanov chain complex is thus defined as follows:
	    
	    $$
	    \begin{aligned}
	    	&\partial_{\Kh}^{i,j} : \CKh^{i,j}(\Lk) \rightarrow \CKh^{i+1,j}(\Lk) \\
	    	& \partial_{\Kh}^{i,j}((H,\epsilon)) = \sum_{e \in A((H,\epsilon))} (-1)^{w(e)}.(H'_e,\epsilon')
	    \end{aligned}
	    $$

  The fact that the above definition of Khovanov homology coincides with the usual definition in terms of resolved states of the link diagram can be seen through the following observation:
  
  \begin{proposition} \label{span graph and res state}
	 There is an one-one correspondence between the sets:
	
       $$
	   \{F(H) \mid H \text{ is a spanning subgraph of } G_\Lk\} \Longleftrightarrow \{\text{ Resolved states of } \Lk \:\}
	$$
  \end{proposition}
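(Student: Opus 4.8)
\emph{Proof proposal.} The plan is to realize each spanning subgraph of $G_\Lk$ as a resolution of $\Lk$ via a regular-neighbourhood construction and then identify the faces of the subgraph with the circles of that resolution. First I would pin down the dictionary between edges and smoothings dictated by the Tait-graph sign convention: an edge $e\in E(G_\Lk)$ is a crossing of $\Lk$, and I declare that $e$ lies in a spanning subgraph $H$ precisely when the resolution of $\Lk$ at that crossing keeps the two black regions incident to $e$ connected, i.e.\ the $A$-smoothing when $e$ is positive and the $B$-smoothing when $e$ is negative (this is forced, since by definition an $A$-smoothing connects the black regions at a positive edge and separates them at a negative one). Because $G_\Lk$ has exactly one edge per crossing, this is a bijection between the $2^{\#\mathrm{crossings}}$ spanning subgraphs of $G_\Lk$ and the $2^{\#\mathrm{crossings}}$ resolved states of $\Lk$; write $S(H)$ for the state attached to $H$, and observe that it is compatible with the $i$-grading above, since the positive edges outside $H$ and the negative edges inside $H$ are exactly the $1$-smoothed crossings.

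The substantive step is a geometric identification of $S(H)$ with the boundary of a fattening of $H$. Draw $H$ in $S^2$ with each vertex a small disc inside the corresponding black region of $\Lk$ and each edge a thin band running through the corresponding crossing along the direction that joins those two black regions (the $A$-direction for positive edges, the $B$-direction for negative ones), and let $N(H)$ be this regular neighbourhood, chosen small enough that neighbourhoods of distinct connected components of $H$ are disjoint (possible since distinct components share neither a vertex nor a crossing). I would then verify, crossing by crossing, that $\partial N(H)$ is, up to isotopy in $S^2$, the collection of resolution circles of $S(H)$: at a crossing whose edge lies in $H$ the boundary of the band reproduces exactly the chosen smoothing, at a crossing whose edge is absent the boundaries of the two nearby vertex discs reproduce the other smoothing, and along every arc of the diagram $\partial N(H)$ simply runs parallel to that arc; these local pictures patch together to all of $S(H)$.

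Finally I would translate $\partial N(H)$ into $F(H)$. Writing $H=\bigsqcup_C C$ for the decomposition of $H$ into connected components, we get $N(H)=\bigsqcup_C N(C)$ and hence $\partial N(H)=\bigsqcup_C \partial N(C)$. For each connected $C$, the surface $N(C)\subset S^2$ is a connected genus-zero subsurface that deformation retracts onto $C$, so the number of its boundary circles equals $2-\chi(C)=2-|V(C)|+|E(C)|$, which by Euler's formula for the planar graph $C$ is exactly $|F(C)|$; moreover each boundary circle of $N(C)$ is naturally labelled by the face of $C$ it bounds, giving a canonical bijection $\partial N(C)\leftrightarrow F(C)$ (an isolated vertex contributes one disc and one face, consistently). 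Taking the union over $C$ identifies the circles of $S(H)$ with $F(H)$, and combined with the bijection $H\leftrightarrow S(H)$ this proves Proposition \ref{span graph and res state}. I expect the only real obstacle to be the local bookkeeping in the middle step: one must fix the band directions so that ``$e\in H$'' yields precisely the correct smoothing for each sign of $e$, and keep neighbourhoods of different components disjoint so that no two boundary circles are accidentally fused; once the local models at the two kinds of crossings are fixed, the patching—and the Euler-characteristic count, which already gives the sanity check $|F(H)|=\#\,\mathrm{circles}(S(H))$—is routine.
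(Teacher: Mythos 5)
Your proposal is correct and takes essentially the same route as the paper's proof: membership of an edge in $H$ is dictated by which smoothing joins the two black regions (the paper's Table \ref{res-edge table}), and the resolved state is realized as the boundary of a regular neighbourhood of the components of $H$, with circle components matching the faces $F(C)$. Your Euler-characteristic count $2-\chi(C)=|F(C)|$ is just an explicit justification of the circle–face bijection that the paper asserts directly.
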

		
   \begin{proof}
      For a given spanning subgraph $H \subset G_\Lk$ and $C \in C(H)$, diagramatically $F(C)$ looks like a regular neighborhood of $C$ in the plane. This regular neighborhood is our desired resolved state of $\Lk$, since if we smooth crossings of $\Lk$ in such a way that $C$ is contained inside the black region of $\Lk$, then we obtain this neighborhood. \\
	 Conversely, for a given resolved state $S$ of $\Lk$, we can make a spanning subgraph $H(S)$ of $G_\Lk$ out of $S$. An edge $e \in G_\Lk$ belongs to $H(S)$ if the black regions due to its end vertices are joined in $S$, otherwise we remove it. Thus, for each circle component in $S$, we have a face corresponding to it in $F(H(S))$. Also see Table \ref{res-edge table} which we will be using often to mention the term ``resolution of an edge" (will be denoted by $\Res(e)$) instead of resolution of a crossing.
   \end{proof} 

   \begin{table}[ht]
   \setlength{\tabcolsep}{2mm}
		\def\arraystretch{1.5}
       \centering
       \begin{tabular}{|c|c|c|}
        \hline
          \backslashbox{$e$}{$\Res(e)$} & $A$ & $B$ \\
          \hline
          Positive  & $e \in H$ & $e \not\in H$\\
          \hline
          Negative & $e\not\in H$ & $e \in H$ \\
          \hline   
       \end{tabular}
       \caption{The correspondence between the resolution of a crossing and the presence of an edge in a spanning subgraph of the Tait graph.}
       \label{res-edge table}
   \end{table}
  
  \begin{lemma}\label{kh complex is dis union of twisted unknots}
    Given a connected link diagram $\Lk$ and its Khovanov chain complex $(\CKh(\Lk),\partial_{\Kh})$, consider its Hasse diagram $\mathcal{H}((\CKh(\Lk),\partial_{\Kh}))$. Then we the following equality:
   
   $$
    \mathcal{H}((\CKh(\Lk),\partial_{\Kh})) = \bigsqcup_{T}  \mathcal{H}((\CKh(U(T),\partial_{\Kh}))
   $$
  \end{lemma}

  \begin{proof}
    All the resolved states of $\Lk$ can be obtained by resolving all the nugatory crossings of each twisted unknot and the twisted unknots were obtained by constructing the skein resolution tree $\mathcal{T}$ by resolving crossings of $\Lk$ in a given fixed order. Moreover, $\CKh(U(T)) \cap \CKh(U(T')) = \phi$ for $T \neq T'$ because there exists edges $e,e' \in G_\Lk$ such that $e \in cyc(T,e')$ and $e' \in cyc(T',e)$, now without loss of generality if $e < e'$, then $a_T(e') \in \{d,\Bar{d}\}$ and $a_{T'}(e') \in \{D,\Bar{D}\}$ and these activities have opposite smoothing according to Table \ref{smoothing table}.
  \end{proof}

\end{subsection}

\begin{subsection}{Lee cohomology and \texorpdfstring{$s$}{s}-invariant} 

  Lee \cite{LEE2005554} deformed the differential maps in Khovanov complex to get a new boundary map which did not behave so well in terms of preservation of the $j$--grading. To be precise, the co-chain complex over here, denoted by $\CLee(\Lk)$, is same as the Khovanov complex but the differential maps are defined as follows:\\

   \textbf{case 1:} When two faces merge, the only change in this case is the definition of the map $m$, where $m(1,1)=m(x,x)=1$ and $m(1,x)=m(x,1)=x$.

    \textbf{case 2:} When a face $F$ splits and if $\epsilon(F)=1$ then $\epsilon_1'$ and $\epsilon_2'$ remains the same but when $\epsilon(F)=x$ then we will have two enhanced spanning subgraphs $(H_1',\epsilon_1')$ and $(H_2',\epsilon_2')$, where 

    $$
	    \epsilon'_1(F) = \begin{cases}
	    	\epsilon(F) & F \neq F_1,F_2\\
	    	x & F = F_1,F_2
	    \end{cases}
	    $$

        and 

          \begin{equation}\label{lee diff split}
              \epsilon'_2(F) = \begin{cases}
	    	\epsilon(F) & F \neq F_1,F_2\\
	    	1 & F = F_1,F_2
	    \end{cases}
          \end{equation}

      Observe that, this modified maps increases the $j$--grading by 4, if the $i$--grading increases by 1. This immediately provides a filtration on the Lee complex and as a consequence we have a spectral sequence converging to $\Q \oplus \Q$ as shown in \cite{LEE2005554}. The homology group is fairly simple which is not so interesting but the states which generates this summand are of significant importance as both shown by Lee and Rasmussen \cite{rus}. Lee replaces the generating set $\{1,x\}$ by $\{x+1,x-1\}$ which not only represents the boundary maps in a simplified form but it helps to represent the generators which generates $\Q \oplus \Q$.\\
      Let $\Lk$ be an $n$--component link, then there are $2^n$ possible orientations on $\Lk$. Lee showed that for each orientation there is exactly one generator for $H^*(\CLee(\Lk))$. In fact, if $o$ be an orientation of $\Lk$, and let $D_o$ correspond to the oriented resolution of $\Lk$. We label the circles in $D_o$ with $(x+1)$ and $(x-1)$ according to the following rule:\\
	For each circle $S$ in $D_o$ we assign a mod-2 invariant, which is the mod-2 number of circles in $D_o$ that separates $S$ from infinity. (Draw a ray from $S$ to infinity, count the number of other times it intersects the other circles, mod 2). We add 1 if $S$ has the counterclockwise orientation and add 0 if it has a clockwise orientation. Label $S$ by $(x+1)$ if the resulting mod invariant is 0, and label it by $(x-1)$ if it is 1. The resulting state is denoted by $\mathfrak{s}_o$.\\

      Rasmussen showed that the generators $\mathfrak{s}_o$ for each orientation $o$ of a link $\Lk$ behave well under cobordisms and induces non-trivial maps on Lee cohomology. For a given knot $K$, Rasmussen uses the induced $j-$grading of the generators $\mathfrak{s}_o$ and $\mathfrak{s}_{\Bar{o}}$ to define a knot invariant $s(K)$ which gives a lower bound for the slice genus of $K$

      $$
       |s(K)| \leq 2g_4(K)
      $$
\end{subsection}

\end{section}


\begin{section}{Reduced spanning tree complex}
	\label{reduced}
	 Let $K$ be a connected knot diagram. Then the \textit{reduced} Khovanov homology of $K$ was also defined in \cite{Kh} which is a knot invariant defined similarly where the differential maps remains the same but the generators were special kinds of enhanced spanning subgraphs. In order to define it, first let us mark a fixed arc on $K$ by placing a dot on it. Now we consider all such enhanced spanning subgraphs $(H,\epsilon)$ as the generators of the reduced Khovanov complex of $K$ where, $\epsilon(F) = 1$ if $F$ contains the dotted arc. We denote this complex by $(\CKh^+(K),\partial_{\Kh}^+)$ and its homology group by $\Kh^+(K)$ Similarly, one can define the other version $(\CKh^-(K),\partial_{\Kh}^-)$, where $\epsilon(F)=x$ for $F$ containing the dotted arc and the homology group by $\Kh^-(K)$. \footnote{In fact, $\CKh^-(K)$ is a subcomplex of $\CKh(K)$ while $\CKh^+(K)$ is not.} Note that we are being very specific about the fact that reduced Khovanov homology is a knot invariant because in case of a link $\Lk$ (knot with atleast two components) the reduced Khovanov homology of $\Lk$ differs if we change the position of the dot from one component to other. It only becomes a link invariant if we fix the dotted arc. But this should not be a problem while developing the spanning tree model for a connected link diagram since, we will consider the dotted arc to be fixed on a chosen component.\\

     In order to define the \textit{reduced spanning tree complex} of a given connected link diagram $\Lk$ with a fixed dotted arc, our initial goal would be to provide an acyclic matching on the Hasse diagram $\mathcal{H}((\CKh^+(\Lk), \partial^+_{\Kh}))$ and use algebraic discrete Morse theory to construct a Morse complex consisting of \textit{critical enhanced spanning subgraphs} and we will use the differential map of the Morse complex in order to define the differential map of the reduced spanning tree complex. Now Lemma \ref{kh complex is dis union of twisted unknots} implies that it is enough to provide an acyclic matching on each of the Hasse diagram $\mathcal{H}((\CKh^+(U(T)), \partial_{\Kh}^+))$ for all spanning trees $T \subset G_\Lk$. More specifically, we would like to provide a near-perfect acyclic matching on $\mathcal{H}((\CKh^+(U(T)), \partial_{\Kh}^+))$. The near-perfect condition is essential since then we would have a unique enhanced spanning subgraph, which we denote by $S_c^{T^+}$ representing a unique spanning tree $T$. We then take the union of all these acyclic matchings to obtain an acyclic matching $\Mt^{\Kh}_\Lk$ on $\mathcal{H}((\CKh^+(\Lk), \partial^+_{\Kh}))$.
	
	\begin{definition}\label{reduced spanning tree complex defn}
		Let $\Lk$ be a connected link diagram and $G_\Lk$ be its ordered Tait graph. For any spanning tree $T$ of $G_\Lk$, we define bigradings of $T^+$, $i(T^+)=i_{\Kh}(S_c^{T^+})$ and $j(T^+)=j_{\Kh}(S_c^{T^+})$. Then we have the following bigraded groups,
		$$
		 {\CST^+}^{i,j}(\Lk) = \Z \langle T^+ \subset G_\Lk \: | \: i(T^+)=i, j(T^+)=j \rangle
		$$
		Define $\CST^+(\Lk)=\oplus_{i,j} {\CST^+}^{i,j}(\Lk)$ to be the \textit{reduced spanning tree complex}. Here, $i_{\Kh}$ and $j_{\Kh}$ refers to the bigradings in Khovanov chain complex.
	\end{definition}
	
	We later on define the differential map on $\CST^+(\Lk)$.

\begin{subsection}{Acyclic matching on reduced Khovanov complex}
		
  In order to provide a near-perfect acyclic matching in $\mathcal{H}((\CKh^+(U(T)),\partial^+_{\Kh}))$, we first construct a tree $G(T)$ representing the twisted unknot $U(T)$ in a way such that all possible resolved states of $U(T)$ are in one-one correspondence with all possible spanning subgraphs of $G(T)$. Thus, we will represent each enhanced spanning subgraph of $\CKh^+(U(T))$ as a pair $(H,\epsilon)$ where $H$ is a spanning subgraph of $G(T)$ and $\epsilon : C(H) \rightarrow \{1,x\}$.

\begin{construction}
     Given a spanning tree $T$ of $G_\Lk$, insert blue edges in $T$ for every externally active edges of $T$. Now contract all the internally inactive edges of $T$. Hence, we get a graph with some blue loops together with internally active edges of $T$. Now convert every blue loop into a graph as shown in Figure \ref{blue loop conversion}. After converting every such blue loop, we have the tree $G(T)$, which represents all the live edges of $T$ as its edges and as a result represents the twisted unknot $U(T)$. In order to represent the dotted arc in $U(T)$ coming from $\Lk$ in $G(T)$, choose the vertex in $G(T)$ which corresponds to the circle component which contains the dotted arc after we resolve the crossing of $U(T)$ according to Table \ref{smoothing table}. Consider this vertex as the root of $G(T)$ and label it as $v_d$.
	
	\begin{figure}[ht]
		\centering
		\includesvg[width=6cm]{gt.svg}
		\caption{}
		\label{blue loop conversion}
	\end{figure}
	
\end{construction}


\begin{theorem}\label{matching_theorem}
    Let $T$ be a spanning tree of the Tait graph $G_\Lk$ associated with a connected link diagram $\Lk$, then there exists an acyclic near-perfect matching on $\mathcal{H}((\CKh^+(U(T)),\partial_{\Kh}^+))$, where $(\CKh^+(U(T)),\partial_{\Kh}^+)$ is the reduced Khovanov complex of $U(T)$. Furthermore, by taking the union of these matchings over all spanning trees of $G_\Lk$, we get an acyclic matching $\Mt^{\Kh}_\Lk$ on $\mathcal{H}((\CKh^+(\Lk),\partial_{\Kh}^+))$. 
\end{theorem}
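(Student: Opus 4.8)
The plan is to build the matching one twisted unknot at a time and then assemble. Fix a spanning tree $T$ of $G_\Lk$ and work inside $\mathcal{H}((\CKh^+(U(T)),\partial_{\Kh}^+))$. The governing observation is that $U(T)$ is a diagram of the unknot, so $\Kh^+(U(T))\cong\Z$ sits in a single bidegree; hence a near-perfect matching is to be expected, and the real task is to write one down explicitly on the tree $G(T)$ in a way that is compatible with the decomposition of Lemma \ref{kh complex is dis union of twisted unknots}. First I would record the combinatorial model: by the construction of $G(T)$ (Figure \ref{blue loop conversion}), every generator of $\CKh^+(U(T))$ is a pair $(H,\epsilon)$ with $H$ a spanning subgraph of the tree $G(T)$ and $\epsilon\colon C(H)\to\{1,x\}$ subject to $\epsilon$ being $1$ on the component of the root $v_d$. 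Because $G(T)$ is a tree, toggling any $e\in A((H,\epsilon))$ either merges two components of $H$ or splits one — exactly the two cases defining $\partial_{\Kh}^+$ — so every Hasse edge carries weight $(-1)^{w(e)}=\pm1$. Thus condition (2) of an acyclic matching holds automatically for whatever edges I select, and only conditions (1) and (3) need work.

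Next I would define the matching recursively on the rooted tree $(G(T),v_d)$ using the fixed edge order, in the spirit of the standard matching witnessing $\Kh^+(\text{unknot})\cong\Z$ and of the chromatic construction of \cite{STM-chromatic}: take the smallest edge $e$ of $G(T)$, and (roughly) pair each state in which $e$ separates off a face labeled $1$ with the state obtained by merging that face back across $e$, compatibly with the root constraint, then recurse on the tree with $e$ deleted (respectively contracted) and the induced order on the remaining edges. Vertex-disjointness (condition (1)) is built into the recursion; a generator escapes being matched only when the recursion terminates, which happens for exactly one state, namely the one whose underlying resolution is $U(T)$ as read off from Table \ref{smoothing table} with the dotted component labeled $1$. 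This yields the near-perfect conclusion and produces the distinguished critical generator $S_c^{T^+}$ of Definition \ref{reduced spanning tree complex defn}, of bidegree $(i(T^+),j(T^+))$.

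The main obstacle is acyclicity, condition (3): after reversing the matched edges one must rule out directed cycles in $\mathcal{H}_{\Mt}$. I would prove this by induction on $|E(G(T))|$ via a patchwork argument: the layer of matched edges attached to the smallest edge $e$ is, on its own, acyclic, and any directed cycle of $\mathcal{H}_{\Mt}$ not confined to that layer would project to a directed cycle in the Hasse diagram of the smaller tree carrying the recursively defined matching, contradicting the inductive hypothesis; alternatively one exhibits a monovariant — a lexicographic weight on $H$ refined by the values of $\epsilon$ — that strictly decreases along every unmatched Hasse edge and every reversed matched edge, precluding any closed directed walk.

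Finally, to conclude the theorem I invoke the reduced analogue of Lemma \ref{kh complex is dis union of twisted unknots}, namely $\mathcal{H}((\CKh^+(\Lk),\partial_{\Kh}^+))=\bigsqcup_T\mathcal{H}((\CKh^+(U(T)),\partial_{\Kh}^+))$. Since a disjoint union of acyclic matchings on disjoint directed graphs is again an acyclic matching, setting $\Mt^{\Kh}_\Lk$ to be the union over all spanning trees $T$ of the matchings constructed above gives the asserted acyclic matching on $\mathcal{H}((\CKh^+(\Lk),\partial_{\Kh}^+))$, which completes the argument.
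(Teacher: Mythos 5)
Your proposal follows the same architecture as the paper's proof (induction on the edges of $G(T)$, pairing across a single edge, recursing on a smaller tree, a unique critical generator, acyclicity by induction, and then a union over spanning trees via the decomposition of the Hasse diagram), but there is a genuine gap precisely at the step that carries the combinatorial content: the pairing rule. Your rule, ``pair each state in which $e$ separates off a face labeled $1$ with the state obtained by merging that face back across $e$,'' ignores whether the crossing of $U(T)$ corresponding to $e$ is a positive or a negative twist, and that sign determines the direction of the Khovanov differential along that cube edge. For a positive twist the differential is the merge map, so your pairs are genuine edges of $\mathcal{H}((\CKh^+(U(T)),\partial_{\Kh}^+))$; but for a negative twist the differential is the split map out of the merged state, which sends a component labeled $1$ to $1\otimes x + x\otimes 1$ and a component labeled $x$ to $x\otimes x$. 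Consequently the pair (merged component labeled $1$, split state with both pieces labeled $1$) has incidence number zero, and a merged component labeled $x$ is never incident to a split state whose separated face is labeled $1$; these pairs are simply not edges of the Hasse diagram, so for negative twists your ``matching'' violates the very definition of an acyclic matching (matched pairs must be Hasse edges with unit weight). Since every live edge of activity $L$ or $\Bar{l}$ gives a negative twist, this is not a corner case. The paper's rule is sign-dependent: at a negative twist the split-off leaf vertex is labeled $x$, at a positive twist it is labeled $1$; correspondingly the unique critical generator has enhancement $1$ at negative-twist vertices and $x$ at positive-twist vertices --- not merely ``the dotted component labeled $1$'' --- and it is exactly this that pins down the bigrading $(i(T^+),j(T^+))$ used to define $\CST^+(\Lk)$.

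Two secondary points. First, the paper recurses by removing the least ordered \emph{leaf} of $G(T)$ (deleting its degree-one vertex), whereas you take the smallest edge and propose a delete/contract recursion; deleting a non-leaf edge disconnects the tree, so your recursion as stated does not return a rooted tree and the bookkeeping of the leftover ($x$-labeled) states breaks down. Second, your acyclicity argument is only a sketch: the ``project to the smaller tree'' step needs the observation (made explicit in the paper) that a directed cycle passing through a pair matched at the new edge would require a predecessor state that cannot be matched at all, and the alternative lexicographic monovariant is asserted but not exhibited. These could plausibly be repaired, but the sign-dependence of the pairing --- and with it the identification of the critical generator --- is the essential idea missing from the proposal.
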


\begin{proof}
	
  We provide the matching on $\mathcal{H}((\CKh^+(U(T)),\partial_{\Kh}^+))$ by applying induction on the number of edges in $G(T)$. Let $n$ be the number of edges in $G(T)$. The base case, $n=1$, is shown Figure \ref{base case matching}.
	
	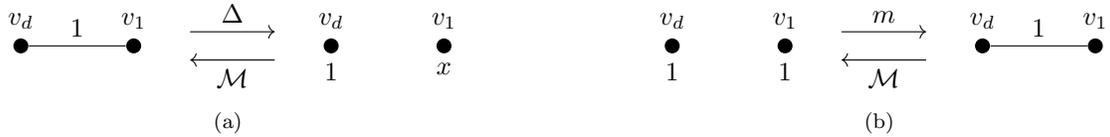
\begin{figure}[ht]
		\centering
		\subfigure[]{
			\begin{tikzpicture}[vertex/.style={circle,fill,inner sep=2pt},scale=0.75]
				\node[vertex] [label=above:$v_d$] at (0,0) (a) {};
				\node[vertex] [label=above:$v_1$] at (2,0) (b) {};
				\draw (a) -- (b) node[midway,above] {$1$};
				\draw[->] (3,0.25) -- (4.5,0.25) node[midway,above] {$\Delta$};
				\draw[<-] (3,-0.25) -- (4.5,-0.25) node[midway,below] {$\Mt$};
				\node[vertex] [label=above:$v_d$,label=below:$1$] at (5.5,0) (c) {};
				\node[vertex] [label=above:$v_1$,label=below:$x$] at (7.5,0) (d) {};
		   \end{tikzpicture}	
			}
		\hspace{2cm}
		\subfigure[]{
			\begin{tikzpicture}[vertex/.style={circle,fill,inner sep=2pt},scale=0.75]
				\node[vertex] [label=above:$v_d$,label=below:$1$] at (0,0) (a) {};
				\node[vertex] [label=above:$v_1$,label=below:$1$] at (2,0) (b) {};
				\draw[->] (3,0.25) -- (4.5,0.25) node[midway,above] {$m$};
				\draw[<-] (3,-0.25) -- (4.5,-0.25) node[midway,below] {$\Mt$};
				\node[vertex] [label=above:$v_d$] at (5.5,0) (c) {};
				\node[vertex] [label=above:$v_1$] at (7.5,0) (d) {};
				\draw (c) -- (d) node[midway,above] {$1$};
			\end{tikzpicture}
		}
		\caption{(a) $e_1$ is a negative twist (b) $e_1$ is a positive twist}
		\label{base case matching}
	\end{figure}
	
	Assume that $G(T)$ is a tree with edges $n > 1$. Let $e_n$ be the least ordered leaf of $G(T)$ and $v_n \neq v_d$ be the vertex adjancent to $e_n$ whose degree is one. Let $\mathcal{B}_n$ be the set of all enhanced spanning subgraphs $(H,\epsilon)$ of $G(T)$, where the connected component $C_{v_n}$ containing $v_n$ has more than one vertex. To define $\Mt_n$, we first pair elements of $\mathcal{B}_n$ based on the type of twist corresponding to edge $e_n$.\\
	
	\textbf{Negative twist}: We pair $(H,\epsilon) \in \mathcal{B}_n$ with $(H',\epsilon')$ where $H'=H - e_n$ and $\epsilon'$ is defined as follows:
	
	$$
	 \epsilon'(C) = \begin{cases}
	 	 \epsilon(C_{v_n}) & \text{ if } C=C_{v_n} - v_n \\
	 	 x & \text{ if } C=\{v_n\} \\
	 	 \epsilon(C) & \text{ if } C \in C(H) \setminus \{C_{v_n}\}
	 \end{cases}
	$$	
	
	\textbf{Positive twist}: We pair $(H,\epsilon) \in \mathcal{B}_n$ with $(H',\epsilon')$ where $H'=H - e_n$ and $\epsilon'$ is defined as follows:
	
	$$
	\epsilon'(C) = \begin{cases}
		\epsilon(C_{v_n}) & \text{ if } C=C_{v_n} - v_n \\
		1 & \text{ if } C=\{v_n\} \\
		\epsilon(C) & \text{ if } C \in C(H) \setminus \{C_{v_n}\}
	\end{cases}
	$$	
	
	The schematic description of $\Mt_n$ for each type of twist is shown in Figure \ref{pos-cross} and \ref{neg-cross}. Now we provide matching on set of the rest of enhanced spanning subgraphs other than in $\mathcal{B}_n$, which we denote by $\mathcal{B}_n'$. We use induction hypothesis to pair elements of $\mathcal{B}_n'$ using the previous matching $\Mt_{n-1}$, the acyclic matching corresponding to the graph $G(T) \setminus v_n$ having $n-1$ edges. The left over unmatched enhanced spanning subgraphs $(H,\epsilon)$ of $\mathcal{B}_n'$ have $v_n$ as an isolated vertex with $\epsilon(\{v_n\}) = 1$ when $e_n$ is a negative twist and $\epsilon(\{v_n\})= x$ when $e_n$ is a positive twist. So, the enhanced spanning subgraphs of $\mathcal{B}_n'$ are in bijection with the enhanced spanning subgraphs of $G(T) \setminus \{v_n\}$. The bijective correspondence is given by the following map:
	
	$$
	 \Psi((H,\epsilon)) = (H \setminus \{v_n\}, \epsilon \mid_{H \setminus \{v_n\}})	
	$$
	
	We now pair of $(H,\epsilon)$ with $(H',\epsilon')$ if and only if $(\Psi(H,\epsilon),\Psi(H',\epsilon')) \in \Mt_{n-1}$ and $\epsilon(\{v_n\})=\epsilon'(\{v_n\})$.

	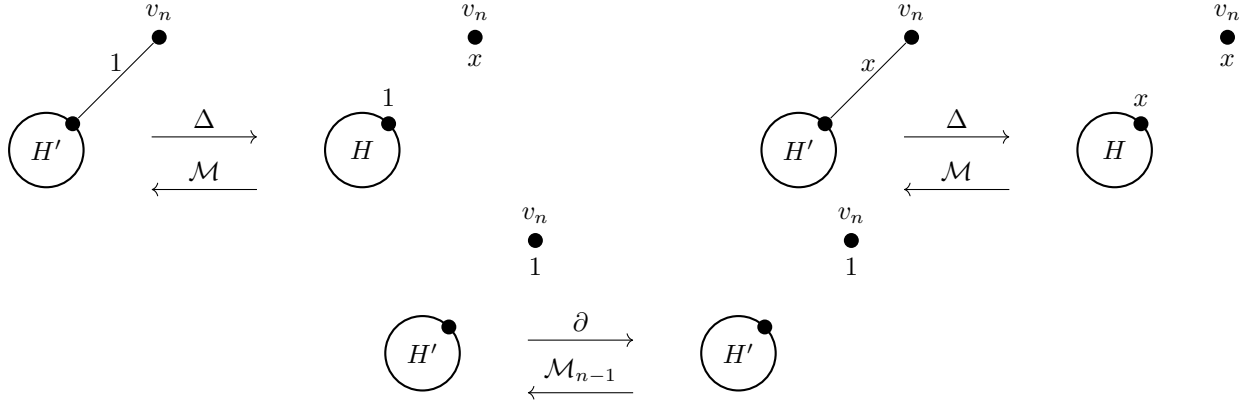
\begin{figure}[ht]
		\centering
		\begin{tikzpicture}[vertex/.style={circle,fill,inner sep=2pt},scale=0.7]
			\node[vertex] at (0.5,0.5) (a) {};
			\node[vertex] [label=above:$v_n$] (b) [above right=of a] {};
			\node [draw,thick,circle through=(a)] at (0,0) {};
			\node at (0,0) {$H'$} (a);
			\draw[->] (2,0.25) -- (4,0.25) node[midway,above] {$\Delta$};
			\draw[<-] (2,-0.75) -- (4,-0.75) node[midway,above] {$\Mt$};
			\node at (6,0) {$H$} (c);
			\node[vertex] [label=above:$1$] at (6.5,0.5) (c) {};
			\node[vertex] [label=above:$v_n$,label=below:$x$] (d) [above right=of c] {};
			\draw (a) -- (b) node[midway,above] {$1$};
			\node [draw,thick,circle through=(c)] at (6,0) {};
		\end{tikzpicture}
		\hfill
		\begin{tikzpicture}[vertex/.style={circle,fill,inner sep=2pt},scale=0.7]
			\node[vertex] at (0.5,0.5) (a) {};
			\node[vertex] [label=above:$v_n$] (b) [above right=of a] {};
			\node [draw,thick,circle through=(a)] at (0,0) {};
			\node at (0,0) {$H'$} (a);
			\draw[->] (2,0.25) -- (4,0.25) node[midway,above] {$\Delta$};
			\draw[<-] (2,-0.75) -- (4,-0.75) node[midway,above] {$\Mt$};
			\node at (6,0) {$H$} (c);
			\node[vertex] [label=above:$x$] at (6.5,0.5) (c) {};
			\node[vertex] [label=above:$v_n$,label=below:$x$] (d) [above right=of c] {};
			\draw (a) -- (b) node[midway,above] {$x$};
			\node [draw,thick,circle through=(c)] at (6,0) {};
		\end{tikzpicture}
		\begin{tikzpicture}[vertex/.style={circle,fill,inner sep=2pt},scale=0.7]
			\node[vertex] at (0.5,0.5) (a) {};
			\node[vertex] [label=below:$1$,label=above:$v_n$] (b) [above right=of a] {};
			\node [draw,thick,circle through=(a)] at (0,0) {};
			\node at (0,0) {$H'$} (a);
			\draw[->] (2,0.25) -- (4,0.25) node[midway,above] {$\partial$};
			\draw[<-] (2,-0.75) -- (4,-0.75) node[midway,above] {$\Mt_{n-1}$};
			\node[vertex] at (6.5,0.5) (c) {};
			\node[vertex] [label=above:$v_n$,label=below:$1$] (d) [above right=of c] {};
			\node [draw,thick,circle through=(c)] at (6,0) {};
			\node at (6,0) {$H'$} (c);
		\end{tikzpicture}
		\caption{$e_n$ is negative twist}
		\label{pos-cross}
	\end{figure}
	
	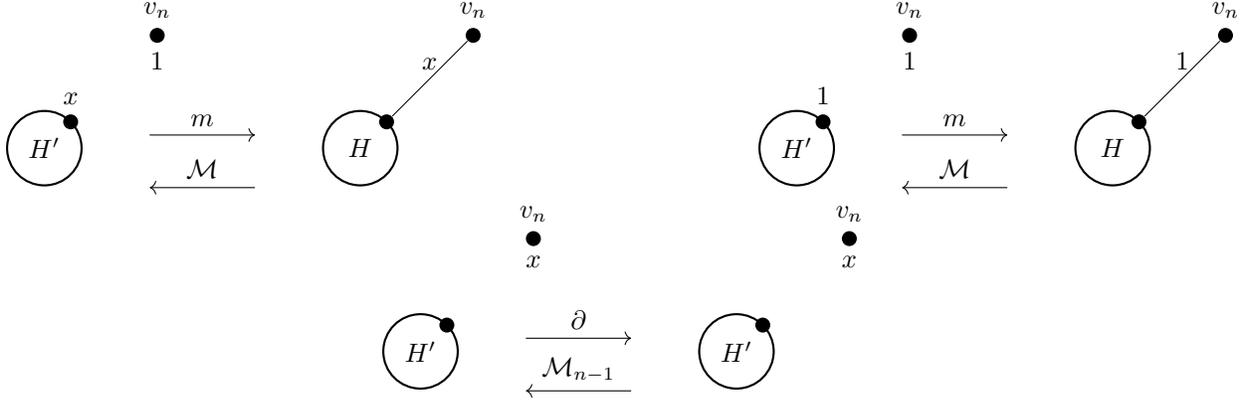
\begin{figure}[ht]
		\centering
		\begin{tikzpicture}[vertex/.style={circle,fill,inner sep=2pt},scale=0.7]
			\node[vertex] [label=above:$x$] at (0.5,0.5) (a) {};
			\node[vertex] [label=below:$1$,label=above:$v_n$] (b) [above right=of a] {};
			\node [draw,thick,circle through=(a)] at (0,0) {};
			\node at (0,0) {$H'$} (a);
			\draw[->] (2,0.25) -- (4,0.25) node[midway,above] {$m$};
			\draw[<-] (2,-0.75) -- (4,-0.75) node[midway,above] {$\Mt$};
			\node[vertex] at (6.5,0.5) (c) {};
			\node[vertex] [label=above:$v_n$] (d) [above right=of c] {};
			\draw (c) -- (d) node[midway,above] {$x$};
			\node [draw,thick,circle through=(c)] at (6,0) {};
			\node at (6,0) {$H$} (c);
		\end{tikzpicture}
		\hfill
		\begin{tikzpicture}[vertex/.style={circle,fill,inner sep=2pt},scale=0.7]
			\node[vertex] [label=above:$1$] at (0.5,0.5) (a) {};
			\node[vertex] [label=below:$1$,label=above:$v_n$] (b) [above right=of a] {};
			\node [draw,thick,circle through=(a)] at (0,0) {};
			\node at (0,0) {$H'$} (a);
			\draw[->] (2,0.25) -- (4,0.25) node[midway,above] {$m$};
			\draw[<-] (2,-0.75) -- (4,-0.75) node[midway,above] {$\Mt$};
			\node[vertex] at (6.5,0.5) (c) {};
			\node[vertex] [label=above:$v_n$] (d) [above right=of c] {};
			\draw (c) -- (d) node[midway,above] {$1$};
			\node [draw,thick,circle through=(c)] at (6,0) {};
			\node at (6,0) {$H$} (c);
		\end{tikzpicture}
		\begin{tikzpicture}[vertex/.style={circle,fill,inner sep=2pt},scale=0.7]
			\node[vertex] at (0.5,0.5) (a) {};
			\node[vertex] [label=below:$x$,label=above:$v_n$] (b) [above right=of a] {};
			\node [draw,thick,circle through=(a)] at (0,0) {};
			\node at (0,0) {$H'$} (a);
			\draw[->] (2,0.25) -- (4,0.25) node[midway,above] {$\partial$};
			\draw[<-] (2,-0.75) -- (4,-0.75) node[midway,above] {$\Mt_{n-1}$};
			\node[vertex] at (6.5,0.5) (c) {};
			\node[vertex] [label=above:$v_n$,label=below:$x$] (d) [above right=of c] {};
			\node [draw,thick,circle through=(c)] at (6,0) {};
			\node at (6,0) {$H'$} (c);
		\end{tikzpicture}
		\caption{$e_n$ is positive twist}
		\label{neg-cross}
	\end{figure} 

   For each $n$, let the set of unmatched enhanced spanning subgraphs be denoted by $\mathcal{B}_n^c$. We call the elements of $\mathcal{B}_n^c$ to be \textit{critical enhanced spanning subgraphs}. For the base case $n=1$, we clearly have exactly one critical enhanced spanning subgraph which is a graph with two isolated vertex $v_d$ and $v_1$ with $\epsilon(\{v_1\})=1$ if $e_1$ is a negative twist and $\epsilon(\{v_1\})=x$ if $e_1$ is a positive twist. Thus, $\Mt_1$ is near-perfect. Suppose by induction hypothesis, assume that $\Mt_{n-1}$ is near-perfect with the unique critical enhanced spanning subgraph $(H^{n-1}_c,\epsilon^{n-1}_c)$. Without loss of generality assume that $e_n$ is a negative twist. Observe that, $\mathcal{B}_n^c$ does not contain elements of $\mathcal{B}_n$ since every enhanced spanning subgraph of $\mathcal{B}_n$ are paired. Thus, $\mathcal{B}_n^c \subset \mathcal{B}_n'$ and since it uses $\Mt_{n-1}$ so by induction hypothesis, we have exactly one critical enhanced spanning subgraph left out by $\Mt_n$ which is $(H^n_c,\epsilon^n_c)$, where $H^n_c = H^{n-1}_c \sqcup v_n$ and 
   
   $$
    \epsilon^n_c(C) = \begin{cases}
    	\epsilon^{n-1}_c(C) & \text{ if } C \neq \{v_n\}\\
    	1 & \text{ if } C=\{v_n\}
    \end{cases}
   $$
   
   Similary, when $e_n$ is a positive twist, then the only difference being $\epsilon^n_c(\{v_n\})=x$. Thus, the matching is near-perfect.\\
   
   Clearly, the matching in case $n=1$ is acyclic. Suppose the matching $\Mt_{n-1}$ is acyclic and in contrary let us assume there is an alternating cycle $\A$ in $\mathcal{H}_\Mt((\CKh^+(U(T)),\partial_{\Kh}^+))$ with atleast one pair of enhanced spanning subgraphs $(H,\epsilon) \leftrightarrows (H',\epsilon')$ in $\A$, where $(H,\epsilon) \in \mathcal{B}_n$. Observe that the critical enhanced state in $B_n^c$ cannot be present in $\A$. Without of loss of generality assume that $e_n$ is a negative twist, then there is only one possible enhanced spanning subgraph $(H'',\epsilon'')$ which can occur in $\A$ before $(H',\epsilon')$ (See Figure \ref{acyclic case}):
   
   \begin{figure}[ht]
   	\centering
   		\begin{tikzpicture}[vertex/.style={circle,fill,inner sep=2pt},scale=0.7]
   			\node[vertex] [label=above:$p$] at (0.5,0.5) (a) {};
   			\node[vertex] [label=above:$v_n$,label=below:$x$] (b) [above right=of a] {};
   			\node [draw,thick,circle through=(a)] at (0,0) {};
   			\node at (0,0) {$H''$} (a);
   			\draw[->] (2,0) -- (4,0) node[midway,above] {$\partial$};
   			\node at (6,0) {$H'$} (c);
   			\node[vertex] [label=above:$p$] at (6.5,0.5) (c) {};
   			\node[vertex] [label=above:$v_n$,label=below:$x$] (d) [above right=of c] {};
   			\node [draw,thick,circle through=(c)] at (6,0) {};
   		\end{tikzpicture}
   		\hfill
   		\begin{tikzpicture}[vertex/.style={circle,fill,inner sep=2pt},scale=0.7]
   			\node[vertex] at (0.5,0.5) (a) {};
   			\node[vertex] [label=above:$v_n$] (b) [above right=of a] {};
   			\node [draw,thick,circle through=(a)] at (0,0) {};
   			\node at (0,0) {$H''$} (a);
   			\draw[->] (2,0) -- (4,0) node[midway,above] {$\partial$};
   			\node at (6,0) {$H'$} (c);
   			\node[vertex] at (6.5,0.5) (c) {};
   			\draw (a)--(b) node[midway,above] {$p$};
   			\draw (c)--(d) node[midway,above] {$p$};
   			\node[vertex] [label=above:$v_n$] (d) [above right=of c] {};
   			\node [draw,thick,circle through=(c)] at (6,0) {};
   		\end{tikzpicture}
   	\caption{ The differential occurs due to a removal or addition of an edge other than $e_n$ for both type of twist of the corresponding crossing due to $e_n$}
   	\label{acyclic case}
   \end{figure}
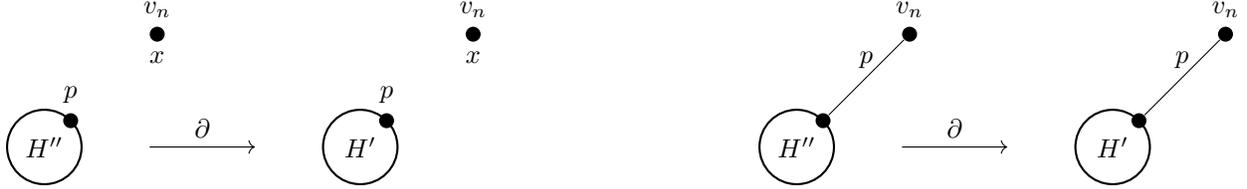
   
   But then there is no enhanced spanning subgraph occuring before $(H'',\epsilon'')$, since we cannot use $\Mt_{n-1}$ to pair $(H'',\epsilon'')$ if $\epsilon''(\{v_n\}) = x$, which implies that for $\A$ to be a cycle, any enhanced spanning subgraph $(H,\epsilon)$ cannot occur in $\A$ where $H$ has the connected component $C_{v_n}$ which means $\A$ can be seen as  a cycle in $\mathcal{H}_\Mt((\CKh^+(U(T)),\partial_{\Kh}^+))$ where all the matched pairs are from $\Mt_{n-1}$ which leads to a contradiction.

\end{proof}

\begin{corollary}\label{determination of aw and res from matching theorem}
	The activity word for a spanning tree $T$ completely determines the critical enhanced spanning subgraph $S_c^{T^+}$ in $\mathcal{H}_\Mt((\CKh^+(U(T)),\partial_{\Kh}^+))$ and the resolution of its corresponding resolved state.
\end{corollary}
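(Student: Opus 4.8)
\emph{Proof proposal.} The plan is to read off an explicit description of the unique critical enhanced spanning subgraph $S_c^{T^+}$ from the inductive construction in the proof of Theorem \ref{matching_theorem}, and then to observe that every piece of data in this description is already encoded in the activity word $W(T)$.

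First I would unwind the induction in Theorem \ref{matching_theorem}. At the $n$-th stage the least-ordered leaf edge $e_n$ of $G(T)$ is peeled off together with its pendant vertex $v_n$, and the only enhanced spanning subgraph of $\mathcal{B}_n'$ that escapes being matched has $v_n$ as an isolated vertex carrying the label $1$ when $e_n$ is a negative twist and $x$ when $e_n$ is a positive twist. Iterating down to the base case, the surviving critical cell $S_c^{T^+} = (H_c^T, \epsilon_c^T)$ has $H_c^T$ equal to the totally disconnected spanning subgraph of $G(T)$ (every vertex an isolated component), $\epsilon_c^T(\{v_d\}) = 1$ because the component $\{v_d\}$ is the one carrying the dotted arc in the reduced complex, and, for every other vertex $v$, $\epsilon_c^T(\{v\}) \in \{1,x\}$ determined entirely by the type of the twist that $v$ was peeled by. Since each edge of $G(T)$ is used exactly once in this peeling, $\epsilon_c^T$ is completely specified once one knows, for each edge of $G(T)$, whether its twist is positive or negative.

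Next I would trace this data back to the Tait graph. By the construction preceding Theorem \ref{matching_theorem}, the tree $G(T)$ together with its root $v_d$ is produced from $T$ by a deterministic recipe --- inserting blue edges for the externally active edges, contracting the internally inactive edges, and converting the resulting blue loops as in Figure \ref{blue loop conversion} --- and its edge set is in bijection with the set of live edges of $T$, that is, the edges whose letter in $W(T)$ belongs to $\{L, l, \bar{L}, \bar{l}\}$. The twist type (positive or negative) attached to such an edge is dictated by its activity letter and its sign through the smoothing assignments of Table \ref{smoothing table}. Hence $W(T)$ pins down all the twist types and therefore pins down $\epsilon_c^T$; together with the graph $G(T)$, which is itself determined by $T$, this determines $S_c^{T^+}$. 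Finally, the resolved state of $\Lk$ underlying $S_c^{T^+}$ is obtained, via Proposition \ref{span graph and res state} and Lemma \ref{kh complex is dis union of twisted unknots}, by smoothing each dead crossing exactly as prescribed by Table \ref{smoothing table} for its activity letter $D$, $d$, $\bar{D}$ or $\bar{d}$, and smoothing each live crossing so that the corresponding edge of $G(T)$ is absent from the edgeless subgraph $H_c^T$, which by Table \ref{res-edge table} is again dictated by that edge's activity letter. All of these letters are precisely the entries of $W(T)$, so the resolution is determined by $W(T)$; in particular the bigradings $i(T^+), j(T^+)$ of Definition \ref{reduced spanning tree complex defn} become computable directly from $W(T)$.

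The parts that need the most care are the two translations in the last step: first, matching each of the four live-edge activities $L, l, \bar{L}, \bar{l}$ to the correct twist type appearing in the figures of Theorem \ref{matching_theorem}, which requires tracking both the sign of the crossing and whether the edge lies in $T$; and second, verifying that the order in which leaves are peeled off is irrelevant, i.e. that when a non-root vertex $v$ finally becomes pendant its surviving incident edge is always the edge of $G(T)$ on the $v$--$v_d$ path, so that $\epsilon_c^T(\{v\})$ depends only on $W(T)$ and not on the peeling schedule. The translation of ``$H_c^T$ is edgeless in $G(T)$'' into a statement about the resolution of $\Lk$ through the blue-loop conversion is the other mildly delicate point, but it is purely local and follows from Figure \ref{blue loop conversion} together with Table \ref{res-edge table}.
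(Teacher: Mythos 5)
Your proposal is correct and follows essentially the same route as the paper: you read off from the induction in Theorem \ref{matching_theorem} that $S_c^{T^+}$ consists of the isolated vertices of $G(T)$ with enhancements dictated by the twist type of each edge, translate twist types into the live letters of $W(T)$, and recover the resolution from Table \ref{smoothing table}. Your extra care about the peeling order and the blue-loop conversion only elaborates what the paper treats implicitly, so no substantive difference remains.
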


\begin{proof}
    We have previously observed in Theorem \ref{matching_theorem} that the critical enhanced spanning subgraph is the unique spanning subgraph $S_c^{T^+}=(H_c,\epsilon_c)$ with $H_c$ being the collection of all isolated vertices of $G(T)$ and
	
    $$
    \epsilon_c(\{v_i\})  = \begin{cases}
    	1 & \text{ if } e_i \text{ is a negative twist}\\
    	x & \text{ if } e_i \text{ is a positive twist}
    \end{cases}
    $$
    
    Now a negative twisted edge in $G(T)$ denotes a $L$ and $\Bar{l}$ in $W(T)$, while a positive twisted edge denotes $\Bar{L}$ and $l$ in $W(T)$. To obtain the resolution, we replace $W(T)$ according to the Table \ref{smoothing table}. 
    
	This table is justified by the fact that for an internally positive live edge, $L$, we apply $B-$smoothing to increase the number of disconnected components and the same reasoning applies for the rest of the edges which as a result gives $H_c$.
\end{proof}

\begin{defin}
    Given a spanning tree $T$, the above matching in $\mathcal{H}_\Mt((\CKh^+(U(T)),\partial_{\Kh}^+))$ can be represented in the form of a word by concatenating the edges of $G(T)$ in order of the inductive matching sequence in Theorem \ref{matching_theorem} and suffixing each live edge with its corresponding order. We call this the \textit{matching word} for $T$ denoted by $M(T)$.  
\end{defin}

    \tikzset{diag1/.pic={
       \node[vertex] at (0,0) (a) {};
       \node[vertex] at (2,0) (b) {};
       \node[vertex,fill=red,color=red] at (1,1) (c) {};
       \draw (a)--(c);
       \draw (b)--(c);
       \node [label=below:$1$] at (1,0.5) (d) {};
       \node [draw, color=gray, thick, shape=rectangle, minimum width=2.5cm, minimum height=1.5cm, anchor=center] at (1,0.5) {};
    }}

    \tikzset{diag2/.pic={
       \node[vertex] [label=above:$1$] at (0,0) (a) {};
       \node[vertex] at (2,0) (b) {};
       \node[vertex,fill=red,color=red] at (1,1) (c) {};
       \draw (b)--(c) node[midway,right] {$1$};
       \node [draw, color=gray, thick, shape=rectangle, minimum width=2.5cm, minimum height=1.5cm, anchor=center] at (1,0.5) {};
    }}

    \tikzset{diag3/.pic={
       \node[vertex] [label=above:$x$] at (0,0) (a) {};
       \node[vertex] at (2,0) (b) {};
       \node[vertex,fill=red,color=red] at (1,1) (c) {};
       \draw (b)--(c) node[midway,right] {$1$};
       \node [draw, color=gray, thick, shape=rectangle, minimum width=2.5cm, minimum height=1.5cm, anchor=center] at (1,0.5) {};
    }}

    \tikzset{diag4/.pic={
       \node[vertex] at (0,0) (a) {};
       \node[vertex] [label=above:$1$] at (2,0) (b) {};
       \node[vertex,fill=red,color=red] at (1,1) (c) {};
       \draw (a)--(c) node[midway,left] {$1$};
       \node [draw, color=gray, thick, shape=rectangle, minimum width=2.5cm, minimum height=1.5cm, anchor=center] at (1,0.5) {};
    }}

    \tikzset{diag5/.pic={
       \node[vertex] at (0,0) (a) {};
       \node[vertex] [label=above:$x$] at (2,0) (b) {};
       \node[vertex,fill=red,color=red] at (1,1) (c) {};
       \draw (a)--(c) node[midway,left] {$1$};
       \node [draw, color=gray, thick, shape=rectangle, minimum width=2.5cm, minimum height=1.5cm, anchor=center] at (1,0.5) {};
    }}
    
    \tikzset{diag6/.pic={
       \node[vertex] [label=above:$1$] at (0,0) (a) {};
       \node[vertex] [label=above:$1$] at (2,0) (b) {};
       \node[vertex,fill=red,color=red] [label=above:$1$] at (1,1) (c) {};
       \node [draw, color=gray, thick, shape=rectangle, minimum width=2.5cm, minimum height=2cm, anchor=center] at (1,0.7) {};
    }}

    \tikzset{diag7/.pic={
       \node[vertex] [label=above:$1$] at (0,0) (a) {};
       \node[vertex] [label=above:$x$] at (2,0) (b) {};
       \node[vertex,fill=red,color=red] [label=above:$1$] at (1,1) (c) {};
       \node [draw, color=gray, thick, shape=rectangle, minimum width=2.5cm, minimum height=2cm, anchor=center] at (1,0.7) {};
    }}

    \tikzset{diag8/.pic={
       \node[vertex] [label=above:$x$] at (0,0) (a) {};
       \node[vertex] [label=above:$1$] at (2,0) (b) {};
       \node[vertex,fill=red,color=red] [label=above:$1$] at (1,1) (c) {};
       \node [draw, color=gray, thick, shape=rectangle, minimum width=2.5cm, minimum height=2cm, anchor=center] at (1,0.7) {};
    }}

    \tikzset{diag9/.pic={
       \node[vertex] [label=above:$x$] at (0,0) (a) {};
       \node[vertex] [label=above:$x$] at (2,0) (b) {};
       \node[vertex,fill=red,color=red] [label=above:$1$] at (1,1) (c) {};
       \node [draw, color=gray, thick, shape=rectangle, minimum width=2.5cm, minimum height=2cm, anchor=center] at (1,0.7) {};
    }}
    
    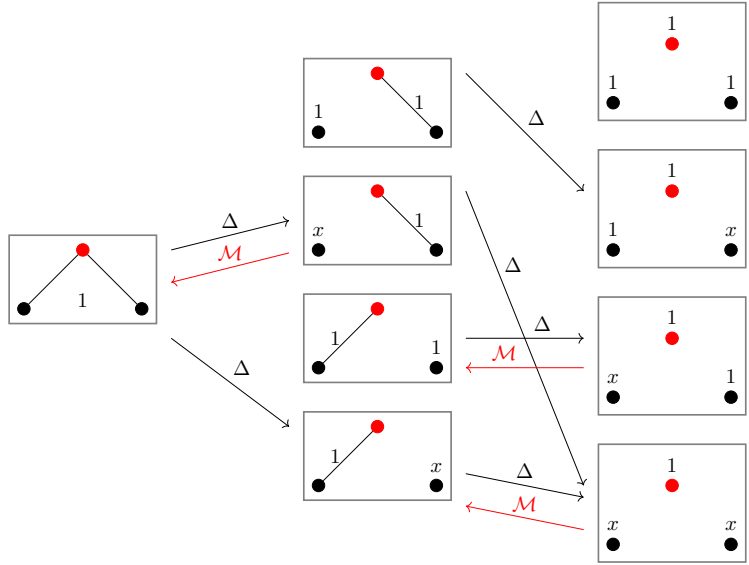
\begin{figure}[ht]
        \centering
        \resizebox{0.6\textwidth}{!}{
        \begin{tikzpicture}[vertex/.style={circle, draw, inner sep=0pt, minimum size=6pt, fill=black}]
            \pic (A) at (0,0) {diag1};
            \pic (B) at (5,3) {diag2};
            \pic (C) at (5,1) {diag3};
            \pic (D) at (5,-1) {diag4};
            \pic (E) at (5,-3) {diag5};
            \pic (F) at (10,3.5) {diag6};
            \pic (G) at (10,1) {diag7};
            \pic (H) at (10,-1.5) {diag8};
            \pic (I) at (10,-4) {diag9};
            \draw[->] (2.5,1)--(4.5,1.5) node[midway,above] {$\Delta$};
            \draw[<-,red] (2.5,0.45)--(4.5,0.95) node[midway,above] {$\Mt$};
            \draw[->] (2.5,-0.5)--(4.5,-2) node[midway,above,xshift=0.2cm] {$\Delta$};
            \draw[->] (7.5,4)--(9.5,2) node[midway,above,xshift=0.2cm] {$\Delta$};
            \draw[->] (7.5,2)--(9.5,-3) node[midway,above,xshift=-0.2cm,yshift=1cm] {$\Delta$};
            \draw[->] (7.5,-0.5)--(9.5,-0.5) node[midway,above,xshift=0.3cm] {$\Delta$};
            \draw[<-,red] (7.5,-1)--(9.5,-1) node[midway,above,xshift=-0.35cm] {$\Mt$};
            \draw[->] (7.5,-2.8)--(9.5,-3.2) node[midway,above] {$\Delta$};
            \draw[<-,red] (7.5,-3.35)--(9.5,-3.75) node[midway,above] {$\Mt$};
        \end{tikzpicture}}
    \caption{$\mathcal{H}_\Mt((\CKh^+(U(T_2)),\partial_{\Kh}^+))$ with $M(T_2)=L_2L_1$, where $T_2$ is from Table \ref{Three trefoil table}}
    \label{hasse diagram example}
\end{figure}

\end{subsection}

\begin{subsection}{Reduced spanning tree complex}
	Given a connected link diagram $\Lk$, we thus have an acyclic matching due to Theorem \ref{matching_theorem} on $\mathcal{H}((\CKh^+(U(T)),\partial_{\Kh}^+))$ which gives us the following Morse complex:
    
	$$
	   {C^\Mt}^+(\Lk) = \bigoplus_{i,j} {({C^\Mt}^+)}^{i,j}(\Lk)
	$$
    
   where, ${({C^\Mt}^+)}^{i,j}(\Lk) = \Z \langle (H_c,\epsilon_c) \in \CKh^+(\Lk) \: \mid \: i_{\Kh}((H_c,\epsilon_c)) = i, j_{\Kh}((H_c,\epsilon_c)) = j \rangle$ and the differential on ${C^\Mt}^+(\Lk)$ is defined as follows:
	
	$$
	\begin{aligned}
		&\partial_\Mt^+ : {({C^\Mt}^+)}^{i,j}(\Lk^+) \rightarrow {({C^\Mt}^+)}^{i+1,j}(\Lk^+) \\
		& (H_c,\epsilon_c) \mapsto \sum_{i_{\Kh}((H_c,\epsilon_c))=i+1} \Gamma((H_c,\epsilon_c),(H_c',\epsilon_c')).(H_c',\epsilon_c') 
	\end{aligned}
	$$
	
   where, $\Gamma((H_c,\epsilon_c),(H_c',\epsilon_c'))$ is the sum of weights of all alternating paths between $(H_c,\epsilon_c)$ and $(H_c',\epsilon_c')$ in $\mathcal{H}_\Mt((\CKh^+(U(T)),\partial_{\Kh}^+))$.\\
   Now each spanning tree of $G_\Lk$ can be represented by the unique critical enhanced spanning subgraph $(H_c,\epsilon_c)$ and hence, we have a $\Z-$module isomorphism of complexes between ${C^\Mt}^+(\Lk)$ and $\CST^+(\Lk)$. Thus, we now have the definition of $\partial_{ST}^+$ which was previously mentioned in the definition \ref{reduced spanning tree complex defn}.
	
	\begin{equation}\label{reduced differential}
		\begin{aligned}
		   &  \partial_{ST}^+: {\CST^+}^{i,j}(\Lk) \rightarrow {\CST^+}^{i+1,j} (\Lk) \\
		   & T^+ \mapsto \sum_{i({T'}^+)=i+1}  \Gamma(T^+,{T'}^+).{T'}^+ 
		\end{aligned}
	\end{equation}
	
   where, the incidence number $\Gamma(T^+,{T'}^+)=\Gamma((H_c,\epsilon_c),(H_c',\epsilon_c'))$.
	
 \begin{proof}[Proof of Theorem \ref{maintheorem}]
     Theorem \ref{dmt maintheorem} and the definition of $\partial^+_{ST}$ in \ref{reduced differential} implies the following:
		$$
		   H^*(({\CST^+}(\Lk),\partial_{ST}^+)) \cong \Kh^+(\Lk)
		$$
 \end{proof}
	
Now our goal will be to provide a detailed combinatorial description of $\Gamma(T^+,{T'}^+)$ in terms of the activity word of the spanning trees of $G_\Lk$.  
	
\end{subsection}

\begin{subsection}{Alternating paths between critical enhanced subgraphs}\label{alternating path count subsection}
	
   An alternating path from $(H_c,\epsilon_c)$ to $(H_c',\epsilon_c')$ contains a collection of enhanced spanning subgraphs which belongs to a set of complexes $\{\CKh^+(U(T_i))\}_{i=1}^n$, where $T_1^+ = T^+$ and $T_n^+ = T'^+$. Now any alternating path in $\mathcal{H}_{\Mt}((\CKh^+(\Lk),\partial_{\Kh}^+))$ consists of non-zero $m$ and $\Delta$ maps defined in the Khovanov complex and as a result the collection of these trees $\{T_i^+\}$ follows the partial order defined in \ref{partial order on spanning trees defn}. In other words,
	
\begin{proposition}\label{partial order respects differential}
     The differential $\partial_{\Kh}^+$ in the reduced Khovanov complex $\CKh^+(\Lk)$ respects the partial order in Definition \ref{partial order on spanning trees defn} as follows:
		
	\begin{enumerate}
           \item If $[\partial_{\Kh}^+(x),y] \neq 0$ for any $x \in \CKh^+(U(T_1))$ and $y \in \CKh^+(U(T_2))$, then $T_1 > T_2$.
			
            \item If $T_1$ and $T_2$ are not comparable or $T_2 > T_1$, then $[\partial_{\Kh}^+(x),y] = 0$ for all $x \in \CKh^+(U(T_1))$ and $y \in \CKh^+(U(T_2))$.
	\end{enumerate}	
\end{proposition}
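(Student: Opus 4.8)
The plan is to combine two facts: (i) every nonzero term of $\partial_{\Kh}^+$ changes the resolution of exactly one crossing from its $A$-smoothing to its $B$-smoothing, and (ii) which leaf of the skein resolution tree $\mathcal{T}$ a resolved state of $\Lk$ belongs to is decided by descending $\mathcal{T}$ along the branches named by that state. Part (2) is then purely formal: if $T_1,T_2$ are incomparable, or if $T_2>T_1$, then $T_1>T_2$ cannot hold --- in the latter case because a chain $T_1>\cdots>T_2>\cdots>T_1$ would violate Proposition \ref{partially ordered trees are distinct} --- so the contrapositive of (1) forces the incidence numbers to vanish. Hence all the work is in (1).

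First I would establish (i). By Table \ref{res-edge table} and Proposition \ref{span graph and res state}, an edge $e$ lies in $A((H,\epsilon))$ precisely when the crossing $e$ carries its $A$-smoothing in the resolved state attached to $H$, and the subgraph $H'_e$ appearing in the definition of $\partial_{\Kh}$ (remove a positive $e$, insert a negative $e$) corresponds to that same state with the crossing $e$ switched to its $B$-smoothing, the enhancement $\epsilon'$ only relabelling faces. Consequently, if $x\in\CKh^+(U(T_1))$ has resolved state $S_x$ and $y\in\CKh^+(U(T_2))$ is a term of $\partial_{\Kh}^+(x)$, then $S_y$ agrees with $S_x$ except that some crossing $c_k$ is $A$-smoothed in $S_x$ and $B$-smoothed in $S_y$. (Only the list of nonzero terms is used, so the same statement holds for the unreduced $\partial_{\Kh}$.)

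Next I would descend $S_x$ and $S_y$ through $\mathcal{T}$. Their paths coincide up to the step at which $c_k$ is processed, and there $c_k$ is nugatory for both or branched for both, since its nugatory status at that step depends only on the smoothings chosen earlier at branched crossings, which $S_x$ and $S_y$ share. If $c_k$ is skipped in both, then --- a skipped crossing never enters the data deciding later nugatory statuses --- the paths agree afterwards and $T_1=T_2$, the \emph{degenerate} case. If $c_k$ is branched, then $S_x$ takes its $A$-branch and $S_y$ its $B$-branch, so in the partial smoothings $(x_1,\dots,x_n)$ and $(y_1,\dots,y_n)$ of $U(T_1)$ and $U(T_2)$ one has $x_k=A$ and $y_k=B$: this is the second clause of Definition \ref{partial order on spanning trees defn}.

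It remains to verify the first clause, $y_i=A\Rightarrow x_i\in\{A,\star\}$. Here $i\neq k$, so $S_x$ and $S_y$ agree at $c_i$ and both are $A$-smoothed there. If $c_i$ is processed before $c_k$, the two descending paths still coincide at that step, so $c_i$ is branched on the path to $U(T_1)$ with its $A$-branch taken and $x_i=A$. If $c_i$ is processed after $c_k$, then on the path to $U(T_1)$ the crossing $c_i$ is either branched, forcing $x_i=A$, or skipped, giving $x_i=\star$; it is never $B$. Hence $(x_1,\dots,x_n)>(y_1,\dots,y_n)$ and, after transitive closure, $T_1>T_2$. The step I expect to require the most care is this last one: one must argue that switching $c_k$ from its $A$- to its $B$-smoothing can only change the fate of a subsequently processed crossing from ``branched with an $A$-smoothing'' to ``skipped'', never to ``branched with a $B$-smoothing'', so that no index $i$ with $y_i=A$ can acquire $x_i=B$ and break the first clause.
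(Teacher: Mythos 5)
Your proof is correct and takes essentially the same route as the paper's much terser argument: both rest on the observation that a nonzero incidence of $\partial_{\Kh}^+$ switches exactly one crossing from its $A$- to its $B$-smoothing, which forces the partial smoothings of the corresponding twisted unknots to satisfy the defining inequality, with part (2) then following formally via Proposition \ref{partially ordered trees are distinct}; your descent through the skein resolution tree just supplies the verification the paper leaves implicit. The only remark worth adding is that the caution in your final sentence is already settled by your own argument: when a later crossing is branched, the branch taken is dictated by the full resolution (which agrees for $S_x$ and $S_y$ away from $c_k$), not by the node, so ``branched with a $B$-smoothing'' can never arise for an index with $y_i=A$.
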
	
	
\begin{proof}
   If $[\partial_{\Kh}^+(x),y] \neq 0$ then there is exactly one edge in $x$ with an $A$-marker which has changed to $B$-marker in $y$ and thus the partial smoothing that contains these states satisfies $(x_1,\cdots,x_n) > (y_1,\cdots,y_n)$. The second statement follows from the definition of $\partial_{\Kh}^+$.
\end{proof}

    First let us look at some properties of alternating paths within the Hasse diagram $\mathcal{H}_\Mt((\CKh^+(U(T)),\partial_{\Kh}^+))$ for a fixed spanning tree $T \subset G_\Lk$. From now on, we will use the notation $S_x^{T^+}$ in place of $(H_x,\epsilon_x)$ whenever we refer to any element of $\CKh^+(U(T))$.
   
   \begin{definition}
   	Given a spanning tree $T$, consider any two enhanced spanning subgraphs $S_x^{T^+}$ and $S_y^{T^+}$ in $\CKh^+(U(T))$. Then we define the following:
   	\begin{enumerate}
   		\item $\widetilde{\A^{\downarrow}}(S_x^{T^+},S_y^{T^+})$ := \# alternating paths from $S_x^{T^+}$ to $S_y^{T^+}$ in $\mathcal{H}_\Mt ((\CKh^+(U(T)),\partial_{\Kh}^+))$, where the paths begin with a non-matched edge.
   		
   		\item $\widetilde{\A^{\uparrow}}(S_x^{T^+},S_y^{T^+})$ := \# alternating paths from $S_x^{T^+}$ to $S_y^{T^+}$ in $\mathcal{H}_\Mt ((\CKh^+(U(T)),\partial_{\Kh}^+))$, where the paths begin with a matched edge.
   	\end{enumerate}
   \end{definition}
   
   \begin{proposition}
   	 $\widetilde{\A^{\downarrow}}(S_x^{T^+},S_y^{T^+}),\ \widetilde{\A^{\uparrow}}(S_x^{T^+},S_y^{T^+}) \in \{0,1\}$ for all $S_x^{T^+}, S_y^{T^+} \in \CKh^+(U(T))$.
   \end{proposition}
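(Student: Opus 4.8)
The plan is to show that between any two critical-or-non-critical enhanced spanning subgraphs of $U(T)$ there is at most one alternating path of a prescribed starting type, and to do this by the same induction on $n = \#E(G(T))$ that underlies Theorem \ref{matching_theorem}. The key observation is that the matched edges in $\mathcal{H}_\Mt((\CKh^+(U(T)),\partial_{\Kh}^+))$ are highly rigid: at each stage of the induction the matching $\Mt_n$ either pairs an enhanced subgraph in $\mathcal{B}_n$ with its partner via the edge $e_n$ (in which case the up-going matched edge is forced), or it inherits a pairing from $\Mt_{n-1}$ under the bijection $\Psi$. So a matched edge traversed in the ``wrong'' direction always lands in a uniquely determined vertex. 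The down-going differential edges $m$ and $\Delta$ are not unique by themselves, but in an \emph{alternating} path a down-edge must be immediately followed by an up-edge (a matched edge), and it is the combination of ``pick a face to merge/split, then un-match'' that needs to be pinned down.

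First I would set up the induction. The base case $n=1$ is immediate from Figure \ref{base case matching}: the Hasse diagram has at most three vertices and one easily checks both counts lie in $\{0,1\}$. For the inductive step, fix the least leaf $e_n$ of $G(T)$ with pendant vertex $v_n \neq v_d$, and recall the partition of the enhanced spanning subgraphs into $\mathcal{B}_n$ (where the component $C_{v_n}$ has $>1$ vertex) and $\mathcal{B}_n'$ (its complement), with $\mathcal{B}_n'$ in bijection with the enhanced spanning subgraphs of $G(T)\setminus\{v_n\}$ via $\Psi$. The essential structural fact I would establish is that an alternating path in $\mathcal{H}_\Mt((\CKh^+(U(T)),\partial_{\Kh}^+))$ cannot ``re-enter'' a $\mathcal{B}_n$-vertex after leaving it: this is exactly the acyclicity argument already carried out at the end of the proof of Theorem \ref{matching_theorem} (see Figure \ref{acyclic case}), where it is shown that once $\epsilon''(\{v_n\})=x$ (resp. $=1$ for a positive twist) one cannot continue the path backwards. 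Consequently any alternating path either lives entirely inside the ``$\mathcal{B}_n$-region'' for one or two steps and then exits into $\mathcal{B}_n'$ forever, or never touches $\mathcal{B}_n$ at all; and the portion inside $\mathcal{B}_n'$ is, under $\Psi$, an alternating path in $\mathcal{H}_{\Mt_{n-1}}$ of the graph $G(T)\setminus\{v_n\}$.

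With that in hand the counting is bookkeeping. For a path that stays in $\mathcal{B}_n'$ the whole way, applying $\Psi$ and the induction hypothesis gives the count is in $\{0,1\}$, once one checks that $\Psi$ is a bijection on alternating paths — this is immediate because $\Psi$ carries matched edges to matched edges and differential edges to differential edges while only forgetting the fixed label $\epsilon(\{v_n\})$, which the two matched subgraphs share. For a path that begins (or passes through) $\mathcal{B}_n$, the initial segment through $e_n$-matched pairs is uniquely determined by the starting/ending vertex and the starting type ($\downarrow$ or $\uparrow$) — here I would simply read off Figures \ref{pos-cross} and \ref{neg-cross}: from a given $(H,\epsilon)\in\mathcal{B}_n$ the matched up-edge goes to a unique $(H',\epsilon')$, and a down-edge out of $(H',\epsilon')$ into $\mathcal{B}_n'$ is forced to be the one shown (the only available merge/split consistent with staying alternating and not returning to $\mathcal{B}_n$). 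Gluing the unique initial segment to the inductively-unique tail gives at most one path total, which is the claim.

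I expect the main obstacle to be the second bullet: carefully ruling out ``branching'' at a down-edge. A vertex in $\mathcal{H}_\Mt$ can have several outgoing differential edges (several faces available to merge, or several ways a face can split), so a priori an alternating path has choices. The resolution is that immediately after a down-edge one must take a matched up-edge, and the matching $\Mt$ is a partial matching — each vertex lies in at most one matched edge — so the only down-edges that can be ``continued'' are those landing on matched-and-not-yet-used vertices, and combined with the no-re-entry fact this collapses the choices to at most one. Making this precise — essentially showing that the out-degree of any vertex \emph{within the subgraph of continuable alternating steps} is at most one — is where the real work sits; everything else is the induction skeleton already present in Theorem \ref{matching_theorem}.
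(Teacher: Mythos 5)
Your skeleton (induction on the number of edges of $G(T)$, the base case of Figure \ref{base case matching}, the splitting into $\mathcal{B}_n$ and $\mathcal{B}_n'$, and the fact that $\Psi$ carries alternating paths for $\Mt_{n-1}$ to alternating paths for $\Mt_n$) is the same as the paper's, but the heart of the statement is missing. You yourself flag the crucial step -- ruling out two distinct alternating paths with the same endpoints -- as ``where the real work sits,'' and the mechanism you propose for it is not correct: it is simply false that, after imposing the no-re-entry condition, ``the out-degree of any vertex within the subgraph of continuable alternating steps is at most one.'' From a critical state $S_c^{T^+}$ there are in general several down-edges each of which lands on a matched vertex and can be continued; Propositions \ref{down path within tree} and \ref{up path within tree} make this explicit (one continuable branch for each negative, resp.\ positive, subpath of $G(T)$), and the $8_{20}$ computation (e.g.\ $T_{14}^+ \mapsto -2T_{15}^- + 2T_9^- + T_{18}^+$) shows a single tree admitting several alternating paths with distinct targets. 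The proposition does not assert local out-degree one; it asserts that distinct continuable branches never reconverge at the same endpoint, and that is exactly what remains to be proved. Your structural claim that a path spends at most its first step or two in $\mathcal{B}_n$ and then stays in $\mathcal{B}_n'$ ``forever'' is also not justified by no-re-entry alone (the $\mathcal{B}_n$ visit could a priori occur in the middle or at the end), and in fact for a positive twist the $e_n$-matched edge is forced to sit at the \emph{end} of the path, not the beginning.

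For comparison, the paper closes this gap by contradiction rather than by a local degree count: assuming two alternating paths $\Pt \neq \Pt'$ from $S_x^{T^+}$ to $S_y^{T^+}$, it may assume (by the induction hypothesis) that one of them uses a matched edge $S_z^{T^+} \leftrightarrows \partial S_z^{T^+}$ coming from the $e_n$-matching; it then shows that the position of this edge in the path is forced -- at the end of the path when $e_n$ is a positive twist, immediately after $S_x^{T^+}$ when $e_n$ is a negative twist -- and deduces that either $\Pt = \Pt'$ or there are two distinct alternating paths consisting entirely of $\Mt_{n-1}$-matched data, contradicting the induction hypothesis for $G(T)\setminus v_n$. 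The paper also disposes of the $\widetilde{\A^{\uparrow}}$ count in one line via $\widetilde{\A^{\uparrow}}(S_x^{T^+},S_y^{T^+}) = \widetilde{\A^{\downarrow}}(\partial_{\Kh}^+(S_x^{T^+}),S_y^{T^+})$, using that each vertex lies on at most one matched edge; your proposal gestures at the two starting types but never records this reduction. To repair your argument you would need to replace the out-degree claim with an argument of the forced-position type (or some other global mechanism preventing reconvergence of branches); as written, the proposal does not prove the proposition.
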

   
   \begin{proof}
      Assuming the statement is true for $\widetilde{\A^\downarrow}(S_x^{T^+},S_y^{T^+})$, one can easily show that it is also true for $\widetilde{\A^\uparrow}(S_x^{T^+},S_y^{T^+})$ since, there is an unique enhanced spanning subgraph $\partial_{\Kh}^+(S_x^{T^+})$ which is matched with $S_x^{T^+}$ and thus, $\widetilde{\A^\uparrow}(S_x^{T^+},S_y^{T^+}) = \widetilde{\A^\downarrow}(\partial_{\Kh}^+(S_x^{T^+}),S_y^{T^+})$.\\
   	 We prove our assumption by applying induction on the number of edges of $G(T)$. For $|E(G(T))|=1$, the statement is certainly true from Figure \ref{base case matching}. Suppose by induction hypothesis the statement is true for $|E(G(T))| < n$, then for the sake of contradiction, assume there are atleast two alternating paths $\Pt$ and $\Pt'$ from $S_x^{T^+}$ to $S_y^{T^+}$ and without loss of generality, assume $\Pt$ contains a directed edge $S_z^{T^+} \leftrightarrows \partial S_z^{T^+}$ which is matched due to the edge $e_n$, which occurs when defining $\Mt_n$ in the proof of Theorem \ref{matching_theorem}. Such a directed edge occurs due to the induction hypothesis. Now if $e_n$ is a positive twist then either $S_z^{T^+}=S_y^{T^+}$ or $S_y^{T^+}$ is the next enhanced spanning subgraph occuring after $S_z^{T^+}$ in $\Pt$. Now if $S_z^{T^+}=S_y^{T^+}$ then, either the $S_x^{T^+}$ occurs before $\partial S_z^{T^+}$ in which case $\Pt = \Pt'$ or there are two different alternating paths from $S_x^{T^+}$ to $\partial S_z^{T^+}$ which entirely consists of directed matching edges of $\Mt_{n-1}$ and thus, we have a contradiction. If $S_z^{T^+} \neq S_y^{T^+}$ then either the directed edge $S_z^{T^+} \leftrightarrows \partial S_z^{T^+}$ belongs to $\Pt'$ in which case $\Pt=\Pt'$ by a similar argument as discussed earlier or else $S_x^{T^+}=S_z^{T^+}$ which again implies $\Pt=\Pt'$. While if $e_n$ is a negative twist, then $\partial S_z^{T^+}$ is the enhanced spanning subgraph occuring just after $S_x^{T^+}$ in $\Pt$ and the rest of the argument follows similarly.
   \end{proof}

  Now since we want to count alternating paths between $S_c^{T^+}$ and $S_x^{T'^+}$, we first characterize  $S_x^{T^+} \in \CKh^+(U(T))$ and $S_y^{T^+} \in \CKh^+(U(T'))$ for which $\widetilde{\A^{\downarrow}}(S_c^{T^+},S_x^{T^+})=1$ and $\widetilde{\A^{\uparrow}}(S_y^{T^+},S_c^{T'^+})=1$.\\ 
  
  For a given state $S_x^{T^+}$, Let $(H_x \cap H_c, \epsilon_x \cap \epsilon_c)$ be the enhanced subgraph with $H_x \cap H_c$ be the subgraph with isolated vertices such that $C(H_x) = C(H_c)$ and $\epsilon_x \cap \epsilon_c : H_x \cap H_c \rightarrow \{1,x\}$ with $(\epsilon_x \cap \epsilon_c) (C) = \epsilon_x(C) = \epsilon_c(C)$. Now define $G(H_x)$ to be the maximal subgraph of $G(T)$ such that $V(H_x \cap H_c) = V(G(H_x))$.\\
  
  Let $\{e_1,e_2,\cdots,e_n\}$ be the set of leaves of $G(T)$ and let $P_{e_i}$ be the unique path in $G(T)$ from $v_d$ to $e_i$. Then, we have the following two definitions:

  \begin{definition}
  	A \textit{negative subpath} of $G(H_x)$ is a subgraph of $G(H_x)$ where each connected component is contained in some $P_{e_i}$, whose all edges have activity $\Bar{L}$ or $l$ and the initial (least distant from $v_d$ in $G(T)$) vertex $v_C$ of each component $C$ is not $v_d$, together with $\epsilon_x (\{v_C\}) = 1$.
  \end{definition}

  \begin{definition}
  	A \textit{positive subpath} of $G(H_x)$ is a subgraph of $G(H_x)$ where each connected component is contained in some $P_{e_i}$, whose all edges have activity $L$ or $\Bar{l}$ and the initial vertex $v_C$ of each component $C$ is not $v_d$, together with $\epsilon_x (\{v_C\}) = x$.
  \end{definition}

   \begin{proposition}\label{down path within tree}
   	For a given spanning tree $T$ of $G_\Lk$, the following two sets are in one-one correspondence:
   	
   	\[\begin{tikzcd}[ampersand replacement=\&,cramped]
   		{\left\{S_x^{T^+} \mid \widetilde{\A^\downarrow}(S_c^{T^+},S_x^{T^+})=1, i(S_c^{T^+})=i(S_x^{T^+})\right\}} \&\& {\left\{\text{Negative subpaths of }G(H_c)=G(T)\right\}}
   		\arrow[shorten <=4pt, shorten >=4pt, tail reversed, from=1-1, to=1-3]
   	\end{tikzcd}\]
   	
   \end{proposition}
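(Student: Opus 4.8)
The plan is to exhibit the bijection explicitly and then prove it is well-defined and invertible by induction on $|E(G(T))|$, mirroring the inductive construction of the matching $\Mt$ in Theorem \ref{matching_theorem}. Given a state $S_x^{T^+}$ with $\widetilde{\A^\downarrow}(S_c^{T^+},S_x^{T^+})=1$ and $i(S_c^{T^+})=i(S_x^{T^+})$, I would associate to it the subgraph $N(S_x) \subseteq G(H_c) = G(T)$ consisting of those edges $e_i \in G(T)$ which, along the (unique) alternating path from $S_c^{T^+}$ to $S_x^{T^+}$, were ``re-inserted'' by a $\Delta$-split step matched to $e_i$ in the construction of $\Mt_n$ — i.e., the edges whose twist was undone in passing from $S_c^{T^+}$ to $S_x^{T^+}$. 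Conversely, to a negative subpath $N$ of $G(T)$ I would associate the state obtained from $S_c^{T^+}$ by re-inserting exactly the edges of $N$ (merging the corresponding isolated vertices into the components dictated by $N$) and propagating the labels using the local rules of Figures \ref{pos-cross} and \ref{neg-cross}.

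The key steps, in order, are: (1) Show that if $\widetilde{\A^\downarrow}(S_c^{T^+},S_x^{T^+})=1$ with equal $i$-grading, then the unique alternating path consists entirely of $\Delta$-steps immediately followed by matched $\Mt$-steps (no net $m$-steps can occur, since $\partial_{\Kh}^+$ raises $i$ by one and the matched edges lower it, and equal $i$-grading forces exact cancellation); this pins down the path combinatorially. (2) Analyze which edges $e_n$ can participate: a matched step coming from $\Mt_n$ requires the leaf $v_n$ to be the endpoint of an edge $e_n$ with the appropriate twist type, and — crucially — for the path to stay alternating and end at a state of the same $i$-grading, the labels must be exactly as in the definition of negative subpath ($\epsilon_x(\{v_C\})=1$ at the initial vertex of each component, all intermediate edges of activity $\bar L$ or $l$). (3) Run the induction: strip the least-ordered leaf $e_n$; by the induction hypothesis the claim holds for $G(T)\setminus v_n$, and one checks the two cases — $e_n$ used along the path (contributing $e_n$ to the negative subpath, with the forced labels from Figures \ref{pos-cross}/\ref{neg-cross}) versus $e_n$ not used (so $v_n$ remains isolated with label as in $S_c^{T^+}$) — correspond exactly to the two ways a negative subpath of $G(T)$ restricts to one of $G(T)\setminus v_n$. (4) Verify the two assignments are mutually inverse, which is essentially bookkeeping once (1)–(3) are in place.

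I expect the main obstacle to be step (2): carefully verifying that the combinatorics of alternating paths — specifically, the constraint that the path never revisits a component structure and never introduces an $m$-step — forces precisely the ``$\bar L$ or $l$'' activity condition and the precise label $1$ (rather than $x$) at the initial vertex of each component of the negative subpath. This requires a close reading of how the local matching moves in Figures \ref{pos-cross} and \ref{neg-cross} compose along a path, and of how the merging rule $m$ and splitting rule $\Delta$ interact with the $\epsilon$-labels; in particular one must rule out paths that would traverse an edge $e_n$ whose re-insertion would merge two components with incompatible labels. The grading bookkeeping ($i(S_c^{T^+})=i(S_x^{T^+})$) is the tool that makes this rigid, and I would lean on it heavily. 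The remaining steps are routine given the inductive scaffolding already established in the proof of Theorem \ref{matching_theorem}.
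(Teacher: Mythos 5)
Your proposal misidentifies the bijection, and the error is not cosmetic. You propose to send a negative subpath $N$ to the state obtained from $S_c^{T^+}$ by re-inserting the edges of $N$, and conversely to read $N$ off from the edges ``re-inserted'' along the path. But every edge of a negative subpath has activity $\Bar{L}$ or $l$ (a positive twist), and inserting such an edge raises the homological grading by one; a state with the edges of $N$ present would have $i$-grading $i(S_c^{T^+})+|E(N)|$ and so cannot lie in the left-hand set, which by hypothesis consists of states with $i(S_x^{T^+})=i(S_c^{T^+})$. The states actually in correspondence with negative subpaths have the \emph{same} underlying subgraph $H_x=H_c$ (all vertices of $G(T)$ isolated); they differ from $S_c^{T^+}$ only in the enhancements: along each component $C_i$ of $N$ the label $1$ is transported from the initial vertex to the terminal vertex, all other vertices of $C_i$ carrying $x$ (see Figure \ref{negative subpath example}). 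The alternating path realizing this consists of a non-matched merge $m$ along each successive edge of $C_i$, each immediately followed by the reversed matched edge (an ``un-merge''), so the underlying graph returns to $H_c$ at every other step while the label $1$ slides one vertex further along the component.

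Relatedly, your step (1) is backwards. At $S_c^{T^+}$ every component is a single vertex, so every non-matched differential edge leaving it is a merge, not a split; the paths counted by $\widetilde{\A^{\downarrow}}(S_c^{T^+},S_x^{T^+})$ with equal $i$-grading at both ends consist entirely of $m$-steps and reversed matched $m$-steps and contain no $\Delta$-steps at all. The grading argument you lean on only shows that up-steps and down-steps occur in equal number; since $m$ and $\Delta$ both raise $i$ by one, it cannot rule out merges in favour of splits. Because both the path structure and the target states are misdescribed, the inductive scheme of your steps (2)--(4) would verify the wrong correspondence. The inductive scaffold itself is reasonable and close in spirit to how the matching was constructed, but it has to be run for the label-shifting states described above — reading each component of $N$ as a subsequence of the matching word $M(T)$ and pushing the label $1$ along it, and, in the converse direction, extracting the components of $N$ from where the path's merges sit relative to the $L/\Bar{l}$ edges in $M(T)$ — which is essentially what the paper's proof does.
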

   
   \begin{proof}
   	For a given negative subpath $H$, each connected component of $C \in C(H)$ can be observed as a subsequence of $M(T)$ and thus can be organized in the inductive matching order of $T$ as $C_1 C_2\cdots C_k$. We start with the initial component $C_1$ from $S_c^{T^+}$, where the initial vertex $v_{C_1}$ in $C_1$ has $\epsilon(\{v_{C_1}\})=1$ and rest vertices have $x$ as their enhancement in $H_c$. We can find an alternating path entirely consisting of merge maps where we end up with a state $S_x^{T^+}$ with $H_x = H_c$ and $\epsilon_x(\{v_{C_i}\})= \epsilon_x(\{v\}) = x$ for all $v \in V(C_i)$ except for the last vertex of each $C_i$ having enhancement $1$ (See Figure \ref{negative subpath example}). For the forward direction of the correspondence, we observe that any alternating path starting from $S_c^{T^+}$ has to begin with an edge with activity either $\Bar{L}$ or $l$ and has to have the previous edge as $L$ or $\Bar{l}$. To begin such a path, we choose any edge with activity $L$ or $\Bar{l}$ and continue until we hit another edge with activity $L$ or $\Bar{l}$ following the order in $M(T)$. We repeat this process until we reach the end of $M(T)$ in order to look for such an edge. This provides us with a negative subpath. 
   \end{proof}   

   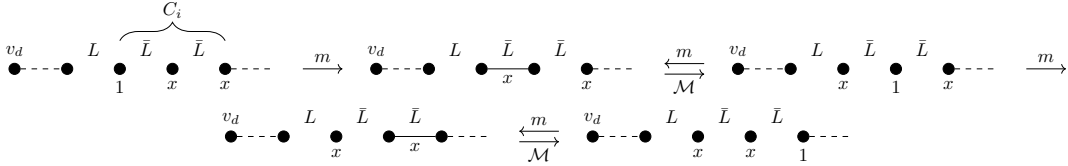
\begin{figure}[ht]
       \centering
       \scalebox{0.7}{
       \begin{tikzpicture}[vertex/.style={circle, draw, inner sep=0pt, minimum size=6pt, fill=black},baseline]
       \node[vertex] [label=above:$v_d$] at (0,0) (r) {};
       \node[vertex] at (1,0) (a) {};
       \node[vertex] [label=below:$1$] at (2,0) (b) {};
       \node[vertex] [label=below:$x$] at (3,0) (c) {};
       \node[vertex] [label=below:$x$] at (4,0) (d) {};
       \node at (5,0) (e) {};
       \draw[dashed] (r)--(a);
       \draw[dashed] (d)--(e);
       \node [label=above:$L$] at (1.5,0) {}; 
       \node [label=above:$\Bar{L}$] at (2.5,0) {}; 
       \node [label=above:$\Bar{L}$] at (3.5,0) {}; 
       \draw[decorate,decoration={brace,amplitude=10pt,raise=12pt}]
        (b.north) -- (d.north) node[midway,yshift=1cm]{$C_i$};
       \end{tikzpicture}
       \begin{tikzpicture}[baseline]
           \node at (0,0) (a) {};
           \node at (1,0) (b) {};
           \draw[->] (a)--(b) node[midway,above] {$m$};
       \end{tikzpicture}
       \begin{tikzpicture}[vertex/.style={circle, draw, inner sep=0pt, minimum size=6pt, fill=black},baseline]
       \node[vertex] [label=above:$v_d$] at (0,0) (r) {};
       \node[vertex] at (1,0) (a) {};
       \node[vertex] at (2,0) (b) {};
       \node[vertex] at (3,0) (c) {};
       \node[vertex] [label=below:$x$] at (4,0) (d) {};
       \node at (5,0) (e) {};
       \draw[dashed] (r)--(a);
       \draw[dashed] (d)--(e);
       \draw (b)--(c) node[midway,below] {$x$};
       \node [label=above:$L$] at (1.5,0) {}; 
       \node [label=above:$\Bar{L}$] at (2.5,0) {}; 
       \node [label=above:$\Bar{L}$] at (3.5,0) {}; 
       \end{tikzpicture}
       \begin{tikzpicture}[baseline]
           \node at (0,0.1) (a) {};
           \node at (1,0.1) (b) {};
           \node at (0,-0.1) (c) {};
           \node at (1,-0.1) (d) {};
           \draw[<-] (a)--(b) node[midway,above] {$m$};
           \draw[->] (c)--(d) node[midway,below] {$\Mt$};
       \end{tikzpicture}
       \begin{tikzpicture}[vertex/.style={circle, draw, inner sep=0pt, minimum size=6pt, fill=black},baseline]
       \node[vertex] [label=above:$v_d$] at (0,0) (r) {};
       \node[vertex] at (1,0) (a) {};
       \node[vertex] [label=below:$x$] at (2,0) (b) {};
       \node[vertex] [label=below:$1$] at (3,0) (c) {};
       \node[vertex] [label=below:$x$] at (4,0) (d) {};
       \node at (5,0) (e) {};
       \draw[dashed] (r)--(a);
       \draw[dashed] (d)--(e);
       \node [label=above:$L$] at (1.5,0) {}; 
       \node [label=above:$\Bar{L}$] at (2.5,0) {}; 
       \node [label=above:$\Bar{L}$] at (3.5,0) {}; 
       \end{tikzpicture}
       \begin{tikzpicture}[baseline]
           \node at (0,0) (a) {};
           \node at (1,0) (b) {};
           \draw[->] (a)--(b) node[midway,above] {$m$};
       \end{tikzpicture}}

       \scalebox{0.7}{
       \begin{tikzpicture}[vertex/.style={circle, draw, inner sep=0pt, minimum size=6pt, fill=black},baseline]
       \node[vertex] [label=above:$v_d$] at (0,0) (r) {};
       \node[vertex] at (1,0) (a) {};
       \node[vertex] [label=below:$x$] at (2,0) (b) {};
       \node[vertex] at (3,0) (c) {};
       \node[vertex] at (4,0) (d) {};
       \node at (5,0) (e) {};
       \draw[dashed] (r)--(a);
       \draw[dashed] (d)--(e);
       \draw (c)--(d) node[midway,below] {$x$};
       \node [label=above:$L$] at (1.5,0) {}; 
       \node [label=above:$\Bar{L}$] at (2.5,0) {}; 
       \node [label=above:$\Bar{L}$] at (3.5,0) {}; 
       \end{tikzpicture}
       \begin{tikzpicture}[baseline]
           \node at (0,0.1) (a) {};
           \node at (1,0.1) (b) {};
           \node at (0,-0.1) (c) {};
           \node at (1,-0.1) (d) {};
           \draw[<-] (a)--(b) node[midway,above] {$m$};
           \draw[->] (c)--(d) node[midway,below] {$\Mt$};
       \end{tikzpicture}
       \begin{tikzpicture}[vertex/.style={circle, draw, inner sep=0pt, minimum size=6pt, fill=black},baseline]
       \node[vertex] [label=above:$v_d$] at (0,0) (r) {};
       \node[vertex] at (1,0) (a) {};
       \node[vertex] [label=below:$x$] at (2,0) (b) {};
       \node[vertex] [label=below:$x$] at (3,0) (c) {};
       \node[vertex] [label=below:$1$] at (4,0) (d) {};
       \node at (5,0) (e) {};
       \draw[dashed] (r)--(a);
       \draw[dashed] (d)--(e);
       \node [label=above:$L$] at (1.5,0) {}; 
       \node [label=above:$\Bar{L}$] at (2.5,0) {}; 
       \node [label=above:$\Bar{L}$] at (3.5,0) {}; 
       \end{tikzpicture}}
       \caption{A negative subpath and its corresponding alternating path.}
       \label{negative subpath example}
   \end{figure}
   
   \begin{proposition}\label{up path within tree}
   	 For a given spanning tree $T$ of $G_\Lk$, the following two sets are in one-one correspondance:
   	 
   	 \[\begin{tikzcd}[ampersand replacement=\&,cramped]
   	 	{\left\{S_x^{T^+} \mid \widetilde{\A^\uparrow}(S_x^{T^+},S_c^{T^+})=1, i(S_c^{T^+})=i(S_x^{T^+})\right\}} \&\& {\left\{\text{Positive subpaths of }G(H_c)=G(T)\right\}}
   	 	\arrow[shorten <=4pt, shorten >=4pt, tail reversed, from=1-1, to=1-3]
   	 \end{tikzcd}\]
   \end{proposition}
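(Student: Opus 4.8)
The plan is to run the argument of Proposition~\ref{down path within tree} ``upside down''. Every ingredient there has an exact counterpart here: the label $1$ is replaced by $x$ (and vice versa), merge maps $m$ by split maps $\Delta$, the activities $\{\bar L, l\}$ by $\{L, \bar l\}$, and the direction along which the distinguished enhancement migrates is reversed; correspondingly, ``negative subpath'' becomes ``positive subpath''. Since $\widetilde{\A^\uparrow}$ is already known to be $\{0,1\}$-valued, it suffices to produce, from each state $S_x^{T^+}$ in the left-hand set, a positive subpath of $G(T)$; to produce a state back from each positive subpath via an explicit alternating path; and to check these assignments are mutually inverse. By Lemma~\ref{kh complex is dis union of twisted unknots} the whole discussion takes place inside the single Hasse diagram $\mathcal{H}_\Mt((\CKh^+(U(T)),\partial_{\Kh}^+))$, so no other spanning tree intervenes.

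For the forward map, I would fix $S_x^{T^+}$ with $\widetilde{\A^\uparrow}(S_x^{T^+},S_c^{T^+})=1$ and $i(S_x^{T^+})=i(S_c^{T^+})$, and let $\Pt$ be the unique such path; by definition $\Pt$ opens with a matched edge. Reading off the matching built in Theorem~\ref{matching_theorem} (Figures~\ref{pos-cross} and~\ref{neg-cross}), a matched edge of $\mathcal{H}_\Mt$ toggles exactly one live edge of $G(T)$, the live edges toggled by a matched edge are precisely those of activity $L$ or $\bar l$, and the ``insertion'' end of the matched pair forces the enhancement $x$ on the vertex that becomes isolated. The path $\Pt$ then alternates reversed matched edges with unmatched split maps, and I would argue exactly as in the ``down'' case that the live edges it toggles, listed in the order of $M(T)$, have all their activities in $\{L,\bar l\}$, form a disjoint union of connected pieces of the paths $P_{e_i}$, and carry $\epsilon_x = x$ on the initial vertex of each piece. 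That data is by definition a positive subpath of $G(H_c)=G(T)$.

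For the reverse map, given a positive subpath $H$ I would order its components $C_1,\dots,C_k$ compatibly with the inductive matching order encoded in $M(T)$, define $S_x^{T^+}$ to have $H_x=H_c$ (the edgeless graph on the isolated vertices of $G(T)$) and the enhancement obtained from $\epsilon_c$ by the prescribed modification on $\bigcup_i V(C_i)$, and then build an alternating path component by component, each step inserting the next live edge via a reversed matched edge (of split type) and retracting via an unmatched split map, exactly as in the mirror image of Figure~\ref{negative subpath example} (with the merge maps there replaced by split maps); after processing $C_k$ one lands on $S_c^{T^+}$. This path starts with a matched edge, so it witnesses $\widetilde{\A^\uparrow}(S_x^{T^+},S_c^{T^+})=1$; since distinct positive subpaths visibly produce states with distinct enhancements, and the forward map shows nothing else can occur, the two maps are inverse.

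The step I expect to be the real obstacle is the enhancement bookkeeping through the inductive structure of $\Mt$ --- specifically, checking that matched edges are attached precisely to the $\{L,\bar l\}$-toggles and that their insertion end forces $x$ rather than $1$ on the newly isolated vertex. This is exactly where the asymmetry of the reduced $+$ complex (whose base face is labelled $1$) is used, and it is what pins down that the relevant subpaths are the positive ones and not the negative ones. Once that local statement is in hand, the component-by-component construction and the already-established $\{0,1\}$-valuedness of $\widetilde{\A^\uparrow}$ make the bijection as routine as in Proposition~\ref{down path within tree}.
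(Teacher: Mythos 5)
Your proposal is correct and takes essentially the same route as the paper, which proves this proposition by mirroring Proposition \ref{down path within tree}: the alternating paths consist entirely of split maps attached to edges of activity $L$ or $\Bar{l}$, with the relevant subsequence of $M(T)$ read from right to left, exactly as you describe. The only caveat is your blanket claim that the matched edges toggle precisely the $L/\Bar{l}$ edges — positive-twist edges are also matched (by merges) — but since only split-type matched pairs can appear in an $\widetilde{\A^\uparrow}$-path at fixed $i$-grading, this is what your argument actually uses and the bijection goes through as in the paper.
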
  
   
   \begin{proof}
   	The proof follows in similar manner. For a positive subpath, we move from $S_x^{T^+}$ to $S_c^{T^+}$ using an alternating path consisting entirely of split maps since it can only occur for an edge with activity $L$ or $\Bar{l}$ and can continue until we have another edge with those same activities (See Figure \ref{positive subpath example}). The only difference here being that the subsequence for a positive subpath in $M(T)$ runs from right to left in contrast to running from left to right for a negative subpath. 
   \end{proof}

   \begin{figure}[ht]
       \centering
       \scalebox{0.7}{
       \begin{tikzpicture}[vertex/.style={circle, draw, inner sep=0pt, minimum size=6pt, fill=black},baseline]
       \node[vertex] [label=above:$v_d$] at (0,0) (r) {};
       \node[vertex] at (1,0) (a) {};
       \node[vertex] [label=below:$1$] at (2,0) (b) {};
       \node[vertex] [label=below:$1$] at (3,0) (c) {};
       \node[vertex] [label=below:$x$] at (4,0) (d) {};
       \node at (5,0) (e) {};
       \draw[dashed] (r)--(a);
       \draw[dashed] (d)--(e);
       \node [label=above:$\Bar{L}$] at (1.5,0) {}; 
       \node [label=above:$L$] at (2.5,0) {}; 
       \node [label=above:$L$] at (3.5,0) {};
       \draw[decorate,decoration={brace,amplitude=10pt,raise=12pt}]
        (b.north) -- (d.north) node[midway,yshift=1cm]{$C_i$};
       \end{tikzpicture}
       \begin{tikzpicture}[baseline]
           \node at (0,0.1) (a) {};
           \node at (1,0.1) (b) {};
           \node at (0,-0.1) (c) {};
           \node at (1,-0.1) (d) {};
           \draw[<-] (a)--(b) node[midway,above] {$\Delta$};
           \draw[->] (c)--(d) node[midway,below] {$\Mt$};
       \end{tikzpicture}
       \begin{tikzpicture}[vertex/.style={circle, draw, inner sep=0pt, minimum size=6pt, fill=black},baseline]
       \node[vertex] [label=above:$v_d$] at (0,0) (r) {};
       \node[vertex] at (1,0) (a) {};
       \node[vertex] [label=below:$1$] at (2,0) (b) {};
       \node[vertex] at (3,0) (c) {};
       \node[vertex] at (4,0) (d) {};
       \node at (5,0) (e) {};
       \draw[dashed] (r)--(a);
       \draw[dashed] (d)--(e);
       \draw (c)--(d) node[midway,below] {$1$};
       \node [label=above:$\Bar{L}$] at (1.5,0) {}; 
       \node [label=above:$L$] at (2.5,0) {}; 
       \node [label=above:$L$] at (3.5,0) {}; 
       \end{tikzpicture}
       \begin{tikzpicture}[baseline]
           \node at (0,0.1) (a) {};
           \node at (1,0.1) (b) {};
           \draw[->] (a)--(b) node[midway,above] {$\Delta$};
       \end{tikzpicture}
       \begin{tikzpicture}[vertex/.style={circle, draw, inner sep=0pt, minimum size=6pt, fill=black},baseline]
       \node[vertex] [label=above:$v_d$] at (0,0) (r) {};
       \node[vertex] at (1,0) (a) {};
       \node[vertex] [label=below:$1$] at (2,0) (b) {};
       \node[vertex] [label=below:$x$] at (3,0) (c) {};
       \node[vertex] [label=below:$1$] at (4,0) (d) {};
       \node at (5,0) (e) {};
       \draw[dashed] (r)--(a);
       \draw[dashed] (d)--(e);
       \node [label=above:$\Bar{L}$] at (1.5,0) {}; 
       \node [label=above:$L$] at (2.5,0) {}; 
       \node [label=above:$L$] at (3.5,0) {}; 
       \end{tikzpicture}
        \begin{tikzpicture}[baseline]
           \node at (0,0.1) (a) {};
           \node at (1,0.1) (b) {};
           \node at (0,-0.1) (c) {};
           \node at (1,-0.1) (d) {};
           \draw[<-] (a)--(b) node[midway,above] {$\Delta$};
           \draw[->] (c)--(d) node[midway,below] {$\Mt$};
       \end{tikzpicture}}

       \scalebox{0.7}{
       \begin{tikzpicture}[vertex/.style={circle, draw, inner sep=0pt, minimum size=6pt, fill=black},baseline]
       \node[vertex] [label=above:$v_d$] at (0,0) (r) {};
       \node[vertex] at (1,0) (a) {};
       \node[vertex] at (2,0) (b) {};
       \node[vertex] at (3,0) (c) {};
       \node[vertex] [label=below:$1$] at (4,0) (d) {};
       \node at (5,0) (e) {};
       \draw[dashed] (r)--(a);
       \draw[dashed] (d)--(e);
       \draw (b)--(c) node[midway,below] {$1$};
       \node [label=above:$\Bar{L}$] at (1.5,0) {}; 
       \node [label=above:$L$] at (2.5,0) {}; 
       \node [label=above:$L$] at (3.5,0) {}; 
       \end{tikzpicture}
       \begin{tikzpicture}[baseline]
           \node at (0,0.1) (a) {};
           \node at (1,0.1) (b) {};
           \draw[->] (a)--(b) node[midway,above] {$\Delta$};
       \end{tikzpicture}
       \begin{tikzpicture}[vertex/.style={circle, draw, inner sep=0pt, minimum size=6pt, fill=black},baseline]
       \node[vertex] [label=above:$v_d$] at (0,0) (r) {};
       \node[vertex] at (1,0) (a) {};
       \node[vertex] [label=below:$x$] at (2,0) (b) {};
       \node[vertex] [label=below:$1$] at (3,0) (c) {};
       \node[vertex] [label=below:$1$] at (4,0) (d) {};
       \node at (5,0) (e) {};
       \draw[dashed] (r)--(a);
       \draw[dashed] (d)--(e);
       \node [label=above:$\Bar{L}$] at (1.5,0) {}; 
       \node [label=above:$L$] at (2.5,0) {}; 
       \node [label=above:$L$] at (3.5,0) {}; 
       \end{tikzpicture}}
       \caption{A positive subpath and its corresponding alternating path.}
       \label{positive subpath example}
   \end{figure}
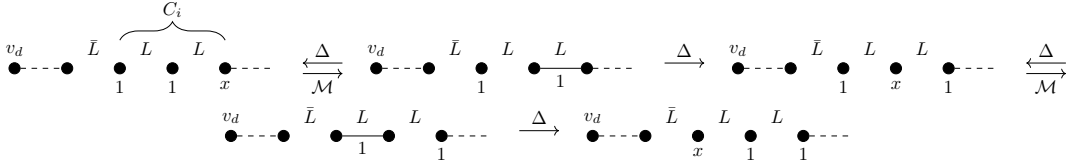

	Any alternating path through a fixed chain $\mathcal{C} = (T=T_1 > T_2 > \cdots > T_n= T')$ contains differential edges of Khovanov complex which corresponds to moving from $\CKh^+(U(T_i))$ to $\CKh^+(U(T_{i+1}))$ which in terms of activity word is a change of resolution of a crossing corresponding to a dead edge in $T_i$. There are only two following changes possible: 
	
	$$
	\Bar{d} \rightarrow \Bar{D} \quad D \rightarrow d
	$$
   
   Let $e_i$ be the dead edge whose resolution is changed while moving from $T_i$ to $T_{i+1}$. We remove an edge $f_i$ from $cyc(T_i,e_i)$ if $a_{T_i}(e_i)=\Bar{d}$ else we insert an edge in $cut(T_i,e_i)$ if $a_{T_i}(e_i)=D$ to obtain $T_{i+1}$. As a result, the activity of the edges in both $cyc(T_i,e_i)$ and $cut(T_{i+1},e_i)$ may change if $a_{T_i}(e_i)=\Bar{d}$ and similarly, the activity of the edges in both $cut(T_i,e_i)$ and $cyc(T_{i+1},f_i)$ may change if $a_{T_i}(e_i)=D$. We study the behavior of these changes in Proposition \ref{atmost one edge} in terms of the existence of an alternating path between $S_c^{T^+}$ and $S_c^{T'^+}$.

   \begin{definition}
      Let $\Pt$ be an alternating path between $S_c^{T^+}$ and $S_c^{T'^+}$ through a chain $\Ch = (T=T_1 > T_2 > \cdots > T_n= T')$. The \textit{ith broken path} is defined to be the subpath $\Pt \cap \mathcal{H}_\Mt ((\CKh^+(U(T_i)),\partial_{\Kh}^+))$.
   \end{definition}

   \begin{lemma} \label{unique states through a chain}
       Let $\Ch = (T_1 > T_2 > \cdots > T_n)$ be a chain of spanning trees and let $\Pt_1$ and $\Pt_2$ be two alternating paths from $S_c^{T_1^+}$ to $S_c^{T_n^+}$ with $P_i^j$ being the $ith$ broken path of $\Pt_j$ for $i=1,\cdots,n$ and $j=1,2$. Let $S^j_i$ and $E^j_i$ be the first and last enhanced spanning subgraphs in $P_i^j$, then $(S_i^1,E_i^1)=(S_i^2,E_i^2)$ as spanning subgraphs of $U(T_i)$ for all $i=1,\cdots,n$.
   \end{lemma}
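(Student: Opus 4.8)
The plan is to use the chain structure of $\Ch$ to reduce the statement to a one–variable induction, and then to pin down each broken path by comparing it with the explicit matching built in Theorem \ref{matching_theorem}. First I would record the shape of an alternating path from $S_c^{T_1^+}$ to $S_c^{T_n^+}$ that runs through $\Ch$. By Proposition \ref{partial order respects differential} every Khovanov differential edge occurring in such a path connects $\CKh^+(U(T_a))$ to $\CKh^+(U(T_b))$ with $T_a>T_b$, while the reversed matched edges of $\Mt^{\Kh}_\Lk$ never leave a single $\CKh^+(U(T_a))$ by construction of the matching. Hence the path visits $\CKh^+(U(T_1)),\dots,\CKh^+(U(T_n))$ in this order, each exactly once, and the passage from $\CKh^+(U(T_i))$ to $\CKh^+(U(T_{i+1}))$ is a single unmatched differential edge flipping the resolution of one fixed crossing of $\Lk$, namely the dead edge $e_i$ of $T_i$ whose resolution distinguishes $T_i$ from $T_{i+1}$ (the edge introduced in the paragraph preceding the statement). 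Thus $\Pt_j$ is the concatenation of $P_1^j,\dots,P_n^j$ joined by the $n-1$ transition edges, and since that crossing carries the same resolution in every state of $U(T_i)$, ``flipping it'' is a bijection from the states of $U(T_i)$ onto a set of states of $U(T_{i+1})$ which sends $E_i^j$ to $S_{i+1}^j$. So it suffices to prove $S_i^1=S_i^2$ for all $i$, and I would argue by induction on $i$, the case $i=1$ being immediate because $S_1^1=S_1^2=S_c^{T_1^+}$.

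For the inductive step I would assume $S_i^1=S_i^2=:S_i$ and show that the broken path $P_i^j$ must pass through the critical enhanced spanning subgraph $S_c^{T_i^+}$, so that it factors as $P_i^j=A_i^j\ast B_i^j$ with $A_i^j$ an alternating path from $S_i$ up to $S_c^{T_i^+}$ and $B_i^j$ one from $S_c^{T_i^+}$ down to $E_i^j$. Granting the factorisation, $A_i^j$ is an alternating path terminating at the critical state, so by Proposition \ref{up path within tree} together with the bound $\widetilde{\A^\uparrow},\widetilde{\A^\downarrow}\in\{0,1\}$ it is the unique such path and $A_i^1=A_i^2$. For $B_i^j$ the endpoint $E_i^j$ is squeezed from two sides: by Proposition \ref{down path within tree} it is the state of $U(T_i)$ attached to a negative subpath of $G(T_i)$, while (since the path continues into $U(T_{i+1})$ and must later reach $S_c^{T_{i+1}^+}$) flipping the transition crossing at $E_i^j$ must yield the state of $U(T_{i+1})$ attached to a positive subpath of $G(T_{i+1})$, by Proposition \ref{up path within tree}. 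As the activity words of $T_i$ and $T_{i+1}$ differ only in the controlled way recorded before the statement --- the single change $\Bar{d}\to\Bar{D}$ or $D\to d$ at the transition crossing and the induced changes along the relevant \emph{cyc} or \emph{cut} set --- matching the negative-subpath and positive-subpath descriptions across the transition leaves exactly one possibility for $E_i^j$; hence $E_i^1=E_i^2$, so $S_{i+1}^1=S_{i+1}^2$, completing the induction. For $i=n$ one uses instead $E_n^j=S_c^{T_n^+}$, so that $B_n^j$ is trivial.

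The hard part will be the factorisation claim: an alternating path that enters $U(T_i)$ along a Khovanov differential and later leaves it along another Khovanov differential cannot avoid $S_c^{T_i^+}$. I would prove this by mirroring the inductive construction of $\Mt_n$ in the proof of Theorem \ref{matching_theorem}, inducting on the number of edges of $G(T_i)$ and peeling off the least-ordered leaf $e_n$ together with its degree-one vertex $v_n\neq v_d$. If the part of the path inside $U(T_i)$ ever creates the component $C_{v_n}$ --- i.e.\ meets $\mathcal{B}_n$ --- then inspecting the local moves in Figures \ref{pos-cross}, \ref{neg-cross} and \ref{acyclic case} shows that it can only proceed by first returning $v_n$ to an isolated vertex with the enhancement forced by the twist type of $e_n$, which is exactly its enhancement in $S_c^{T_i^+}$; after that one deletes $v_n$ and applies the induction hypothesis to $G(T_i)\setminus v_n$. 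If it never creates $C_{v_n}$, then $v_n$ is already isolated with the correct enhancement and one again passes to $G(T_i)\setminus v_n$. The remaining ingredient --- the bigrading bookkeeping above that singles out $E_i^j$ --- is routine, but it must be carried out separately for the two activity changes and compatibly with the merge/split conventions of $\partial_{\Kh}^+$.
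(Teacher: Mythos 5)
Your reduction of the lemma to showing that consecutive broken paths have matching endpoints is fine, and your description of the transition edges agrees with the paper's setup; the problem is that the inductive step rests on a factorisation claim that is false. An intermediate broken path $P_i^j$ ($1<i<n$) need not --- and in fact cannot --- pass through the critical state $S_c^{T_i^+}$: the critical state is by definition unmatched, so no edge of $\Mt$ is incident to it, and an alternating path could only enter and leave it through two consecutive unmatched differential edges, which breaks the alternating structure. Only the first and last broken paths begin or end at a critical state; this is exactly why Propositions \ref{down path within tree} and \ref{up path within tree} are stated for paths starting at or ending at $S_c^{T^+}$, and they give no information about interior broken paths. The paper's own worked example confirms this: in the alternating path from $T_9^+$ to $T_{19}^+$ through the chain $T_9>T_4>T_5>T_{19}$ (Figure \ref{alt path sample example}), the broken paths inside $U(T_4)$ and $U(T_5)$ never visit $S_c^{T_4^+}$ or $S_c^{T_5^+}$. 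With the factorisation gone, the identification $A_i^1=A_i^2$ and the ``squeeze'' determining $E_i^j$ both collapse, and the step ``matching the negative-subpath and positive-subpath descriptions across the transition leaves exactly one possibility for $E_i^j$'' is precisely the content that still has to be proved; as stated it is an assertion, not an argument.

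For comparison, the paper does not try to pin interior broken paths against critical states at all; it argues by contradiction and propagates a discrepancy forward. If $S_i^1=S_i^2$ but $E_i^1\neq E_i^2$, then $E_i^1$ and $E_i^2$ differ at two live edges $g_1,g_2$ of $T_i$ carrying opposite resolutions, so $S_{i+1}^1\neq S_{i+1}^2$; the key point, proved using the matching word $M(T_{i+1})$, is that within a broken path a live edge can have its resolution changed essentially only once and only in the order dictated by $M(T_{i+1})$, so differing entry states force differing exit states $E_{i+1}^1\neq E_{i+1}^2$ (with a separate, easier case when $g_1$ or $g_2$ becomes dead in $T_{i+1}$). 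Iterating yields $S_n^1\neq S_n^2$, contradicting $S_n^1=S_n^2=S_c^{T_n^+}$. Some version of this monotonicity-along-the-matching-word mechanism, controlling broken paths that never meet a critical state, is what your proposal is missing and cannot be supplied by the factorisation route.
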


   \begin{proof}
       Suppose for some $1 < i < n$, $S_i^1 = S_i^2$, but $E_i^1 \neq E_i^2$, choosing such an $i$ comes from the fact that $S_i^1 = E_i^1 = S_i^2 = E_i^2$ for $i=1,n$ which follows from Proposition \ref{down path within tree} and \ref{up path within tree}. This implies that there exists live edges $g_1,g_2 \in T_i$ such that $\Res(g_1) = A$, $\Res(g_2) = B$ at $E_i^1$ and vice versa for $E_i^2$. Now change the resolution at $e_i$ in $E_i^j$ for $j=1,2$ to obtain $S_{i+1}^1$ and $S_{i+1}^2$ respectively. It is clear that $S_{i+1}^1 \neq S_{i+1}^2$ and now we claim that $E_{i+1}^1 \neq E_{i+1}^2$. Now suppose if either $g_1$ or $g_2$ becomes dead in $T_{i+1}$, then it is obvious that $E_{i+1}^1 \neq E_{i+1}^2$. So, let us assume that $g_1$ and $g_2$ remains live in $T_{i+1}$ and $g_1$ occurs before $g_2$ in $M(T_{i+1})$. If $E_{i+1}^1 = E_{i+1}^2$ and without loss of generality assume that $\Res(g_1)=\Res(g_2)=A$ in $E_{i+1}^j$ and $g_2$ is a negative twist in $U(T_{i+1})$, then there is some state $y_j \in P_{i+1}^j$ such that $y_j$ is matched with $\partial y_j$ and $y_j \mapsto \partial y_j$ is due to $g_j$ and thus, in $y_j$, we have $\Res(g_j)=A$. Now, in order for the matching between $y_2$ and $\partial y_2$ to occur, we must have $\Res(g_2)=B$ in both $y_2$ and $\partial y_2$, but since $\Res(g_2)=A$ in $E_{i+1}^2$, so there must exist another state $y_2' \in P_{i+1}^2$ occuring before $\partial y_2$ such that $y_2' \mapsto \partial y_2'$ is non-matched and turns $\Res(g_2)$ from $A$ to $B$, but then one cannot further change $\Res(g_2)$ to $A$ because $M(T_{i+1})$ doesn't allow it. Thus, we can inductively conclude that $E_{n-1}^1 \neq E_{n-1}^2$ and hence, $S_n^1 \neq S_n^2$ which gives a contradiction.
   \end{proof}

   \begin{definition}
          Let $Com(T_i, T_{i+1})$ be the set of all edges in $G_\Lk$ which are either present or absent in both $T_i$ and $T_{i+1}$. Then, we define the following four sets:
   
   \begin{itemize}
   	 \item $I^1_i = \left\{ e \in Com(T_i,T_{i+1}) \mid a_{T_i}(e) = L/\Bar{l}, a_{T_{i+1}}(e) = D/\Bar{d} \right\}$.
   	 
   	 \item $I^2_i = \left\{ e \in Com(T_i,T_{i+1}) \mid a_{T_i}(e) = \Bar{L}/l, a_{T_{i+1}}(e) = \Bar{D}/d \right\}$.
   	 
   	 \item $I^3_i = \left\{ e \in Com(T_i,T_{i+1}) \mid a_{T_i}(e) = D/\Bar{d}, a_{T_{i+1}}(e) = L/\Bar{l} \right\}$.
   	 
   	 \item $I^4_i = \left\{ e \in Com(T_i,T_{i+1}) \mid a_{T_i}(e) = \Bar{D}/d, a_{T_{i+1}}(e) = \Bar{L}/l \right\}$.
   \end{itemize}
   \end{definition}
   
      \begin{proposition}\label{atmost one edge}
    	Let $P_i$ be the ith broken path for $\Pt$. Then,
   	\begin{enumerate}
   		\item every enhanced spanning subgraph in $P_i$ can have atmost one edge from $I^1_i$ with a $B$--marker.
   		
   		\item No enhanced spanning subgraph in $P_i$ can have any edge from $I_i^2$ with a $A$--marker. 
   		
   		\item No enhanced spanning subgraph in $P_{i+1}$ can have any edge from $I_i^3$ with a $B$-marker.
   		
   		\item every enhanced spanning subgraph in $P_{i+1}$ can have atmost one edge from $I^4_i$ with a $A$--marker.
   	\end{enumerate} 
   \end{proposition}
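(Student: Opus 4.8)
The plan is to isolate a single combinatorial mechanism and apply it to each of the four parts. The starting point is the local description of one step of the chain $\Ch = (T_1 > \cdots > T_n)$: since $T_{i+1}$ is obtained from $T_i$ by a single matroid exchange — delete $f_i \in cyc(T_i, e_i)$ and insert $e_i$ when $a_{T_i}(e_i) = \bar d$, or delete $e_i$ and insert some $g \in cut(T_i, e_i)$ when $a_{T_i}(e_i) = D$ — only edges lying on the fundamental cycle (or cocycle) cut out by this exchange can change their activity. The first task is to establish a \emph{local dictionary}: the edges whose activity changes from $L/\bar l$ to $D/\bar d$, namely the edges of $I^1_i$, form a \emph{consecutive segment} of a single root-to-leaf path in $G(T_i)$; the edges changing from $\bar L/l$ to $\bar D/d$, namely $I^2_i$, likewise form a consecutive segment of such a path; symmetrically $I^3_i$ and $I^4_i$ are consecutive segments of paths in $G(T_{i+1})$; and in each case the segment is an interval of the matching word $M(T_i)$, respectively $M(T_{i+1})$. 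Granting the dictionary, parts (3) and (4) are proved by the same two arguments run inside $\CKh^+(U(T_{i+1}))$ instead of $\CKh^+(U(T_i))$ (using that the edges of $I^3_i$ and $I^4_i$ are dead in $T_i$), so it is enough to treat (1) and (2).

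For part (2): every $e \in I^2_i$ is \emph{dead} in $T_{i+1}$ with $B$-smoothing, hence $\Res(e) = B$ in every enhanced spanning subgraph of $U(T_{i+1})$. The step of $\Pt$ joining $\CKh^+(U(T_i))$ to $\CKh^+(U(T_{i+1}))$ changes only $\Res(e_i)$ (Proposition \ref{partial order respects differential}), so the exit state $E_i$ of the broken path $P_i$ already satisfies $\Res(e) = B$ for every $e \in I^2_i$. I would then feed in the structural description of $P_i$: by Propositions \ref{down path within tree} and \ref{up path within tree} and Lemma \ref{unique states through a chain}, the endpoints $S_i, E_i$ of $P_i$ differ from the critical state $S_c^{T_i^+}$ only along a negative, respectively positive, subpath, and any alternating path from $S_i$ to $E_i$ is built from merge and split maps that are forced to obey the linear order of $M(T_i)$; in particular a live edge whose resolution has once been set to $B$ cannot later be flipped back to $A$ along $P_i$. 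Since the edges of $I^2_i$ form a single interval of $M(T_i)$ that lies on the ``$B$ side'' of the region in which $P_i$ can act, $\Res(e)$ stays equal to $B$ on all of $P_i$; that is, no state of $P_i$ carries an $A$-marker on an edge of $I^2_i$. Part (3) is obtained by the identical argument inside $\CKh^+(U(T_{i+1}))$, pinning $\Res(e) = A$ at the entry state $S_{i+1}$ for $e \in I^3_i$ (dead with $A$-smoothing in $T_i$).

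For part (1): the edges of $I^1_i$ become dead in $T_{i+1}$, but now a genuine cascade is possible and the bound is only ``at most one''. List the edges of $I^1_i$ as $g_1, \dots, g_r$ in their order along the common path in $G(T_i)$, which by the dictionary is also their order in $M(T_i)$; in $S_c^{T_i^+}$ each $g_t$ carries a $B$-marker. Along an alternating path through $\CKh^+(U(T_i))$, the only way a $B$-marker can be created on some $g_t$ is by a merge map of the type occurring in the negative-subpath picture of Proposition \ref{down path within tree} (Figure \ref{negative subpath example}), and that very map simultaneously removes the $B$-marker one step earlier on the segment; hence exactly one position of the segment is ever active, and if the path has already pushed the marker past the end of the segment then none is. Making this rigorous comes down to showing that two distinct $g_s, g_t$ both carrying $B$-markers in one state would force two incomparable alternating sub-paths into $S_i$ or $E_i$, contradicting Lemma \ref{unique states through a chain}; I would organize it as an induction on $|E(G(T_i))|$ paralleling the proof of Theorem \ref{matching_theorem}. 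Part (4) is the mirror statement: the edges of $I^4_i$ pass from dead (with $B$-smoothing) in $T_i$ to live $\bar L/l$ in $T_{i+1}$, and the cascade is now carried by split maps along the positive-subpath picture of Proposition \ref{up path within tree} (Figure \ref{positive subpath example}) inside $\CKh^+(U(T_{i+1}))$, with $A$-markers in place of $B$-markers.

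The main obstacle is the local dictionary of the first paragraph: proving that each $I^k_i$ is a consecutive path segment and that this segment is an interval of the relevant matching word. This is a matroid-exchange computation that must be carried out separately for the two shapes of the exchange ($e_i$ inside versus outside $T_i$) and for both edge signs, and it requires carefully reconciling the edge ordering on $G_\Lk$, the induced ordering on the dual tree used in Corollary \ref{determination of aw and res from matching theorem}, and the inductive order recorded in $M(T_i)$. Once that is in place, everything else is bookkeeping with the merge/split pictures already established in Propositions \ref{down path within tree} and \ref{up path within tree}.
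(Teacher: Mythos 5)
There is a genuine gap: your whole argument is conditioned on the ``local dictionary'' of your first paragraph --- that each $I^k_i$ is a consecutive segment of a single root-to-leaf path of $G(T_i)$ (resp.\ $G(T_{i+1})$) and an interval of the matching word --- and you yourself flag this as unproven. That claim is nowhere needed in the paper's proof, and it is far from clear that it holds: the edges whose activities change lie in a fundamental cycle $cyc(T_i,e_i)$ together with a cocycle, and after contracting the internally inactive edges and performing the blue-loop conversion there is no evident reason why the surviving $L/\Bar{l}$ (or $\Bar{L}/l$) edges among them should form a consecutive block of one root-to-leaf path, nor an interval of $M(T_i)$, whose order is an inductive leaf-removal order rather than a path order. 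Even granting the dictionary, the steps that would constitute the actual proof are asserted rather than proved. For (2), your correct observation is that the $I^2_i$ edges are $B$-marked at the exit state $E_i$ (they are dead with $B$-smoothing in $T_{i+1}$ and only $\Res(e_i)$ changes when passing to $S_{i+1}$); but what must then be excluded is that such an edge is $A$-marked earlier in $P_i$ and gets flipped $A\to B$ by an unmatched differential step later, and unmatched steps do exactly this kind of flip. Your principle that a resolution ``once set to $B$ cannot later be flipped back to $A$'' points in the wrong direction, and ``lies on the $B$ side of the region in which $P_i$ can act'' is not defined. The paper closes precisely this hole with a matching-word argument: if $\partial S$ is obtained from $S$ by changing the marker at $e\in I^2_i$, then $S$ is matched via some live edge $f$ occurring before $e$ in $M(T_i)$, and matching $\partial S$ would require a live edge occurring after $e$ in $M(T_i)$, which is impossible because all such edges must be absent in $\partial S$ with the intermediate enhancements equal to those of $S_c^{T_i^+}$.

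For (1) the situation is similar. Your cascade picture is the right intuition, but the statement that a merge map ``simultaneously removes the $B$-marker one step earlier on the segment'' is inaccurate (each differential step changes a single crossing; the bookkeeping happens over an unmatched/reversed-matched pair), and ``exactly one position of the segment is ever active'' is exactly the assertion to be proved; you defer it to an induction paralleling Theorem \ref{matching_theorem} that is not carried out. The paper's argument is short and direct: if some state of $P_i$ carried two $B$-marked edges of $I^1_i$, then, since these edges are $A$-marked by the time the path leaves $T_i$, both would have to be flipped $B\to A$ along $P_i$; such flips occur only along reversed matched edges, the $L/\Bar{l}$ edges are matched in the order of $M(T_i)$, and once the first one is matched the remaining ones can no longer change their resolution --- a contradiction, with (3) and (4) handled by the symmetric arguments in $\CKh^+(U(T_{i+1}))$, as you correctly anticipate. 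So the viable repair is to drop the dictionary entirely and supply the two matching-order arguments above, using only Propositions \ref{down path within tree} and \ref{up path within tree}, Lemma \ref{unique states through a chain}, and the structure of the matching from Theorem \ref{matching_theorem}.
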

   
   \begin{proof}
   (1) $\implies$ Suppose there exists $S \in P_i$ with more than one $B$--marked edge coming from $I_i^1$ then, there is an enhanced spanning subgraph $S' \in P_i$ where these edges are $A$--marked. Since, there is an alternating path between $S$ and $S'$ where these $L/\Bar{l}$ edges are being matched in matching order of $T_i$ and thus one of them gets matched first to change from $B$ to $A$--marker but this prevents the rest of the live edges to change their resolution. Thus, we have a contradiction.	  
   (4) follows from a similar argument.\\
   	 
   (2) $\implies$ Let $S$ be an enhanced spanning subgraph in $P_i$ having an edge $e$ from $I_i^2$ with a $A$--marker such that $\partial S$ is obtained from $S$ due to a change in marker at $e$. Now in order to match $S$ with an enhanced spanning subgraph $S'$, we must perform the matching due to some live edge $f$. Now without loss of generality, assume that $f$ occurs before $e$ in $M(T_i)$, which means in order to match $\partial S$, we need to use a live edge which occurs after $e$ in $M(T_i)$ which is not possible since, all such edges after $e$ in $M(T_i)$ must be absent in $\partial S$ and the vertices between them must have same enhancement as $S_c^{T_i^+}$ in order for the matching (due to $f$) to exist. (3) follows from a similar argument.
   \end{proof}
   
   \begin{proposition} \label{combinatorial description for red complex}
   	 For a given fixed chain of spanning trees $\Ch= (T= T_1 > \cdots > T_n = T')$, $\Gamma(T^+,T'^+)$ can be computed entirely using $M(T_i)$ and $W(T_i)$ for all $i=1, \cdots, n$.
   \end{proposition}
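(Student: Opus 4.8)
The plan is to fix a chain $\Ch = (T = T_1 > T_2 > \cdots > T_n = T')$ and compute the contribution $\Gamma_{\Ch}(T^+,T'^+)$ to $\Gamma(T^+,T'^+)$ coming from those alternating paths in $\mathcal{H}_\Mt((\CKh^+(\Lk),\partial_{\Kh}^+))$ that pass through precisely the subcomplexes $\CKh^+(U(T_1)),\dots,\CKh^+(U(T_n))$ in this order. Since $\partial_{\Kh}^+$ respects the partial order (Proposition \ref{partial order respects differential}) and the matching $\Mt^{\Kh}_\Lk$ never pairs states lying in two different $\CKh^+(U(T))$'s, every alternating path from $S_c^{T^+}$ to $S_c^{T'^+}$ visits a strictly decreasing, repetition-free sequence of trees; hence $\Gamma(T^+,T'^+) = \sum_{\Ch}\Gamma_{\Ch}(T^+,T'^+)$, the sum ranging over the finitely many chains from $T$ to $T'$, and it suffices to compute one $\Gamma_{\Ch}(T^+,T'^+)$ from the data $\{M(T_i),W(T_i)\}_{i=1}^n$.

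First I would record the shape of such a path. Write $\Pt = P_1\cdot c_1\cdot P_2\cdot c_2\cdots c_{n-1}\cdot P_n$, where $P_i$ is the $i$th broken path (lying in $\mathcal{H}_\Mt((\CKh^+(U(T_i)),\partial_{\Kh}^+))$) and $c_i$ is the single Khovanov differential edge from $\CKh^+(U(T_i))$ into $\CKh^+(U(T_{i+1}))$. Alternation forces every $c_i$ to be non-matched; consequently $P_1$ and $P_n$ may be empty, and when nonempty $P_1$ begins non-matched (as $S_c^{T^+}$ is critical) and ends matched, while $P_n$ begins matched and ends non-matched, whereas each interior $P_i$ is nonempty and both begins and ends with matched edges. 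Comparing $W(T_i)$ with $W(T_{i+1})$ identifies $c_i$: it flips the marker at the unique edge $e_i$ which is dead in $T_i$ and whose activity passes from $\{\Bar{d},D\}$ to $\{\Bar{D},d\}$ (if no single resolution change realises the step $T_i>T_{i+1}$, then $\Gamma_{\Ch}=0$), and the sign $(-1)^{w(e_i)}$ of $c_i$ is likewise read off from $W(T_i)$ and the fixed edge order.

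Next I would pin the endpoints of the broken paths. By Lemma \ref{unique states through a chain}, the first and last enhanced spanning subgraphs $S_i,E_i$ of $P_i$ are the same for every path through $\Ch$, so they are invariants of the chain, and I claim they are combinatorial: $S_1$ and $E_n$ are the critical states $S_c^{T_1^+},S_c^{T_n^+}$, which Corollary \ref{determination of aw and res from matching theorem} extracts from $W(T_1),W(T_n)$; and $S_{i+1}$ is obtained from $E_i$ by flipping the marker at $e_i$. It then remains to see that $E_i$ is determined: combining Propositions \ref{down path within tree} and \ref{up path within tree} — extended, by the same induction on $|E(G(T))|$, to a description of alternating paths between two arbitrary states of a single $\CKh^+(U(T))$ — with Proposition \ref{atmost one edge}, which forces the markers at the interface edges $Com(T_i,T_{i+1})$ to be those prescribed by $S_c^{T_i^+}$ except possibly on $I^1_i\cup I^4_i$, one reconstructs $E_i$ uniquely from $W(T_i),W(T_{i+1}),M(T_i)$. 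Feeding $S_i,E_i$ into $\widetilde{\A^{\downarrow}}(S_i,E_i),\widetilde{\A^{\uparrow}}(S_i,E_i)\in\{0,1\}$, each $P_i$ is either impossible or unique, and when it exists its weight is the product of the $\pm 1$ weights of its edges — a quantity manifestly computed from $M(T_i)$ (the matching order and live-edge orders) and the sign exponents $w(\cdot)$. Therefore $\Gamma_{\Ch}(T^+,T'^+)$ equals $\prod_{i=1}^{n}(\text{weight of }P_i)\cdot\prod_{i=1}^{n-1}(\text{weight of }c_i)$ when all $P_i$ exist and the interface constraints hold, and $0$ otherwise; every ingredient is a function of $\{M(T_i),W(T_i)\}$, proving the statement.

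The step I expect to be the main obstacle is this last one: upgrading Propositions \ref{down path within tree} and \ref{up path within tree}, which describe paths to or from the critical state, to the interior situation of a broken path between two generic states $S_i,E_i$, and checking that the constraints furnished by Proposition \ref{atmost one edge} on the common edges are exactly what allow consecutive broken paths to be glued into one alternating path. The accompanying sign bookkeeping — tracking the exponents $w(e)$ of the Khovanov differential across the chain and along each $P_i$ — is routine, but must be carried out carefully enough to confirm that it depends only on the activity words and matching words.
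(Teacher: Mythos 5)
Your overall strategy is the paper's: decompose any alternating path from $S_c^{T^+}$ to $S_c^{T'^+}$ along the fixed chain into broken paths joined by the non-matched differential edges that flip the marker at the dead edge $e_i$, pin down the endpoint pairs $(S_i,E_i)$ via Propositions \ref{down path within tree} and \ref{up path within tree} at the two ends and Proposition \ref{atmost one edge} in between, invoke Lemma \ref{unique states through a chain}, and observe that all weights are read off from $M(T_i)$ and $W(T_i)$. However, one step is a genuine overstatement: your claim that $E_i$ is reconstructed \emph{uniquely} and that $\Gamma_{\Ch}(T^+,T'^+)$ is a single product of edge weights goes beyond what Lemma \ref{unique states through a chain} provides. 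That lemma fixes $(S_i,E_i)$ only as spanning subgraphs of $U(T_i)$, not as enhanced states, so several enhanced alternating paths may traverse the same chain through the same underlying pairs, and $\Gamma_{\Ch}$ is in general a signed sum over admissible enhancement assignments rather than one product. This is not a hypothetical worry: in the parallel unreduced computation of Proposition \ref{length 1 chain} a single chain carries two alternating paths, with $\Gamma$ equal (up to sign) to $(-1)^{r_1}+(-1)^{r_2}$, and the paper's own proof of the present proposition accordingly runs a second induction that uses $M(T_i)$ to track \emph{all} enhancements compatible with the fixed underlying $E_i$, not a unique one.

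By contrast, the step you flag as the main obstacle --- upgrading Propositions \ref{down path within tree} and \ref{up path within tree} to a characterization of broken paths between two arbitrary states of $\CKh^+(U(T_i))$ --- is not needed in the strong form you envisage, and the paper does not prove it either. Since $i(E_i)=i(S_i)-1$, the candidates for the underlying subgraph of $E_i$ are the finitely many states obtained from $S_i$ by switching one $B$-marked live edge of $T_i$ to an $A$-marker (the $k_i$ choices in the paper's proof); for each candidate, existence of the broken path is a finite check inside the Hasse diagram of $U(T_i)$, which is determined by $M(T_i)$, and its uniqueness for fixed enhanced endpoints is exactly the statement $\widetilde{\A^{\downarrow}},\widetilde{\A^{\uparrow}}\in\{0,1\}$; Lemma \ref{unique states through a chain} is then invoked a posteriori to conclude that at most one collection of underlying pairs survives. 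If you replace your claimed unique reconstruction by this enumeration and replace the product formula by a sum over enhancement choices, your argument becomes the paper's proof.
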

   
   \begin{proof}
   	
    For a given fixed chain of spanning trees $\Ch= (T= T_1 > \cdots > T_n = T')$, let $(e_i,f_i)$ be the pair of edges which differs $T_i$ from $T_{i+1}$ and as before let $e_i$ be the dead edge whose change in resolution results in moving from the $ith$ broken path to $(i+1)th$. The initial necessary step to find an alternating path is to find the pairs $(S_i,E_i)$ for each broken path. We get $(S_1,E_1)$ and $(S_n,E_n)$ from Proposition \ref{down path within tree} and $\ref{up path within tree}$ respectively. Now we change the marker at $e_1$ to get $S_2$ from $E_1$ and assume that we have found $S_i$ by changing the marker at $e_{i-1}$ at $E_{i-1}$, then we look at all the live edges in $T_i$ which have a $B$-marker in $S_i$ (say we have $k_i$ many) and since $i(E_i)=i(S_i)-1$, so we have $k_i$ many choices for $E_i$ and now for each such choice, we change the marker at $e_i$ to get $k_i$ many choices of $S_{i+1}$. We continue this process inductively until we are able to reach $S_n$. For each inductive step, we take the help of the combinatorial information coming from $W(T_i),W(T_{i+1})$, Proposition \ref{atmost one edge}, resolution of $f_i$, $M(T_i),M(T_{i+1})$ for finding these pairs which reach upto $S_n$. If we find one such collection of pairs $(S_i,E_i)$, then Lemma \ref{unique states through a chain} asserts that these are unique pairs for any alternating path.\\
    Now for finding the enhancements of these pairs, we first observe that $\epsilon(S_1),\epsilon(E_1)$ and $\epsilon(S_n),\epsilon(E_n)$ comes from Proposition \ref{down path within tree} and \ref{up path within tree} respectively. Now inductively assume that we have $\epsilon(S_i)$ and we know $E_i$, then use $M(T_i)$ to find all the possible enhanced subgraphs whose resolution matches with $E_i$ and there is an alternating path to it from $S_i$ (the $ith$ broken path), if we find such an enhanced $E_i$, we change the marker at $e_i$ to obtain all possible enhancement of $S_{i+1}$. We continue this until we reach any of the desired $(S_n,\epsilon(S_n))$ which we have previously obtained.  

   \end{proof} 

   \begin{example}
			
			\begin{figure}[!tbph]
				\centering
				\subfigure[]{\includesvg[scale=0.65]{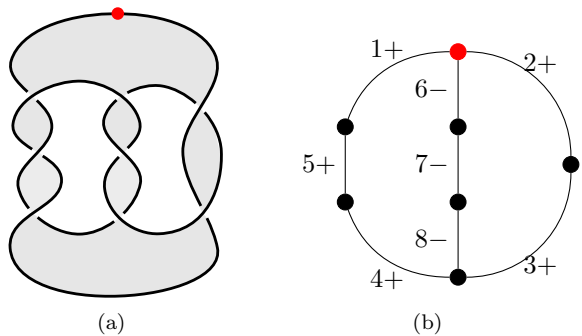}}
				\qquad 
				\subfigure[]{
					\begin{tikzpicture}[vertex/.style={circle, draw, inner sep=0pt, minimum size=6pt, fill=black}, edge/.style={draw}, scale=1]
						\node[vertex,fill=red,color=red] (a) at (0,2) {};
						\node[vertex] (b) at (0,1) {};
						\node[vertex] (c) at (0,0) {};
						\node[vertex] (d) at (0,-1) {};
						\node[vertex] (e) at (-1.5,1) {};
						\node[vertex] (f) at (-1.5,0) {};
						\node[vertex] (g) at (1.5,0.5) {};
						\path[edge] (a) to node[left] {$6-$} (b);
						\path[edge] (b) to node[left] {$7-$} (c);
						\path[edge] (c) to node[left] {$8-$} (d);
						\path[edge] (a) to[out=180,in=70] node[above] {$1+$} (e);
						\path[edge] (e) to node[left] {$5+$} (f);
						\path[edge] (f) to[out=290,in=180] node[below] {$4+$} (d);
						\path[edge] (a) to[out=0,in=90] node[above] {$2+$} (g);
						\path[edge] (g) to[out=270,in=0] node[below] {$3+$} (d);
				\end{tikzpicture}}
				\caption{$8_{20}$ and its Tait graph}
				\label{example 8_20}
			\end{figure}

      For the spanning tree complex and computations for the differential for the knot $8_{20}$ in Figure \ref{example 8_20}, see Section \ref{example computation} in the appendix. For the time being, we look at a specific incidence between two spanning trees $T_5^+$ and $T_{19}^+$ coming from Table \ref{chain complex table for 8_20} in order to understand the working principle of Proposition \ref{combinatorial description for red complex}.\\

      We label the edges according to their order (See Figure \ref{example 8_20}) as $e_1, e_2, \cdots e_8$. Here we have $W(T_9) = LLLdD \Bar{L} \Bar{d} \Bar{D}$ and one can easily verify that arriving at $T_{19}$ is only possible through the chain $\Ch = T_9 > T_4 > T_5 > T_{19}$. In order for the existence of an alternating path, we must have the existence of unique (without considering enhancements) spanning subgraphs $(S_i,E_i)$ for each $T_i$ for $i=4,5,9,19$. It clear that from Proposition \ref{down path within tree} and \ref{up path within tree} that $S_9 = E_9 = S_c^{T_9^+}$ and $S_{19} = E_{19} = S_c^{T_{19}^+}$ respectively. We change the marker at $e_7$ to obtain $S_4$ from $E_9$ and now $M(T_4)=L_2L_3L_1$ and all of these live edges have $B$-marker in $S_4$, so the possible choices of $E_4$ are the enhanced spanning subgraphs with resolution $(L_2L_3L_1)=(BBA),(BAB),(ABB)$. Now if we choose any one of them as $E_4$ except for $(ABB)$ and continue the process of finding $E_5$ from $S_5$, one can easily show that $S_{19}$ cannot be obtained. Thus, we get $E_4$ from $S_4$ by changing $e_2$ to $A$-marker at $S_4$ and then we change the marker at $e_5$ to obtain $S_5$. Now similarly, we have $M(T_5)=L_2L_3L_4L_1$ and the resolution of these live edges at $S_5$ is $(ABBB)$ and again we have three choices, out of which only $(AABB)$ is eligible for $E_5$ since, $I_5^1 = \{e_3\}$ and  Proposition \ref{atmost one edge} says that it has to be converted into an $A$-marker. Thus, finally changing the marker at $e_6$ gives us $S_{19}$. Now we only need to find all possible alternating paths through these pairs $(S_i,E_i)$ due to Lemma \ref{unique states through a chain} (See Figure \ref{alt path sample example}). \\
      Now we only need to find the enhancements of these states. $\epsilon(S_1)=\epsilon(E_1)=\epsilon(S_c^{T_9^+})$ comes from Proposition \ref{down path within tree} and $\epsilon(S_{19})=\epsilon(E_{19})=\epsilon(S_c^{T_{19}^+})$ comes from Proposition \ref{up path within tree}. Hence, we get $\epsilon(S_4)$ and $\epsilon(E_5)$. Now as $E_4$ is matched due to the edge $e_2$, we get all the enhancements of components of $E_4$ due to live edges $e_3$ and $e_1$ from $M(T_4)$. The component of $e_2$ has $1$ as its enhancement and thus, we can use $M(T_4)$ to obtain the unique path from $S_4$ to $E_4$. Now, we have two choices of $\epsilon(S_5)$ coming from $E_4$ due to the split map at $e_5$ but since, $E_5$ is matched due to $e_3$, so there is a unique choice for $S_5$ and thus, we get the unique path from $S_5$ to $E_5$ from $M(T_5)$ and thus, we have only one unique choice for $\epsilon(S_{19})$. Therefore, we have an unique alternating path from $T_9^+$ to $T_{19}^+$.
      
   \end{example}
   
   	\begin{table}[ht]
   	\centering
   	\setlength{\tabcolsep}{4mm}
   	\def\arraystretch{1.5}
   	\resizebox{0.8\textwidth}{!}{
   	\begin{tabular}{cccccc}

   		\begin{tikzpicture}[vertex/.style={circle, draw, inner sep=0pt, minimum size=6pt, fill=black}, edge/.style={draw}, baseline=0]
   			\node[vertex,fill=red,color=red] [label=above:$1$] (a) at (0,2) {};
   			\node[vertex] [label=below:$x$] (b) at (0,1) {};
   			\node[vertex] (c) at (0,0) {};
   			\node[vertex] [label=left:$1$] (d) at (0,-1) {};
   			\node[vertex] [label=left:$1$] (e) at (-1.5,1) {};
   			\node[vertex] (f) at (-1.5,0) {};
   			\node[vertex] [label=right:$1$] (g) at (1.5,0.5) {};
   			\path[edge,dashed] (a) to (b);
   			\path[edge] (c) to (d);
   			\path[edge,dashed] (a) to[out=180,in=70] (e);
   			\path[edge] (e) to (f);
   			\path[edge,dashed] (a) to[out=0,in=90] (g);
   			\path[edge,dashed] (g) to[out=270,in=0] (d);
   			\node at (0,-2) {$T_9$ ($LLLdD\Bar{L}\Bar{d}\Bar{D}$)};
   		\end{tikzpicture} & \begin{tikzpicture}
   			\draw[->,very thick,color=blue] (0,0) -- node[midway,above,color=black] {$m$} (1,0);
   		\end{tikzpicture} &
   		
   		\begin{tikzpicture}[vertex/.style={circle, draw, inner sep=0pt, minimum size=6pt, fill=black}, edge/.style={draw}, baseline=0]
   			\node[vertex,fill=red,color=red] [label=above:$1$] (a) at (0,2) {};
   			\node[vertex] (b) at (0,1) {};
   			\node[vertex] (c) at (0,0) {};
   			\node[vertex] [label=left:$x$] (d) at (0,-1) {};
   			\node[vertex] [label=left:$1$] (e) at (-1.5,1) {};
   			\node[vertex] (f) at (-1.5,0) {};
   			\node[vertex] [label=right:$1$] (g) at (1.5,0.5) {};
   			\path[edge] (b) to (c);
   			\path[edge] (c) to (d);
   			\path[edge,dashed] (a) to[out=180,in=70] (e);
   			\path[edge] (e) to (f);
   			\path[edge,dashed] (a) to[out=0,in=90] (g);
   			\path[edge,dashed,very thick,color=red] (g) to[out=270,in=0] (d);
   			\node at (0,-2) {$T_4$ ($LLLdD\Bar{d}\Bar{D}\Bar{D}$)};
   		\end{tikzpicture} & $\xrightarrow{\Mt}$ &
   		
   		\begin{tikzpicture}[vertex/.style={circle, draw, inner sep=0pt, minimum size=6pt, fill=black}, edge/.style={draw}, baseline=0]
   			\node[vertex,fill=red,color=red] [label=above:$1$] (a) at (0,2) {};
   			\node[vertex] (b) at (0,1) {};
   			\node[vertex] (c) at (0,0) {};
   			\node[vertex] (d) at (0,-1) {};
   			\node[vertex] [label=left:$1$] (e) at (-1.5,1) {};
   			\node[vertex] (f) at (-1.5,0) {};
   			\node[vertex] [label=right:$1$] (g) at (1.5,0.5) {};
   			\path[edge] (b) to (c);
   			\path[edge] (c) to (d);
   			\path[edge,dashed] (a) to[out=180,in=70] (e);
   			\path[edge] (e) to (f);
   			\path[edge,dashed] (a) to[out=0,in=90] (g);
   			\path[edge,very thick,color=red] (g) to[out=270,in=0] (d);
   			\node at (0,-2) {$T_4$};
   		\end{tikzpicture} & $\xrightarrow{\Delta}$ \\
   		
   		\begin{tikzpicture}[vertex/.style={circle, draw, inner sep=0pt, minimum size=6pt, fill=black}, edge/.style={draw}, baseline=0]
   			\node[vertex,fill=red,color=red] [label=above:$1$] (a) at (0,2) {};
   			\node[vertex] (b) at (0,1) {};
   			\node[vertex] (c) at (0,0) {};
   			\node[vertex] [label=left:$1$] (d) at (0,-1) {};
   			\node[vertex] [label=left:$1$] (e) at (-1.5,1) {};
   			\node[vertex] (f) at (-1.5,0) {};
   			\node[vertex] [label=right:$x$] (g) at (1.5,0.5) {};
   			\path[edge] (b) to (c);
   			\path[edge] (c) to (d);
   			\path[edge,dashed] (a) to[out=180,in=70] (e);
   			\path[edge] (e) to (f);
   			\path[edge,dashed,very thick,color=red] (a) to[out=0,in=90] (g);
   			\path[edge,dashed] (g) to[out=270,in=0] (d);
   			\node at (0,-2) {$T_4$};
   		\end{tikzpicture} & $\xrightarrow{\Mt}$ &
   		
   		\begin{tikzpicture}[vertex/.style={circle, draw, inner sep=0pt, minimum size=6pt, fill=black}, edge/.style={draw}, baseline=0]
   			\node[vertex,fill=red,color=red] [label=above:$1$] (a) at (0,2) {};
   			\node[vertex] (b) at (0,1) {};
   			\node[vertex] (c) at (0,0) {};
   			\node[vertex] [label=left:$1$] (d) at (0,-1) {};
   			\node[vertex] [label=left:$1$] (e) at (-1.5,1) {};
   			\node[vertex] (f) at (-1.5,0) {};
   			\node[vertex] (g) at (1.5,0.5) {};
   			\path[edge] (b) to (c);
   			\path[edge] (c) to (d);
   			\path[edge,dashed] (a) to[out=180,in=70] (e);
   			\path[edge] (e) to (f);
   			\path[edge,very thick,color=red] (a) to[out=0,in=90] (g);
   			\path[edge,dashed] (g) to[out=270,in=0] (d);
   			\node at (0,-2) {$T_4$};
   		\end{tikzpicture} & \begin{tikzpicture}
   			\draw[->,very thick,color=blue] (0,0) -- node[midway,above,color=black] {$\Delta$} (1,0);
   		\end{tikzpicture} &
   		
   		\begin{tikzpicture}[vertex/.style={circle, draw, inner sep=0pt, minimum size=6pt, fill=black}, edge/.style={draw}, baseline=0]
   			\node[vertex,fill=red,color=red] [label=above:$1$] (a) at (0,2) {};
   			\node[vertex] (b) at (0,1) {};
   			\node[vertex] (c) at (0,0) {};
   			\node[vertex] [label=below:$1$] (d) at (0,-1) {};
   			\node[vertex] [label=left:$1$] (e) at (-1.5,1) {};
   			\node[vertex] [label=left:$x$] (f) at (-1.5,0) {};
   			\node[vertex] (g) at (1.5,0.5) {};
   			\path[edge] (b) to (c);
   			\path[edge] (c) to (d);
   			\path[edge,dashed] (a) to[out=180,in=70] (e);
   			\path[edge,dashed,very thick,color=red] (f) to[out=290,in=180] (d);
   			\path[edge] (a) to[out=0,in=90] (g);
   			\path[edge,dashed] (g) to[out=270,in=0] (d);
   			\node at (0,-2) {$T_5$ ($LLLLd\Bar{d}\Bar{D}\Bar{D}$)};
   		\end{tikzpicture} & $\xrightarrow{\Mt}$ \\
   		
   		\begin{tikzpicture}[vertex/.style={circle, draw, inner sep=0pt, minimum size=6pt, fill=black}, edge/.style={draw}, baseline=0]
   			\node[vertex,fill=red,color=red] [label=above:$1$] (a) at (0,2) {};
   			\node[vertex] (b) at (0,1) {};
   			\node[vertex] (c) at (0,0) {};
   			\node[vertex] [label=below:$1$] (d) at (0,-1) {};
   			\node[vertex] [label=left:$1$] (e) at (-1.5,1) {};
   			\node[vertex] (f) at (-1.5,0) {};
   			\node[vertex] (g) at (1.5,0.5) {};
   			\path[edge] (b) to (c);
   			\path[edge] (c) to (d);
   			\path[edge,dashed] (a) to[out=180,in=70] (e);
   			\path[edge,very thick,color=red] (f) to[out=290,in=180] (d);
   			\path[edge] (a) to[out=0,in=90] (g);
   			\path[edge,dashed] (g) to[out=270,in=0] (d);
   			\node at (0,-2) {$T_5$};
   		\end{tikzpicture} & $\xrightarrow{\Delta}$ &
   		
   		\begin{tikzpicture}[vertex/.style={circle, draw, inner sep=0pt, minimum size=6pt, fill=black}, edge/.style={draw}, baseline=0]
   			\node[vertex,fill=red,color=red] [label=above:$1$] (a) at (0,2) {};
   			\node[vertex] (b) at (0,1) {};
   			\node[vertex] (c) at (0,0) {};
   			\node[vertex] [label=below:$x$] (d) at (0,-1) {};
   			\node[vertex] [label=left:$1$] (e) at (-1.5,1) {};
   			\node[vertex] [label=left:$1$] (f) at (-1.5,0) {};
   			\node[vertex] (g) at (1.5,0.5) {};
   			\path[edge] (b) to (c);
   			\path[edge] (c) to (d);
   			\path[edge,dashed] (a) to[out=180,in=70] (e);
   			\path[edge,dashed] (f) to[out=290,in=180] (d);
   			\path[edge] (a) to[out=0,in=90] (g);
   			\path[edge,dashed,very thick,color=red] (g) to[out=270,in=0] (d);
   			\node at (0,-2) {$T_5$};
   		\end{tikzpicture} & $\xrightarrow{\Mt}$ &
   		
   		\begin{tikzpicture}[vertex/.style={circle, draw, inner sep=0pt, minimum size=6pt, fill=black}, edge/.style={draw}, baseline=0]
   			\node[vertex,fill=red,color=red] [label=above:$1$] (a) at (0,2) {};
   			\node[vertex] (b) at (0,1) {};
   			\node[vertex] (c) at (0,0) {};
   			\node[vertex] (d) at (0,-1) {};
   			\node[vertex] [label=left:$1$] (e) at (-1.5,1) {};
   			\node[vertex] [label=left:$1$] (f) at (-1.5,0) {};
   			\node[vertex] (g) at (1.5,0.5) {};
   			\path[edge] (b) to (c);
   			\path[edge] (c) to (d);
   			\path[edge,dashed] (a) to[out=180,in=70] (e);
   			\path[edge,dashed] (f) to[out=290,in=180] (d);
   			\path[edge] (a) to[out=0,in=90] (g);
   			\path[edge,very thick,color=red] (g) to[out=270,in=0] (d);
   			\node at (0,-2) {$T_5$};
   		\end{tikzpicture}  & \begin{tikzpicture}
   			\draw[->,very thick,color=blue] (0,0) -- node[midway,above,color=black] {$\Delta$} (1,0);
   		\end{tikzpicture} \\
   		
   		\begin{tikzpicture}[vertex/.style={circle, draw, inner sep=0pt, minimum size=6pt, fill=black}, edge/.style={draw}, baseline=0]
   			\node[vertex,fill=red,color=red] [label=above:$1$] (a) at (0,2) {};
   			\node[vertex] (b) at (0,1) {};
   			\node[vertex] (c) at (0,0) {};
   			\node[vertex] (d) at (0,-1) {};
   			\node[vertex] [label=left:$1$] (e) at (-1.5,1) {};
   			\node[vertex] [label=left:$1$] (f) at (-1.5,0) {};
   			\node[vertex] (g) at (1.5,0.5) {};
            \node (h) at (0.75,0.5) {$x$};
   			\path[edge] (a) to (b);
   			\path[edge] (b) to (c);
   			\path[edge] (c) to (d);
   			\path[edge,dashed] (a) to[out=180,in=70] (e);		   	
   			\path[edge,dashed] (f) to[out=290,in=180] (d);		   
   			\path[edge] (g) to[out=270,in=0] (d);
   			\path[edge, color=blue] (a) to[out=0,in=90] (g);
   			\node at (0,-2) {$T_{19}$ ($LlDLd\Bar{D}\Bar{D}\Bar{D}$)};
   		\end{tikzpicture}
 
   	\end{tabular}}
   	
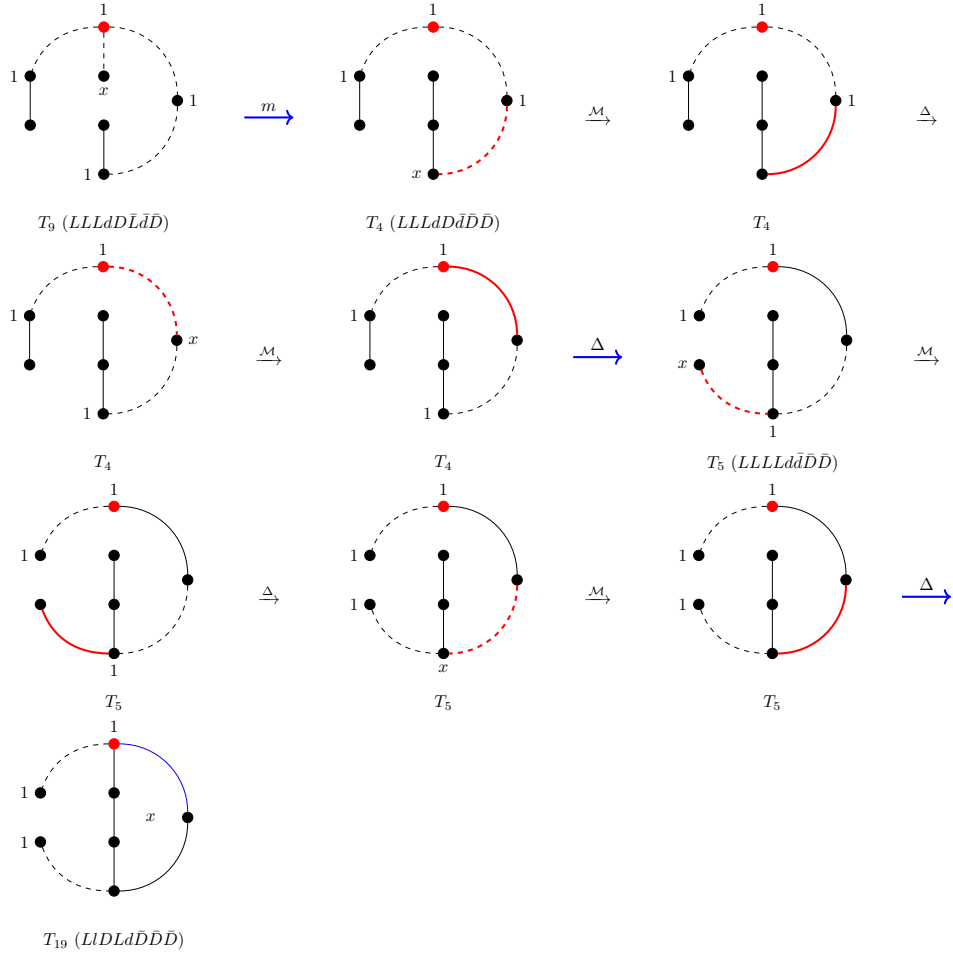
\captionof{figure}{An alternating path between $T_5^+$ and $T_{19}^+$ from example \ref{example computation}. The blue arrows indicates the change of complex from one twisted unknot of a spanning tree to the other.}
    \label{alt path sample example}
   \end{table}

\end{subsection}

\end{section}


\begin{section}{Unreduced spanning tree complex} \label{unreduced Kh complex section}
	
	\begin{subsection}{Acyclic matching on the unreduced Khovanov complex}
		
		In the case of reduced Khovanov complex, the enhanced spanning subgraphs $(H,\epsilon)$ had a restriction which was $\epsilon(C) = 1$ for $C \in C(H)$ with $v_d  \in C$. If we change this restriction to $\epsilon(C)=x$ then we have another complex $\CKh^-(U(T))$. The unreduced Khovanov complex for a twisted unknot $U(T)$ is given by 
		
		$$
		   \CKh(U(T)) = \CKh^+(U(T)) \oplus \CKh^-(U(T))
		$$
		
		Thus, for a given connected link diagram $\Lk$, the unreduced Khovanov complex of $\Lk$ is given by:
		
		$$
		   \CKh(\Lk) = \bigoplus_T \CKh(U(T))
		$$
		
		Observe that the matching in Theorem \ref{maintheorem} did not depend on the above mentioned restriction. Thus, one can easily define a similar near-perfect acyclic matching on $\mathcal{H}((\CKh^-(U(T)),\partial_{\Kh}^-))$. Thus, we do have an acyclic matching on $\mathcal{H}((\CKh(U(T)),\partial_{\Kh}))$ with exactly two critical enhanced spanning subgraphs which we denote by $S_c^{T^+}$ and $S_c^{T^-}$.
		
		\begin{theorem}\label{matching theorem in unreduced}
			Let $T$ be a spanning tree of the Tait graph $G_\Lk$ associated with a connected link diagram $\Lk$, then there exists an acyclic matching on $\mathcal{H}((\CKh(U(T)),\partial_{\Kh}))$, where $\mathcal{H}((\CKh(U(T)),\partial_{\Kh}))$ is the unreduced Khovanov complex of $U(T)$. Furthermore, by taking the union of these matchings over all spanning trees of $G_\Lk$, we get an acyclic matching $\Mt^{\Kh}_\Lk$ on $\mathcal{H}((\CKh(\Lk),\partial_{\Kh}))$.
		\end{theorem}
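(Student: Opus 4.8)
The plan is to assemble the matching on each twisted unknot $U(T)$ out of two copies of the reduced matching of Theorem~\ref{matching_theorem}, and then to patch these together over all spanning trees using Lemma~\ref{kh complex is dis union of twisted unknots}.

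First I would isolate the structural input that makes the argument run: $\CKh^-(U(T))$ is a \emph{subcomplex} of $\CKh(U(T))$, whereas $\CKh^+(U(T))$ is the corresponding \emph{quotient} complex $\CKh(U(T))/\CKh^-(U(T))$. This is just the remark made after the definition of the reduced complexes, applied to the diagram $U(T)$: the merge map can turn the label of the $v_d$-component from $1$ into $x$ but never from $x$ into $1$, and a split never changes an $x$ on the $v_d$-component into a $1$. Two consequences are worth recording before going on. (i) For generators $c,c'$ lying in $\CKh^+(U(T))$ the incidence number $[c:c']$ computed in $\CKh(U(T))$ coincides with the one computed in the quotient complex $\CKh^+(U(T))$ (and similarly, incidence numbers between generators of $\CKh^-(U(T))$ agree whether computed in the subcomplex or in $\CKh(U(T))$), so the reduced matching $\Mt^{+}_T$ furnished by Theorem~\ref{matching_theorem} is literally a set of edges of the big Hasse diagram $\mathcal{H}((\CKh(U(T)),\partial_{\Kh}))$, with unchanged weights. (ii) The inductive recipe of Theorem~\ref{matching_theorem} only ever modifies the label of a leaf vertex $v_n\neq v_d$ and the component it sits in; it never touches the label carried by the $v_d$-component. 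Hence the very same recipe, run on $\mathcal{H}((\CKh^-(U(T)),\partial_{\Kh}^-))$, produces an acyclic near-perfect matching $\Mt^{-}_T$ with a unique critical enhanced spanning subgraph $S_c^{T^-}$ (the base case and the inductive step of Figures~\ref{base case matching}, \ref{pos-cross} and \ref{neg-cross} go through verbatim after replacing the label of the $v_d$-component by $x$).

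Next I would set $\Mt_T:=\Mt^{+}_T\cup\Mt^{-}_T$ and verify the three axioms of an acyclic matching for it as a subset of the edges of $\mathcal{H}((\CKh(U(T)),\partial_{\Kh}))$. Axioms (1) and (2) are immediate: the generating set of $\CKh(U(T))$ is the disjoint union of those of $\CKh^{+}(U(T))$ and $\CKh^{-}(U(T))$, so no generator meets two edges of $\Mt_T$, and every matched weight is a unit, inherited from the reduced case. The content is axiom (3), acyclicity, and here I would use a one-way-flow observation: in $\mathcal{H}_{\Mt_T}((\CKh(U(T)),\partial_{\Kh}))$ every directed edge issuing from a generator of $\CKh^{-}(U(T))$ lands again in $\CKh^{-}(U(T))$ — unmatched differential edges do so because $\CKh^-(U(T))$ is a subcomplex, and reversed matched edges do so because $\Mt^{-}_T$ lives inside $\CKh^{-}(U(T))$. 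Consequently a directed path cannot leave $\CKh^{-}(U(T))$ once it has entered it, so a directed cycle is confined either entirely to $\CKh^{+}(U(T))$ or entirely to $\CKh^{-}(U(T))$; in the first case it is a directed cycle of $\mathcal{H}_{\Mt^{+}_T}((\CKh^{+}(U(T)),\partial_{\Kh}^{+}))$ and in the second of $\mathcal{H}_{\Mt^{-}_T}((\CKh^{-}(U(T)),\partial_{\Kh}^{-}))$, and both are impossible by Theorem~\ref{matching_theorem} and its $(-)$-analogue. A generator is critical for $\Mt_T$ precisely when it is unmatched in whichever of the two summands contains it, so the critical enhanced spanning subgraphs of $\Mt_T$ are exactly $S_c^{T^+}$ and $S_c^{T^-}$.

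Finally I would globalize. By Lemma~\ref{kh complex is dis union of twisted unknots}, $\mathcal{H}((\CKh(\Lk),\partial_{\Kh}))$ is the disjoint union over the spanning trees $T$ of $G_\Lk$ of the graphs $\mathcal{H}((\CKh(U(T)),\partial_{\Kh}))$, so $\Mt^{\Kh}_\Lk:=\bigsqcup_T\Mt_T$ automatically satisfies axioms (1) and (2), and any directed cycle of $\mathcal{H}_{\Mt^{\Kh}_\Lk}$ lies in a single connected component $\mathcal{H}_{\Mt_T}((\CKh(U(T)),\partial_{\Kh}))$ and hence does not exist by the previous step; thus $\Mt^{\Kh}_\Lk$ is an acyclic matching, with critical set $\bigcup_T\{S_c^{T^+},S_c^{T^-}\}$. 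The step I expect to require the most care is the one-way-flow claim and the precise bookkeeping it rests on: one has to be certain that no differential edge and no reversed matched edge can carry a path out of $\CKh^{-}(U(T))$, which in the end is a direct check of the Khovanov merge and split maps on the $v_d$-marked component; once that is nailed down, the remaining verifications are routine.
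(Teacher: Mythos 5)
Your proposal is correct and follows essentially the same route as the paper: the paper likewise reuses the matching of Theorem \ref{matching_theorem} on both summands of $\CKh(U(T))=\CKh^+(U(T))\oplus\CKh^-(U(T))$ and rules out directed cycles by observing that no directed edge can pass from $\CKh^-(U(T))$ back to $\CKh^+(U(T))$, since the maps $m$ and $\Delta$ never change the enhancement of the $v_d$-component from $x$ to $1$ (your ``one-way flow''), before taking the union over spanning trees via Lemma \ref{kh complex is dis union of twisted unknots}. Your write-up is simply more explicit about the subcomplex/quotient bookkeeping, the agreement of incidence numbers, and the two critical states $S_c^{T^{\pm}}$, points the paper leaves as ``clear from the above discussion.''
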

		
		\begin{proof}
			The existence of a matching in $\CKh(U(T))$ is clear from the above discussion. Now if a directed cycle exists in $\mathcal{H}_{\Mt}((\CKh(U(T)),\partial_{\Kh}))$, then it must contain a directed edge $(e \rightarrow e')$ such that $e \in \CKh^+(U(T))$ and $e' \in \CKh^-(U(T))$ but simultaneously there also must be a directed edge $(f \rightarrow f')$ such that $f \in \CKh^-(U(T))$ and $f' \in \CKh^+(U(T))$ but for such an edge to exist the component containing $v_d$ in $f$ with enhancement $x$ must either merge($m$) or split($\Delta$) to give $f$ and the component containing $v_d$ in $f$ must then $1$ as its enhancement, but the definition of the maps $m$ and $\Delta$ in Khovanov chain complex does not allow it. Thus, $\Mt_\Lk$ is an acyclic matching.  
		\end{proof}
	\end{subsection}
	
	\begin{subsection}{Spanning tree complex}
		For a given connected link diagram $\Lk$, we have previously defined the reduced spanning tree complex $\CST^+(\Lk)$ and its generators are the spanning trees of $G_\Lk$, where $v_d$ has the enhancement $1$ and thus, we denoted its generators by $T^+$. Similarly, we now have the counterpart where the enhancement of $v_d$ is $x$ and we denote the reduced counterpart by $\CST^-(\Lk)$ where the generators are the same spanning trees, but denoted by $T^-$. Thus we define the following total complex:
		
		\begin{definition}
			The \textit{unreduced spanning tree complex} or simply spanning tree complex is defined to be the bigraded complex,
			$$
			\CST(\Lk) = \bigoplus_{i,j}\CST^{i,j}(\Lk)
			$$
			where, 
			$$
			\CST^{i,j}(\Lk)= {\CST^+}^{i,j}(\Lk) + {\CST^-}^{i,j}(\Lk)
			$$ 
			with ${\CST^-}^{i,j}(\Lk)$ being defined similarly as ${\CST^+}^{i,j}(\Lk)$.
		\end{definition}
		
		We define the differential for the spanning tree complex $\partial_{ST} : \CST^{i,j}(\Lk) \rightarrow \CST^{i+1,j}(\Lk)$ as
		
		\begin{equation} \label{unreduced differential}
		   \partial_{ST}(T) := \left\{
		\begin{aligned}
			& \partial^+_{ST}(T) + \sum \Gamma(T^+,{T'}^-){T'}^- & \text{if } T=T^+ \\
			& \partial^-_{ST}(T) & \text{if } T=T^-
		\end{aligned}
		\right. 
		\end{equation}

        where, $\Gamma(T^+,T'^-) = \Gamma(S_c^{T^+},S_c^{T'^-})$ and $\partial_{ST}^-$ is defined similarly as in \ref{reduced differential}.

        \begin{proof}[Proof of Theorem \ref{unreduced maintheorem}]
            Theorem \ref{dmt maintheorem} and the definition of $\partial_{ST}$ in \ref{unreduced differential} implies the following:

            $$
             H^*((\CST(\Lk),\partial_{ST})) \cong \Kh(\Lk)
            $$
        \end{proof}
        
		All results proven in subsection \ref{alternating path count subsection} also holds for the complex $\CST^-(\Lk)$. Hence, in order to completely describe $\partial_{ST}$, we only need to look for the existence of an alternating path from $S_c^{T^+}$ to $S_c^{{T'}^-}$.

        \begin{definition}
   	Given a spanning tree $T$, consider any two enhanced spanning subgraphs $S_x^{T}$ and $S_y^{T}$ in $\CKh(U(T))$. Then we similarly define the following:
   	\begin{enumerate}
   		\item $\A^{\downarrow}(S_x^{T},S_y^{T})$ := \# alternating paths from $S_x^{T}$ to $S_y^{T}$ in $\mathcal{H}_\Mt ((\CKh(U(T)),\partial_{\Kh}))$, where the paths begin with a non-matched edge.
   		
   		\item $\A^{\uparrow}(S_x^{T^+},S_y^{T^+})$ := \# alternating paths from $S_x^{T}$ to $S_y^{T}$ in $\mathcal{H}_\Mt ((\CKh(U(T)),\partial_{\Kh}))$, where the paths begin with a matched edge.
   	\end{enumerate}
    \end{definition}
		
		\begin{definition}
			A \textit{rooted negative subpath} of $G(H_x)$ is subgraph of $G(H_x)$ which is itself a negative subpath and moreover the initial vertex $v_C$ of each connected component $C$ of this negative subpath can be $v_d$.
		\end{definition}
		
		\begin{definition}
			A \textit{rooted positive subpath} of $G(H_x)$ is subgraph of $G(H_x)$ which is itself a positive subpath and moreover the initial vertex $v_C$ of each connected component $C$ of this positive subpath can be $v_d$.
		\end{definition}
		
		\begin{proposition}\label{rooted negative subpaths}
		    For a given spanning tree $T$ of $G_\Lk$, the following two sets are in one-one correspondence:
			
			\[\begin{tikzcd}[ampersand replacement=\&,cramped]
				{\left\{S_x^{T} \mid \A^\downarrow(S_c^{T^+},S_x^{T})=1, i(S_c^{T^+})=i(S_x^{T})\right\}} \&\& {\left\{\text{Rooted negative subpaths of }G(H_c)=G(T)\right\}}
				\arrow[shorten <=4pt, shorten >=4pt, tail reversed, from=1-1, to=1-3]
			\end{tikzcd}\]
			
		\end{proposition}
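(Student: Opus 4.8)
The plan is to run the argument of Proposition~\ref{down path within tree} inside the full complex $\CKh(U(T)) = \CKh^+(U(T)) \oplus \CKh^-(U(T))$, where the only structural change is that the component of $G(T)$ containing the root $v_d$ is no longer forced to carry the label $1$; consequently a component of the subpath one builds is now allowed to start at $v_d$, which is exactly the extra freedom in the definition of a rooted negative subpath. Since all the local moves and all the label bookkeeping away from $v_d$ are identical to the reduced case, most of the argument is a transcription.

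First I would construct the map from rooted negative subpaths to states. Given a rooted negative subpath $H \subseteq G(H_c)=G(T)$, list its connected components $C_1,\dots,C_k$ in the inductive matching order recorded by $M(T)$. For each $C_j$ not containing $v_d$, the construction in Proposition~\ref{down path within tree} produces a string of alternating moves --- merges interleaved with reversed matchings --- that migrates the label $1$ from the initial vertex $v_{C_j}$ to the last vertex of $C_j$, leaving every other vertex of $C_j$ labelled $x$, and this string stays inside $\CKh^+(U(T))$. For the unique component $C_{j_0}$ containing $v_d$, if present, the identical string applies, now using $\epsilon_{S_c^{T^+}}(\{v_d\})=1$ as the starting label; the label $1$ migrates off $v_d$, so $v_d$'s component ends labelled $x$ and the alternating path crosses from $\CKh^+(U(T))$ into $\CKh^-(U(T))$ exactly at this step. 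Concatenating over $C_1,\dots,C_k$ gives an $i$-preserving $\downarrow$-alternating path from $S_c^{T^+}$ to a well-defined state $S_x^{T}$, which lies in $\CKh^-(U(T))$ precisely when some $C_j$ is rooted at $v_d$.

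Conversely, given $S_x^{T}$ with $\A^\downarrow(S_c^{T^+},S_x^{T})=1$ and $i(S_x^{T})=i(S_c^{T^+})$, I would read off the subpath as in Proposition~\ref{down path within tree}: an $i$-preserving $\downarrow$-path out of $S_c^{T^+}$ must begin with a merge $m$ along an edge of activity $\Bar L$ or $l$ whose predecessor edge (toward $v_d$) has activity $L$ or $\Bar l$, or which is incident to $v_d$, and it continues by consuming such edges along the paths $P_{e_i}$; the set of edges consumed forms a negative subpath, rooted exactly when one of these merges is performed on the $v_d$-component. The point to verify is that this occurs at most once: because the maps $m$ and $\Delta$ of the Khovanov differential never turn the label of the $v_d$-component from $x$ back to $1$ --- the same observation used in the proof of Theorem~\ref{matching theorem in unreduced} --- once the path enters $\CKh^-(U(T))$ it remains there, so at most one component of the subpath can be rooted at $v_d$. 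Combined with the uniqueness of the intermediate states (the same induction on $|E(G(T))|$ that shows $\widetilde{\A^\downarrow} \in \{0,1\}$ in the reduced case), the two assignments are mutually inverse.

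The hard part will be this enhancement bookkeeping at the crossing step: checking that the move taking the path out of $\CKh^+(U(T))$ is forced to be a merge on the $v_d$-component along an edge of activity $\Bar L$ or $l$, that the label migration then proceeds exactly as in the non-rooted case, and that no later move can return the path to $\CKh^+(U(T))$ --- so that a component rooted at $v_d$ is counted once and only once. Everything else follows verbatim from Proposition~\ref{down path within tree}.
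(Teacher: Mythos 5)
Your proposal is correct and follows essentially the same route as the paper: states in $\CKh^+(U(T))$ are handled by Proposition \ref{down path within tree}, while states in $\CKh^-(U(T))$ correspond to subpaths with a component rooted at $v_d$, reached by an alternating path of merge maps along that component following $M(T)$, with the observation (as in Theorem \ref{matching theorem in unreduced}) that the path cannot return to $\CKh^+(U(T))$. The bookkeeping you flag as the remaining work is exactly what the paper's brief proof leaves implicit, so there is no substantive difference in approach.
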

		
	    \begin{proof}
	    	If $S_x^T \in \CKh^+(U(T))$ then it corresponds to negative subpaths of $G(T)$ as in Proposition \ref{down path within tree}. If we have a ``proper" rooted negative subpath $\Pt$, which means there is a connected component of $\Pt$ which contains an edge incident to $v_d$, then we can use that component to move from $\CKh^+(U(T))$ to $\CKh^-(U(T))$ using an alternating path consisting entirely of merge maps following $M(T)$ (See Figure \ref{rooted negative subpath example}). Thus, states $S_x^T \in \CKh^-(U(T))$ corresponds to proper rooted negative subpaths.
	    \end{proof}

   \begin{table}[ht]
     \centering
     \setlength{\tabcolsep}{4mm}
     \def\arraystretch{1.5}
     \resizebox{\textwidth}{!}{
         \begin{tabular}{ccccccccc}
           \begin{tikzpicture}[vertex/.style={circle, draw, inner sep=0pt, minimum size=6pt, fill=black}, edge/.style={draw}, baseline=0]
          \node[vertex,fill=red,color=red] [label=above:$1$] (a) at (0,2) {};
	   \node[vertex] [label=right:$x$] (b) at (0,1) {};
	   \node[vertex] [label=right:$x$] (c) at (0,0) {};
	   \node[vertex] (d) at (0,-1) {};
	   \node[vertex] [label=left:$1$] (e) at (-1.5,1) {};
	   \node[vertex] (f) at (-1.5,0) {};
	   \node[vertex] [label=right:$1$] (g) at (1.5,0.5) {};
	   \path[edge,dotted] (a) to (b);
	   \path[edge,dotted] (b) to (c);
	   \path[edge,dotted] (a) to[out=180,in=70] (e);
	   \path[edge] (e) to (f);
	   \path[edge] (f) to[out=290,in=180] (d);
	   \path[edge,dotted] (a) to[out=0,in=90] (g);
	\end{tikzpicture} & 
         \Large{$\xrightarrow{m}$} &
        \begin{tikzpicture}[vertex/.style={circle, draw, inner sep=0pt, minimum size=6pt, fill=black}, edge/.style={draw}, baseline=0]
          \node[vertex,fill=red,color=red] (a) at (0,2) {};
	   \node[vertex] [label=right:$x$] (b) at (0,1) {};
	   \node[vertex] [label=right:$x$] (c) at (0,0) {};
	   \node[vertex] (d) at (0,-1) {};
	   \node[vertex] [label=left:$1$] (e) at (-1.5,1) {};
	   \node[vertex] (f) at (-1.5,0) {};
	   \node[vertex] [label=right:$1$] (g) at (1.5,0.5) {};
	   \path[edge] (a) to (b);
	   \path[edge,dotted] (b) to (c);
	   \path[edge,dotted] (a) to[out=180,in=70] (e);
	   \path[edge] (e) to (f);
	   \path[edge] (f) to[out=290,in=180] (d);
	   \path[edge,dotted] (a) to[out=0,in=90] (g);
	\end{tikzpicture} & \Large{$\xrightarrow{\Mt}$} &
        \begin{tikzpicture}[vertex/.style={circle, draw, inner sep=0pt, minimum size=6pt, fill=black}, edge/.style={draw}, baseline=0]
          \node[vertex,fill=red,color=red] [label=above:$x$] (a) at (0,2) {};
	   \node[vertex] [label=right:$1$] (b) at (0,1) {};
	   \node[vertex] [label=right:$x$] (c) at (0,0) {};
	   \node[vertex] (d) at (0,-1) {};
	   \node[vertex] [label=left:$1$] (e) at (-1.5,1) {};
	   \node[vertex] (f) at (-1.5,0) {};
	   \node[vertex] [label=right:$1$] (g) at (1.5,0.5) {};
	   \path[edge,dotted] (a) to (b);
	   \path[edge,dotted] (b) to (c);
	   \path[edge,dotted] (a) to[out=180,in=70] (e);
	   \path[edge] (e) to (f);
	   \path[edge] (f) to[out=290,in=180] (d);
	   \path[edge,dotted] (a) to[out=0,in=90] (g);
	\end{tikzpicture} & \Large{$\xrightarrow{m}$} &
        \begin{tikzpicture}[vertex/.style={circle, draw, inner sep=0pt, minimum size=6pt, fill=black}, edge/.style={draw}, baseline=0]
          \node[vertex,fill=red,color=red] [label=above:$1$] (a) at (0,2) {};
	   \node[vertex] [label=right:$x$] (b) at (0,1) {};
	   \node[vertex] (c) at (0,0) {};
	   \node[vertex] (d) at (0,-1) {};
	   \node[vertex] [label=left:$1$] (e) at (-1.5,1) {};
	   \node[vertex] (f) at (-1.5,0) {};
	   \node[vertex] [label=right:$1$] (g) at (1.5,0.5) {};
	   \path[edge,dotted] (a) to (b);
	   \path[edge] (b) to (c);
	   \path[edge,dotted] (a) to[out=180,in=70] (e);
	   \path[edge] (e) to (f);
	   \path[edge] (f) to[out=290,in=180] (d);
	   \path[edge,dotted] (a) to[out=0,in=90] (g);
	\end{tikzpicture} & \Large{$\xrightarrow{\Mt}$} & 
        \begin{tikzpicture}[vertex/.style={circle, draw, inner sep=0pt, minimum size=6pt, fill=black}, edge/.style={draw}, baseline=0]
          \node[vertex,fill=red,color=red] [label=above:$x$] (a) at (0,2) {};
	   \node[vertex] [label=right:$x$] (b) at (0,1) {};
	   \node[vertex] [label=right:$1$] (c) at (0,0) {};
	   \node[vertex] (d) at (0,-1) {};
	   \node[vertex] [label=left:$1$] (e) at (-1.5,1) {};
	   \node[vertex] (f) at (-1.5,0) {};
	   \node[vertex] [label=right:$1$] (g) at (1.5,0.5) {};
	   \path[edge,dotted] (a) to (b);
	   \path[edge,dotted] (b) to (c);
	   \path[edge,dotted] (a) to[out=180,in=70] (e);
	   \path[edge] (e) to (f);
	   \path[edge] (f) to[out=290,in=180] (d);
	   \path[edge,dotted] (a) to[out=0,in=90] (g);
	\end{tikzpicture}
    \end{tabular}}
    
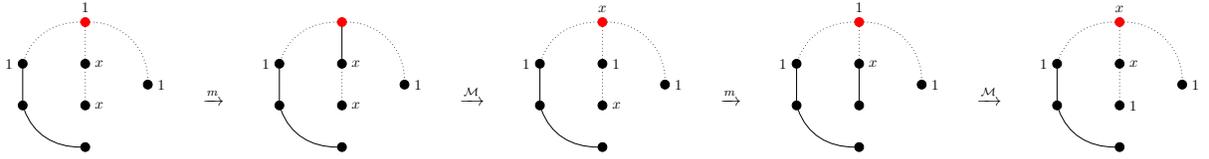
\captionof{figure}{A rooted negative subpath in $T_{13}(LLdDD\Bar{L}\Bar{L}\Bar{d})$ from Table \ref{spanning trees of 8_20}}
   \label{rooted negative subpath example}
   \end{table}
		   
		\begin{proposition}\label{rooted positive subpaths}
			For a given spanning tree $T$ of $G_\Lk$, the following two sets are in one-one correspondence:
			
			\[\begin{tikzcd}[ampersand replacement=\&,cramped]
				{\left\{S_x^{T} \mid \A^\uparrow(S_x^{T},S_c^{T^-})=1, i(S_x^{T})=i(S_c^{T^-})\right\}} \&\& {\left\{\text{Rooted positive subpaths of }G(H_c)=G(T)\right\}}
				\arrow[shorten <=4pt, shorten >=4pt, tail reversed, from=1-1, to=1-3]
			\end{tikzcd}\]
		\end{proposition}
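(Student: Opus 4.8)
The plan is to prove Proposition \ref{rooted positive subpaths} as the exact dual of Proposition \ref{rooted negative subpaths}: I would keep the combinatorial skeleton of the proof of Proposition \ref{up path within tree} and add only the bookkeeping that records which of the two summands $\CKh^{\pm}(U(T))$ the state $S_x^T$ lies in, and where an alternating path to $S_c^{T^-}$ crosses between them. The basic tool is the observation already used in the proof of Theorem \ref{matching theorem in unreduced}: since the two near-perfect matchings are constructed separately, no matched pair straddles the two summands, and $\partial_{\Kh}$ never carries $\CKh^-(U(T))$ into $\CKh^+(U(T))$; hence $\mathcal{H}_\Mt((\CKh(U(T)),\partial_{\Kh}))$ has no directed edge from a vertex of $\CKh^-(U(T))$ to a vertex of $\CKh^+(U(T))$.

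First I would dispose of the states $S_x^T\in\CKh^-(U(T))$. By the observation above, every alternating path from such an $S_x^T$ to $S_c^{T^-}\in\CKh^-(U(T))$ stays entirely inside $\mathcal{H}_\Mt((\CKh^-(U(T)),\partial_{\Kh}^-))$. The matching on $\CKh^-(U(T))$ is, combinatorially, identical to that on $\CKh^+(U(T))$ (Theorem \ref{matching theorem in unreduced} is obtained from Theorem \ref{matching_theorem} by interchanging the enhancements $1$ and $x$ at $v_d$), so Proposition \ref{up path within tree} applies verbatim to this subcomplex and identifies the states $S_x^T\in\CKh^-(U(T))$ with $\A^\uparrow(S_x^T,S_c^{T^-})=1$ and $i(S_x^T)=i(S_c^{T^-})$ with the positive subpaths of $G(T)$, i.e. the rooted positive subpaths none of whose components meets $v_d$.

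Next I would treat the states $S_x^T\in\CKh^+(U(T))$. Any alternating path from such an $S_x^T$ to $S_c^{T^-}$ must use at least one directed edge from $\CKh^+(U(T))$ to $\CKh^-(U(T))$; since no matched pair straddles the summands, each such crossing edge is an unmatched differential edge, and since the path cannot return to $\CKh^+(U(T))$ once it leaves, the crossing occurs exactly once. As the $\A^\uparrow$-path is built from split maps in the sense of Proposition \ref{up path within tree}, this crossing edge is a split of the face containing $v_d$ — the only way to turn the $v_d$-enhancement from $1$ into $x$ along such a path — realized by the edge of $G(T)$ incident to $v_d$ at which the splitting occurs. Reading off, as in Proposition \ref{up path within tree}, the subsequences of $M(T)$ realized by the split maps before and after the crossing yields a positive subpath one of whose components has initial vertex $v_d$; conversely a proper rooted positive subpath (one with a component incident to $v_d$) produces, via split maps following $M(T)$ together with the single $v_d$-face split, exactly such a crossing alternating path from a state of $\CKh^+(U(T))$ to $S_c^{T^-}$. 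Combined with the previous step, this gives the bijection with all rooted positive subpaths. Finally I would check, as in the proofs of Propositions \ref{up path within tree} and \ref{rooted negative subpaths}, that the enhancements along the path are forced — every non-initial vertex of a component carries $x$ except the last vertex of that component — so the path, if it exists, is unique and recovers the subpath. The one place I expect genuine work is this last point in the crossing case: verifying that the single $+\to-$ split at the $v_d$-face is compatible with the matched/unmatched alternation on either side of it, which I would settle by the same case analysis on the type of twist of the $v_d$-incident edge as in the proof of Proposition \ref{rooted negative subpaths}.
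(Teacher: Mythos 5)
Your proposal is correct and takes essentially the same route as the paper's (very terse) proof: states $S_x^T\in\CKh^-(U(T))$ are handled by the unrooted case of Proposition \ref{up path within tree}, while states $S_x^T\in\CKh^+(U(T))$ correspond to proper rooted positive subpaths via a single crossing from $\CKh^+(U(T))$ to $\CKh^-(U(T))$ realized at the component containing $v_d$. One small adjustment: the crossing is a split not because splitting is ``the only way'' to change the $v_d$-enhancement from $1$ to $x$ (a merge $m(1,x)=x$ also does this, and is precisely the mechanism in Proposition \ref{rooted negative subpaths}), but because the unmatched steps of an $\A^\uparrow$-path are split maps; with that justification corrected, your argument matches the paper's.
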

		
		\begin{proof}
			A similar argument works over here. The states $S_x^T \in \CKh^+(U(T))$ corresponds to proper rooted positive subpaths while states $S_x^T \in \CKh^-(U(T))$ corresponds to positive subpaths as in Proposition \ref{up path within tree} (See Figure \ref{rooted positive subpath example}).
		\end{proof}

    \begin{table}[ht]
     \centering
     \setlength{\tabcolsep}{4mm}
     \def\arraystretch{1.5}
     \resizebox{\textwidth}{!}{
     \begin{tabular}{ccccccccc}
         \begin{tikzpicture}[vertex/.style={circle, draw, inner sep=0pt, minimum size=6pt, fill=black}, edge/.style={draw}, baseline=0]
	   \node[vertex,fill=red,color=red] [label=above:$1$] (a) at (0,2) {};
		\node[vertex] (b) at (0,1) {};
		\node[vertex] (c) at (0,0) {};
		\node[vertex] [label=left:$x$] (d) at (0,-1) {};
		\node[vertex] [label=left:$1$] (e) at (-1.5,1) {};
		\node[vertex] (f) at (-1.5,0) {};
		\node[vertex] [label=left:$1$] (g) at (1.5,0.5) {};
		\path[edge] (b) to (c);
		\path[edge] (c) to (d);
		\path[edge,dotted] (a) to[out=180,in=70] (e);
		\path[edge] (e) to (f);
		\path[edge,dotted] (a) to[out=0,in=90] (g);
		\path[edge,dotted] (g) to[out=270,in=0] (d);
	\end{tikzpicture} & 
         \Large{$\xrightarrow{\Mt}$} &
        \begin{tikzpicture}[vertex/.style={circle, draw, inner sep=0pt, minimum size=6pt, fill=black}, edge/.style={draw}, baseline=0]
	   \node[vertex,fill=red,color=red] [label=above:$1$] (a) at (0,2) {};
		\node[vertex] (b) at (0,1) {};
		\node[vertex] (c) at (0,0) {};
		\node[vertex] (d) at (0,-1) {};
		\node[vertex] [label=left:$1$] (e) at (-1.5,1) {};
		\node[vertex] (f) at (-1.5,0) {};
		\node[vertex] [label=left:$1$] (g) at (1.5,0.5) {};
		\path[edge] (b) to (c);
		\path[edge] (c) to (d);
		\path[edge,dotted] (a) to[out=180,in=70] (e);
		\path[edge] (e) to (f);
		\path[edge,dotted] (a) to[out=0,in=90] (g);
		\path[edge] (g) to[out=270,in=0] (d);
	\end{tikzpicture} & \Large{$\xrightarrow{\Delta}$} &
        \begin{tikzpicture}[vertex/.style={circle, draw, inner sep=0pt, minimum size=6pt, fill=black}, edge/.style={draw}, baseline=0]
	   \node[vertex,fill=red,color=red] [label=above:$1$] (a) at (0,2) {};
		\node[vertex] (b) at (0,1) {};
		\node[vertex] (c) at (0,0) {};
		\node[vertex] [label=left:$1$] (d) at (0,-1) {};
		\node[vertex] [label=left:$1$] (e) at (-1.5,1) {};
		\node[vertex] (f) at (-1.5,0) {};
		\node[vertex] [label=left:$x$] (g) at (1.5,0.5) {};
		\path[edge] (b) to (c);
		\path[edge] (c) to (d);
		\path[edge,dotted] (a) to[out=180,in=70] (e);
		\path[edge] (e) to (f);
		\path[edge,dotted] (a) to[out=0,in=90] (g);
		\path[edge,dotted] (g) to[out=270,in=0] (d);
	\end{tikzpicture} & \Large{$\xrightarrow{\Mt}$} & 
      \begin{tikzpicture}[vertex/.style={circle, draw, inner sep=0pt, minimum size=6pt, fill=black}, edge/.style={draw}, baseline=0]
	   \node[vertex,fill=red,color=red] [label=above:$1$] (a) at (0,2) {};
		\node[vertex] (b) at (0,1) {};
		\node[vertex] (c) at (0,0) {};
		\node[vertex] [label=left:$1$] (d) at (0,-1) {};
		\node[vertex] [label=left:$1$] (e) at (-1.5,1) {};
		\node[vertex] (f) at (-1.5,0) {};
		\node[vertex] (g) at (1.5,0.5) {};
		\path[edge] (b) to (c);
		\path[edge] (c) to (d);
		\path[edge,dotted] (a) to[out=180,in=70] (e);
		\path[edge] (e) to (f);
		\path[edge] (a) to[out=0,in=90] (g);
		\path[edge,dotted] (g) to[out=270,in=0] (d);
	\end{tikzpicture} & \Large{$\xrightarrow{\Delta}$} &
     \begin{tikzpicture}[vertex/.style={circle, draw, inner sep=0pt, minimum size=6pt, fill=black}, edge/.style={draw}, baseline=0]
	   \node[vertex,fill=red,color=red] [label=above:$x$] (a) at (0,2) {};
		\node[vertex] (b) at (0,1) {};
		\node[vertex] (c) at (0,0) {};
		\node[vertex] [label=left:$1$] (d) at (0,-1) {};
		\node[vertex] [label=left:$1$] (e) at (-1.5,1) {};
		\node[vertex] (f) at (-1.5,0) {};
		\node[vertex] [label=left:$1$] (g) at (1.5,0.5) {};
		\path[edge] (b) to (c);
		\path[edge] (c) to (d);
		\path[edge,dotted] (a) to[out=180,in=70] (e);
		\path[edge] (e) to (f);
		\path[edge,dotted] (a) to[out=0,in=90] (g);
		\path[edge,dotted] (g) to[out=270,in=0] (d);
	\end{tikzpicture}
    \end{tabular}}
    
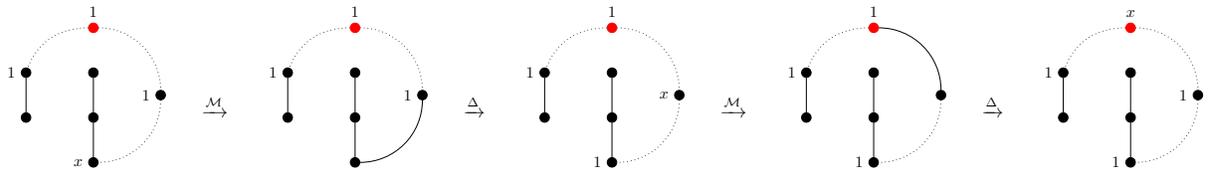
\captionof{figure}{A rooted positive subpath in $T_{4}(LLLdD\Bar{d}\Bar{D}\Bar{D})$ from Table \ref{spanning trees of 8_20}}
   \label{rooted positive subpath example}
   \end{table}

		\begin{proposition}
			 For a given fixed chain of spanning trees $\Ch= (T= T_1 > \cdots > T_n = T')$, $\Gamma(T^+,T'^-)$ can be computed entirely using $M(T_i)$ and $W(T_i)$ for all $i=1, \cdots, n$.
		\end{proposition}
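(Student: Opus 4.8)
The plan is to imitate the proof of Proposition~\ref{combinatorial description for red complex}, the new feature being that an alternating path $\Pt$ from $S_c^{T^+}$ to $S_c^{T'^-}$ now begins in the ``$+$ world'' $\CKh^+(U(T_1))$ and must terminate in the ``$-$ world'' $\CKh^-(U(T_n))$. As before, decompose $\Pt$ through the chain $\Ch$ into broken paths $P_i = \Pt \cap \mathcal{H}_\Mt((\CKh(U(T_i)),\partial_{\Kh}))$. The key structural remark, already implicit in the proof of Theorem~\ref{matching theorem in unreduced}, is that neither a reversed matched edge nor a Khovanov differential edge can send a state whose $v_d$-component carries the label $x$ to one whose $v_d$-component carries the label $1$; consequently, once $\Pt$ enters $\CKh^-$ it remains there. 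Hence there is a unique step of $\Pt$ at which the label of the $v_d$-component changes from $1$ to $x$. This step is either a non-matched Khovanov differential inside some broken path $P_k$, or a transition edge $E_k \mapsto S_{k+1}$ between consecutive broken paths; in either case $P_i \subset \CKh^+(U(T_i))$ for $i$ before the switch and $P_i \subset \CKh^-(U(T_i))$ for $i$ after it.

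First I would pin down the endpoint data exactly as in Proposition~\ref{combinatorial description for red complex}: the first broken path $P_1$ starts at $S_c^{T_1^+}$, so by Proposition~\ref{rooted negative subpaths} its last state $E_1$ and all of its face labels are prescribed by a rooted negative subpath of $G(T_1)$, which is read off from $W(T_1)$ and $M(T_1)$; dually, the last broken path $P_n$ ends at $S_c^{T_n^-}$, so by Proposition~\ref{rooted positive subpaths} its first state $S_n$ is prescribed by a rooted positive subpath of $G(T_n)$, read off from $W(T_n)$ and $M(T_n)$. For every broken path strictly before the switch the path lies in the reduced complex $\CKh^+(U(T_i))$ and for every broken path strictly after it the path lies in $\CKh^-(U(T_i))$; in both ranges all of the results of Subsection~\ref{alternating path count subsection}, which hold verbatim for the ``$-$'' complexes, apply, so I would propagate the pairs $(S_i,E_i)$ and their labels inductively using $W(T_i), W(T_{i+1})$, the analog of Proposition~\ref{atmost one edge}, the resolution of $f_i$, and $M(T_i), M(T_{i+1})$, precisely as in the reduced argument.

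The main obstacle is the single broken path $P_k$ (or the single transition edge) across which the label of the $v_d$-component flips. If the flip already occurs inside $P_1$ or inside $P_n$, equivalently if $E_1$ lies in $\CKh^-$ or $S_n$ lies in $\CKh^+$, it is handled directly by Proposition~\ref{rooted negative subpaths} resp.\ Proposition~\ref{rooted positive subpaths}, since those statements allow the target resp.\ source to lie in the opposite complex and describe it by a \emph{proper} rooted subpath. Otherwise the flip happens at an interior index $1<k<n$, and I must show that an alternating path inside $\mathcal{H}_\Mt((\CKh(U(T_k)),\partial_{\Kh}))$ from a ``$+$'' state $S_k$ to a ``$-$'' state $E_k$ (or a transition edge $E_k \mapsto S_{k+1}$ realizing the flip) is again determined by $W(T_k)$ and $M(T_k)$: the only moves that can change the label of the $v_d$-face are a merge of the $v_d$-face with an $x$-labelled face or a split of the $v_d$-face in which the $v_d$-side receives $x$, and, since matched edges preserve the ``$\pm$''-type, the matching order recorded in $M(T_k)$ together with $W(T_k)$ forces both the edge at which this happens and the labels of all faces along $P_k$. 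This is exactly the statement obtained by gluing the unreduced analogs of Propositions~\ref{down path within tree}--\ref{up path within tree} along the rooted component containing $v_d$, which is what Propositions~\ref{rooted negative subpaths} and~\ref{rooted positive subpaths} already encode.

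Finally, with $P_k$ shown to be combinatorially determined, I would extend Lemma~\ref{unique states through a chain} to assert that the collection $\{(S_i,E_i)\}_{i=1}^n$, together with the flip location, is the same for every alternating path realizing the chain $\Ch$; the proof is the same resolution-tracking argument, now carried through the single $+\to-$ step as well. Then $\Gamma(T^+,T'^-)=\sum_\Pt w(\Pt)$ is computed by the identical bookkeeping of signs and path counts through the fixed pairs $(S_i,E_i)$ as in Proposition~\ref{combinatorial description for red complex}, entirely in terms of $\{M(T_i),W(T_i)\}_{i=1}^n$.
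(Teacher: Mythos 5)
Your proposal is correct and takes essentially the same approach as the paper: both arguments single out the unique index at which the alternating path passes from the $+$ complex to the $-$ complex (either within one tree's complex or across a transition between consecutive trees), use Propositions \ref{rooted negative subpaths} and \ref{rooted positive subpaths} to determine the enhanced spanning subgraphs at that switch, and reduce everything else to the machinery of Proposition \ref{combinatorial description for red complex} and Lemma \ref{unique states through a chain}. Your discussion of the interior-flip case and the extension of the uniqueness lemma simply spells out details the paper leaves implicit.
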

		
		\begin{proof}
            \sloppy As before let $(e_i,f_i)$ be the pair of edge which differs $T_i$ from $T_{i+1}$ and $e_i$ be the dead edge whose change in resolution allows one to move from the $ith$ broken path to $(i+1)th$. For an alternating path $\Pt$ between $S_c^{T^+}$ and $S_c^{T'^-}$, there exists a unique $1 \leq j \leq n$ such that $\Pt$ travels from $\mathcal{H}_\Mt ((\CKh^+(U(T_j)),\partial_{\Kh}^+))$ to $\mathcal{H}_\Mt ((\CKh^-(U(T_j)),\partial_{\Kh}^-))$ or from $\mathcal{H}_\Mt ((\CKh^+(U(T_j)),\partial_{\Kh}^+))$ to $\mathcal{H}_\Mt ((\CKh^-(U(T_{j+1})),\partial_{\Kh}^-))$ for which $1 \leq j \leq n-1$. Proposition \ref{rooted negative subpaths} and \ref{rooted positive subpaths} helps to find such enhanced spanning subgraphs for such tree $T_j$. The rest of the argument for the description of $\Pt$ follows from Proposition \ref{combinatorial description for red complex}.
		\end{proof}
		
	\end{subsection}

	\begin{subsection}{Torsion in alternating links}
		
		A.N. Shumakovitch in \cite{Shumakovitch} conjectured that every non-split link except the trivial knot, the Hopf link, and their connected sums has 2-torsion in their Khovanov homology. Although this conjecture is yet open but some important special cases has been proven by Shumakovitch \cite{Shumakovitch}, and Asaeda and Przytycki in \cite{Asaeda} for alternating and almost-alternating links respectively.
		
		\begin{conjecture}\label{shuma_conj}
			Every non-split link except the trivial knot, Hopf link and their connected sums have $\Z_2$--torsion in their Khovanov homology.
		\end{conjecture}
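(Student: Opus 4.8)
Conjecture \ref{shuma_conj} remains open, and what follows is a program for attacking it through the integral spanning tree complex $(\CST(G_\Lk),\partial_{ST})$, of which the alternating specialisation Theorem \ref{torsion theorem} is the first case to fall out. By Theorem \ref{unreduced maintheorem} it is enough to locate the torsion inside $\CST(G_\Lk)$, and a finitely generated free $\Z$-complex has $\Z_2$-torsion in cohomology precisely when some differential $\partial_{ST}^{i,j}$ has an even invariant factor, equivalently when $\rank_{\Z_2}\!\big(\partial_{ST}^{i,j}\otimes\Z_2\big) < \rank_{\Q}\!\big(\partial_{ST}^{i,j}\otimes\Q\big)$ for some bidegree $(i,j)$. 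So the plan is to exhibit, for every connected diagram $\Lk$ other than the listed exceptions, a bidegree in which the explicitly combinatorial integer matrix $\big(\Gamma(T^{\pm},{T'}^{\pm})\big)$ of Proposition \ref{combinatorial description for red complex} has an even elementary divisor.

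First I would isolate the graph-theoretic source of an even incidence. Along a chain $T = T_1 > \cdots > T_n = T'$ the alternating paths counted by $\Gamma$ are constrained by Proposition \ref{atmost one edge}, and a Khovanov split map followed by a Khovanov merge map run along the two arcs of a nontrivial cycle of the Tait graph contributes $\pm 2$ to such a $\Gamma$; I would make precise the assertion that a suitable cycle of $G_\Lk$ (together with an admissible pairing of one $\Delta$ with one $m$ for the fixed edge ordering) forces an even entry in the Smith normal form of $\partial_{ST}$ in the relevant bidegree. When $G_\Lk$ has all edges of one sign — the alternating case — the resulting complex is confined to two diagonals, no other incidence in that bidegree can cancel this even factor, and Theorem \ref{torsion theorem} drops out; this is the spanning tree avatar of Shumakovitch's Bockstein argument \cite{Shumakovitch}. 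The second step is a combinatorial census: every connected diagram other than the unknot, the Hopf link, and their connected sums has a Tait graph (possibly after an allowable re-ordering of the edges) whose block decomposition at cut vertices contains a block supporting such a split--merge configuration — connected sums correspond exactly to the cut vertices, and the primitive exceptions are precisely the blocks (a single edge; a digon) too small to support one, which assemble into the trivial knot, the Hopf link, and their connected sums. Run blockwise, the same scheme would also recover the almost-alternating case of Asaeda--Przytycki \cite{Asaeda}.

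The hard part will be the step where diagonal confinement is used. For a non-alternating $\Lk$ the Tait graph carries edges of both signs, the chains $T > \cdots > T'$ are long, and $\Gamma(T^{\pm},{T'}^{\pm})$ is a genuine signed sum $\sum (-1)^{w(e)}$ over many alternating paths; an even invariant factor produced in one bidegree by the cycle configuration above can then be annihilated by its interaction with neighbouring bidegrees, so exhibiting the even entry of $\partial_{ST}$ locally is not enough. To push the program through one needs an input that survives this cancellation — for instance a mod-$2$ reduction of the $\Gamma$-matrix that is visibly of boundary-of-a-boundary type, hence forces an even elementary divisor independently of signs, or a reduction of the general case to the alternating one by tracking a crossing change or a mutation at the level of the Morse complexes $\CST(G_\Lk)$. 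Supplying either ingredient is, at present, beyond what the model delivers, which is why we can currently prove only the alternating specialisation Theorem \ref{torsion theorem} and must leave Conjecture \ref{shuma_conj} open.
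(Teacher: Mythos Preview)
The statement is a \emph{conjecture}, and the paper does not prove it: immediately after stating Conjecture \ref{shuma_conj} the paper says it ``is yet open'' and then proves only the alternating specialisation, Theorem \ref{torsion theorem}. You have correctly identified this status and presented a program rather than a proof, and your final paragraph correctly leaves the conjecture open. So there is nothing to compare at the level of a proof of the conjecture itself.

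One point on your sketch of the alternating case. You attribute the factor of $2$ to ``a Khovanov split map followed by a Khovanov merge map run along the two arcs of a nontrivial cycle.'' The paper's actual argument for Theorem \ref{torsion theorem} is more concrete and slightly different in mechanism: it chooses an ear decomposition of the $2$-connected Tait graph, a tailored edge ordering, and a specific pair of trees $T' > T$ differing by a single edge swap in the base cycle $G_0$; it then exhibits exactly two alternating paths from $S_c^{T'^+}$ to $S_c^{T^-}$ (one the direct $\Delta$, the other a zig-zag of $\Delta$'s and matchings around $G_0$), yielding $\partial_{ST}(T'^+) = \pm 2\,T^-$. The indivisibility of the $2$ is then argued by showing that every other $\tilde T$ in that bidegree with $\tilde T > T$ likewise maps to an even multiple of $T^-$. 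So the source of the $2$ is a pair of alternating paths with equal sign, not a split--merge composite, and the ``no cancellation'' step is handled by a direct check on the relevant chains rather than by a diagonal-confinement argument. Your description is in the right spirit but imprecise on this point.
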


        Shumakovitch partially answered conjecture \ref{shuma_conj} in affirmative for alternating links which are not connected sum of Hopf links. In this subsection, we provide an alternate proof of this partial answer. In other words, we prove Theorem \ref{torsion theorem} using the spanning tree complex of alternating links and the structure of their associated Tait graph. Before moving on, let us discuss some preliminary results from graph theory.
        
		\begin{definition}
			Let $H$ be a subgraph of a graph $G$. An \textit{ear} of $F$ in $G$ is a non-trivial path whose ends lie in $H$ but its internal vertices do not.
		\end{definition}
		
		\begin{definition}
			A \textit{nested sequence} of graphs is a sequence $(G_0,G_1,\cdots,G_k)$ such that $G_i \subset G_{i+1}$ for all $0 \leq i < k$. An \textit{ear decomposition} of a non-separable graph $G$ is a nested sequence $(G_0,G_1,\cdots,G_k)$ of non-separable subgraphs such that 
			
			\begin{enumerate}
				\item $G_0$ is a cycle,
				\item $G_{i+1} = G_i \cup P_i$ where $P_i$ is an ear of $G_i$ in $G$ for all $0 \leq i <k$,
				\item $G_k=G$.
			\end{enumerate}
		\end{definition}

	\begin{proof}[Proof of Theorem \ref{torsion theorem}]
	 
	 It is enough to prove the theorem for a non-split prime alternating link. Let $\Lk$ be an non-split prime alternating connected link diagram. Assume that all the edges of $G_\Lk$ are positive. As $G_\Lk$ is $2$--connected, we can find an ear decomposition of $G_\Lk$ with $G_0$ being a cycle of length atleast 3 since $\Lk$ is neither unknot nor the connected sum of Hopf links. We provide a specific ordering on edges of $G_\Lk$. First we order all edges of $G_0$, then we sequentially order all the edges of $P_i$ for all $0 \leq i <k$ such that $e_i < e_j$ whenever $e_i \in P_i$, $e_j \in P_j$ and $i<j$. Choose any vertex of $G_0$ to be considered as a root.\\
	 
	 Remove the largest edge in $e \in G_0$, and sequentially remove the largest edge in each $P_i$ to get a tree $T$ such that $|A(H_c,\epsilon_c)|$ for $T$ is minimum among all spanning trees of $G_\Lk$.  All the internal edges of $T$ have activity $L$ and all the external edges have activity $d$. Remove the second largest edge $f \in G_0$ and insert $e$ to get another tree $T'$.\\
	 
	 Since $G_0$ is of length atleast 3, there is still an edge in $G_0 \cap T'$ with activity $L$. Now $a_{T'}(e)=D$ and $a_{T'}(f)=d$, there is only one chain possible starting with $T'$ which is $T' > T$. Since, the differential $\partial_{ST}^+$ in the reduced spanning tree complex $\CST^+(G_\Lk)$ is trivial, so alternating path only exists between $T'^+$ and $T^-$. All possible alternating paths are shown in Figure \ref{alt-trees-path}. Let $|E(G_0)| = n$, then we have $\partial_{ST}(T'^+) = (-1)^{n-1}. 2T^-$. The only thing required to be shown is that $T^-$ is not exact.\\
	 
	 Observe that any tree $\Tilde{T}$ with $i(\Tilde{T}^+)=i(T'^+)$ having chains beginning with $\Tilde{T}^+$ always conatins tree with higher homological grading than $\Tilde{T}^+$, thus the only chain which is feasible in this case is $\Tilde{T} > T$, where $W(\Tilde{T})$ and $W(T)$ exactly differ at two edges. Thus, for any chain element $X=\sum_{i(\Tilde{T})=i(T)-1,\  \Tilde{T}>T} \Tilde{T}$ we will similarly have $\partial_{ST}(X)=2c.T^- $ for some constant $c \in \Z$.
	 
	\end{proof} 

      \begin{table}[ht]
      	\centering
      	\setlength{\tabcolsep}{4mm}
      	\def\arraystretch{1.5}
      	\resizebox{0.85\textwidth}{!}{
      		\begin{tabular}{ccccc}
      			\begin{tikzpicture}[vertex/.style={circle, draw, inner sep=0pt, minimum size=6pt, fill=black}, edge/.style={draw}, baseline=0]
      				\node[vertex,fill=red,color=red] at (0,0) (a) {};
      				\node[vertex] at (0.5,-1) (b) {};
      				\path[edge,dashed] (a) to node[midway,left]  {$1\:L$} (b);
      				\node[vertex] at (1,1) (c) {};
      				\node[vertex] at (2,0.2) (d) {};
      				\node at (1,-0.2) (e) {$G_0$};
      				\node at (-1,0.2) (f) {};
      				\node at (1.4,2) (g) {};
      				\node at (0,-2) (h) {};
      				\node at (3,0.4) (i) {};
      				\path[edge,dashed] (a)--(f) {};
      				\path[edge,dashed] (b)--(h) {};
      				\path[edge,dashed] (c)--(g) {};
      				\path[edge,dashed] (d)--(i) {};
      				\path[edge,dashed] (a) to node[midway,above,xshift=-12pt,yshift=3pt] {$n\: D(e)$} (c);
      				\path[] (c) to node[right,xshift=3pt,yshift=8pt] {$(n-1)\:d(f)$} (d);
      				\path[edge,dashed] (b) to[out=320,in=300] node[below,yshift=-6pt] {$2\:L\:\cdots$} (d);
      				\node at (1.3,-2.2) {$T'^+$};
      			\end{tikzpicture} & \begin{tikzpicture}
      				\draw[->,very thick,color=blue] (0,0) -- node[midway,above,color=black] {$\Delta$} (1,0);
      			\end{tikzpicture} &
      			\begin{tikzpicture}[vertex/.style={circle, draw, inner sep=0pt, minimum size=6pt, fill=black}, edge/.style={draw}, baseline=0]
      				\node[vertex,fill=red,color=red] at (0,0) (a) {};
      				\node[vertex] at (0.5,-1) (b) {};
      				\path[edge,dashed] (a) to node[midway,left]  {$1\:L$} (b);
      				\node[vertex] at (1,1) (c) {};
      				\node[vertex] at (2,0.2) (d) {};
      				\node at (1,-0.2) (e) {$G_0$};
      				\node at (-1,0.2) (f) {};
      				\node at (1.4,2) (g) {};
      				\node at (0,-2) (h) {};
      				\node at (3,0.4) (i) {};
      				\path[edge,dashed] (a)--(f) {};
      				\path[edge,dashed] (b)--(h) {};
      				\path[edge,dashed] (c)--(g) {};
      				\path[edge,dashed] (d)--(i) {};
      				\path[] (a) to node[midway,above,xshift=-12pt,yshift=3pt] {$n\: d(e)$} (c);
      				\path[edge,dashed] (c) to node[right,xshift=3pt,yshift=8pt] {$(n-1)\:L(f)$} (d);
      				\path[edge,dashed] (b) to[out=320,in=300] node[below,yshift=-6pt] {$2\:L\:\cdots$} (d);
      				\node at (1.3,-2.2) {$T^-$}; 
      			\end{tikzpicture} & \begin{tikzpicture}
      			\draw[<-] (0,0) -- node[midway,above,color=black] {$\Delta$} (1,0);
      			\end{tikzpicture} & 
      			\begin{tikzpicture}[vertex/.style={circle, draw, inner sep=0pt, minimum size=6pt, fill=black}, edge/.style={draw}, baseline=0]
      				\node[vertex,fill=red,color=red] at (0,0) (a) {};
      				\node[vertex] at (0.5,-1) (b) {};
      				\path[edge,very thick, color=red] (a) to node[color=black,midway,left]  {$1\:L$} (b);
      				\node[vertex] at (1,1) (c) {};
      				\node[vertex] at (2,0.2) (d) {};
      				\node at (1,-0.2) (e) {$G_0$};
      				\node at (-1,0.2) (f) {};
      				\node at (1.4,2) (g) {};
      				\node at (0,-2) (h) {};
      				\node at (3,0.4) (i) {};
      				\path[edge,dashed] (a)--(f) {};
      				\path[edge,dashed] (b)--(h) {};
      				\path[edge,dashed] (c)--(g) {};
      				\path[edge,dashed] (d)--(i) {};
      				\path[] (a) to node[midway,above,xshift=-12pt,yshift=3pt] {$n\: d(e)$} (c);
      				\path[edge,dashed] (c) to node[right,xshift=3pt,yshift=8pt] {$(n-1)\:L(f)$} (d);
      				\path[edge,dashed] (b) to[out=320,in=300] node[below,yshift=-6pt] {$2\:L\:\cdots$} (d);
      				\node at (1.3,-2.2) {$T^-$}; 
      			\end{tikzpicture} \\
      			&&&& \\
      			\begin{tikzpicture}
      				\draw[->,very thick,color=blue] (0,0) -- node[midway,right,color=black] {$\Delta$} (0,-1);
      			\end{tikzpicture} & & & & \begin{tikzpicture}
      			\draw[->,very thick,color=blue] (0,0) -- node[midway,right,color=black] {$\Mt$} (0,1);
      			\end{tikzpicture} \\ 
      			\begin{tikzpicture}[vertex/.style={circle, draw, inner sep=0pt, minimum size=6pt, fill=black}, edge/.style={draw}, baseline=0]
      				\node[vertex,fill=red,color=red] at (0,0) (a) {};
      				\node[vertex] at (0.5,-1) (b) {};
      				\path[edge,dashed] (a) to node[midway,left]  {$1\:L$} (b);
      				\node[vertex] at (1,1) (c) {};
      				\node[vertex] at (2,0.2) (d) {};
      				\node at (1,-0.2) (e) {$G_0$};
      				\node at (-1,0.2) (f) {};
      				\node at (1.4,2) (g) {};
      				\node at (0,-2) (h) {};
      				\node at (3,0.4) (i) {};
      				\path[edge,dashed] (a)--(f) {};
      				\path[edge,dashed] (b)--(h) {};
      				\path[edge,dashed] (c)--(g) {};
      				\path[edge,dashed] (d)--(i) {};
      				\path[] (a) to node[midway,above,xshift=-12pt,yshift=3pt] {$n\: d(e)$} (c);
      				\path[edge,dashed,very thick,color=red] (c) to node[color=black,right,xshift=3pt,yshift=8pt] {$(n-1)\:L(f)$} (d);
      				\path[edge,dashed] (b) to[out=320,in=300] node[below,yshift=-6pt] {$2\:L\:\cdots$} (d);
      				\node at (1.3,-2.2) {$T^-$}; 
      			\end{tikzpicture} & \begin{tikzpicture}
      				\draw[->] (0,0) -- node[midway,above,color=black] {$\Mt$} (1,0);
      			\end{tikzpicture} &
      			\begin{tikzpicture}[vertex/.style={circle, draw, inner sep=0pt, minimum size=6pt, fill=black}, edge/.style={draw}, baseline=0]
      				\node[vertex,fill=red,color=red] at (0,0) (a) {};
      				\node[vertex] at (0.5,-1) (b) {};
      				\path[edge,dashed] (a) to node[midway,left]  {$1\:L$} (b);
      				\node[vertex] at (1,1) (c) {};
      				\node[vertex] at (2,0.2) (d) {};
      				\node at (1,-0.2) (e) {$G_0$};
      				\node at (-1,0.2) (f) {};
      				\node at (1.4,2) (g) {};
      				\node at (0,-2) (h) {};
      				\node at (3,0.4) (i) {};
      				\path[edge,dashed] (a)--(f) {};
      				\path[edge,dashed] (b)--(h) {};
      				\path[edge,dashed] (c)--(g) {};
      				\path[edge,dashed] (d)--(i) {};
      				\path[] (a) to node[midway,above,xshift=-12pt,yshift=3pt] {$n\: d(e)$} (c);
      				\path[edge,very thick,color=red] (c) to node[color=black,right,xshift=3pt,yshift=8pt] {$(n-1)\:L(f)$} (d);
      				\path[edge,dashed] (b) to[out=320,in=300] node[below,yshift=-6pt] {$2\:L\:\cdots$} (d);
      				\node at (1.3,-2.2) {$T^-$}; 
      			\end{tikzpicture} & $\boldsymbol{\cdots\cdots}$ &
      			\begin{tikzpicture}[vertex/.style={circle, draw, inner sep=0pt, minimum size=6pt, fill=black}, edge/.style={draw}, baseline=0]
      				\node[vertex,fill=red,color=red] at (0,0) (a) {};
      				\node[vertex] at (0.5,-1) (b) {};
      				\path[edge,dashed,very thick,color=red] (a) to node[color=black,midway,left]  {$1\:L$} (b);
      				\node[vertex] at (1,1) (c) {};
      				\node[vertex] at (2,0.2) (d) {};
      				\node at (1,-0.2) (e) {$G_0$};
      				\node at (-1,0.2) (f) {};
      				\node at (1.4,2) (g) {};
      				\node at (0,-2) (h) {};
      				\node at (3,0.4) (i) {};
      				\path[edge,dashed] (a)--(f) {};
      				\path[edge,dashed] (b)--(h) {};
      				\path[edge,dashed] (c)--(g) {};
      				\path[edge,dashed] (d)--(i) {};
      				\path[] (a) to node[midway,above,xshift=-12pt,yshift=3pt] {$n\: d(e)$} (c);
      				\path[edge,dashed] (c) to node[right,xshift=3pt,yshift=8pt] {$(n-1)\:L(f)$} (d);
      				\path[edge,dashed] (b) to[out=320,in=300] node[below,yshift=-6pt] {$2\:L\:\cdots$} (d);
      				\node at (1.3,-2.2) {$T^-$}; 
      			\end{tikzpicture} 
     	\end{tabular}}
     
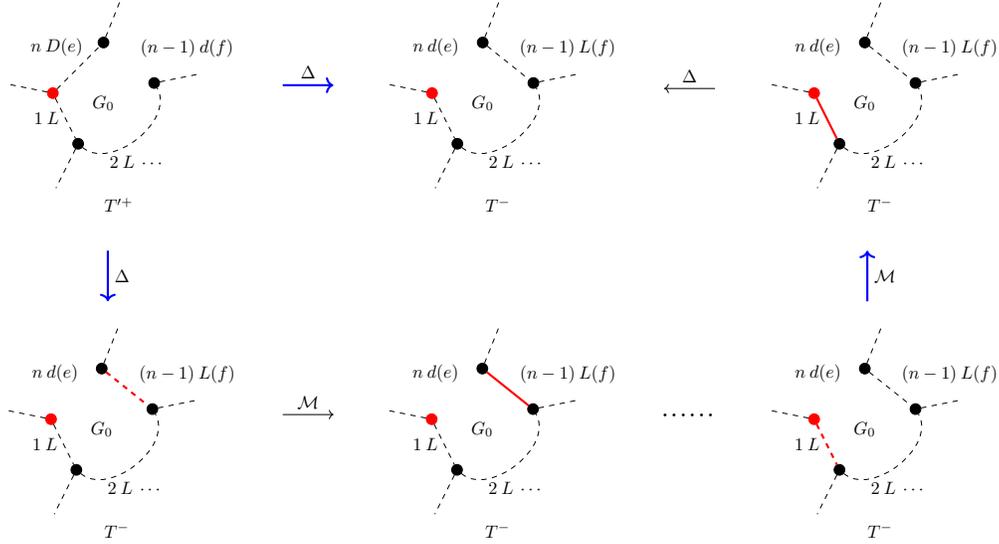
\captionof{figure}{Alternating paths between $T^+$ and $T'^-$}
     \label{alt-trees-path}
      \end{table}	

    \end{subsection}
    
   \end{section}


\begin{section}{Rasmussen's s invariant}	\label{s-invariant}
	
	\begin{subsection}{Acyclic Matching on the unreduced Lee complex}
		
		The generating set for both Khovanov and Lee complex coincides. Thus, we still can use the fact that Lee complex can be partitioned into complexes of the twisted unknots for each spanning tree of the Tait graph. Hence, we can use Theorem \ref{matching_theorem} to provide the same matching to Lee complex because the edge set of the Hasse diagram of Khovanov complex is contained in the edge set of the Hasse diagram of Lee complex.
		
		\begin{proposition}\label{acyclic matching for Lee}
			Let $T$ be a spanning tree of the Tait graph $G_\Lk$ associated to a connected link diagram $\Lk$, then there exists an acyclic near-perfect matching on $\mathcal{H}(({\CLee}(U(T)),\partial_{Lee}))$, where $(\CLee(U(T)),\partial_{Lee})$ is the unreduced Lee complex for $U(T)$. Furthermore, by taking the union of these matchings over all spanning trees, we get an acyclic matching $\Mt^{Lee}_\Lk$ on $\mathcal{H}(({\CLee}(\Lk),\partial_{Lee}))$.
		\end{proposition}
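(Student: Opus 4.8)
The plan is to reuse verbatim the matching $\Mt^{\Kh}_\Lk$ built in Theorem \ref{matching_theorem} and its unreduced counterpart Theorem \ref{matching theorem in unreduced}, and to argue that it remains an acyclic near-perfect matching once it is regarded inside the Hasse diagram of the Lee complex. The point is that nothing about the matching changes: the generating sets of $\CKh$ and $\CLee$ coincide, and the Lee differential $\partial_{Lee}$ acts on a generator $(H,\epsilon)$ by changing the resolution of exactly one crossing, equivalently by adding or deleting a single edge of $G_\Lk$, exactly as $\partial_{\Kh}$ does; the extra Lee contributions differ from the Khovanov ones only in the enhancement $\epsilon$, never in the underlying subgraph $H$. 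Consequently the reasoning in the proof of Lemma \ref{kh complex is dis union of twisted unknots} applies unchanged, giving the decomposition $\CLee(\Lk)=\bigoplus_T \CLee(U(T))$ with $\CLee(U(T))=\CKh(U(T))$ as groups, so it suffices to treat a single $U(T)$ and then take the union over all spanning trees of $G_\Lk$.

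I would then dispatch the two local conditions of an acyclic matching. Condition (1) (each vertex in at most one matched edge) is automatic, since the vertex set is unchanged and $\Mt^{\Kh}_\Lk$ already satisfies it. For condition (2), observe that every matched edge of $\Mt^{\Kh}_\Lk$ is one of the standard merge or split arrows ($m(1,1)=1$, $m(1,x)=m(x,1)=x$, the $\epsilon=1$ split, or the $\epsilon=x\mapsto(x,x)$ part of the split) appearing in Figures \ref{base case matching}, \ref{pos-cross}, \ref{neg-cross}; every such arrow occurs identically in $\CLee$ with the same sign $(-1)^{w(e)}$, hence each matched edge is still a genuine edge of $\mathcal{H}((\CLee(\Lk),\partial_{Lee}))$ with weight $\pm 1$, a unit. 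In particular every matched edge is a $j$--preserving edge of the Lee Hasse diagram. Near-perfectness is inherited for the same reason: the same set of edges leaves exactly the same critical generators $S_c^{T^+}$ and $S_c^{T^-}$ for each spanning tree $T$.

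The substantive step, and the only place the argument genuinely departs from Theorem \ref{matching theorem in unreduced}, is acyclicity (condition (3)); this is where I expect the main obstacle, since the Lee Hasse diagram has strictly more edges than the Khovanov one and a priori these new arrows, together with the reversed matched arrows, could form a directed cycle. To rule this out I would use the filtration of $\CLee$ by the quantum grading: write $\partial_{Lee}=\partial_{\Kh}+\Phi$, where $\partial_{\Kh}$ preserves $j$ and the perturbation $\Phi$ (coming from $m(x,x)=1$ and from the extra $\epsilon=x\mapsto(1,1)$ split term) strictly raises $j$ by $4$ whenever $i$ increases by $1$. Now suppose $\mathcal{H}_{\Mt}((\CLee(U(T)),\partial_{Lee}))$, or after unions $\mathcal{H}_{\Mt}((\CLee(\Lk),\partial_{Lee}))$, contained a directed cycle. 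Such a cycle alternates forward differential edges, each of which leaves $j$ fixed or raises it by $4$, with reversed matched edges, each of which leaves $j$ fixed. Since the total change of $j$ around a cycle is zero, no forward edge can raise $j$, so every edge of the cycle is either a reversed matched edge or a $j$--preserving Lee differential edge, i.e.\ a Khovanov differential edge. Hence the cycle already lives in $\mathcal{H}_{\Mt}((\CKh(U(T)),\partial_{\Kh}))$, respectively $\mathcal{H}_{\Mt}((\CKh(\Lk),\partial_{\Kh}))$, contradicting the acyclicity proved in Theorem \ref{matching_theorem} and Theorem \ref{matching theorem in unreduced}. This completes the verification that $\Mt^{Lee}_\Lk$ is an acyclic near-perfect matching.
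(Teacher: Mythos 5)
Your proposal is correct and takes essentially the same route as the paper: reuse the Khovanov matching verbatim (the Khovanov Hasse diagram being a subgraph of the Lee one, so matchedness, unit weights and near-perfectness carry over), and rule out directed cycles by observing that the extra Lee arrows strictly raise the $j$-grading while all matched and Khovanov arrows preserve it, so any cycle would already lie in $\mathcal{H}_\Mt((\CKh(U(T)),\partial_{\Kh}))$, contradicting Theorems \ref{matching_theorem} and \ref{matching theorem in unreduced}. Your write-up simply makes explicit the grading bookkeeping that the paper states in one line.
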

		
		\begin{proof}
		   \sloppy  From the definition of Lee complex, we know that $\mathcal{H}((\CKh(U(T)),\partial_{\Kh}))$ is a subgraph of $\mathcal{H}((\CLee(U(T),\partial_{Lee})))$. Thus, we can take $\Mt^{Lee}_{\Lk}$ to be the same matching as $\Mt^{\Kh}_\Lk$ as defined in Theorem \ref{matching_theorem} and it follows that it is near-perfect as well in Lee complex. For any directed cycle to occur in $\mathcal{H}_\Mt((\CLee(U(T),\partial_{Lee}))$, all the enhanced spanning subgraphs in that cycle must have the same $j$--grading, thus any directed cycle in $\mathcal{H}_\Mt((\CLee(U(T)),\partial_{Lee}))$ is also a directed cycle in $\mathcal{H}_\Mt((\CKh(U(T)),\partial_{\Kh}))$ which is not possible. Thus, $\Mt^{Lee}_\Lk$ is acyclic.   
		\end{proof}

        \begin{definition}
			A \textit{generalized rooted positive subpath} is a rooted positive subpath of $G(T)$ where $\epsilon(\{v_C\}) \in \{1,x\}$ for every connected component $C$ of the rooted positive subpath.
		\end{definition}
		
		\begin{proposition}\label{generalized rooted pos subpath}
			For a given spanning tree $T$ of $G_\Lk$, the following two sets are in one-one correspondence:
			
			\[\begin{tikzcd}[ampersand replacement=\&,cramped]
				{\left\{S_x^{T} \mid \A^\uparrow_{Lee}(S_x^{T},S_c^{T^+})=1, i(S_c^{T^+})=i(S_x^{T})\right\}} \& {\left\{\text{Generalized rooted positive subpaths of }G(T)\right\}}
				\arrow[shorten <=5pt, shorten >=5pt, tail reversed, from=1-1, to=1-2]
			\end{tikzcd}\]

            where $\A^\uparrow_{Lee}(S_x^T,S_y^T)$ is the number of alternating paths from $S_x^T$ to $S_y^T$ in $\mathcal{H}_\Mt((\CLee(U(T)),\partial_{Lee}))$, where the path begins with a matched edge.
		\end{proposition}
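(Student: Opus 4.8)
The plan is to follow the proofs of Proposition~\ref{up path within tree} and Proposition~\ref{rooted positive subpaths} almost verbatim, isolating the one place where the Lee differential differs from the Khovanov one. The relevant difference is recorded in~\eqref{lee diff split}: when a face labeled $x$ splits, the Lee differential produces \emph{two} enhanced spanning subgraphs --- one in which both resulting faces are labeled $x$, and one in which both are labeled $1$ --- whereas the Khovanov differential produced only the first. This is precisely what promotes a rooted positive subpath to a \emph{generalized} one, since it records a choice of label in $\{1,x\}$ at the initial vertex $v_C$ of each connected component.

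First I would use Proposition~\ref{acyclic matching for Lee} to observe that $\Mt^{Lee}_\Lk$ consists of the same matched pairs as $\Mt^{\Kh}_\Lk$, so the edges that can begin an alternating path counted by $\A^\uparrow_{Lee}$ are unchanged and still correspond to split maps at edges of activity $L$ or $\Bar{l}$ (the negative twists of $G(T)$). Then, exactly as in Proposition~\ref{up path within tree}, under the grading condition $i(S_x^T)=i(S_c^{T^+})$ an alternating path from $S_x^T$ to $S_c^{T^+}$ beginning with a matched edge consists entirely of such split maps; reading off the live edges at which these splits occur, arranged in the order of $M(T)$, produces a subgraph of $G(T)$ whose components lie along the paths $P_{e_i}$ and all of whose edges have activity $L$ or $\Bar{l}$ --- that is, a rooted positive subpath --- with the one new feature that, by~\eqref{lee diff split}, the initial vertex of each component may now carry either label, yielding a generalized rooted positive subpath. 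Conversely, given a generalized rooted positive subpath together with a choice of initial label in $\{1,x\}$ for each component, I would build the alternating path toward $S_c^{T^+}$ by splitting along each component in $M(T)$-order, choosing the $(x,x)$-output when the initial label is $x$ and the $(1,1)$-output when it is $1$; as in Proposition~\ref{rooted positive subpaths}, whether the resulting $S_x^T$ lies in the $+$ or $-$ part of $\CLee(U(T))$ is governed by whether the subpath has a component incident to $v_d$.

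I expect the main obstacle to be the uniqueness half, i.e. proving $\A^\uparrow_{Lee}(S_x^T,S_c^{T^+})\le 1$: no generalized rooted positive subpath, together with its chosen initial labels, should admit two distinct alternating paths to $S_c^{T^+}$. This is the Lee analogue of the statement that $\widetilde{\A^\uparrow}\in\{0,1\}$, and I would run the same induction on $|E(G(T))|$; the delicate point is that the extra $(1,1)$-output of an $x$-split must not introduce branching. The key observation is that this output affects only the enhancement carried along the path, not the combinatorial sequence of live edges that get split, and that once a live edge has been split past its matched partner the ordering $M(T)$ forbids re-splitting it, so at each stage the next matched edge --- hence the next step of the path --- is forced. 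With uniqueness and both directions of the construction established, the stated bijection follows exactly as in the cited propositions.
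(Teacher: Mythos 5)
Your proposal is correct and takes essentially the same route as the paper: its proof simply repeats the argument of Proposition \ref{rooted positive subpaths} (hence of Proposition \ref{up path within tree}), noting that the one new ingredient in the Lee complex is the extra split of an $x$-labeled face into two $1$-labeled faces from equation \ref{lee diff split}, which is precisely the point you isolate as the reason the initial vertex of each component may now carry either label. Your added discussion of the uniqueness of the alternating path and of the unchanged matching (Proposition \ref{acyclic matching for Lee}) is detail the paper leaves implicit, inherited from the earlier propositions.
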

		
		\begin{proof}
			We follow the proof of Proposition \ref{rooted positive subpaths} with an extra addition to detail. In case of Lee complex, a split map at an edge $e=\{v_1,v_2\} \in G(T)$ with activity $L/\Bar{l}$ can also occur with both $\epsilon(C_{v_1})=\epsilon(C_{v_2})=1$ (See equation \ref{lee diff split}), where $C_{v_i}$ is the component containing the vertex $v_i$ for $i=1,2$. Thus, the reason for taking generalized rooted positive subpaths is justified.
		\end{proof}
		
		 A similar result also holds for enhanced spanning subgraphs with $\A^\uparrow_{Lee}(S_x^{T},S_c^{T^-})=1$ and moreover, one can verify that 
		 
		 \begin{equation} \label{equal number of paths}
		     \bigg|\left\{S_x^{T} \mid \A^\uparrow_{Lee}(S_x^{T},S_c^{T^+})=1, i(S_c^{T^+})=i(S_x^{T})\right\}\bigg| = \bigg|\left\{S_x^{T} \mid \A^\uparrow_{Lee}(S_x^{T},S_c^{T^-})=1, i(S_c^{T^-})=i(S_x^{T})\right\}\bigg| 
		 \end{equation}

	\end{subsection}   
	
	\begin{subsection}{Orientation preserving tree}
		Given an oriented connected knot diagram $K$, Rasmussen showed that the homology class of orientation preserving resolution $\mathfrak{s}_o$ of $K$ survives under any cobordism of $K$, with no closed components, in the Lee cohomology. We want to express these generators using the critical enhanced spanning subgraphs of some spanning tree of $G_K$ which we call the \textit{orientation preserving tree} and denote it by $\mathfrak{T}_o$. The construction of $\mathfrak{T}_o$ depends on a specific ordering of the edges of $G_K$.
		
	\begin{construction}
		Suppose we have a fixed orientation $o$ on $K$, then consider the Tait graph $G_K$ on top of the checkerboard colored knot diagram $K$. We color an edge red with an arrow pointing towards the orientation $o$ if near its corresponding crossing the local picture looks like type I configuration or else we leave the edge uncolored for a type II configuration (See Figure \ref{directed tait graph}).
		
		\begin{figure}[ht]
			\centering
			\includegraphics[width=0.5\textwidth]{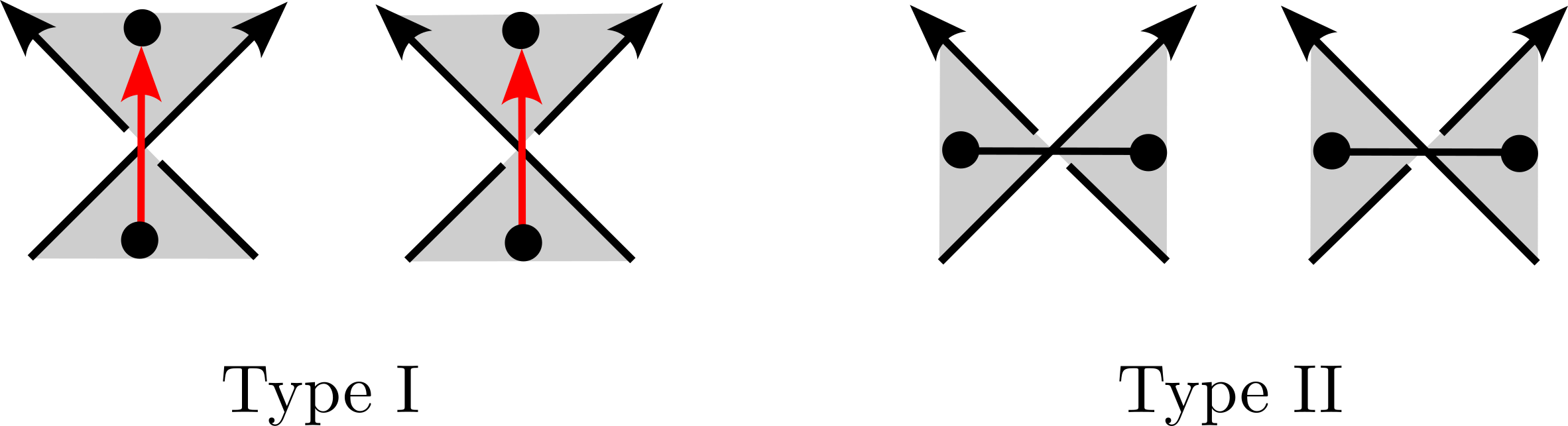}
			\caption{}
			\label{directed tait graph}
		\end{figure}
		
        Here are some key observations which we obtain after this whole decoration of $G_K$.
		
		\begin{enumerate}
			\item \label{observe 1} There does not exist a red edge which is not a part of a red cycle. Suppose this is false, then there exist a red edge $e$ whose one end vertex $v$ has all black edges incident to it. (such a vertex always exists since, vertices of red edges cannot be a leaf of $G_K$). Now in the state $\mathfrak{s}_o$, these black edges will not be contained inside the shaded region and hence the orientation on the boundary of the  shaded region containing $v$ will not match with $o$ chosen for $K$.

            \item \label{observe 2} A minimal cycle (cycle which does not contain any other subgraph inside its face) containing a black edge must contain even number of black edges because the arcs of the circle components passing around the end vertices of a black edge have orientation pointing in the same direction when seen in $\mathfrak{s}_o$. Figure \ref{obs} provides a pictorial representation of the above observation.

         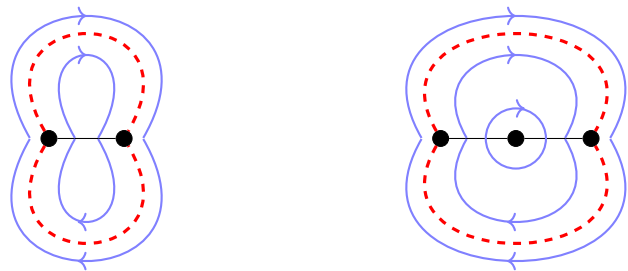
\begin{figure}[ht]
                \centering
                \subfigure[One cannot assign any crossing to the black edge due to the orientation of the inner blue circle.]{\begin{tikzpicture}[vertex/.style={circle, draw, inner sep=0pt, minimum size=6pt, fill=black}, edge/.style={draw}, baseline=0]
    \node[vertex] at (0,0) (a) {};
    \node[vertex] at (1,0) (b) {};
    \draw (a)--(b);
    \path[edge,dashed,red,very thick] (a) to[out=120,in=60,distance=2cm] (b);
    \path[edge,dashed,red,very thick] (a) to[out=240,in=300,distance=2cm] (b);
    \node[inner sep=0pt, minimum size=0pt] at (-0.25,0) (c) {};
    \node[inner sep=0pt, minimum size=0pt] at (0.35,0) (d) {};
    \node[inner sep=0pt, minimum size=0pt] at (0.65,0) (e) {};
    \node[inner sep=0pt, minimum size=0pt] at (1.25,0) (f) {};
    \draw[blue!50,thick,decoration={markings, mark=at position 0.5 with {\arrow{>}}},
        postaction={decorate}] (c) to[out=120,in=60,distance=2.5cm,looseness=1.5] (f);
    \draw[blue!50,thick, decoration={markings, mark=at position 0.5 with {\arrow{<}}},
        postaction={decorate}] (c) to[out=240,in=300,distance=2.5cm,looseness=1.5] (f);
    \draw[blue!50,thick, decoration={markings, mark=at position 0.5 with {\arrow{>}}},
        postaction={decorate}] (d) to[out=120,in=60,distance=1.7cm,looseness=1.5] (e);
    \draw[blue!50,thick, decoration={markings, mark=at position 0.5 with {\arrow{<}}},
        postaction={decorate}] (d) to[out=240,in=300,distance=1.7cm,looseness=1.5] (e);
\end{tikzpicture}}
\qquad\quad
\subfigure[The orientation on each blue circle allows one to provide any suitable crossing for each black edge.]{\begin{tikzpicture}[vertex/.style={circle, draw, inner sep=0pt, minimum size=6pt, fill=black}, edge/.style={draw}, baseline=0]
    \node[vertex] at (0,0) (a) {};
    \node[vertex] at (1,0) (b) {};
    \node[vertex] at (2,0) (c) {};
    \draw (a)--(b);
    \draw (b)--(c);
    \path[edge,dashed,red,very thick] (a) to[out=120,in=60,distance=2cm] (c);
    \path[edge,dashed,red,very thick] (a) to[out=240,in=300,distance=2cm] (c);
    \node[inner sep=0pt, minimum size=0pt] at (-0.25,0) (d) {};
    \node[inner sep=0pt, minimum size=0pt] at (0.35,0) (e) {};
    \node[inner sep=0pt, minimum size=0pt] at (1.65,0) (h) {};
    \node[inner sep=0pt, minimum size=0pt] at (2.25,0) (i) {};
    \draw[blue!50,thick,decoration={markings, mark=at position 0.5 with {\arrow{>}}},
        postaction={decorate}] (d) to[out=120,in=60,distance=2.5cm,looseness=1.5] (i);
    \draw[blue!50,thick, decoration={markings, mark=at position 0.5 with {\arrow{<}}},
        postaction={decorate}] (d) to[out=240,in=300,distance=2.5cm,looseness=1.5] (i);
    \draw[blue!50,thick, decoration={markings, mark=at position 0.5 with {\arrow{>}}},
        postaction={decorate}] (e) to[out=120,in=60,distance=1.7cm,looseness=1.5] (h);
    \draw[blue!50,thick, decoration={markings, mark=at position 0.5 with {\arrow{<}}},
        postaction={decorate}] (e) to[out=240,in=300,distance=1.7cm,looseness=1.5] (h);
    \path[draw=blue!50,thick,decoration={markings, mark=at position 0.25 with {\arrow{<}}},
        postaction={decorate}] (b) circle[radius=0.4];
\end{tikzpicture}}
               \caption{The blue circles around the Tait graph represents the state $\mathfrak{s}_o$}
                \label{obs}
            \end{figure}
			
        \item \label{observe 3} Consider the dual graph $G_K^*$ of $G_K$. The dual edge of each red edge in $G_K$ becomes black in $G_K^*$ while the dual edge of each black edge in $G$ becomes red in $G_K^*$. Thus, a similar phenomenon occurs for black edges in $G_K^*$ which occurs in (\ref{observe 2}) for black edges in $G_K$.
		\end{enumerate}
    
     Let $H_1$ be the subgraph which contains all the red edges of $G_K$. For each connected component $C \in C(H_1)$, remove positive red edges from $C$ until it becomes minimally connected. Then remove negative red edges from $C$ until it becomes minimally connected. This gives a new subgraph $H_2$. Now consider the spanning subgraph $H_3$ which contains all the black edges of $G_K$ and $H_2$. First remove positive black edges until $H_3$ becomes minimally connected and then remove negative black edges until it becomes minimally connected. This gives us the final spanning subgraph $H_4$ which is a spanning tree of $G_K$. Now we order the edges of $G_K$ in the following sequence: 
		
	$$
		 \{\text{red edges not in }H_4\} < \{\text{black edges in } H_4\} < \{\text{red edges in } H_4\} < \{\text{black edges not in }H_4\}
	$$
		
		Thus, we have a spanning tree $\To$ where the removed red edges are externally active, the removed black edges are externally inactive, the red edges in $\To$ are internally inactive and the black edges in $\To$ are internally active. Hence, the critical enhanced spanning subgraphs of $\mathfrak{T}_o$ corresponds to the  resolved states $\mathfrak{s}_o$ due to Proposition \ref{span graph and res state}. An example of this construction is illustrated for the knot $K$ (See Figure \ref{exotic-disk-oriented-tree}) in Figure \ref{exotic_disks} with a chosen orientation.
				
	  \end{construction}

		\begin{figure}[ht]
			\centering
			\includesvg[width=0.5\textwidth,inkscape=force]{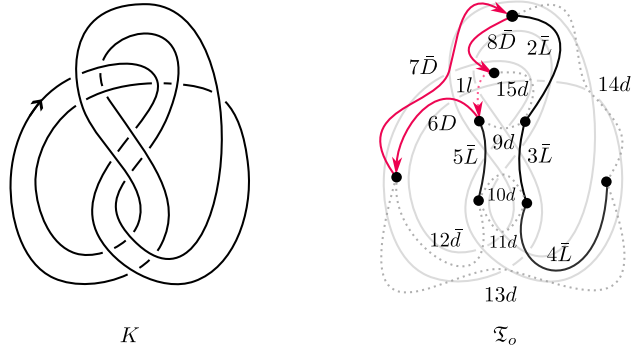}
			\caption{Orientation preserving tree of the knot $K$ in Figure \ref{exotic_disks}}
			\label{exotic-disk-oriented-tree}
		\end{figure}

        Let $\CST_{Lee}(K)$ denote the spanning tree complex corresponding to the Lee complex of $K$. We will now show that $\To$ is a cycle in $\CST_{Lee}(K)$ by showing that any chain $\Ch = \To > T_1 >T_2 \cdots > T_n$ beginning with $\To$ has the property that for every tree $T_k \in \Ch$, $i(T_k)-i(T_{k-1}) \geq 1$ and $j(T_k)-j(T_{k-1}) \geq 2$ which reduces the possibility to $\Ch=\To > T_1$ with $\partial_{ST}(\To^+)=\Gamma(\To^+,T_1^-).T_1^-$, but we will also show that $\Gamma(\To^+,T_1^-)=0$. 

          \begin{proposition}\label{length 1 chain}
              Let $\Ch=\To > T_1$ with $i(T_1)-i(\To)=1$ and $j(T_1)-j(\To)=2$, then $\Gamma(\To^+,T_1^-)=0$.
          \end{proposition}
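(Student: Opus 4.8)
The plan is to reduce the statement to a calculation in the Khovanov Hasse diagram and then eliminate the remaining alternating paths using the way $\To$ was constructed.

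\emph{Step 1: passing to the $j$-constant slice.} By Proposition~\ref{acyclic matching for Lee} the Lee matching coincides with the Khovanov matching $\Mt^{\Kh}_K$, so every matched edge — and hence every reversed matched edge of $\mathcal{H}_\Mt((\CLee(K),\partial_{Lee}))$ — is an edge of $\mathcal{H}((\CKh(K),\partial_{\Kh}))$ and preserves the quantum grading $j$. Every non-matched edge of the Lee Hasse diagram either preserves $j$ (the Khovanov part of $\partial_{Lee}$) or raises it by $4$ (the deformation terms; compare equation~\ref{lee diff split} with the grading formula). Hence $j$ is non-decreasing along every alternating path. Since relabelling the $v_d$-component from $1$ to $x$ lowers $j$ by $2$, we get $j(S_c^{T_1^-})=j(S_c^{T_1^+})-2=(j(\To)+2)-2=j(S_c^{\To^+})$; so any alternating path from $S_c^{\To^+}$ to $S_c^{T_1^-}$ is forced to be $j$-constant and therefore lies entirely inside $\mathcal{H}_\Mt((\CKh(K),\partial_{\Kh}))$. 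Consequently $\Gamma(\To^+,T_1^-)=\Gamma_{\Kh}(\To^+,T_1^-)$, the corresponding structure constant of $\partial_{ST}$ in the unreduced Khovanov spanning tree complex.

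\emph{Step 2: shape of the contributing paths.} Using $i(T_1)=i(\To)+1$ together with Propositions~\ref{rooted negative subpaths} and~\ref{rooted positive subpaths} and Lemma~\ref{unique states through a chain}, every alternating path counted by $\Gamma_{\Kh}(\To^+,T_1^-)$ factors as
\[
S_c^{\To^+}\ \leadsto\ E\ \xrightarrow{\ e\ }\ S\ \leadsto\ S_c^{T_1^-},
\]
where the first leg realizes a rooted negative subpath of $G(\To)$, the middle edge flips the unique dead edge $e$ separating $\To$ from $T_1$ (so $a_{\To}(e)\in\{D,\bar d\}$ and the flip turns an $A$-marker into a $B$-marker), and the last leg realizes a rooted positive subpath of $G(T_1)$. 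The relabelling of the dotted circle from $1$ to $x$ happens in exactly one of these three pieces.

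\emph{Step 3: the structural case analysis.} By the construction of $\To$ the edge $e$ is either a positive red edge of $\To$ (activity $D$, so $T_1=(\To\setminus e)\cup f$ with $f\in cut(\To,e)$) or a negative black edge not in $\To$ (activity $\bar d$, so $T_1=(\To\cup e)\setminus f$ with $f\in cyc(\To,e)$). In either case the even-parity statements of Observations~\ref{observe 2} and~\ref{observe 3} — a minimal cycle through a black edge carries an even number of black edges, and dually for $G_K^*$ — control the edges of the cycle/cut through $e$; combined with Proposition~\ref{atmost one edge} and the words $M(\To),M(T_1)$ they determine which live edges can carry a $B$-marker along the two legs. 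One then checks that the only way to land on the critical state $S_c^{T_1^-}$ forces the last leg to execute a split at the edge just affected, so the two Khovanov splitting branches $(\epsilon_1',\epsilon_2')$ both occur and enter with opposite signs; all other configurations cannot be completed to $S_c^{T_1^-}$. Hence $\Gamma_{\Kh}(\To^+,T_1^-)=0$, and so $\Gamma(\To^+,T_1^-)=0$.

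The main obstacle is Step~3. The subtlety is that the root vertex of $G(T_1)$ need not be the root vertex of $G(\To)$, and the $\pm$-decoration records precisely the $v_d$-component, so one must track carefully how flipping $e$ redistributes the dotted circle among the components. It is exactly the even number of black edges on the minimal cycle through $e$ (Observation~\ref{observe 2}) — equivalently the parity of the mod-$2$ separation invariant that defines the labels of the critical states — that makes the labelling forced by $S_c^{\To^+}$ incompatible with the labelling forced by $S_c^{T_1^-}$, except in the self-cancelling configurations above.
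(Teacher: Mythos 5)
Your Steps 1 and 2 are sound and coincide with the paper's own reduction: the $j$-constancy argument forcing the path into $\mathcal{H}_\Mt((\CKh(K),\partial_{\Kh}))$, and the factorization through a rooted negative subpath, the flip at the dead edge, and a rooted positive subpath are exactly how the paper proceeds. The genuine gap is in Step 3, where the cancellation mechanism you propose is not the one that operates and, as stated, is false: when a $1$-labelled circle splits, the two branches $(\epsilon_1',\epsilon_2')$ enter the Khovanov (and Lee) differential with the \emph{same} coefficient $(-1)^{w(e)}$, since the sign depends only on the edge whose resolution changes and not on the enhancement; so "the two Khovanov splitting branches \dots enter with opposite signs" cannot be the source of the vanishing. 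The actual cancellation is between two \emph{distinct} alternating paths. Writing $e_1=\{v_1,v_2\}$ for the live edge exchanged with the dead edge $f_1$ and $P_{v_i}$ for the path from $v_d$ to $v_i$ in $G(\To)$, there are exactly two admissible choices of first leg (one proper rooted negative subpath along each $P_{v_i}$), hence exactly two alternating paths from $S_c^{\To^+}$ to $S_c^{T_1^-}$; their weights agree up to the factors $(-1)^{r_1}$ and $(-1)^{r_2}$, where $r_i$ counts the $\Bar{L}$-edges on $P_{v_i}\setminus P_{v_{3-i}}$, and Observation (\ref{observe 2}) (an even number of black edges on the cycle through the black edge $f_1$) gives $r_1+r_2$ odd, so $(-1)^{r_1}+(-1)^{r_2}=0$. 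Your closing paragraph cites this parity input, but attaches it to the wrong mechanism ("incompatible labellings except self-cancelling configurations") rather than to the relative sign of two genuinely existing paths, both of which do reach $S_c^{T_1^-}$.

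Two further items are asserted but not discharged by "one then checks", and they carry real content. First, before any path count one must pin down the combinatorics of the switch: from $i(T_1)-i(\To)=1$ and $j(T_1)-j(\To)=2$ one shows that the only possible change of activity word is $\Bar{L}\Bar{d}\rightarrow\Bar{d}\Bar{D}$, i.e.\ the exchanged live edge is the largest black edge of $cyc(\To,f_1)\setminus f_1$; without this you cannot identify the two legs or their signs. Second, the case $a_{\To}(f_1)=D$, $a_{T_1}(f_1)=d$ is not symmetric on its face; the paper handles it by passing to the dual Tait graph (using $cut(T,e)=cyc(T^*,e^*)$ and Observation (\ref{observe 3})), which is a legitimate shortcut your direct case split would have to replace with an actual argument. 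As it stands, the heart of the proposition — that there are exactly two contributing paths and that their weights are opposite — is missing, and the sign claim you substitute for it is incorrect.
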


          \begin{proof}
              Assume that $a_{\To}(f_1)=\Bar{d}$ and $a_{T_1}(f_1)=\Bar{D}$. Observe that, $cyc(\To,f_1)$ contains edges with activity $\Bar{L},D,\Bar{D}$ only and if $a_{\To}(e_1) \in \{D,\Bar{D}\}$, then $a_{T_1}(e_1) \in \{d,\Bar{d}\}$ since, $cyc(\To,f_1)$ contains atleast one black edge other than $f_1$. Hence, $a_{\To}(e_1) = \Bar{L}$. Moreover, $e_1$ has to be the largest black edge in $cyc(\To,f_1) \setminus f_1$, otherwise we will have $i(T_1) -i(\To) > 1$. Thus, the only possible change between $W(\To)$ and $W(T_1)$ is $\Bar{L}\Bar{d} \rightarrow \Bar{d}\Bar{D}$. Now any alternating path between $S_c^{\To^+}$ and $S_c^{T_1^-}$ cannot contain a non-matched directed edge which increases the $j-$grading by 4. So, any alternating path between these two critical enhanced spanning subgraphs can be seen in $\mathcal{H}_\Mt((\CKh(K),\partial_{\Kh}))$.\\
              Now from Proposition \ref{rooted negative subpaths} and \ref{rooted positive subpaths}, we know that $S_0=E_0=S_c^{\To^+}$ and $S_1=E_1=S_c^{T_1^-}$ as spanning subgraphs. Suppose $e_1=\{v_1,v_2\}$ and $P_{v_i}$ be the unique path from $v_d$ to $v_i$ in $G(\To)$. Then in order to move from $\mathcal{H}_\Mt ((\CKh(U(\To^+)),\partial^+_{\Kh}))$ to $\mathcal{H}_\Mt ((\CKh(U(T_1^-),\partial^-_{\Kh}))$ through an alternating path, one of vertices $v_1$ or $v_2$ must have $1$ as its enhancement in $E_0$ and $\epsilon(\{v_d\})=x$. Thus, for finding $\epsilon(E_0)$, we only have two proper rooted negative subpaths, one from each $P_{v_i}$. Now for these two choices of $E_0$ we will have two choices of $S_1$ and thus, one can show that there will be exactly two alternating paths. Suppose $p=|e \mid e \in P_{v_1} \cap P_{v_2}, e \text{ is a positive twist }|$, $q=|e \mid e \in P_{v_1} \cap P_{v_2}, e \text{ is a negative twist }|$, $r_1=|e \mid e \in P_{v_1}\setminus P_{v_2}, \ a_{\To}(e)=\Bar{L}|$ and $r_2=|e \mid e \in P_{v_2}\setminus P_{v_1}, \ a_{\To}(e)=\Bar{L}|$. Then, one can easily verify that

              $$
               \Gamma(\To^+,T_1^-)=(-1)^{s(f_1)+p+q}\left((-1)^{r_1} + (-1)^{r_2}\right)
              $$

              Now by observation (\ref{observe 2}) $cyc(\To,f_1)$ contains even number of black edges hence, $r_1+r_2+1$ is even. So, $\Gamma(\To^+,T_1^-)=0$. When $a_{\To}(f_1)=D$ and $a_{T_1}(f_1)=d$ then, consider the dual chain $\To^* > T_1^*$, where $a_{\To^*}(f_1^*)=\Bar{d}$ and $a_{T_1^*}(f_1^*)=\Bar{D}$ and similarly we have  $\Gamma({\To^*}^+,{T_1^*}^-)=0$ and thus, $\Gamma(\To^+,T_1^-)=0$
              \end{proof}
		
		 \begin{proposition}\label{higher hom than To}
			$\To$ is a cycle in $\CST_{Lee}(K)$ with $i(\To^\pm)=0$.
		\end{proposition}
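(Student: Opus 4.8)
The plan is to prove the two assertions—$i(\To^{\pm})=0$ and $\partial_{ST}(\To^{+})=\partial_{ST}(\To^{-})=0$ in $\CST_{Lee}(K)$—separately, with essentially all of the work going into a bigrading estimate for chains of spanning trees issuing from $\To$ (the estimate announced just before the statement). The homological grading is immediate: by the end of the Construction, Table~\ref{smoothing table} and Proposition~\ref{span graph and res state} identify the critical enhanced spanning subgraph $S_c^{\To^{\pm}}$ with the oriented resolution $\mathfrak{s}_o$ of $K$, so $i(\To^{\pm})=i_{\Kh}(S_c^{\To^{\pm}})=i_{\Kh}(\mathfrak{s}_o)=0$ because $\mathfrak{s}_o$ lies in homological degree $0$—the standard fact recalled in Section~\ref{prelim} that makes $\mathfrak{s}_o$ a cycle generating $H^{0}$ of the Lee complex. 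Since $i$ does not depend on the enhancement, $i(\To^{+})=i(\To^{-})=0$.

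For the cycle property, the Morse differential raises homological grading by exactly $1$, so (again by enhancement-independence of $i$) the monomial $\To^{\mp}$ cannot appear in $\partial_{ST}(\To^{\pm})$, and $\partial_{ST}(\To^{\pm})=\sum\Gamma(\To^{\pm},{T'}^{\bullet})\,{T'}^{\bullet}$ with the sum over spanning trees $T'$ having $i(T')=1$. By Proposition~\ref{partial order respects differential}, a realizing alternating path for a nonzero $\Gamma(\To^{\pm},{T'}^{\bullet})$ runs through a chain $\To=T_0>T_1>\cdots>T_n=T'$. The decisive claim is that along any such chain the bigrading of the critical states strictly increases—$i(T_k^{+})-i(T_{k-1}^{+})\ge 1$ and $j(T_k^{+})-j(T_{k-1}^{+})\ge 2$ at each step, the second an equality whenever the first is. Granting this, $i(T')=1=i(\To)+1$ forces $n=1$, so $\partial_{ST}(\To^{\pm})$ is supported on covers $\To>T_1$ with $i(T_1^{+})-i(\To^{+})=1$ and hence $j(T_1^{+})-j(\To^{+})=2$.

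It then remains to kill $\Gamma(\To^{\pm},T_1^{\bullet})$ for such a cover. Along any alternating path joining two critical states in $\mathcal{H}_{\Mt}((\CLee(K),\partial_{Lee}))$ the reversed (matched) edges are Khovanov-type and preserve the quantum grading, while a forward edge either preserves it or raises it by $4$, so $j$ changes by a nonnegative multiple of $4$ along the path. As $j(T_1^{-})=j(T_1^{+})-2$ and $j(T_1^{+})-j(\To^{+})=2$, two of the four a priori incidences, $\Gamma(\To^{+},T_1^{+})$ and $\Gamma(\To^{-},T_1^{-})$, would demand a $j$-change of $2\notin 4\Z$ and so vanish outright; the remaining two are the enhancement-changing ones, $\Gamma(\To^{+},T_1^{-})$ (with $j$-change $0$, so its path lies in $\mathcal{H}_{\Mt}((\CKh(K),\partial_{\Kh}))$) and its $\pm$-mirror $\Gamma(\To^{-},T_1^{+})$, and both vanish by Proposition~\ref{length 1 chain} and its $\pm$-symmetric analogue—symmetric since the constructions and results of subsection~\ref{alternating path count subsection} carry over verbatim to $\CST^{-}$, and since one may pass to the planar dual exactly as at the end of the proof of Proposition~\ref{length 1 chain}. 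Hence $\partial_{ST}(\To^{+})=\partial_{ST}(\To^{-})=0$, i.e.\ $\To$ is a cycle.

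The main obstacle is the chain-grading claim of the second paragraph, which is where the geometry built into $\To$ enters. A single cover $T_{k-1}>T_k$ is, by the discussion preceding Proposition~\ref{atmost one edge}, either a $\Bar{d}\to\Bar{D}$ move (deleting an edge of $cyc(T_{k-1},e)$ and inserting the dead edge $e$) or a $D\to d$ move (inserting an edge of $cut(T_{k-1},e)$); one tracks how the activities of the remaining edges of that cut or cycle—and the corresponding enhancement data of the critical state—change, and shows the net shift in $(i,j)$ is at least $(1,2)$ with the boundary case controlled. The constraints forcing this are observation~(\ref{observe 1}) (no red edge lies off a red cycle), observation~(\ref{observe 2}) (every minimal cycle through a black edge contains an even number of black edges) and its planar dual observation~(\ref{observe 3}); these parity statements are precisely what keeps a chain step from $\To$ from being ``cheap'' in bigrading, and they also underlie the cancellation used in Proposition~\ref{length 1 chain} for the boundary case.
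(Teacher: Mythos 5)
Your overall strategy is the same as the paper's: reduce to chains of length one via a per-step bigrading estimate, and then kill the surviving incidence using Proposition \ref{length 1 chain} together with the fact that alternating paths in the Lee--Morse complex change $j$ by a non-negative multiple of $4$ (matched edges are Khovanov-type). The identification $i(\To^{\pm})=0$ via the oriented resolution is also fine. The problem is that you never prove the ``decisive claim'' on which everything rests. The statement that every step $T_{k-1}>T_k$ of a chain issuing from $\To$ satisfies $i(T_k)-i(T_{k-1})\ge 1$ and $j(T_k)-j(T_{k-1})\ge 2$ is precisely the body of the paper's proof: a case analysis of the edge swap $(e_i,f_i)$, in which one first shows (using observation (\ref{observe 2})) that the $\Bar d\to\Bar D$ edge must be black, then runs through the possible activity changes $\Bar L\to\Bar l$, $\Bar L\to\Bar d$, $\Bar D\to\Bar l$ of the companion edge, rules out the red possibilities, and finally handles the $D\to d$ case by passing to the dual graph via observation (\ref{observe 3}). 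Your proposal names the right ingredients (observations (\ref{observe 1})--(\ref{observe 3}), tracking activities in $cyc$/$cut$) but explicitly defers the analysis (``one tracks how the activities \dots change, and shows the net shift in $(i,j)$ is at least $(1,2)$''), so the central technical content of the proposition is missing rather than proved.

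Two secondary points. First, you strengthen the estimate to ``$j$-jump equal to $2$ whenever the $i$-jump equals $1$''; the paper only proves the inequality, and your mod-$4$ bookkeeping genuinely needs this equality (otherwise a cover with $i$-jump $1$ and $j$-jump $4$ would make $\Gamma(\To^{+},T_1^{+})$ a priori admissible), so it would have to be extracted from the same case analysis you skipped. Second, your dismissal of $\Gamma(\To^{-},T_1^{+})$ ``by the $\pm$-symmetric analogue of Proposition \ref{length 1 chain}'' is not justified: that incidence has $j$-shift $4$, so any contributing path must use exactly one Lee-type edge (a merge $m(x,x)=1$ or split as in equation \ref{lee diff split} involving the dotted component), whereas Proposition \ref{length 1 chain} counts $j$-preserving Khovanov paths; neither the $+/-$ symmetry nor the planar-dual trick converts one situation into the other, so a separate path count would be required (the paper itself only writes out $\partial_{ST}(\To^{+})$).
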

		
		\begin{proof}
              Suppose $\Ch = \To > T_1 >T_2 \cdots > T_n$ be a chain beginning with $\To$. Any tree $T_i$ in $\Ch$ is obtained from $T_{i-1}$ is obtained by switching a pair of edges $(e_i,f_i)$, where $e_i \in T_{i-1}$ and $f_i \not\in T_{i-1}$ and one of them is dead edge with activity either $\Bar{d}$ or $D$. We treat these two cases separately.\\

              \textbf{case 1:} Suppose $a_{T_{i-1}}(f_i) = \Bar{d}$ and $a_{T_i}(f_i)=\Bar{D}$. Assume that $f_i$ is a red edge then, $a_{\To}(f_i)=\Bar{l} / \Bar{D}$. Now there exists $g \in cyc(T_{i-1},f_i)$ such that $g < f_i$. If $a_{\To}(f_i)=\Bar{l}$, then $a_{\To}(g)=\Bar{D}$ which implies $f_i < g$. Thus, $a_{\To}(f_i) = \Bar{D}$ and this implies that there exists $1 <k < k' < i-1$ such that $f_i$ changes its activity from $\Bar{D}$ to $\Bar{L}$ while moving from $T_{k-1}$ to $T_k$ and $f_i$ changes its activity from $\Bar{L}$ to $\Bar{d}$ while moving from $T_{k'-1}$ to $T_{k'}$ but this cannot occur since $cut(T_{k-1},f_i)$ contains an edge with activity $\Bar{l}$. Another possibility is that $f_i$ changes from $\Bar{D}$ to $\Bar{l}$ and then to $\Bar{d}$ but again this is not possible since, changing from $\Bar{D}$ to $\Bar{l}$ is only possible if there is an edge $g' \in cut(T_{k-1},f_i)$ with activity $\Bar{d}$ which is a black edge and hence there will be another black edge (due to observation (\ref{observe 2})) $g'' \in cyc(T_{k-1},g')$ such that $g'' < f_i$. Thus, $f_i$ has to be a black edge.\\
              Now if $a_{\To}(f_i)=\Bar{d}$ then all the black edges in $cyc(T_{i-1},f_i)$ must be negative and $e_i < f_i$ according to the construction of $\To$ Thus, if $e_i$ is black then $a_{T_{i-1}}(e_i)=\{\Bar{L},\Bar{D}\}$. Hence, $a_{T_i}(e_i)=\{\Bar{l},\Bar{d}\}$. If $e_i$ changes from $\Bar{L}$ to $\Bar{l}$, then $i(T_i) - i(T_{i-1}) \geq 2$ and thus, $j(T_i)-j(T_{i-1}) \geq 4$. If $e_i$ changes from $\Bar{L}$ to $\Bar{d}$ then $i(T_i) - i(T_{i-1}) \geq 1$ and thus, $j(T_i)-j(T_{i-1}) \geq 2$. If $e_i$ changes from $\Bar{D}$ to $\Bar{l}$, then similarly we will have $i(T_i) - i(T_{i-1}) \geq 1$ and $j(T_i)-j(T_{i-1}) \geq 2$. Now, if $e_i$ is red then, $a_{T_{i-1}}(e_i)=\{D,\Bar{D}\}$ and $a_{T_i}(e_i)=\{l,\Bar{l}\}$ but this scenario is not possible since, there is another black edge other than $f_i$ in $cyc(T_{i-1},f_i)$ which is smaller than $e_i$. Now if $a_{\To}(f_i)=\Bar{L}$, then there is a black edge $e_i' \in cyc(T_{i-1},f_i)$ such that $e_i'< f_i$ and there is an edge in $cut(\To,f_i)$ with activity $\Bar{d}$ because $a_{T_{i-1}}(f_i)=\Bar{d}$. Thus, again all the black edges in $cyc(T_{i-1},f_i)$ must be negative and hence, the rest of the argument follows similarly.\\
			
			\textbf{case 2:} Suppose $a_{T_{i-1}}(e_i) = D$ and $a_{T_i}(e_i)=d$. Then we have $a_{T^*_{i-1}}(e_i^*)=\Bar{d}$ and $a_{T^*_i}(e_i^*)=\Bar{D}$ since, $cut(T,e)=cyc(T^*,e^*)$. Now it follows from case 1 and observation (\ref{observe 3}) that $i(T_{i-1}^*) - i(T_i^*) \geq 1$ and $j(T_i^*)-j(T_{i-1}^*) \geq 2$ in all scenarios. Thus, these inequalities also holds for $T_{i-1}$ and $T_i$ since, $T_{i-1} < T_i$ if and only if $T_{i-1}^* < T_i^*$.\\
			
			Thus, in both the cases, $i(T_{i-1}) < i(T_i)$ and $j(T_{i-1})<j(T_i)$ for all $i=1,\cdots,n$ with $T_0 = \To$. For chains $\Ch = \To > T_1$, the proof follows from Proposition \ref{length 1 chain}. $i(\To^\pm)=0$ follows from Figure \ref{directed tait graph}.
		\end{proof}

        Recall from \cite{rus} that $s_{\max}(K)$ and $s_{\min}(K)$ are the $j-$gradings of the two copies of $\Q$ surviving in the $E_\infty$ term of the spectral sequence defined by Lee in \cite{LEE2005554} and Rasmussen showed that $s_{\max}(K)=s_{\min}(K)+2$ and finally defined the knot invariant
        $$
         s(K)=s_{max}(K)-1 = s_{\min}(K)+1
        $$
        Denote $s$ to be the $j-$grading in $H^*(\CLee(K))$ induced from the usual $j-$grading in $\CLee(K)$. Then, Rasmussen showed that the s grading of the generators $[\mathfrak{s}_o \pm \mathfrak{s}_{\Bar{o}}]$ attain either of the values in $\{s_{\max}(K),s_{\min}(K)\}$. We now show that the generators $[\mathfrak{s}_o \pm \mathfrak{s}_{\Bar{o}}] \in H^*(\CLee(K))$ corresponds to $[\To^\pm] \in \CST_{Lee}(K)$. 

        \begin{proposition}\label{oriented tree generators}
			$g(\mathfrak{s}_o \pm \mathfrak{s}_{\Bar{o}})$ is a non-zero multiple of $ \mathfrak{T}_o^{\pm}$, where $g$ is the map in equation \ref{g map in dmt} for the Lee complex.
		\end{proposition}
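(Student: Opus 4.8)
The plan is to analyze the map $g\colon \CLee(K)\to \CST_{Lee}(K)$ from equation~\ref{g map in dmt} applied to the Lee generators $\mathfrak{s}_o\pm\mathfrak{s}_{\bar o}$, and to show the image is concentrated on the single tree $\To$ (with the two enhancements $\To^+$ and $\To^-$), with a nonzero coefficient. Recall that $g_i(c)=\sum_{c'\in B_i^\Mt}\Gamma(c,c').c'$, so $g(\mathfrak{s}_o)$ is a $\Z$-linear combination of critical enhanced spanning subgraphs $S_c^{T^\pm}$ weighted by the total weight of alternating paths from $\mathfrak{s}_o$ to $S_c^{T^\pm}$ in $\mathcal{H}_\Mt((\CLee(K),\partial_{Lee}))$. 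The first step is to identify which enhanced state of which twisted unknot $U(T)$ the resolution $\mathfrak{s}_o$ lives in: by the construction of $\To$, the oriented resolution $D_o$ is precisely the partial smoothing $U(\To)$, so $\mathfrak{s}_o$ (suitably expanded in the $\{1,x\}$ basis from the $\{x\pm 1\}$ basis) is a sum of enhanced spanning subgraphs all lying in $\CLee(U(\To))$. Then, by Lemma~\ref{kh complex is dis union of twisted unknots} (which applies equally to the Lee complex since the Hasse diagram of $\CKh$ is a subgraph of that of $\CLee$ and they share generators), any alternating path originating at a state of $\CLee(U(\To))$ stays inside $\CLee(U(\To))$ unless it uses a differential edge changing the twisted unknot; but Proposition~\ref{partial order respects differential} together with $i(\To^\pm)=0$ from Proposition~\ref{higher hom than To} forces such a path to move to trees $T<\To$, which raises homological grading, so a path back to a critical state at homological grading $0$ cannot do this. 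Hence $g(\mathfrak{s}_o)$ is supported only on $\To^+$ and $\To^-$.

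The second step is to compute the two coefficients $\Gamma(\mathfrak{s}_o,\To^+)$ and $\Gamma(\mathfrak{s}_o,\To^-)$. Here I would expand $\mathfrak{s}_o=\bigotimes_S (x\pm 1)_S$ in the $\{1,x\}$ basis over the circles $S$ of $D_o$; each summand is an enhanced spanning subgraph of $G(\To)$ supported on the all-isolated-vertices subgraph, whose enhancement $\epsilon$ differs from $\epsilon_c^{\To^+}$ on exactly the set of circles labelled with the "wrong" value. By Proposition~\ref{generalized rooted pos subpath} (and its $\To^-$ analogue), the enhanced states $S_x^{\To}$ admitting an alternating path up to $S_c^{\To^+}$ (resp.\ $S_c^{\To^-}$) are exactly those corresponding to generalized rooted positive subpaths of $G(\To)$, and the weight of the corresponding path is a sign $(-1)^{\bullet}$ determined by the matching word $M(\To)$; these are $\pm 1$ by the earlier counting propositions asserting $\A^{\uparrow}\in\{0,1\}$. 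Summing the contributions of all basis states of $\mathfrak{s}_o$ that map to $\To^+$ gives $\Gamma(\mathfrak{s}_o,\To^+)$, and similarly for $\To^-$; equation~\ref{equal number of paths} guarantees the two underlying index sets of contributing states are in bijection. One then combines $g(\mathfrak{s}_o)$ and $g(\mathfrak{s}_{\bar o})$: because $\bar o$ is the reversed orientation, its oriented resolution is the same diagram $D_o$ but $\mathfrak{s}_{\bar o}$ has every circle's label flipped between $x+1$ and $x-1$, which swaps the roles of the two summands; consequently $g(\mathfrak{s}_o+\mathfrak{s}_{\bar o})$ and $g(\mathfrak{s}_o-\mathfrak{s}_{\bar o})$ collapse onto nonzero multiples of $\To^+$ and $\To^-$ respectively (up to which sign goes with which, fixed by a direct check on a small example or by tracking the mod-$2$ "circles separating from infinity" parity that defines $\mathfrak{s}_o$).

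The main obstacle I anticipate is the bookkeeping of signs and the verification that the coefficients are genuinely nonzero rather than cancelling in pairs. Concretely: after expanding $\mathfrak{s}_o$ one gets a large alternating sum of basis enhanced states, and one must show that the total weight of all alternating paths from that sum to $\To^+$ does not vanish. The clean way to organize this is to note that the matching of Theorem~\ref{matching_theorem} pairs each non-critical state with a unique partner, so $g$ kills everything except the critical part; the subtlety is that $\mathfrak{s}_o$ itself is a nontrivial combination, and one must check the induced maps $g_0$ restricted to the span of the $2^{|C(D_o)|}$ enhanced states of $U(\To)$ at homological grading $0$ sends $\mathfrak{s}_o\pm\mathfrak{s}_{\bar o}$ to nonzero multiples of $\To^\pm$. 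This I would handle by observing that $g_0$ restricted to $\CLee(U(\To))^{0,\ast}$ is, up to the identification of the preceding propositions, exactly the standard quasi-isomorphism collapsing a "staircase" Lee complex of a twisted unknot onto its two generators, under which $\mathfrak{s}_o$ and $\mathfrak{s}_{\bar o}$ are precisely the canonical Lee generators — so nonvanishing is inherited from Lee's and Rasmussen's computation of $H^\ast(\CLee)$ for a one-component diagram. I would then record the precise nonzero multiple (a power of $2$ coming from the merge/split arithmetic, matching the $2^{\#\text{circles}-1}$ factors familiar from Lee theory) as the content of the proposition.
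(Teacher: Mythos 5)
Your overall skeleton matches the paper up to the decisive step: like the paper, you localize the computation to $\To$ (via Proposition \ref{higher hom than To} and the partial order, so no critical cell $S_c^{T^\pm}$ with $T\neq\To$ in homological degree $0$ can receive a path), and you identify the contributing summands of $\mathfrak{s}_o$ via Proposition \ref{generalized rooted pos subpath} and equation \ref{equal number of paths}. But at the point you yourself flag as the main obstacle, the proposal has a genuine gap rather than a proof. The quasi-isomorphism argument (``$g$ restricted to $\CLee(U(\To))$ collapses the Lee complex of the twisted unknot onto its two critical generators, so nonvanishing is inherited from Lee--Rasmussen'') only shows that $g(\mathfrak{s}_o+\mathfrak{s}_{\Bar{o}})$ and $g(\mathfrak{s}_o-\mathfrak{s}_{\Bar{o}})$ are two linearly independent, hence nonzero, elements of the span of $\To^+$ and $\To^-$. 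The proposition asserts strictly more: that each of them is a multiple of a \emph{single} generator, $\To^+$ respectively $\To^-$. Nothing in your argument pins down this alignment; ``flipping every circle's label swaps the roles of the two summands,'' together with ``a direct check on a small example,'' is an assertion, not an argument valid for an arbitrary knot diagram.

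The paper closes exactly this gap by a chain-level sign computation that you do not carry out: for every summand $s$ of $\mathfrak{s}_o$ admitting a path to $S_c^{\To^\pm}$, the sign with which $s$ occurs in the expansion of $\mathfrak{s}_o$ satisfies $\mathrm{sign}(s)=(-1)^{|M(\Pt_s)|}\,\mathrm{sign}(S_c^{\To^\pm})$, so the path weight times $\mathrm{sign}(s)$ equals $\mathrm{sign}(S_c^{\To^\pm})$ independently of $s$; hence no cancellation occurs and, using equation \ref{equal number of paths}, $g(\mathfrak{s}_o)$ is a common nonzero multiple of $\To^+ + \To^-$, while the analogous computation for $\mathfrak{s}_{\Bar{o}}$ produces $\To^+ - \To^-$. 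Taking sums and differences then yields the stated multiples of $\To^+$ and $\To^-$. Without this (or an equivalent symmetry argument made precise, e.g.\ an involution on the Lee complex compatible with the matching that exchanges $S_c^{\To^+}$ and $S_c^{\To^-}$ and intertwines $\mathfrak{s}_o$ and $\mathfrak{s}_{\Bar{o}}$), your proposal proves only nonvanishing of the images, not the proposition as stated.
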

		
		\begin{proof}  
			
			For $T \neq \mathfrak{T}_o$, $\Gamma(y,S_c^{T^{\pm}})=0$ for all $y \in \mathfrak{s}_o \subset \CLee(U(\To))$ due to the fact that any chain starting with $\To$ contains trees with $i-$grading higher than $\To$ as was shown in Proposition \ref{higher hom than To}. Now for $T=\To$, we need to find all the states $s \in \mathfrak{s}_o \subset \CLee(U(\To))$ such that $\A^{\uparrow} (s,S_c^{\To^\pm})=1$. From Proposition \ref{generalized rooted pos subpath}, we get all such states $s \in \mathfrak{s}_o$ for which $\A^{\uparrow}(s,S_c^{\To^\pm})=1$. Now suppose $\Pt^\pm_s$ be the alternating path from $s$ to $S_c^{\To^\pm}$ in $\mathcal{H}_\Mt((\CLee(U(\To)),\partial_{Lee}))$ and let $M(\Pt_s) = \{e \in \Pt_s \mid e \in \Mt_\To\}$, where $\Mt_\To$ is the matching in $\mathcal{H}_\Mt((\CLee(U(\To)),\partial_{Lee}))$, then $\text{sign}(s) = (-1)^{|M(\Pt_s)|}.\text{sign}\left(S_c^{\To^\pm}\right)$ and $w(\Pt^\pm_s) = (-1)^{|M(\Pt_s)|}.\text{sign}(s) = \text{sign}\left(S_c^{\To^\pm}\right)$. Thus, $\Gamma(s,S_c^{\To^\pm})=\text{sign}\left(S_c^{\To^\pm}\right)$ for all $s \in \mathfrak{s}_o$ and hence, from equation \ref{equal number of paths} we have
			
			$$
			 g(\mathfrak{s}_o) = \sum_{s \in \mathfrak{s}_o} \left(\Gamma(s,S_c^{\To^+}).\To^+ + \Gamma(s,S_c^{\To^-}).\To^-\right) = \text{sign}(S_c^{\To^\pm}).\bigg|\left\{s \mid \A^{\uparrow}\left(s,S_c^{\To^\pm}\right)=1\right\}\bigg|.\left(\To^+ + \To^-\right)
			$$
			
			When the orientation is $\Bar{o}$ then,
			
			$$
			 g(\mathfrak{s}_{\Bar{o}}) = \sum_{s \in \mathfrak{s}_{\Bar{o}}} \left(\Gamma(s,S_c^{{\mathfrak{T}_{\Bar{o}}}^+}).{\mathfrak{T}_{\Bar{o}}}^+ + \Gamma(s,S_c^{{\mathfrak{T}_{\Bar{o}}}^-}).{\mathfrak{T}_{\Bar{o}}}^-\right) = \text{sign}(S_c^{{\mathfrak{T}_{\Bar{o}}}^\pm}).\bigg|\left\{s \mid \A^{\uparrow}\left(s,S_c^{{\mathfrak{T}_{\Bar{o}}}^\pm}\right)=1\right\}\bigg|.\left({\mathfrak{T}_{\Bar{o}}}^+ - {\mathfrak{T}_{\Bar{o}}}^-\right)
			$$ 
        \end{proof}
		
	\end{subsection} 
	
	\begin{subsection}{Filtration on the spanning tree complex and s-invariant}
		We have an analogous filtration on $\CST_{Lee}(K)$ as compared to the filtration $\fil^{Lee}_p(\CLee(K))$ defined in \cite{LEE2005554}. Define a submodule of $\CST_{Lee}^{i,*}(K)$ for each $i$ as
		
		$$
		\fil^{ST}_p(\CST_{Lee}^{i,*}(K)) = \Q \langle T^+,T^- \mid T^{\pm} \in \CST_{Lee}^{i,*}(K) \text{ and } j(T^{\pm}) \geq p \rangle 
		$$
		
		The collection of this submodules clearly defines a filtration on $\CST_{Lee}(K)$. Now the map $g$ defined from $\CLee(K)$ to $\CST_{Lee}(K)$ (See equation \ref{g map in dmt}) is a filtered chain homotopy equivalence, since $g$ is grading preserving map and homotopy equivalence follows from Lemma \ref{chain homotopic maps}. Thus, we have the following commutative diagram:
		
		\begin{equation}\label{cd for inc and gpm}
			\begin{tikzcd}[ampersand replacement=\&,cramped]
				{H^*(\mathcal{F}_p^{Lee}(\CLee(K)))} \&\& {H^*(\CLee(K))} \\
				\\
				{H^*(\mathcal{F}_p^{ST}(\CST_{Lee}(K)))} \&\& {H^*(\CST_{Lee}(K))}
				\arrow["{i_p^{*{Lee}}}", from=1-1, to=1-3]
				\arrow["{g^*}", from=1-3, to=3-3]
				\arrow["{i_p^{*ST}}", from=3-1, to=3-3]
				\arrow["{g^*}", from=1-1, to=3-1]
				\arrow["\cong"', from=1-1, to=3-1]
				\arrow["\cong"', from=1-3, to=3-3]
			\end{tikzcd}
		\end{equation} 
		
		For a given knot $K$, let us define two numerical quantities corresponding to $s_{\max}(K)$ and $s_{\min}(K)$. We denote $s_{ST}$ to be the $j-$grading on $H^*(\CST_{Lee}(K))$ induced by the $j$-grading on $\CST_{Lee}(K)$.
		
		\begin{definition}
			$$
			\begin{aligned}
				&	s^{ST}_{\max}(K) = \max\{s_{ST}(T) \mid [T] \in \CST_{Lee}(K), [T] \neq 0\}\\
				& 	s^{ST}_{\min}(K) = \min\{s_{ST}(T) \mid [T] \in \CST_{Lee}(K), [T] \neq 0\}
			\end{aligned}
			$$
		\end{definition}
		
		Then, $s^{ST}_{\max}(K)$ and $s^{ST}_{\min}(K)$ are the $j-$gradings of the two copies of $\Q$ surviving in the $E_\infty$ term of the spectral sequence obtained from the filtration $\fil^{ST}_p(\CST_{Lee}^{i,*}(K))$. We are now ready prove Theorem \ref{s-invariant thm}.
		
		\begin{proof}[Proof of Theorem \ref{s-invariant thm}.]
			The $s$ and $s_{ST}$ gradings of the generators in $H^*(\CLee(K))$ and $H^*(\CST_{Lee}(K))$ are respectively given by
			
			\begin{equation} \label{s-grad eq}
				\begin{aligned}
					& s([\mathfrak{s}_o \pm \mathfrak{s}_{\Bar{o}}]) = \underset{p}{\max}\{p \mid [\mathfrak{s}_o \pm \mathfrak{s}_{\Bar{o}}] \in \im(i^{*Lee}_p)\} \\
					& s_{ST}([\mathfrak{T}_o^{\pm}]) = \underset{p}{\max}\{p \mid [\mathfrak{T}_o^{\pm}] \in \im(i^{*ST}_p)\}
				\end{aligned}
			\end{equation} 
			
			Now by the above commutative diagram \ref{cd for inc and gpm} and Proposition \ref{oriented tree generators} we have,
			$$
			\{s([\mathfrak{s}_o + \mathfrak{s}_{\Bar{o}}]), s([\mathfrak{s}_o - \mathfrak{s}_{\Bar{o}}])\}=\{s_{ST}([\mathfrak{T}_o^{+}]),s_{ST}([\mathfrak{T}_o^{-}])\}
			$$
			But we know that $s([\mathfrak{s}_o + \mathfrak{s}_{\Bar{o}}])+s([\mathfrak{s}_o - \mathfrak{s}_{\Bar{o}}]) = s_{\max}(K) + s_{\min}(K) = 2s(K)$ and thus, we have our desired result.
		\end{proof}
		
		\begin{proof}[Proof of Corollary \ref{s-inv lower bound}.]
		    Since, $s_{ST}(\To^\pm) \geq j(\To^\pm)$ coming from equation \ref{s-grad eq} and $j(\To^+)=j(\To^-)+2$ hence, the result then follows from Theorem \ref{s-invariant thm}.
		\end{proof}

        \begin{remark} \label{lobb remark}
            Let $D$ be a connected knot diagram of a knot $K$. One can associate a decorated graph to $D$ known as the \textit{Seifert graph}, denoted by $T(D)$. Take the orientation preserving resolution of $D$ and for each circle, we have a vertex for $T(D)$ and we provide an edge with positive sign between two vertices if they share a positive crossing in $D$ otherwise we assign a negative edge. We form two subgraphs of $T(D)$, $T^+(D)$ (remove all the negative edges from $T(D)$) $T^-(D)$ (remove all positive edges from $T(D)$). Lobb proved the inequality \ref{lobb inequality} in \cite{Lobb}. It is important to note that this inequality depends on the diagram $D$. Now, observe that $G(\To)$ can be seen as a spanning tree of $T(D)$ and hence we have,

            $$
             \begin{aligned}
             j(\To^+)-1 & = i(\To^+)+w(D)+(\# L + \# \Bar{l} + 1 - \#\Bar{L} - \# l)-1 \\
             & = w(D) - (\# L + \# \Bar{l} + \#\Bar{L} + \# l + 1) + 2(\#L + \# \Bar{l} + 1) - 1 \\
             & = w(D) - \#\text{nodes}(T(D)) + 2\#\text{components}(T^+(D)) -1 
             \end{aligned}
            $$
        \end{remark}
		
	\end{subsection}
	
\end{section}


\begin{section}{Infinite family of exotic disks} \label{exotic}
	We now detect the exotic phenomena exhibited by the slice disks bounded by the knot $K$ previously mentioned in Figure \ref{exotic_disks}. Our goal will be to produce a chain element in $\CST(K)$ which would differentiate the two slice disks $D_1$ and $D_2$ in Figure \ref{exotic_disks} in terms of smooth isotopy rel boundary. But before that we want to investigate whether both of them are topologically isotopic rel boundary or not. We use the following theorem to do so,
	
	\begin{figure}[ht]
		\centering
		\includesvg[width=0.5\textwidth,inkscape=force]{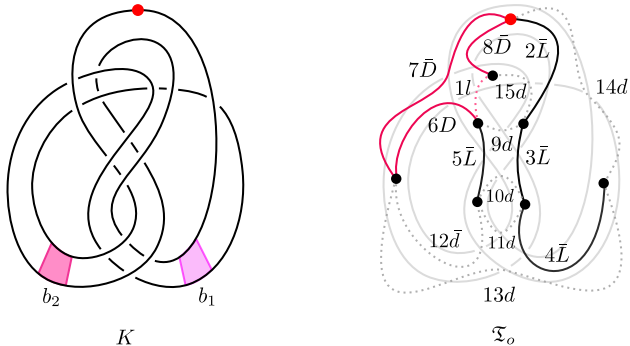}
		\caption{The knot $K$ with two bands $b_1$ and $b_2$ along which surgery results in discs $D_1$ and $D_2$, together with a distinguishing element $\mathfrak{T}_o \in \CST(K)$}
		\label{knot with bands and their distinguishing element}
	\end{figure}
	
	\begin{theorem}[\cite{Conway_powell}]\label{cp_top_isotopy}
		Let $F_1$ and $F_2$ be two locally flat, embedded, compact, orientable genus $g \neq 1,2$ surfaces inside $B^4$, both of which have a common boundary knot $K$ in $S^3$, where $K$ has Alexander polynomial one. Then $F_1$ and $F_2$ are topologically ambiently isotopic rel boundary if and only if $\pi_1(B^4 \setminus F_1) \cong \pi_1(B^4 \setminus F_2) \cong \Z$.
	\end{theorem}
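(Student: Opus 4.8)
Since Theorem~\ref{cp_top_isotopy} is quoted from \cite{Conway_powell}, I will only indicate the shape of the argument I would use to prove it. The natural setting is Freedman--Quinn topological surgery in dimension $4$, which is available here because the only fundamental group that occurs, $\Z$, is good. The ``only if'' direction is immediate: an ambient isotopy carrying $F_1$ to $F_2$ restricts to a homeomorphism of the complements, so $\pi_1(B^4\setminus F_1)\cong\pi_1(B^4\setminus F_2)$, and the common value is $\Z$ for one (hence both) of them. All the content is in the ``if'' direction.

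\textbf{Reduction to the exteriors.} The first step is to pass to the compact topological $4$-manifolds $E_i = B^4 \setminus \nu(F_i)$, the complements of open tubular neighbourhoods. Local flatness of $F_i$ guarantees that $\nu(F_i)$ exists and is unique up to ambient isotopy, and a computation shows $\partial E_i = (S^3 \setminus \nu(K)) \cup_{T^2} (\Sigma_{g,1}\times S^1)$ depends only on $K$ and $g$; thus $\partial E_1$ and $\partial E_2$ are canonically identified. It then suffices to construct a homeomorphism $E_1 \to E_2$ restricting to this identification of boundaries: by uniqueness of tubular neighbourhoods of locally flat surfaces it extends over the tube to a self-homeomorphism of $B^4$ that is the identity on $S^3$, Alexander's trick isotopes that homeomorphism to the identity rel $S^3$, and the resulting isotopy of $B^4$ carries $F_1$ onto $F_2$.

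\textbf{Pinning down the algebraic type.} Next I would use the hypotheses $\Delta_K = 1$ and $\pi_1(E_i)\cong\Z$ to force the $\Z[t^{\pm 1}]$-homology of $E_i$. Alexander/Milnor duality (after capping off in $S^4$) gives that $H_1$ is generated by the meridian, $H_2(E_i;\Z[t^{\pm1}])$ is free of rank $2g$ with equivariant intersection form a sum of $2g$ hyperbolic planes (and $H_2 = 0$ when $g = 0$), and the higher homology vanishes; the condition $\Delta_K = 1$ is precisely what kills the $\Z[t^{\pm1}]$-torsion that would otherwise obstruct this. One also checks the Kirby--Siebenmann invariant of $E_i$ vanishes. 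Consequently $E_1$ and $E_2$ carry the same algebraic data rel boundary, and one produces a degree-one normal map between them inducing the identity on the boundary.

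\textbf{Surgery, and the main obstacle.} Finally I would run the topological surgery sequence for the pair $(E_i, \partial E_i)$ over $\Z[\Z]$, whose $L$-groups and Whitehead group are well understood (Shaneson splitting, $\mathrm{Wh}(\Z)=0$): one arranges the surgery obstruction of the normal cobordism to vanish and obtains an $s$-cobordism rel boundary from $E_1$ to $E_2$, which is a product since $\Z$ is good, giving $E_1\cong E_2$ rel $\partial$ and completing the reduction above. The genuinely delicate point — and the source of the hypothesis $g \neq 1,2$ — is the geometric realisation inside this surgery step: when $g = 0$ the surgery kernel is already trivial and only the $s$-cobordism theorem over $\Z[\Z]$ is needed, and when $g \geq 3$ there is enough room to represent the kernel classes by framed embedded surfaces and to apply the disc-embedding input for good groups, whereas for $g = 1, 2$ the intersection data is too rigid and one is forced into a separate, more intricate analysis, which is why those cases are excluded. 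Everything outside this step is standard duality and tubular-neighbourhood bookkeeping.
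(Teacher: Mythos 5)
The paper does not prove Theorem \ref{cp_top_isotopy} at all: it is imported verbatim from \cite{Conway_powell} and used as a black box in Section 6 (via Proposition \ref{top-iso-prop}), so there is no in-paper argument to compare yours against. Judged against the actual proof in the cited source, your outline does follow the right general strategy — pass to the surface exteriors, identify their boundaries, compute the $\Z[t^{\pm 1}]$-homology and the hyperbolic equivariant intersection form using $\Delta_K=1$ and $\pi_1\cong\Z$, and then invoke Freedman--Quinn topological surgery/$s$-cobordism machinery for the good group $\Z$ to get a homeomorphism of exteriors rel boundary, which one extends and absorbs by Alexander's trick.

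However, as a proof proposal it leaves the genuinely hard points at the level of assertion, and one of them is described inaccurately. The construction of a degree-one normal map inducing a \emph{prescribed} identification of the boundaries, and the vanishing of the resulting obstruction, is exactly where the work lies: in \cite{Conway_powell} the obstruction to matching the two exteriors is an isometry question for the hyperbolic form $H(\Z[t^{\pm1}])^{\oplus g}$, and the hypothesis $g\neq 1,2$ enters because the needed statement about the isometry group of this form (that the relevant automorphisms can be realized, which rests on algebraic results available only in rank $\geq 6$, i.e.\ $g\geq 3$, together with the separately handled case $g=0$) is unknown in ranks $2$ and $4$ — not because surgery kernel classes cannot be represented by framed embedded surfaces when $g=1,2$, as you suggest. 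So your sketch is a reasonable roadmap of the surgery-theoretic approach, but it does not substitute for the cited proof, and its explanation of the excluded genera misidentifies the actual bottleneck.
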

	
	Given a ribbon knot $K \in S^3$, let $D$ be the ribbon disk it bounds inside $B^4$. We use the receipe for drawing the Kirby diagram for the ribbon disk complement $B^4 \setminus D$ explained in \cite{Gompf_Stipsicz}. For each $0$-handle of $D$, we obtain a $1$-handle (dotted circle) for $B^4 \setminus D$. Similarly, for each $1$-handle of $F$, we have a $2$-handle attached to the $1$-handles of $B^4 \setminus D$. We do not consider higher dimensional handles since they can be attached uniquely. The attaching curve for a $2$-handle are two parallel copies running along the core of the ribbon band which merges to form a knot and attached along the two parallel copies of the core of the $1$-handle. The framing of the $2$-handles attached are zero. Following this procedure, we draw the Kirby diagrams of the disk complement for both of the slice disks $D_1$ and $D_2$ in Figure \ref{exotic_disks} (See Figure \ref{kirby diagrams}).
	
	\begin{figure}[ht]
		\centering
		\includesvg[width=0.65\textwidth]{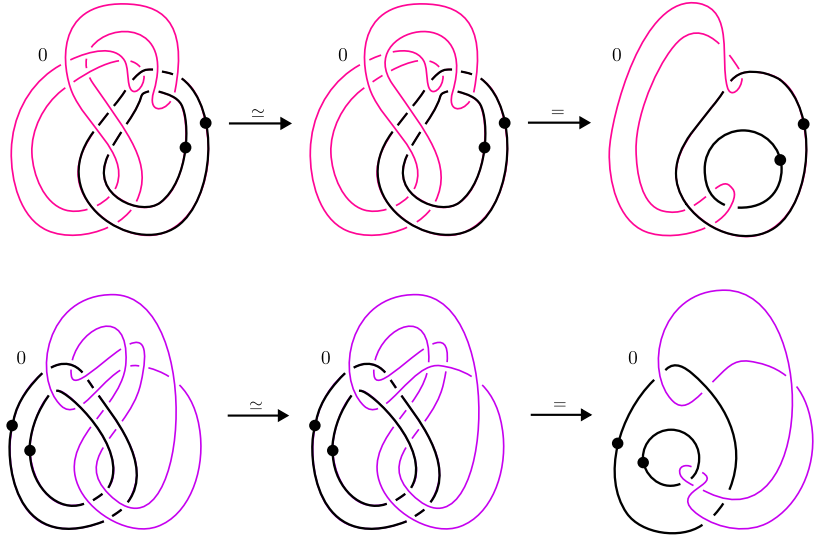}
		\caption{Homotopy of the 0-framed 2-handle produces a cancelling 1-2 handle pair for both $B^4 \setminus D_1$ and $B^4 \setminus D_2$}
		\label{kirby diagrams}
	\end{figure}
	
	\begin{proposition}\label{top-iso-prop}
		The slice disks $D_1$ and $D_2$ in Figrue \ref{exotic_disks} are topologically isotopic rel boundary.
	\end{proposition}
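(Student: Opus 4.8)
The plan is to invoke the Conway--Powell criterion (Theorem \ref{cp_top_isotopy}). Both $D_1$ and $D_2$ are smooth, hence locally flat, compact, orientable, embedded discs in $B^4$; in particular they have genus $g=0\notin\{1,2\}$, and by construction they share the common boundary knot $K\subset S^3$ of Figure \ref{exotic_disks}. Thus the only hypotheses left to verify are that $K$ has Alexander polynomial one and that $\pi_1(B^4\setminus D_1)\cong\pi_1(B^4\setminus D_2)\cong\Z$. The whole proposition therefore reduces to these two facts, and I claim both follow from a single Kirby-calculus computation: if one shows $\pi_1(B^4\setminus D_i)\cong\Z$ for $i=1,2$, then the infinite cyclic cover of $B^4\setminus D_i$ coincides with its universal cover, hence is simply connected, so $H_1$ of that cover vanishes and the Fox--Milnor-type identity for slice-disc complements forces $\Delta_K(t)\doteq 1$.

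The main step is thus to compute $\pi_1(B^4\setminus D_i)$. Since both $D_i$ are ribbon discs, I would follow the recipe recalled above (from \cite{Gompf_Stipsicz}): draw the Kirby diagram of $B^4\setminus D_i$ with one $1$-handle (dotted circle) for each $0$-handle of the ribbon disc and one $0$-framed $2$-handle for each band, exactly as in Figure \ref{kirby diagrams}. Reading off a presentation of $\pi_1$, the generators correspond to the $1$-handles and the relators to the attaching circles of the $2$-handles, written as words in those generators. Crucially, $\pi_1$ of a $4$-dimensional handlebody is unchanged under a homotopy of the attaching circle of a $2$-handle (such a homotopy alters the manifold but only performs Tietze moves on the group presentation), so I am free to homotope the attaching curves. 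Homotoping the $0$-framed $2$-handle in each of the two diagrams of Figure \ref{kirby diagrams} until its attaching circle is a small meridian of one of the dotted circles exhibits a cancelling $1$--$2$ handle pair; after performing that cancellation one is left, in each case, with a presentation whose residual relator is the trivial word, whence $\pi_1(B^4\setminus D_i)\cong\Z$ for both $i=1,2$.

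With $g=0\notin\{1,2\}$, $\Delta_K(t)\doteq 1$, and $\pi_1(B^4\setminus D_1)\cong\pi_1(B^4\setminus D_2)\cong\Z$ in hand, Theorem \ref{cp_top_isotopy} applies verbatim and gives that $D_1$ and $D_2$ are topologically ambiently isotopic rel boundary. The part I expect to require the most care is the Kirby-calculus bookkeeping underlying Figure \ref{kirby diagrams}: one must identify precisely which word each $2$-handle attaching circle represents, check that the homotopy of the $0$-framed handle genuinely slides it onto a meridian of a dotted circle, and confirm that after the $1$--$2$ cancellation nothing but a trivially satisfied relation survives (equivalently, that the picture remaining after cancellation is the standard diagram of a handlebody with a single dotted circle and no $2$-handles). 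Everything else is a direct application of the cited results.
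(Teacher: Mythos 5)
Your proposal is correct and follows essentially the same route as the paper: draw the Kirby diagrams of the ribbon disc exteriors as in Figure \ref{kirby diagrams}, homotope the $0$-framed $2$-handle onto a meridian of a dotted circle to cancel a $1$--$2$ pair, conclude $\pi_1(B^4\setminus D_i)\cong\Z$, and apply Theorem \ref{cp_top_isotopy}. The only addition is your explicit Fox--Milnor argument deducing $\Delta_K(t)\doteq 1$ from the $\Z$-complement condition, a hypothesis of Theorem \ref{cp_top_isotopy} that the paper's proof leaves implicit, and that step is also correct.
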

	
	\begin{proof}
		Figure \ref{kirby diagrams} shows that both of the slice disks complement are homotopy equivalent to $S^1 \times B^3$. Thus the proposition follows from Theorem \ref{cp_top_isotopy}.
	\end{proof}
	
	\begin{theorem}[\cite{jac}]\label{jacobsson thm}
		Let $\Sigma_1$ and $\Sigma_2$ be two cobordisms between two links $\Lk_1$ and $\Lk_2$ such that $\Sigma_1$ and $\Sigma_2$ are smoothly isoptic rel. boundary, then $\Kh(\Sigma_1)$ and $\Kh(\Sigma_2)$ are same upto overall sign.
	\end{theorem}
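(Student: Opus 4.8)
The plan is to present the cobordisms as \emph{movies} and invoke the Carter--Saito movie-move theorem. First I would recall that any smoothly embedded cobordism $\Sigma \subset S^3 \times [0,1]$ from $\Lk_1$ to $\Lk_2$ can be put in general position with respect to the height function, so that $\Sigma$ is recovered from a finite sequence of link diagrams $\Lk_1 = D_0, D_1, \dots, D_m = \Lk_2$ in which consecutive diagrams differ by a planar isotopy, a Reidemeister move, or an elementary Morse modification (a birth of a circle, a death of a circle, or a saddle/band move). Khovanov's functorial construction --- as developed by Khovanov, Jacobsson, and Bar--Natan --- assigns to each elementary step a chain map between the Khovanov complexes of its endpoints: the Reidemeister steps get the standard homotopy equivalences, while the Morse steps get the unit $\iota$, the counit $\varepsilon$, and the merge and split maps $m, \Delta$ applied at the relevant circle. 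One then defines $\Kh(\Sigma)$ to be the composite of these maps, which descends to a map on Khovanov homology.

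Next I would use the fact that two movies present ambiently isotopic (rel boundary) cobordisms if and only if they are related by a finite sequence of \emph{movie moves}, of which the Carter--Saito list contains fifteen, each a purely local modification of a movie. Hence it suffices to show that the composite chain map is invariant, up to an overall sign $\pm 1$, under each movie move. For the moves that only reshuffle, insert, or cancel Reidemeister steps, this is just compatibility of the chosen Reidemeister homotopy equivalences, checked diagram by diagram. For the moves involving genuine Morse data --- birth--saddle cancellations, the ``pair of pants'' relations, and above all the move in which a sheet of $\Sigma$ passes through itself --- one computes the two composite maps explicitly on the Frobenius algebra $A = \Z[x]/(x^2)$ and verifies they coincide; a genuine sign discrepancy appears in precisely the moves responsible for the $\pm$ ambiguity.

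The main obstacle, and essentially the only content, is the case-by-case verification of the fifteen movie moves while tracking signs: the merge and split maps carry the $(-1)^{w(e)}$ signs built into $\partial_{\Kh}$, the Reidemeister II and III homotopy equivalences must be pinned down precisely (competing normalizations differ by signs), and one must confirm the residual sign ambiguities are not removable by any consistent global choice. To organize the bookkeeping I would pass to Bar--Natan's local cobordism category: working with (dotted) cobordisms modulo the $S$, $T$, and neck-cutting ($4$Tu) relations turns each movie move into an identity or a sign-identity of morphisms in a small category, so the fifteen checks reduce to short local computations, after which the Frobenius-algebra TQFT transports everything back to $\Z$-modules. Finally I would note that the sign genuinely cannot be removed in the $\Z$-linear theory --- it vanishes after reducing mod $2$ --- which is the reason the statement reads ``up to overall sign''.
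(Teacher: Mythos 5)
The paper itself contains no proof of this statement: it is quoted verbatim from Jacobsson \cite{jac} and used as a black box in Section 6, so there is nothing internal to compare against. Your outline is precisely the standard argument from the cited literature (Jacobsson, and Bar--Natan's reformulation): put the cobordism in general position, present it as a movie of Reidemeister moves and elementary Morse modifications, assign to each step the corresponding chain map (Reidemeister homotopy equivalences, unit, counit, merge, split), and then invoke the Carter--Saito theorem to reduce well-definedness up to sign to a finite check of the fifteen movie moves, conveniently organized in the dotted/cobordism category modulo the local relations. So the approach is correct; the only caveats are that essentially all of the mathematical content sits in the case-by-case movie-move verification with signs, which you defer rather than carry out, and that your closing claim that the sign ambiguity is irremovable is not needed for the statement (which asserts equality only up to overall sign) and is a genuinely delicate point on its own: it holds for the original construction, but later modifications (e.g.\ by Clark--Morrison--Walker and Blanchet) do achieve honest functoriality, so it should be phrased as a property of the specific theory rather than an unconditional fact.
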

	
	\begin{figure}[!tpbh]
		\centering
		\includesvg[width=0.6\textwidth,inkscape=force]{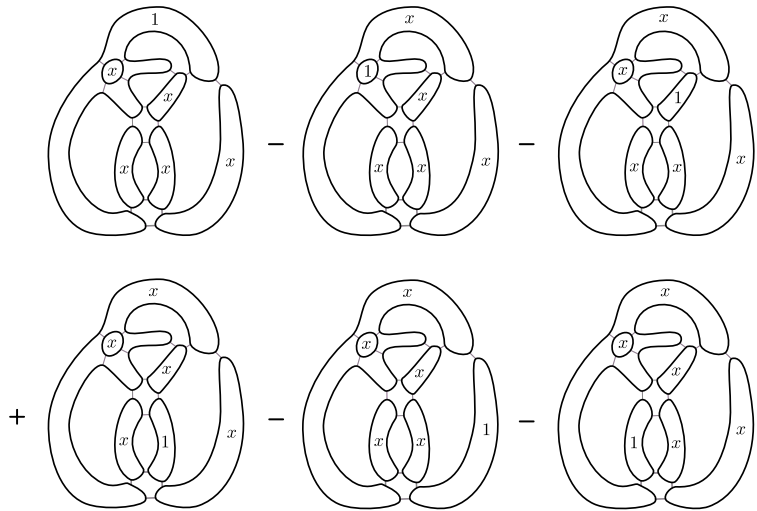}
		\caption{The distinguished oriented resolution cycle $\phi \in \CKh(K)$ distinguishing $D_1$ and $D_2$}
		\label{khovanov dist cycle}
	\end{figure}
	
	\begin{theorem}
		The slice disks $D_1$ and $D_2$ induce distinct maps on homology of the spanning tree complex and hence are not smoothly isotopic rel boundary.
	\end{theorem}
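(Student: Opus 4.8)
The plan is to reduce the statement to a computation of Khovanov cobordism maps, using the functoriality principle of Theorem~\ref{jacobsson thm} together with the identification $H^{*}(\CST(\cdot))\cong\Kh(\cdot)$ of Theorem~\ref{unreduced maintheorem}. View each disk $D_i\subset B^{4}$ as a cobordism $\Sigma_i$ (from the empty link to $K$, say), so that it induces a map $\Kh(\Sigma_i)\colon\Kh(\emptyset)=\Z\to\Kh(K)$ in the appropriate bidegree. The chain homotopy equivalences $f$ and $g$ of Lemma~\ref{chain homotopic maps} (equations~\eqref{f map in dmt} and~\eqref{g map in dmt}) transport $\Kh(\Sigma_i)$ to a map $\Phi_i\colon H^{*}(\CST(\emptyset))\to H^{*}(\CST(K))$, and $\Phi_1=\pm\Phi_2$ if and only if $\Kh(\Sigma_1)=\pm\Kh(\Sigma_2)$; this is the precise meaning of ``$D_i$ induces a map on the homology of the spanning tree complex''. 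So it suffices to show $\Phi_1(1)\neq\pm\Phi_2(1)$ in $H^{*}(\CST(K))$.

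First I would fix explicit movie presentations of $\Sigma_1$ and $\Sigma_2$ determined by the bands $b_1,b_2$ of Figure~\ref{knot with bands and their distinguishing element}: each $\Sigma_i$ consists of a sequence of births producing a split union of unknots, followed by the saddle moves that surger it to $K$. The induced chain map on $\CKh(K)$ is the composite of the corresponding elementary cobordism maps (units for births, saddle maps for the band attachments); I would evaluate the generator of $\CKh(\emptyset)$ along this composite and then push the resulting cycle into the spanning tree basis with $g$. This is exactly where the orientation preserving tree $\To$ enters: by Proposition~\ref{oriented tree generators} the distinguished oriented resolution cycle $f(\To^{+})$ is a signed sum of enhanced states of the oriented resolution of $K$, so the saddle maps act on it transparently, and the coefficient of $\To^{+}$ (equivalently of $S_c^{\To^{+}}$) in $\Phi_i(1)$ can be read off combinatorially from $W(\To)$ and $M(\To)$ rather than from the full Khovanov differential. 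The output is the class depicted as $\phi\in\CKh(K)$ in Figure~\ref{khovanov dist cycle}, equivalently $\To\in\CST(K)$ in Figure~\ref{knot with bands and their distinguishing element}.

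With these computations in hand one compares $\Phi_1(1)$ and $\Phi_2(1)$ directly: I would verify, working in the finitely many generators of the relevant bidegree of $\CST(K)$ and using the explicit differential $\partial_{ST}$ of Sections~\ref{reduced}--\ref{unreduced Kh complex section}, that these two homology classes are not proportional, i.e. $\Phi_1(1)\neq\pm\Phi_2(1)$ — in the present examples this holds because one image pairs nontrivially with the class $[\To^{+}]$ (equivalently is linearly independent from it) while the other does not. Hence $\Kh(\Sigma_1)\neq\pm\Kh(\Sigma_2)$, so by Theorem~\ref{jacobsson thm} the cobordisms $\Sigma_1,\Sigma_2$, and therefore the disks $D_1,D_2$, are not smoothly isotopic rel boundary; combined with Proposition~\ref{top-iso-prop} this produces an exotic pair, and carrying the construction out over the stated infinite family (cf. Figure~\ref{exotic-disk-oriented-tree}) yields Theorem~\ref{application in exotic disks}.

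The main obstacle is the explicit evaluation of the two cobordism maps on the distinguished oriented resolution cycle and the verification that their images differ by more than an overall sign. A direct computation of a Khovanov cobordism map means tracking enhanced states through every frame of a movie with all the attendant sign bookkeeping; the technical core of the argument is to exploit the combinatorial description of $f(\To^{+})$ as an oriented-resolution cycle (Proposition~\ref{oriented tree generators}, with the grading facts of Proposition~\ref{higher hom than To}) to cut this down to a finite, checkable computation inside $\CST(K)$, and to confirm that neither a change of orientation of the disks nor the passage to the Lee/Bar--Natan deformation collapses the distinction.
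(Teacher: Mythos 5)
There is a genuine gap, and it sits exactly at the heart of the argument: the step where the two induced maps are actually shown to differ. The paper's proof fixes the cycle $\phi=f(\To^{+})\in\CKh(K)$ (with $f$ as in equation \ref{f map in dmt}) and then applies to it the cobordism maps induced by the two band surgeries, i.e.\ it evaluates the maps going \emph{out of} $\Kh(K)$: the $b_1$-cobordism sends one summand of $\phi$ to a nonzero state and all other summands to zero, while the $b_2$-cobordism sends $\phi$ to zero; since $0\neq\pm(\text{nonzero})$, Theorem \ref{jacobsson thm} applies. Your proposal instead compares the classes $\Phi_1(1),\Phi_2(1)\in H^{*}(\CST(K))$ coming from maps \emph{into} $\Kh(K)$, which is a legitimate alternative in principle, but your verification that $\Phi_1(1)\neq\pm\Phi_2(1)$ is never reduced to anything checkable. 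The criterion you offer, that ``one image pairs nontrivially with the class $[\To^{+}]$ (equivalently is linearly independent from it),'' is not well defined: no pairing on $\Kh(K)$ or on $H^{*}(\CST(K))$ is available in this framework, and pairing nontrivially with a class is in any case not equivalent to being linearly independent from it. In practice, distinguishing two homology classes in $\Kh(K)$ up to sign is done precisely by evaluating functionals on them -- i.e.\ by applying cobordism maps to a distinguished cycle -- which is the computation the paper performs and which your outline omits; alternatively you would need to compute the relevant bigraded piece of $H^{*}(\CST(K))$ and both image classes explicitly, which you do not do.

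There is also an internal inconsistency of direction in your middle paragraph: after setting up $\Phi_i\colon\Kh(\emptyset)\to\Kh(K)$ you say the saddle maps ``act on'' $f(\To^{+})$ and that ``the output is the class depicted as $\phi$.'' Saddle maps can only act on $f(\To^{+})$ if the disks are read as cobordisms \emph{from} $K$ (the paper's direction), and in the paper $\phi$ is not the image of the generator under a disk map at all -- it is the fixed distinguishing cycle on which the two disk maps are evaluated (Figure \ref{khovanov dist cycle}). Relatedly, reading off ``the coefficient of $\To^{+}$ in $\Phi_i(1)$'' from the chain level does not by itself distinguish homology classes, since such coefficients are not invariant under adding boundaries. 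The remarks about the Lee/Bar--Natan deformation are not needed for this theorem. To repair the proposal, either switch to the paper's direction and carry out the explicit evaluation of the $b_1$- and $b_2$-cobordism maps on $\phi$, or, if you keep your direction, replace the ``pairing'' step by an actual computation identifying $\Phi_1(1)$ and $\Phi_2(1)$ in $H^{*}(\CST(K))$ and showing they are not equal up to sign.
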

	
	\begin{proof}
		The oriented tree $\To^+$ in Figure \ref{knot with bands and their distinguishing element} is a cycle in $\CST(K)$ and by Proposition \ref{rooted negative subpaths}, we obtain $\phi$ in Figure \ref{khovanov dist cycle} from $f(\To^+)$, where $f$ is the map in equation \ref{f map in dmt} for $\CKh(K)$. The movie description for the fourth summand of $\phi$ for the cobordism due to the band surgery $b_1$ is shown in Figure \ref{movie chain map}. One can easily check that the rest of the summands maps to zero for the band surgery $b_1$ and $\phi$ maps to zero for the band surgery $b_2$. Thus $D_1$ and $D_2$ are not smoothly isotopic rel. boundary due to Theorem \ref{jacobsson thm}.
	\end{proof}

    \begin{figure}[!tbph]
		\centering
		\includesvg[width=0.6\textwidth]{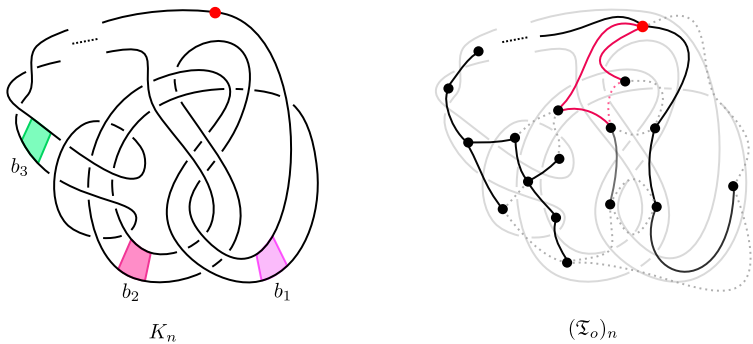}
		\caption{An infinite family of pairwise exotic slice disks $D_n$ and $D_n'$, and their corresponding distinguishing element in the spanning tree complex}
		\label{infinite family exotic disks}
	\end{figure}
	
	\begin{figure}[!tbph]
		\centering
		\includesvg[width=\textwidth,inkscape=force]{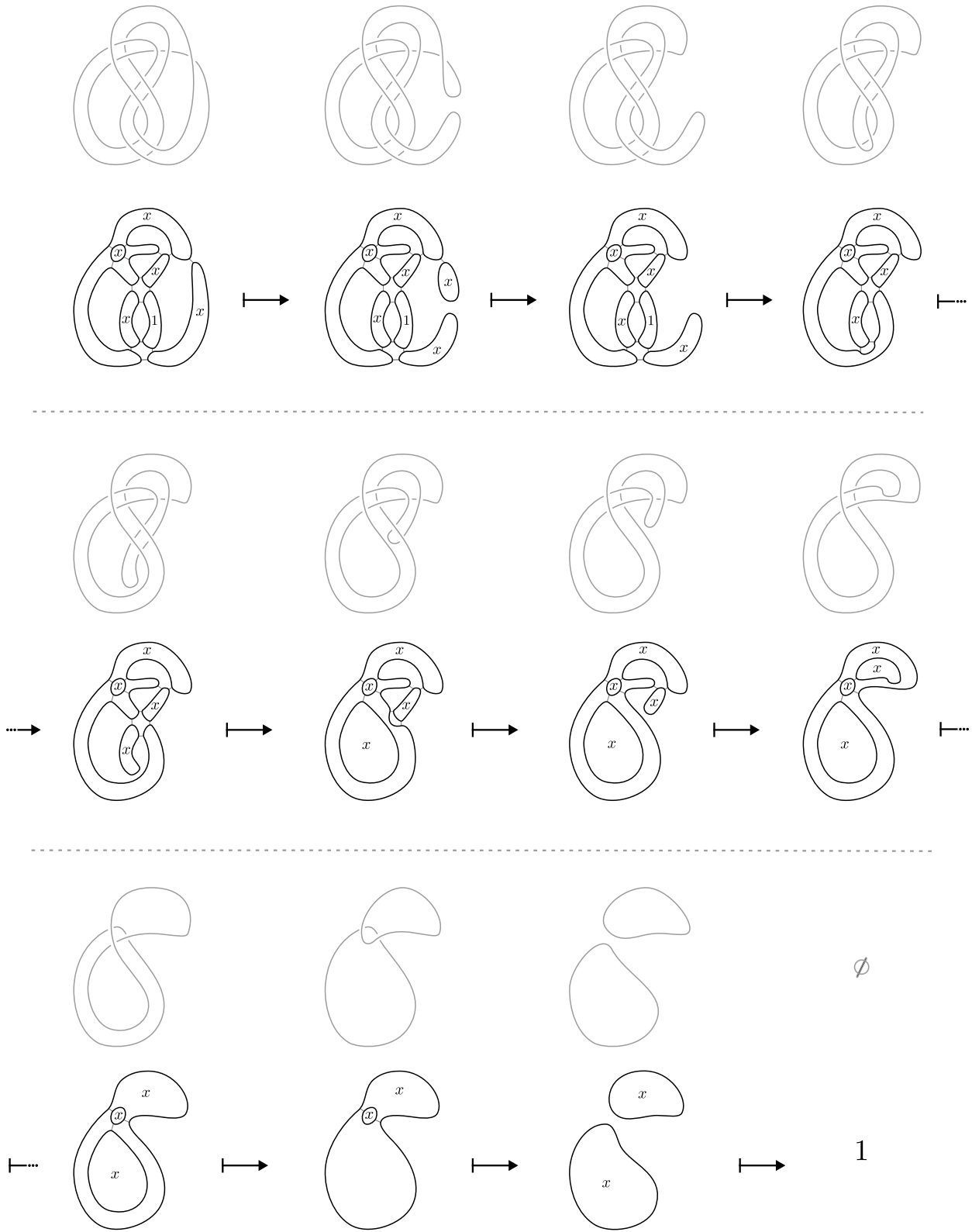}
		\caption{A movie description of the slice disk and the mapping of a summand of $\phi$ under the cobordism due to the band move $b_1$}
		\label{movie chain map}
	\end{figure}
	
	\begin{corollary}
		The slice disks $D_1$ and $D_2$ forms an exotic pair rel boundary.
	\end{corollary}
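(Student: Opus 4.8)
The plan is simply to assemble the two results established immediately above. Recall that a pair of smooth, properly embedded, orientable surfaces $F_1, F_2 \subset B^4$ with common boundary $\partial F_1 = \partial F_2 = K \subset S^3$ is called an \emph{exotic pair rel boundary} precisely when $F_1$ and $F_2$ are topologically (ambiently) isotopic rel boundary but are not smoothly isotopic rel boundary. Thus no new mathematics is needed: one only has to match the definition against what has already been proven for the knot $K$ and the slice disks $D_1, D_2$ of Figure \ref{exotic_disks}.

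First I would invoke Proposition \ref{top-iso-prop}, which asserts that $D_1$ and $D_2$ are topologically isotopic rel boundary. This is the step where the Alexander-polynomial-one hypothesis on $K$ and the identification of both disk complements with a homotopy $S^1 \times B^3$ (via the Kirby calculus of Figure \ref{kirby diagrams}) enter through Conway--Powell's Theorem \ref{cp_top_isotopy}; all of that verification is already carried out.

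Next I would invoke the theorem proved just above, whose argument produces the distinguished oriented resolution cycle $\phi = f(\To^+) \in \CKh(K)$ and uses Jacobsson's naturality Theorem \ref{jacobsson thm}: the cobordism maps induced by $D_1$ and $D_2$ send $[\phi]$ to classes that differ up to overall sign (nonzero for $b_1$, zero for $b_2$), so $D_1$ and $D_2$ cannot be smoothly isotopic rel boundary. Conjoining this with the previous paragraph gives exactly the defining property of an exotic pair rel boundary, completing the proof. I do not expect any genuine obstacle at this stage — the content here is purely verificational, confirming that the two already-established statements combine to the stated definition; the substantive difficulty was already absorbed into the preceding theorem, namely checking that the relevant summand of $\phi$ survives the band move $b_1$ (Figure \ref{movie chain map}) while the remaining summands, and all of $\phi$ under $b_2$, map to zero.
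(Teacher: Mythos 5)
Your proposal is correct and coincides with what the paper intends: the corollary is immediate from Proposition \ref{top-iso-prop} (topological isotopy rel boundary) together with the preceding theorem (the distinguished cycle $\phi=f(\To^+)$ and Theorem \ref{jacobsson thm} obstruct smooth isotopy rel boundary), matched against the definition of an exotic pair. No further content is needed, and your reading of where the real work lies (in the two cited results, not in the corollary itself) agrees with the paper.
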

	
	The fact that the infinite family $K_n$ in Figure \ref{infinite family exotic disks} bounds a pair of slice disks for each $n$ which are exotically knotted rel. boundary follows from the corollary \ref{ribbon conc inj cst} which is a consequence of the following theorem:
	
	\begin{theorem}[\cite{ribbon-inj}]
		If $C$ is a ribbon concordance from $\Lk_1$ to $\Lk_2$ ,
		the induced map $Kh(C) : Kh(\Lk_1) \rightarrow Kh(\Lk_2)$ is injective, with left inverse induced by the reverse of $C$ , viewed as a cobordism from $\Lk_2$ to $\Lk_1$ .
	\end{theorem}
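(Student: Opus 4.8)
The plan is to show that the \emph{reverse} concordance $\bar C$, regarded as a cobordism $\Lk_2 \to \Lk_1$, furnishes a one-sided inverse to $\Kh(C)$; both the injectivity of $\Kh(C)$ and the identification of the left inverse follow once we know that the composite cobordism $\bar C \circ C : \Lk_1 \to \Lk_1$ induces $\pm\,\mathrm{id}_{\Kh(\Lk_1)}$ on Khovanov homology. So the entire problem is reduced to understanding the induced map of $\bar C \circ C$.

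First I would record the handle-theoretic description of a ribbon concordance. Reading $C$ from $\Lk_1$ upward one sees only births of split unknotted components followed by saddle (band) moves, and no deaths (no index-$2$ critical points); dually, $\bar C$ read from $\Lk_2$ upward consists of saddle moves followed by deaths of split unknots. Stacking these movies presents $\bar C \circ C$ as a movie built from births, saddles, dual saddles, and deaths. The geometric heart of the argument is a lemma to the effect that, using the ribbon structure, all of these critical points can be cancelled in complementary pairs by an ambient isotopy rel boundary, so that $\bar C \circ C$ is isotopic rel boundary to the product cobordism $\Lk_1 \times I$. Here the ribbon hypothesis is genuinely needed: for a general concordance the handles cannot be rearranged so as to cancel cleanly. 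In the knot-Floer analogue it suffices to reduce $\bar C\circ C$ to the product together with a disjoint union of unknotted $2$-spheres, because closed spheres act as the identity on that theory; in Khovanov homology a sphere evaluates to $0$, so one must instead cancel the extraneous $0$- and $1$-handles against each other and leave no closed components behind.

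Granting this lemma, the proof concludes by the functoriality of Khovanov homology under cobordisms up to an overall sign, which is Theorem~\ref{jacobsson thm}: since $\bar C \circ C$ is isotopic rel boundary to $\Lk_1 \times I$ and $\Kh(\Lk_1 \times I) = \mathrm{id}_{\Kh(\Lk_1)}$, we obtain $\Kh(\bar C)\circ \Kh(C) = \Kh(\bar C \circ C) = \pm\,\mathrm{id}_{\Kh(\Lk_1)}$. A linear map with a left inverse is injective, and $\pm\,\Kh(\bar C)$ is the asserted left inverse.

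The step I expect to be the main obstacle is the geometric cancellation lemma, and in particular carrying it out with no residue. The naive procedure --- repeatedly cancelling a merge-saddle of $C$ against the adjacent split-saddle of $\bar C$ --- leaves behind birth--death pairs that close up into disjoint unknotted $2$-spheres, which would kill the Khovanov map; the content of the ribbon condition is that the $0$- and $1$-handles of $C$, together with their duals in $\bar C$, can be slid and re-ordered so that every handle is paired with a complementary handle of adjacent index and nothing is left over. Making this precise requires (i) promoting the abstract handle cancellations to an ambient isotopy of the cobordism rel boundary, and (ii) in the link (rather than knot) case, running the argument componentwise. Sign bookkeeping is a further but routine point; it is absorbed into the ``up to sign'' clause of Theorem~\ref{jacobsson thm} and so is harmless for the injectivity conclusion.
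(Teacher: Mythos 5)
The paper only quotes this theorem from \cite{ribbon-inj} (Levine--Zemke) and does not reprove it, so your proposal has to be measured against the known argument, and there is a genuine gap at exactly the step you flag as the ``geometric cancellation lemma.'' Your plan requires that $\bar C \circ C$ be ambiently isotopic rel boundary to the product cobordism $\Lk_1 \times I$ with \emph{nothing} left over. That statement is false in general, and it is not what the ribbon condition gives you. Zemke's geometric lemma (the one the Khovanov argument also rests on) says only that $\bar C \circ C$ is isotopic rel boundary to the identity cobordism with finitely many unknotted, unlinked $2$-spheres \emph{tubed in}: each birth of $C$ pairs with the corresponding death of $\bar C$ to form a sphere, but the saddle of $C$ and its dual saddle in $\bar C$ do not cancel away --- they survive as a tube joining that sphere to the product cylinder. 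One cannot slide handles so as to erase these; if one could, then $\Kh(\bar C)\circ\Kh(C)=\pm\mathrm{id}$ would hold for purely formal reasons in \emph{any} functorial theory, and the whole content of Zemke's and Levine--Zemke's theorems would evaporate. So the reduction you propose is too strong, and the proof as written stalls at that point.

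The way the actual argument closes the gap is not by cancellation but by a local computation in Bar-Natan's dotted cobordism category: tubing an unknotted, unlinked $2$-sphere onto a cobordism does not change the induced map on Khovanov homology. Applying the neck-cutting relation to the tube splits it into two terms, one carrying a dotted sphere (which evaluates to $1$) and one carrying an undotted sphere (which evaluates to $0$), so the tubed cobordism induces the same map as the original. Combined with Zemke's isotopy statement and functoriality up to sign, this yields $\Kh(\bar C)\circ\Kh(C)=\pm\mathrm{id}_{\Kh(\Lk_1)}$ and hence injectivity. Your instinct that a genuinely disjoint sphere would kill the map is correct, but the resolution is that the spheres are tubed rather than disjoint, and the tube is handled algebraically via the dotted relations --- not geometrically cancelled. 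If you replace your cancellation lemma with Zemke's tubing lemma plus this neck-cutting computation, the rest of your outline (left inverse implies injective, signs absorbed by the up-to-sign functoriality) goes through as stated.
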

	
	\begin{corollary} \label{ribbon conc inj cst}
		Let $\Sigma_1$ and $\Sigma_2$ be cobordisms from $\Lk_0$ to $\Lk_1$ and let $C$ be a ribbon concordance from $\Lk_1$ to $\Lk_2$. If $\Sigma_1$ and $\Sigma_2$ induce distinct maps $\Kh(\Lk_0) \rightarrow \Kh(\Lk_1)$, then the cobordisms $C \circ \Sigma_1$ and $C \circ \Sigma_2$ induce distinct maps $\Kh(\Lk_0) \rightarrow \Kh(\Lk_2)$.\\
		
		Similarly, if the reverses of $\Sigma_1$ and $\Sigma_2$ induce distinct maps $\Kh(\Lk_1) \rightarrow \Kh(\Lk_2)$, then the reverses of $C \circ \Sigma_1$ and $C \circ \Sigma_2$ induce distinct map $\Kh(\Lk_2) \rightarrow \Kh(\Lk_0)$.
	\end{corollary}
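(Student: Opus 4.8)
The plan is to deduce the corollary formally from the functoriality of Khovanov homology under cobordisms together with the injectivity statement of the theorem just cited. First I would fix conventions: a cobordism $\Sigma$ from a link $\Lk$ to a link $\Lk'$ induces a map $\Kh(\Sigma)\colon \Kh(\Lk)\to\Kh(\Lk')$ that is well defined only up to an overall sign --- precisely the ambiguity appearing in Theorem \ref{jacobsson thm} --- so throughout ``distinct'' is to be read as ``distinct up to sign''; and composition of cobordisms corresponds to composition of induced maps, $\Kh(C\circ\Sigma)=\pm\,\Kh(C)\circ\Kh(\Sigma)$.

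For the first assertion I would argue by contradiction. If $C\circ\Sigma_1$ and $C\circ\Sigma_2$ induced the same map up to sign, then $\Kh(C)\circ\Kh(\Sigma_1)=\pm\,\Kh(C)\circ\Kh(\Sigma_2)$, so $\Kh(C)$ would annihilate the element $\Kh(\Sigma_1)(a)\mp\Kh(\Sigma_2)(a)$ for every $a\in\Kh(\Lk_0)$. Since $C$ is a ribbon concordance, the theorem gives that $\Kh(C)$ is injective, forcing $\Kh(\Sigma_1)(a)=\pm\,\Kh(\Sigma_2)(a)$ for all $a$, and hence $\Kh(\Sigma_1)=\pm\,\Kh(\Sigma_2)$, contrary to hypothesis. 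This yields that $C\circ\Sigma_1$ and $C\circ\Sigma_2$ induce distinct maps $\Kh(\Lk_0)\to\Kh(\Lk_2)$.

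For the second assertion I would use that the reverse of $C\circ\Sigma_i$ is $\overline{\Sigma_i}\circ\overline{C}$, where $\overline{C}$ is a cobordism from $\Lk_2$ to $\Lk_1$ and $\overline{\Sigma_i}$ from $\Lk_1$ to $\Lk_0$. The theorem states that the reverse of $C$ induces a left inverse of $\Kh(C)$, so $\Kh(\overline{C})$ admits $\Kh(C)$ as a right inverse and is in particular surjective. By functoriality $\Kh(\overline{C\circ\Sigma_i})=\pm\,\Kh(\overline{\Sigma_i})\circ\Kh(\overline{C})$; if the two sides for $i=1,2$ agreed up to sign, then since $\Kh(\overline{C})$ is surjective I could cancel it on the right to conclude $\Kh(\overline{\Sigma_1})=\pm\,\Kh(\overline{\Sigma_2})$, again a contradiction. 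Hence the reverses of $C\circ\Sigma_1$ and $C\circ\Sigma_2$ induce distinct maps $\Kh(\Lk_2)\to\Kh(\Lk_0)$.

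The only genuine point requiring care --- and the step I expect to be the main obstacle --- is the bookkeeping of the sign ambiguity: the functoriality identities, the distinctness hypotheses, and the conclusions must all be interpreted up to an overall sign, and one must verify that the injectivity (respectively surjectivity) argument survives carrying a ``$\pm$'' along. It does, since the sign can be absorbed into the element $\Kh(\Sigma_1)(a)\mp\Kh(\Sigma_2)(a)$ (respectively into the cancelled surjection); beyond this, the corollary is an immediate consequence of the cited injectivity theorem for ribbon concordances and the functoriality of $\Kh$.
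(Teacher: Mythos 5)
Your proposal is correct and follows essentially the same route as the paper: functoriality plus linearity reduces the claim to the injectivity of $\Kh(C)$ for the first statement and to the surjectivity of $\Kh(\overline{C})$ (coming from the left-inverse property) for the second, exactly as in the paper's proof. The only difference is cosmetic: you phrase it as a contradiction and carry the overall sign ambiguity of cobordism maps explicitly, a refinement the paper omits but which, as you verify, does not affect the injectivity/surjectivity argument.
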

	
	\begin{proof}
		Suppose $\Sigma_1$ and $\Sigma_2$ induce distinct maps on $\Kh(\Lk_0) \rightarrow \Kh(\Lk_1)$, then there exists $S \in \Kh(\Lk_0)$ such that $\Kh(\Sigma_1)(S) \neq \Kh(\Sigma_2)(S)$. Since $C$ induces an injective map $\Kh(\Lk_1) \rightarrow \Kh(\Lk_2)$, we have
		
		$$
		 \Kh(C \circ \Sigma_1)(S) - \Kh(C \circ \Sigma_2)(S) = \Kh(C)(\Kh(\Sigma_1)(S) - \Kh(\Sigma_2)(S)) \neq 0
		$$
		
		An analogous argument applies to the reversed cobordisms, appealing instead to the surjectivity of the map $\Kh(\Lk_2) \rightarrow \Kh(\Lk_1)$ induced by the reverse of $C$.
	\end{proof}

	\begin{proof}[Proof of Theorem \ref{application in exotic disks}]
		The disks $D_n$ and $D_n'$ are obtained by extending the disks $D_1$ and $D_2$ respectively using the concordance from $K$ to $K_n$ which implies that $D_n$ and $D_n'$ are topologically isotopic rel. boundary as $D_1$ and $D_2$ are from Proposition \ref{top-iso-prop}. The band surgery due to $b_3$ in Figure \ref{infinite family exotic disks} produces a ribbon concordance between $K_n$ and $K$ for each $n$ and since $K$ bounds a pair of slice disks which are exotically knotted rel. boundary, the rest follows from corollary \ref{ribbon conc inj cst}.
	\end{proof}
	
	\begin{remark}
		Theorem \ref{application in exotic disks} can be made even stronger by removing the rel. boundary condition. If we have a knot $K$ whose diffeomorphism symmetry group is trivial and if $K$ bounds surfaces $\Sigma$ and $\Sigma'$ in $B^4$ that are ambiently isotopic, then $\Sigma$ and $\Sigma'$ are also ambiently isoptopic rel. boundary. From this above result, it readily follows that the disks bounded by the knots $K_n$ in Figure \ref{infinite family exotic disks} are not smoothly isotopic under any ambient diffeomorphism once we prove that $S^3 \setminus K_n$ has trivial isometry group. We direct the interested reader to section \ref{iso-grp-sec} for a proof of the above result.
	\end{remark}
	
\end{section}

\appendix

\begin{section}{The isometry group of \texorpdfstring{$K_n$}{Kn}}\label{iso-grp-sec}

\begin{theorem}
	The link complement $S^3 \setminus \Lk$ is hyperbolic with trivial isometry group.
\end{theorem}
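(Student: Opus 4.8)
The plan is to treat $\Lk$ as the explicit link it is (the link underlying the exotic--disk construction, obtained from the knot $K$ by adjoining the unknotted twisting circle) and to prove hyperbolicity and triviality of the isometry group for this one manifold; since the link is given by a concrete diagram, the argument will be largely computer--assisted but upgraded to a proof. For \textbf{hyperbolicity}, I would first recall Thurston's hyperbolization theorem for link complements: $S^3\setminus\Lk$ admits a complete finite--volume hyperbolic metric if and only if $\Lk$ is non-split, prime, and is neither a torus link nor a satellite link, i.e.\ its complement contains no essential sphere, disc, annulus, or torus. Non-splitness and primality are read off the diagram directly (the diagram is connected and prime as a $4$--valent planar graph, and the linking data rules out splitting). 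To exclude essential tori and simultaneously obtain the metric, I would exhibit an explicit ideal triangulation $\mathcal{T}$ of $S^3\setminus\Lk$ (the one SnapPy produces from the diagram), solve Thurston's gluing/completeness equations, and \emph{certify} a positively oriented geometric solution by interval arithmetic in the style of HIKMOT, as implemented in SnapPy's \texttt{verify\_hyperbolicity}. A certified positively oriented solution proves $S^3\setminus\Lk$ is hyperbolic and, via Neumann--Zagier and the shape certificate, yields provably correct values for the volume, the cusp cross-section shapes, and the invariant trace field.

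For the \textbf{isometry group}, I would invoke Mostow--Prasad rigidity: $\mathrm{Isom}(S^3\setminus\Lk)$ is a finite group, isomorphic to $\mathrm{Out}\bigl(\pi_1(S^3\setminus\Lk)\bigr)$, and it acts on the Epstein--Penner canonical ideal polyhedral decomposition $\mathcal{D}$, which is a topological invariant of the manifold (with its set of cusps). The strategy is: (a) compute $\mathcal{D}$ together with a rigorous certificate --- the tilt inequalities defining $\mathcal{D}$ are strict and are checkable in interval arithmetic, so SnapPy's verified canonical cell computation makes $\mathcal{D}$ a genuine invariant; (b) compute the group of combinatorial automorphisms of $\mathcal{D}$ compatible with the peripheral (cusp) structure and observe that it is trivial, equivalently record that \texttt{M.symmetry\_group(verified=True)} returns the trivial group; hence $\mathrm{Isom}(S^3\setminus\Lk)=1$. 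As human--checkable corroboration I would note two conceptual points: the cusps of $\Lk$ have pairwise non-isometric horospherical cross-sections (so no isometry can permute components), and $\Lk$ is chiral --- detected by the invariant trace field (or the complex volume) not being fixed by complex conjugation --- so there is no orientation-reversing isometry either.

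The \textbf{main obstacle} is not conceptual but is the rigor of the two computations: certifying the hyperbolic structure via interval arithmetic on Thurston's equations, and certifying the canonical cell decomposition so that its automorphism group is both a bona fide invariant and genuinely trivial. A secondary subtlety is that an isometry of the cusped manifold need not a priori be induced by a diffeomorphism of the pair $(S^3,\Lk)$; working throughout with the automorphism group of the canonical decomposition sidesteps this entirely, but if one prefers a hands-on argument one must additionally supply the (standard) fact that symmetries of a link complement one of whose components is unknotted are geometric. I expect the final write-up to consist of a brief diagrammatic lemma establishing that $\Lk$ is non-split and prime, followed by a short table recording the verified SnapPy data (triangulation, certified shapes, volume, invariant trace field, canonical cell decomposition, and trivial symmetry group), from which the theorem follows.
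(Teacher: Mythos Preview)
Your approach is essentially the same as the paper's: the paper also proves this by a verified SnapPy computation, using \texttt{verify\_hyperbolicity()} for the hyperbolic structure and \texttt{canonical\_retriangulation(verified=True)} followed by counting self-isomorphisms to certify that the isometry group is trivial. Your write-up supplies considerably more of the underlying theory (Thurston hyperbolization, Mostow--Prasad, Epstein--Penner, HIKMOT-style interval arithmetic) and some auxiliary corroboration, but the substance of the argument is the same verified-computation strategy. One small factual point to correct before writing it up: in the paper $\Lk$ is a three-component link (not two), so your description of it as ``$K$ plus the unknotted twisting circle'' should be adjusted accordingly.
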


\begin{proof}

We used SnapPy's link editor to draw the link $\Lk$ and obtained its DT code. We then went to the Sage terminal where we use the following code to verify that the complement of $\Lk$ has trivial isometry group.

\begin{figure}[ht]
	\centering
	\includesvg[width=0.28\textwidth]{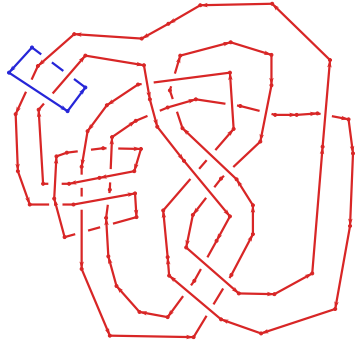}
	\caption{A 3--component link $\Lk$ with 1 unknotted component along which we perform a Dehn felling}
\end{figure}

\begin{verbatim}
	sage: import snappy
	sage: L=snappy.ManifoldHP('DT: [(56,-8,24,-42,14,-26,40,-58,38,34,-52,6,-12,-18,
	....: -36,-54,22,-46,48,10,-4,-32,50,-28,30,-20,44),(-16,2)], [1,0,0,0,0,0,0,1,1
	....: ,0,0,1,1,0,0,0,1,0,1,1,1,1,0,0,1,1,1,0,0]')
	sage: R=L.canonical_retriangulation(verified=True)
	sage: len(R.isomoprhisms_to(R))
	1
\end{verbatim}

Thus, the size of the isometry group is 1, so the identity is the unique isometry of $S^3\setminus \Lk$. One can also verify that $S^3 \setminus \Lk$ is hyperbolic using the  command \verb|L.verify_hyperbolicity()|.

\end{proof}

\begin{proposition}
	The knot complement $S^3 \setminus K_n$ is hyperbolic with trivial isometry group.
\end{proposition}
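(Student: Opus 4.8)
The plan is to realize $S^3\setminus K_n$ as a Dehn filling of the link complement $S^3\setminus\Lk$ and then combine Thurston's hyperbolic Dehn surgery theorem with the triviality of the isometry group of $S^3\setminus\Lk$ just established. First I would record the surgery description: the knots $K_n$ are obtained from a fixed diagram by inserting $n$ full twists along a band encircled by the distinguished unknotted component $C$ of $\Lk$, so that $S^3\setminus K_n$ is obtained from $M:=S^3\setminus\Lk$ by $(-1/n)$-Dehn filling the cusp corresponding to $C$ (with respect to the standard meridian--longitude framing), together with the finitely many fixed fillings on the other auxiliary component(s), if any are present in the construction. Write $N_n:=S^3\setminus K_n$ and let $\gamma_n\subset N_n$ denote the core geodesic of the filling solid torus glued along $C$.

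Next I would invoke Thurston's hyperbolic Dehn surgery theorem: since $M$ is finite-volume hyperbolic, all but finitely many fillings of its non-$K_n$ cusps yield hyperbolic manifolds, so $N_n$ is hyperbolic for all $|n|$ sufficiently large; moreover, as $|n|\to\infty$ the manifolds $N_n$ converge geometrically to the cusped manifold obtained from $M$ by the fixed fillings (in particular to $S^3\setminus\Lk$ itself when there are none), and $\ell(\gamma_n)\to 0$. This already produces infinitely many hyperbolic knot complements, which is all that is needed for an infinite family; the finitely many remaining small values of $n$ can either be dispatched by a direct SnapPy computation as was done for $\Lk$, or simply omitted.

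For the isometry group I would fix $|n|$ large and argue by uniqueness of the shortest geodesic. By the geometric convergence, every closed geodesic of $N_n$ other than $\gamma_n$ and the boundedly many cores of the fixed fillings has length bounded below by the systole of the limiting manifold, while $\ell(\gamma_n)\to 0$; hence for $|n|$ large $\gamma_n$ is the unique shortest closed geodesic of $N_n$. Any $\phi\in\mathrm{Isom}(N_n)$ carries the shortest geodesic to itself, so $\phi(\gamma_n)=\gamma_n$, and $\phi$ then restricts to an isometry of the complete hyperbolic structure on the drilled manifold $N_n\setminus\gamma_n$ preserving the new cusp. Since an isometry of a finite-volume hyperbolic manifold is determined by its restriction to any nonempty open set, this restriction map is injective, so $\mathrm{Isom}(N_n)$ embeds into the stabilizer of that cusp in $\mathrm{Isom}(N_n\setminus\gamma_n)$. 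When the construction uses a single twist region, $N_n\setminus\gamma_n$ is exactly $S^3\setminus\Lk$, whose isometry group is trivial, and we conclude; if further fixed cusps of $S^3\setminus\Lk$ were filled, one iterates the drilling step over them to reduce again to $S^3\setminus\Lk$.

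The step I expect to be the main obstacle is precisely this last reduction. The ``unique shortest geodesic'' hypothesis is automatic for the $n$-dependent filling because $\ell(\gamma_n)\to 0$, but it need not hold for any fixed auxiliary fillings used in the construction; if such fillings are present one must either arrange their slopes so that their cores are the unique shortest geodesics of the intermediate manifold, or verify by a direct computation (for instance in SnapPy) that these fixed fillings introduce no extra symmetry. Beyond that, the only remaining work is the bookkeeping needed to identify the finite exceptional set of $n$ and to check those small cases, which is again routine on a computer.
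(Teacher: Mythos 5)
Your proposal is correct and follows essentially the same route as the paper: the paper likewise realizes $S^3\setminus K_n$ as $1/n$ Dehn filling on the unknotted component of $\Lk$ and invokes the standard consequence of Thurston's hyperbolic Dehn surgery theorem (spelled out via the short core geodesic and drilling argument, as in the cited Hayden--Sundberg paper) to transfer hyperbolicity and triviality of the isometry group from $S^3\setminus\Lk$ to the filled manifolds. Your version simply makes explicit the details the paper delegates to the reference, including the harmless caveat that the argument a priori covers all but finitely many $n$, which suffices for an infinite family.
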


\begin{proof}
	One can obtain the knot complement $S^3 \setminus K_n$ by Dehn filling along the unknotted component in $\Lk$ with slope $1/n$. We then use the standard technique due to Thurston’s hyperbolic Dehn surgery theorem that $S^3 \setminus K_n$ is hyperbolic and has trivial isometry group since $S^3 \setminus \Lk$ has trivial isometry group. More details about this proof can be found in \cite{kh-isaac-hayden}.
\end{proof}

\end{section}

\begin{section}{Khovanov homology of \texorpdfstring{$8_{20}$}{820}}\label{example computation}

The Table \ref{spanning trees of 8_20} below lists all the spanning of the Tait graph $G_\Lk$ in Figure \ref{example 8_20}, Table \ref{chain complex table for 8_20} provides the spanning tree complex of $G_\Lk$, and Table \ref{incidences of example} provides all the incidences for the differential in the spanning tree complex of $G_\Lk$ together with the Khovanov homology groups of the knot $8_{20}$.

	\setlength{\tabcolsep}{5mm}
	\def\arraystretch{1.25}
	
	\begin{table}[!tbph]
		\centering
		\resizebox{\textwidth}{!}{ 
			\begin{tabular}{|c|c|c|c|c|c|c|}
				\hline
				\begin{tikzpicture}[vertex/.style={circle, draw, inner sep=0pt, minimum size=6pt, fill=black}, edge/.style={draw}, baseline=0]
					\node[vertex,fill=red,color=red] (a) at (0,2) {};
					\node[vertex] (b) at (0,1) {};
					\node[vertex] (c) at (0,0) {};
					\node[vertex] (d) at (0,-1) {};
					\node[vertex] (e) at (-1.5,1) {};
					\node[vertex] (f) at (-1.5,0) {};
					\node[vertex] (g) at (1.5,0.5) {};
					\path[edge] (b) to (c);
					\path[edge] (c) to (d);
					\path[edge] (e) to (f);
					\path[edge] (f) to[out=290,in=180] (d);
					\path[edge] (a) to[out=0,in=90]  (g);
					\path[edge] (g) to[out=270,in=0] (d);
					\node at (0,-2) {$T_1$ ($lDDDD\Bar{d}\Bar{D}\Bar{D}$)};
				\end{tikzpicture} &
				\begin{tikzpicture}[vertex/.style={circle, draw, inner sep=0pt, minimum size=6pt, fill=black}, edge/.style={draw}, baseline=0]
					\node[vertex,fill=red,color=red] (a) at (0,2) {};
					\node[vertex] (b) at (0,1) {};
					\node[vertex] (c) at (0,0) {};
					\node[vertex] (d) at (0,-1) {};
					\node[vertex] (e) at (-1.5,1) {};
					\node[vertex] (f) at (-1.5,0) {};
					\node[vertex] (g) at (1.5,0.5) {};
					\path[edge] (b) to (c);
					\path[edge] (c) to (d);
					\path[edge] (a) to[out=180,in=70] (e);
					\path[edge] (e) to (f);
					\path[edge] (f) to[out=290,in=180] (d);
					\path[edge] (g) to[out=270,in=0] (d);
					\node at (0,-2) {$T_2$ ($LdDDD\Bar{d}\Bar{D}\Bar{D}$)};
				\end{tikzpicture} &
				\begin{tikzpicture}[vertex/.style={circle, draw, inner sep=0pt, minimum size=6pt, fill=black}, edge/.style={draw}, baseline=0]
					\node[vertex,fill=red,color=red] (a) at (0,2) {};
					\node[vertex] (b) at (0,1) {};
					\node[vertex] (c) at (0,0) {};
					\node[vertex] (d) at (0,-1) {};
					\node[vertex] (e) at (-1.5,1) {};
					\node[vertex] (f) at (-1.5,0) {};
					\node[vertex] (g) at (1.5,0.5) {};
					\path[edge] (b) to (c);
					\path[edge] (c) to (d);
					\path[edge] (a) to[out=180,in=70] (e);
					\path[edge] (e) to (f);
					\path[edge] (f) to[out=290,in=180] (d);
					\path[edge] (a) to[out=0,in=90] (g);
					\node at (0,-2) {$T_3$ ($LLdDD\Bar{d}\Bar{D}\Bar{D}$)};
				\end{tikzpicture} &
				\begin{tikzpicture}[vertex/.style={circle, draw, inner sep=0pt, minimum size=6pt, fill=black}, edge/.style={draw}, baseline=0]
					\node[vertex,fill=red,color=red] (a) at (0,2) {};
					\node[vertex] (b) at (0,1) {};
					\node[vertex] (c) at (0,0) {};
					\node[vertex] (d) at (0,-1) {};
					\node[vertex] (e) at (-1.5,1) {};
					\node[vertex] (f) at (-1.5,0) {};
					\node[vertex] (g) at (1.5,0.5) {};
					\path[edge] (b) to (c);
					\path[edge] (c) to (d);
					\path[edge] (a) to[out=180,in=70] (e);
					\path[edge] (e) to (f);
					\path[edge] (a) to[out=0,in=90] (g);
					\path[edge] (g) to[out=270,in=0] (d);
					\node at (0,-2) {$T_4$ ($LLLdD\Bar{d}\Bar{D}\Bar{D}$)};
				\end{tikzpicture} &
				\begin{tikzpicture}[vertex/.style={circle, draw, inner sep=0pt, minimum size=6pt, fill=black}, edge/.style={draw}, baseline=0]
					\node[vertex,fill=red,color=red] (a) at (0,2) {};
					\node[vertex] (b) at (0,1) {};
					\node[vertex] (c) at (0,0) {};
					\node[vertex] (d) at (0,-1) {};
					\node[vertex] (e) at (-1.5,1) {};
					\node[vertex] (f) at (-1.5,0) {};
					\node[vertex] (g) at (1.5,0.5) {};
					\path[edge] (b) to (c);
					\path[edge] (c) to (d);
					\path[edge] (a) to[out=180,in=70] (e);
					\path[edge] (f) to[out=290,in=180] (d);
					\path[edge] (a) to[out=0,in=90] (g);
					\path[edge] (g) to[out=270,in=0] (d);
					\node at (0,-2) {$T_5$ ($LLLLd\Bar{d}\Bar{D}\Bar{D}$)};
				\end{tikzpicture} &
				\begin{tikzpicture}[vertex/.style={circle, draw, inner sep=0pt, minimum size=6pt, fill=black}, edge/.style={draw}, baseline=0]
					\node[vertex,fill=red,color=red] (a) at (0,2) {};
					\node[vertex] (b) at (0,1) {};
					\node[vertex] (c) at (0,0) {};
					\node[vertex] (d) at (0,-1) {};
					\node[vertex] (e) at (-1.5,1) {};
					\node[vertex] (f) at (-1.5,0) {};
					\node[vertex] (g) at (1.5,0.5) {};
					\path[edge] (a) to (b);
					\path[edge] (c) to (d);
					\path[edge] (e) to (f);
					\path[edge] (f) to[out=290,in=180] (d);
					\path[edge] (a) to[out=0,in=90] (g);
					\path[edge] (g) to[out=270,in=0] (d);
					\node at (0,-2) {$T_6$ ($lDDDD\Bar{L}\Bar{d}\Bar{D}$)};
				\end{tikzpicture} &
				\begin{tikzpicture}[vertex/.style={circle, draw, inner sep=0pt, minimum size=6pt, fill=black}, edge/.style={draw}, baseline=0]
					\node[vertex,fill=red,color=red] (a) at (0,2) {};
					\node[vertex] (b) at (0,1) {};
					\node[vertex] (c) at (0,0) {};
					\node[vertex] (d) at (0,-1) {};
					\node[vertex] (e) at (-1.5,1) {};
					\node[vertex] (f) at (-1.5,0) {};
					\node[vertex] (g) at (1.5,0.5) {};
					\path[edge] (a) to (b);
					\path[edge] (c) to (d);
					\path[edge] (a) to[out=180,in=70] (e);
					\path[edge] (e) to (f);
					\path[edge] (f) to[out=290,in=180] (d);
					\path[edge] (g) to[out=270,in=0] (d);
					\node at (0,-2) {$T_7$ ($LdDDD\Bar{L}\Bar{d}\Bar{D}$)};
				\end{tikzpicture} \\
                \hline
				\begin{tikzpicture}[vertex/.style={circle, draw, inner sep=0pt, minimum size=6pt, fill=black}, edge/.style={draw}, baseline=0]
					\node[vertex,fill=red,color=red] (a) at (0,2) {};
					\node[vertex] (b) at (0,1) {};
					\node[vertex] (c) at (0,0) {};
					\node[vertex] (d) at (0,-1) {};
					\node[vertex] (e) at (-1.5,1) {};
					\node[vertex] (f) at (-1.5,0) {};
					\node[vertex] (g) at (1.5,0.5) {};
					\path[edge] (a) to (b);
					\path[edge] (c) to (d);
					\path[edge] (a) to[out=180,in=70] (e);
					\path[edge] (e) to (f);
					\path[edge] (f) to[out=290,in=180] (d);
					\path[edge] (a) to[out=0,in=90] (g);
					\node at (0,-2) {$T_8$ ($LLdDD\Bar{L}\Bar{d}\Bar{D}$)};
				\end{tikzpicture} &
				\begin{tikzpicture}[vertex/.style={circle, draw, inner sep=0pt, minimum size=6pt, fill=black}, edge/.style={draw}, baseline=0]
					\node[vertex,fill=red,color=red] (a) at (0,2) {};
					\node[vertex] (b) at (0,1) {};
					\node[vertex] (c) at (0,0) {};
					\node[vertex] (d) at (0,-1) {};
					\node[vertex] (e) at (-1.5,1) {};
					\node[vertex] (f) at (-1.5,0) {};
					\node[vertex] (g) at (1.5,0.5) {};
					\path[edge] (a) to (b);
					\path[edge] (c) to (d);
					\path[edge] (a) to[out=180,in=70] (e);
					\path[edge] (e) to (f);
					\path[edge] (a) to[out=0,in=90] (g);
					\path[edge] (g) to[out=270,in=0] (d);
					\node at (0,-2) {$T_9$ ($LLLdD\Bar{L}\Bar{d}\Bar{D}$)};
				\end{tikzpicture} &
				\begin{tikzpicture}[vertex/.style={circle, draw, inner sep=0pt, minimum size=6pt, fill=black}, edge/.style={draw}, baseline=0]
					\node[vertex,fill=red,color=red] (a) at (0,2) {};
					\node[vertex] (b) at (0,1) {};
					\node[vertex] (c) at (0,0) {};
					\node[vertex] (d) at (0,-1) {};
					\node[vertex] (e) at (-1.5,1) {};
					\node[vertex] (f) at (-1.5,0) {};
					\node[vertex] (g) at (1.5,0.5) {};
					\path[edge] (a) to (b);
					\path[edge] (c) to (d);
					\path[edge] (a) to[out=180,in=70] (e);
					\path[edge] (f) to[out=290,in=180] (d);
					\path[edge] (a) to[out=0,in=90] (g);
					\path[edge] (g) to[out=270,in=0] (d);
					\node at (0,-2) {$T_{10}$ ($LLLLd\Bar{L}\Bar{d}\Bar{D}$)};
				\end{tikzpicture} &
				\begin{tikzpicture}[vertex/.style={circle, draw, inner sep=0pt, minimum size=6pt, fill=black}, edge/.style={draw}, baseline=0]
					\node[vertex,fill=red,color=red] (a) at (0,2) {};
					\node[vertex] (b) at (0,1) {};
					\node[vertex] (c) at (0,0) {};
					\node[vertex] (d) at (0,-1) {};
					\node[vertex] (e) at (-1.5,1) {};
					\node[vertex] (f) at (-1.5,0) {};
					\node[vertex] (g) at (1.5,0.5) {};
					\path[edge] (a) to (b);
					\path[edge] (b) to (c);
					\path[edge] (e) to (f);
					\path[edge] (f) to[out=290,in=180] (d);
					\path[edge] (a) to[out=0,in=90] (g);
					\path[edge] (g) to[out=270,in=0] (d);
					\node at (0,-2) {$T_{11}$ ($lDDDD\Bar{L}\Bar{L}\Bar{d}$)};
				\end{tikzpicture} &
				\begin{tikzpicture}[vertex/.style={circle, draw, inner sep=0pt, minimum size=6pt, fill=black}, edge/.style={draw}, baseline=0]
					\node[vertex,fill=red,color=red] (a) at (0,2) {};
					\node[vertex] (b) at (0,1) {};
					\node[vertex] (c) at (0,0) {};
					\node[vertex] (d) at (0,-1) {};
					\node[vertex] (e) at (-1.5,1) {};
					\node[vertex] (f) at (-1.5,0) {};
					\node[vertex] (g) at (1.5,0.5) {};
					\path[edge] (a) to (b);
					\path[edge] (b) to (c);
					\path[edge] (a) to[out=180,in=70] (e);
					\path[edge] (e) to (f);
					\path[edge] (f) to[out=290,in=180] (d);
					\path[edge] (g) to[out=270,in=0] (d);
					\node at (0,-2) {$T_{12}$ ($LdDDD\Bar{L}\Bar{L}\Bar{d}$)};
				\end{tikzpicture} &
				\begin{tikzpicture}[vertex/.style={circle, draw, inner sep=0pt, minimum size=6pt, fill=black}, edge/.style={draw}, baseline=0]
					\node[vertex,fill=red,color=red] (a) at (0,2) {};
					\node[vertex] (b) at (0,1) {};
					\node[vertex] (c) at (0,0) {};
					\node[vertex] (d) at (0,-1) {};
					\node[vertex] (e) at (-1.5,1) {};
					\node[vertex] (f) at (-1.5,0) {};
					\node[vertex] (g) at (1.5,0.5) {};
					\path[edge] (a) to (b);
					\path[edge] (b) to (c);
					\path[edge] (a) to[out=180,in=70] (e);
					\path[edge] (e) to (f);
					\path[edge] (f) to[out=290,in=180] (d);
					\path[edge] (a) to[out=0,in=90] (g);
					\node at (0,-2) {$T_{13}$ ($LLdDD\Bar{L}\Bar{L}\Bar{d}$)};
				\end{tikzpicture} &
				\begin{tikzpicture}[vertex/.style={circle, draw, inner sep=0pt, minimum size=6pt, fill=black}, edge/.style={draw}, baseline=0]
					\node[vertex,fill=red,color=red] (a) at (0,2) {};
					\node[vertex] (b) at (0,1) {};
					\node[vertex] (c) at (0,0) {};
					\node[vertex] (d) at (0,-1) {};
					\node[vertex] (e) at (-1.5,1) {};
					\node[vertex] (f) at (-1.5,0) {};
					\node[vertex] (g) at (1.5,0.5) {};
					\path[edge] (a) to (b);
					\path[edge] (b) to (c);
					\path[edge] (a) to[out=180,in=70] (e);
					\path[edge] (e) to (f);
					\path[edge] (a) to[out=0,in=90] (g);
					\path[edge] (g) to[out=270,in=0] (d);
					\node at (0,-2) {$T_{14}$ ($LLLdD\Bar{L}\Bar{L}\Bar{d}$)};
				\end{tikzpicture} \\
                \hline
				\begin{tikzpicture}[vertex/.style={circle, draw, inner sep=0pt, minimum size=6pt, fill=black}, edge/.style={draw}, baseline=0]
					\node[vertex,fill=red,color=red] (a) at (0,2) {};
					\node[vertex] (b) at (0,1) {};
					\node[vertex] (c) at (0,0) {};
					\node[vertex] (d) at (0,-1) {};
					\node[vertex] (e) at (-1.5,1) {};
					\node[vertex] (f) at (-1.5,0) {};
					\node[vertex] (g) at (1.5,0.5) {};
					\path[edge] (a) to (b);
					\path[edge] (b) to (c);
					\path[edge] (a) to[out=180,in=70] (e);
					\path[edge] (f) to[out=290,in=180] (d);
					\path[edge] (a) to[out=0,in=90] (g);
					\path[edge] (g) to[out=270,in=0] (d);
					\node at (0,-2) {$T_{15}$ ($LLLLd\Bar{L}\Bar{L}\Bar{d}$)};
				\end{tikzpicture} &
				\begin{tikzpicture}[vertex/.style={circle, draw, inner sep=0pt, minimum size=6pt, fill=black}, edge/.style={draw}, baseline=0]
					\node[vertex,fill=red,color=red] (a) at (0,2) {};
					\node[vertex] (b) at (0,1) {};
					\node[vertex] (c) at (0,0) {};
					\node[vertex] (d) at (0,-1) {};
					\node[vertex] (e) at (-1.5,1) {};
					\node[vertex] (f) at (-1.5,0) {};
					\node[vertex] (g) at (1.5,0.5) {};
					\path[edge] (a) to (b);
					\path[edge] (b) to (c);
					\path[edge] (c) to (d);
					\path[edge] (e) to (f);
					\path[edge] (f) to[out=290,in=180] (d);
					\path[edge] (g) to[out=270,in=0] (d);
					\node at (0,-2) {$T_{16}$ ($llDDD\Bar{D}\Bar{D}\Bar{D}$)};
				\end{tikzpicture} &
				\begin{tikzpicture}[vertex/.style={circle, draw, inner sep=0pt, minimum size=6pt, fill=black}, edge/.style={draw}, baseline=0]
					\node[vertex,fill=red,color=red] (a) at (0,2) {};
					\node[vertex] (b) at (0,1) {};
					\node[vertex] (c) at (0,0) {};
					\node[vertex] (d) at (0,-1) {};
					\node[vertex] (e) at (-1.5,1) {};
					\node[vertex] (f) at (-1.5,0) {};
					\node[vertex] (g) at (1.5,0.5) {};
					\path[edge] (a) to (b);
					\path[edge] (b) to (c);
					\path[edge] (c) to (d);
					\path[edge] (e) to (f);
					\path[edge] (f) to[out=290,in=180] (d);
					\path[edge] (a) to[out=0,in=90] (g);
					\node at (0,-2) {$T_{17}$ ($lLdDD\Bar{D}\Bar{D}\Bar{D}$)};
				\end{tikzpicture} &
				\begin{tikzpicture}[vertex/.style={circle, draw, inner sep=0pt, minimum size=6pt, fill=black}, edge/.style={draw}, baseline=0]
					\node[vertex,fill=red,color=red] (a) at (0,2) {};
					\node[vertex] (b) at (0,1) {};
					\node[vertex] (c) at (0,0) {};
					\node[vertex] (d) at (0,-1) {};
					\node[vertex] (e) at (-1.5,1) {};
					\node[vertex] (f) at (-1.5,0) {};
					\node[vertex] (g) at (1.5,0.5) {};
					\path[edge] (a) to (b);
					\path[edge] (b) to (c);
					\path[edge] (c) to (d);
					\path[edge] (a) to[out=180,in=70] (e);
					\path[edge] (e) to (f);	
					\path[edge] (g) to[out=270,in=0] (d);
					\node at (0,-2) {$T_{18}$ ($lDDDD\Bar{d}\Bar{D}\Bar{D}$)};
				\end{tikzpicture} &
				\begin{tikzpicture}[vertex/.style={circle, draw, inner sep=0pt, minimum size=6pt, fill=black}, edge/.style={draw}, baseline=0]
					\node[vertex,fill=red,color=red] (a) at (0,2) {};
					\node[vertex] (b) at (0,1) {};
					\node[vertex] (c) at (0,0) {};
					\node[vertex] (d) at (0,-1) {};
					\node[vertex] (e) at (-1.5,1) {};
					\node[vertex] (f) at (-1.5,0) {};
					\node[vertex] (g) at (1.5,0.5) {};
					\path[edge] (a) to (b);
					\path[edge] (b) to (c);
					\path[edge] (c) to (d);
					\path[edge] (a) to[out=180,in=70] (e);		   	
					\path[edge] (f) to[out=290,in=180] (d);		   
					\path[edge] (g) to[out=270,in=0] (d);
					\node at (0,-2) {$T_{19}$ ($LlDLd\Bar{D}\Bar{D}\Bar{D}$)};
				\end{tikzpicture} &
				\begin{tikzpicture}[vertex/.style={circle, draw, inner sep=0pt, minimum size=6pt, fill=black}, edge/.style={draw}, baseline=0]
					\node[vertex,fill=red,color=red] (a) at (0,2) {};
					\node[vertex] (b) at (0,1) {};
					\node[vertex] (c) at (0,0) {};
					\node[vertex] (d) at (0,-1) {};
					\node[vertex] (e) at (-1.5,1) {};
					\node[vertex] (f) at (-1.5,0) {};
					\node[vertex] (g) at (1.5,0.5) {};
					\path[edge] (a) to (b);
					\path[edge] (b) to (c);
					\path[edge] (c) to (d);
					\path[edge] (a) to[out=180,in=70] (e);
					\path[edge] (e) to (f);			   	
					\path[edge] (a) to[out=0,in=90] (g);
					\node at (0,-2) {$T_{20}$ ($LLddD\Bar{D}\Bar{D}\Bar{D}$)};	
				\end{tikzpicture} &
				\begin{tikzpicture}[vertex/.style={circle, draw, inner sep=0pt, minimum size=6pt, fill=black}, edge/.style={draw}, baseline=0]
					\node[vertex,fill=red,color=red] (a) at (0,2) {};
					\node[vertex] (b) at (0,1) {};
					\node[vertex] (c) at (0,0) {};
					\node[vertex] (d) at (0,-1) {};
					\node[vertex] (e) at (-1.5,1) {};
					\node[vertex] (f) at (-1.5,0) {};
					\node[vertex] (g) at (1.5,0.5) {};
					\path[edge] (a) to (b);
					\path[edge] (b) to (c);
					\path[edge] (c) to (d);
					\path[edge] (a) to[out=180,in=70] (e);
					\path[edge] (f) to[out=290,in=180] (d);
					\path[edge] (a) to[out=0,in=90] (g);
					\node at (0,-2) {$T_{21}$ ($LLdLd\Bar{D}\Bar{D}\Bar{D}$)};
				\end{tikzpicture} \\
				\hline
		\end{tabular}}
		\vspace{0.2cm}
		\caption{Spanning trees of Tait graph of $8_{20}$}
           \label{spanning trees of 8_20}
	\end{table}
	\vspace{-0.5cm}
	
	\begin{table}[!tbph]
		\setlength{\tabcolsep}{2mm}
		\def\arraystretch{1.5}
		\centering
        \resizebox{\textwidth}{!}{
		\begin{tabular}{|c|c|c|c|c|c|c|c|c|c|}
			\hline
			\backslashbox{$j$}{$i$} & $-5$ & $-4$ & $-3$ & $-2$ & $-1$ & $0$ & $1$ & $2$ & $3$ \\ \hline
			$5$  &  &  &  & & & & & $T_5^+$ & $T_{21}^+$ \\ \hline
			$3$ & & & & & & & $T_{10}^+, T_{4}^+$ & $T_{20}^+, T_5^-$ & $T_{21}^-$ \\ \hline
			$1$ & & & & & &
			\begin{tabular}{c}
				$T_{15}^+, T_9^+$ \\
				$T_3^+$
			\end{tabular}  & 
			\begin{tabular}{c}
				$T_{19}^+, T_{10}^-$\\
				$ T_4^-$
			\end{tabular} 
			& $T_{20}^-$ &  \\ \hline
			$-1$ & & & & & \begin{tabular}{c}
				$T_{14}^+, T_8^+$ \\
				$T_2^+$
			\end{tabular} &
			\begin{tabular}{c}
				$T_{17}^+,T_{15}^-$\\
				$ T_{18}^+,T_9^-$\\
				$ T_3^-$
			\end{tabular}  
			& $T_{19}^-$ & & \\ \hline
			$-3$ & & & & $T_{13}^+,T_7^+$ & \begin{tabular}{c}
				$T_{14}^-, T_8^-$ \\
				$T_2^-$
			\end{tabular} & $T_{17}^-,T_{18}^-$ & & &  \\ \hline
			$-5$ & & & $T_{12}^+, T_1^+$ &
			\begin{tabular}{c}
				$T_{16}^+, T_{13}^-$\\
				$T_7^-$
			\end{tabular}  & & & & & \\ \hline 
			$-7$ & & $T_6^+$ & $T_{12}^-, T_1^-$ & $T_{16}^-$ & & & & & \\ \hline
			$-9$ & $T_{11}^+$ & $T_6^-$ & & & & & & & \\ \hline
			$-11$ & $T_{11}^-$ & & & & & & & & \\ \hline
		\end{tabular}}
		\vspace{0.1cm}
		\caption{Spanning tree complex of $8_{20}$}
		\label{chain complex table for 8_20}
	\end{table}
	
	\begin{table}[ht]
	    \centering
	    \setlength{\tabcolsep}{2mm}
		\def\arraystretch{1.5}
        \resizebox{0.45\textwidth}{!}{
	    \begin{tabular}{|l|l|}
	    	\hline
	    	$T_{11}^+ \mapsto 2T_6^-$ & $T_{17}^+ \mapsto 0$\\
	    	\hline
	    	$T_6^+ \mapsto 0$ & $T_{18}^+ \mapsto -2T_{19}^-$\\
	    	\hline
	    	$T_{12}^- \mapsto 0$ & $T_{15}^- \mapsto T_{19}^-$\\
	    	\hline
	    	$T_{12}^+ \mapsto 2T_{13}^-$ & $T_9^- \mapsto T_{19}^-$\\
	    	\hline 
	    	$T_1^+ \mapsto T_{16}^+$  & $T_3^- \mapsto T_{19}^-$\\
	    	\hline
	    	$T_{13}^+ \mapsto 0$ & $T_{15}^+ \mapsto T_{19}^+ - 2T_{10}^-$\\
	    	\hline
	    	$T_7^+ \mapsto -2T_8^- - 2T_2^-$ & $T_9^+ \mapsto T_{19}^+ -2T_{10}^-$\\
	    	\hline
	    	$T_{14}^- \mapsto T_{18}^-$ & $T_3^+ \mapsto T_{19}^+$\\
	    	\hline
	    	$T_8^- \mapsto -T_{17}^-$ & $T_{19}^+ \mapsto 0$\\
	    	\hline
	    	$T_2^- \mapsto T_{17}^-$ & $T_{10}^- \mapsto 0$\\
	    	\hline
	    	$T_{14}^+ \mapsto -2T_{15}^- + 2T_9^- + T_{18}^+$ & $T_4^- \mapsto T_{20}^-$\\
	    	\hline
	    	$T_8^+ \mapsto -T_{17}^+ + 2T_3^-$ & $T_{10}^+ \mapsto 0$ \\
	    	\hline
	    	$T_2^+ \mapsto T_{17}^+ - 2T_3^-$ & $T_4^+ \mapsto T_{20}^+ - 2T_5^-$\\
	    	\hline
	    	$T_{20}^+ \mapsto -2T_{21}^-$ & $T_5^\pm \mapsto -T_{21}^\pm$\\
	    	\hline
	    \end{tabular}}
        \hfill
            \resizebox{0.47\textwidth}{!}{
		\begin{tabular}{|c|c|c|c|c|c|c|c|}
			\hline
			\backslashbox{$j$}{$i$} & $-5$ & $-4$ & $-3$ & $-2$ & $-1$ & $0$ & $1$ \\ \hline
			$3$ & & & & & & & $\Z$ \\ \hline
			$1$ & & & & & & $\Z$ & $\Z_2$  \\ \hline
			$-1$ & & & & & $\Z$ & $\Z \oplus \Z$ &\\ \hline
			$-3$ & & & & $\Z$ & $\Z_2$ & &   \\ \hline
			$-5$ & & & & $\Z \oplus \Z_2$ & & & \\ \hline 
			$-7$ & & $\Z$ & $\Z$ & & & & \\ \hline
			$-9$ & & $\Z_2$ & & & & &  \\ \hline
			$-11$ & $\Z$ & & & & & &  \\ \hline
		\end{tabular}}
	    \vspace{0.2cm}
	    \caption{Incidences of spanning trees and the Khovanov homology groups derived from the differential}
          \label{incidences of example}
	\end{table}
	
\end{section}	
\clearpage


\bibliographystyle{alpha}
\bibliography{citations}

\end{document}